\algnewcommand\algorithmicinput{\textbf{Input:}}
\algnewcommand\Input{\item[\algorithmicinput]}
\algnewcommand\algorithmicoutput{\textbf{Output:}}
\algnewcommand\Output{\item[\algorithmicoutput]}
\theoremstyle{plain}
\newtheorem{theorem}{Theorem}[section]
\newtheorem{cor}[theorem]{Corollary}
\newtheorem{lemma}[theorem]{Lemma}
\newtheorem{prop}[theorem]{Proposition}
\theoremstyle{definition}
\newtheorem{remark}[theorem]{Remark}
\DeclareMathOperator{\var}{Var}
\DeclareMathOperator{\cov}{Cov}
\DeclareMathOperator{\E}{\mathbb{E}}
\newcommand{\setbinom}{\genfrac{\{}{\}}{0pt}{}}
\renewcommand{\binom}{\genfrac{(}{)}{0pt}{}}
\begin{document}

\title{Asymptotic confidence bands for centered purely random forests}

\author{Natalie Neumeyer$^1$, Jan Rabe$^1$ and Mathias Trabs$^2$\\
   \emph{Universität Hamburg$^1$ and Karlsruhe Institute of Technology$^2$}}

\maketitle

\begin{abstract}
In a multivariate nonparametric regression setting we construct explicit asymptotic uniform confidence bands for centered purely random forests. Since the most popular example in this class of random forests, namely the uniformly centered purely random forests, is well known to suffer from suboptimal rates, we propose a new type of purely random forests, called the Ehrenfest centered purely random forests, which achieve minimax optimal rates. Our main confidence band theorem applies to both random forests. The proof is based on an interpretation of random forests as generalized U-Statistics together with a Gaussian approximation of the supremum of empirical processes. Our theoretical findings are illustrated in simulation examples.
\end{abstract}

Keywords: Purely random forests; uniform confidence bands; nonparametric regression; statistical inference in machine learning


\section{Introduction}

Random forests are widely used in machine learning, yet many fundamental questions about their statistical properties remain open. We present asymptotic confidence bands for regression functions based on centered purely random forests. As by-products we obtain a rate of convergence of the mean squared error, a uniform rate of convergence, and pointwise asymptotic normality. 

Since the introduction of random forests by \citet{Breiman2001}, their statistical analysis is an active research area. A large part of the literature investigates consistency of various types of random forests. Examples are the results
by \citet{Biau2008} or \citet{Scornet2015}, who prove consistency
of Breiman's forest in an additive regression model. \citet{Chi2022}
show consistency in a high-dimensional regression model. \citet{Scornet2016}
analyzes the relationship between random forests and their infinite
version, i.e., a random forest with infinitely many trees, and links
the consistency of the finite forest to the infinite. Recently, \citet{Zhan-Liu-Xia} showed consistency of random forests based on oblique
decision trees. 

A substantial amount of literature on random forests deals with purely
random forests, in which the partitions of the trees do not depend
on the training sample. A popular version are centered purely random
forests that are characterized by always splitting each hyperrectangle
in its center orthogonal to a random coordinate.  They have been introduced by \citet{Breiman2004}, who analyzed their mean squared
error. The considered regression model 
has strong and weak features with higher split probabilities for strong features. \citet{Biau2012} showed consistency in this model and proved bounds on the bias and variance
terms. Both of these articles consider the infinite random forest
estimator.
\citet{Klusowski2021} improved the mean squared prediction error
over the bound by \citet{Biau2012}. Under the assumption that the
probabilities of selecting the feature to be cut are constant for
all steps in the tree construction, he further proved that centered
random forests cannot achieve the minimax optimal rate of $n^{-2/(p+2)}$,
see e.g. \citet{Tsybakov2009}, for the mean squared error in a regression
model with Lipschitz continuous regression function $m\colon\mathbb R^p\to\mathbb R$ and $n\in\mathbb N$ i.i.d. observations. Moreover, \citet{Klusowski2021}
showed that a uniform distribution over the set of relevant features
achieves the best possible rate for these random forests.
As an alternative to centered purely random forests, rate optimal purely random forests have been constructed based on random tessellations, in particular the Mondrian random forest introduced by \cite{Mourtada2020}.
The main reason to consider purely random forests is that the reduced data dependence of the algorithm reduces the technical difficulties when proving statistical guarantees.  Note that the aforementioned articles on purely random forests do not incorporate subsampling for the individual regression trees, but instead use the entire training sample for each tree, which is a considerable deviation from practical applications of random forests and a noteworthy distinction from our work.

In order to achieve minimax optimality with centered purely random forests, we propose a new centered purely random forest algorithm, called \emph{Ehrenfest centered purely random forest}. The method relies on a regularizing splitting distribution which is motivated by the Ehrenfest model on particle diffusion and which avoids too narrow cells with a too large diameter. As a result we obtain optimal pointwise and uniform convergence rates for Lipschitz regular regression functions.

The literature on statistical inference for random forests is even more limited and restricted to purely random forests or, slightly more general, honest random forests, where no observation is used for both, the construction of the partition and the estimation of the weights in each cell.
Pointwise asymptotic confidence intervals can be derived from asymptotic normality of the random forest. The latter was shown by  \citet{Mentch2016}, \citet{Wager2018} and \citet{Peng2022}. In those results the variance of the asymptotic
normal distribution is either unknown or described abstractly. Estimators for the variance of random forests are suggested by  \citet{Mentch2016}, \citet{Wager2018} and \citet{Xu2024}. A pointwise central limit theorem for Mondrian random forests has been proved by \cite{cattaneo2023inference}.

There are no results that allow for the construction of uniform asymptotic
confidence bands that we are aware of. Moreover, the lack of an explicit asymptotic variance is puzzling. We want to close these gaps and
prove both of these results for centered purely random forests. More precisely, our main result is an
asymptotic uniform confidence band $\mathcal{C}_{n}(x)$, $x\in\mathcal X$, centered arround a centered purely random forests satisfying
	\[
	\liminf_{n\to\infty}\mathbb{P}\big(m(x)\in\mathcal{C}_{n}(x),\,\forall x\in\mathcal{X}\big)\geq1-\beta \in (0,1),
	\]
    where $m$ denotes the regression function and $\mathcal X\subset\mathbb R^p$ is the covariate space.
    One of our aforementioned by-products is pointwise asymptotic normality with  a very explicit and simple representation of the asymptotic variance which is easy to estimate.

Despite the common tendency to categorize a random forest as a machine
learning algorithm, it can be considered as a nonparametric regression estimator. This
is especially true for purely random forests, that do not exploit
the training data in their partition construction. A substantial amount
of literature exists on the subject of confidence bands for different
nonparametric density and regression estimators. Most of the early results
in the literature were for the univariate case. One of the first results
is by \citet{Smirnov1950} and gives confidence bands for a probability
density based on a histogram estimator.
\citet{Bickel1973} study confidence bands for kernel density estimators.
\citet{Gine2010} investigated confidence bands for general
density estimators that adapt to the degree of smoothness of the density
function. 
\citet{Chernozhukov2014a} and \citet{Armstrong}  also derived adaptive confidence bands. 
\citet{Johnston1982} considered confidence bands for regression functions based on the Nadaraya-Watson
 estimator, and \citet{Haerdle1989} based on M-smoothers.
Typically, the confidence bands rely on an undersmoothing of the estimator
to reduce the bias relative to the stochastic error. An alternative
is a direct bias correction, which is used, for instance, by \citet{Eubank1993},
who considered a deterministic, uniform design for local constant
regression estimation, and in the article by \citet{Xia1998}, who
considered a random design under dependence and used local linear
estimation. Bootstrap confidence bands for nonparametric
regression were considered by \citet{Hall1993}, \citet{Neumann1998},
\citet{Claeskens2003} and \citet{Hall2013}. \citet{Sabbah2014} showed
confidence bands for quantile regression functions, and  \citet{Birke2010} and \citet{Proksch-Bissantz-Dette} proved confidence bands
in  inverse regression models.

The aforementioned articles on confidence bands for regression functions only considered univariate covariates.
In the multivariate case \citet{Konakov1984} analyzed the asymptotic
distribution of the maximal deviation for the Nadaraya-Watson estimate
in a random design setting. For multivariate regression models \citet{Proksch2016} proved confidence
bands in the case of fixed design, in particular based on local polynomial estimators, and \citet{Chao2017}  in the case of random design in different models with the special case of a mean regression function and Nadaraya-Watson estimator. Confidence bands for a broad class of partitioning estimators have been proposed by \cite{CattaneoEtAl2020}. The special case of histogram estimators has been revisited by \citet{Neumeyer-Rabe-Trabs}. 

Our method of proof relies on the interpretation
of a random forest as a generalized U-statistic, so that the Hoeffding decomposition
and the H\'ajek projection can be utilized to analyze the asymptotic
behavior, similarly as by 
\citet{Mentch2016} and \citet{Peng2022}. The dominating term in the asymptotic expansion of the random forest then has empirical process structure. Instead of approximating the empirical process uniformly by a Gaussian process as in classical results for confidence bands for densities and regression functions, it has advantages to  approximate the supremum of the empirical process by the supremum of a Gaussian process. To this end we apply the approximation result by  \citet{Chernozhukov2014}, similarly as  \citet{Chernozhukov2014a} and \citet{Patschkowski2019} for density confidence bands.
\medskip 

We introduce the nonparametric regression model and define centered purely random forests in Section \ref{sec:Random-forests}. In Section~\ref{sec:rates} we present the results for the mean squared error, the uniform convergence rate and the pointwise asymptotic normality, and introduce the Ehrenfest centered purely random forest. In Section \ref{sec:Confidence-Bands} the uniform confidence band result is presented, assumptions are discussed and the proof strategy is explained. Simulations are shown in Section \ref{chap:Simulation-study}. The proofs can be found in the supplementary material. In particular, all proofs for Section~\ref{sec:rates} are given in Section~\ref{sec:proofRates} while the proofs for Section~\ref{sec:Confidence-Bands} are detailed out in Sections~\ref{sec:ProofSketchMain} and~\ref{sec:Proofs chap CBs}. 
\medskip

Let us introduce some \emph{notation}: We write $[k]=\{1,\dots,k\}$ for any $k\in\mathbb N$. The Euclidean norm is denoted by $\Vert\cdot\Vert$. We use the  notation $\|\cdot\|_\infty$ for the supremum norm on $[0,1]^p$. The indicator function on a set $A$ is written as $\mathbb{I}\{A\}$. Number of elements of a set $A$ is denoted by $\#A$. The volume or Lebesgue measure of a set $A\subset\mathbb R^p$ is denoted by $\mathrm{vol}(A)$.  The set of all subsets $I\subseteq[n]$ of size $r$ is denoted by $\setbinom n r$. We abbreviate $A_n=\mathcal{O}(B_n)$ by $A_n\lesssim B_n$ and use the notation $A_n\sim B_n$ if $\lim_{n\to\infty} A_n/B_n\in (0,\infty)$.

\section{Regression model and centered purely random forests\label{sec:Random-forests}}

Throughout we will consider the following multivariate regression problem
\begin{equation*}
  Y=m(X)+\varepsilon,
\end{equation*}
where the covariates $X\in [0,1]^p$ admit a density $f_X$ such that 
\begin{equation}\label{eq:density assum}
0< c_{X}\leq f_{X}\leq C_{X},
\end{equation}
the regression function $m:[0,1]^p \to \mathbb{R}$ is $\alpha$-Hölder continuous for $\alpha \in (0,1]$,
the observation error $\varepsilon$ is independent of $X$, centered and satisfies $\mathbb E[|\varepsilon|^q]<\infty$ for some sufficiently large $q\ge2$. We set $\sigma^2 := \var (\varepsilon)$. The training sample $\mathcal{D}_n:=((X_1,Y_1),\ldots ,(X_n,Y_n))$ is given by $n\in\mathbb N$ independent copies of $(X,Y)$.

In a regression context, random forests are constructed as an aggregation of \emph{randomized regression trees} of the form
\begin{align*}
  \tilde{m}_{n}( x_0;\Theta)&:=\sum_{i\in  I } Y_{i} \frac{\mathbb{I}\{X_{i}\in A_{n}( x_0,\omega,\mathcal{D}_{n})\}}{\#_I A_n( x_0,\omega,\mathcal{D}_n)}\qquad \text{with}\\
  \#_I A_n( x_0,\omega,\mathcal{D}_n)&:=\sum_{j\in I}  \mathbb{I}\{X_{j}\in A_{n}( x_0,\omega,\mathcal{D}_{n})\}.
\end{align*}
This is a piecewise constant approximation of the regression function based on a randomized, possibly $\mathcal D_n$ dependent, partition of the future space $[0,1]^p$. We denote the cell of the partition which contains a point $x_0\in[0,1]^p$ by $A_{n}( x_0,\omega,\mathcal{D}_{n})$. In this notation the random variable $\omega\in\Omega$ encodes the random mechanism to construct the partition. Note that the above tree is constructed only based on a subsample $I\subset\{1,\dots,n\}$ which allows for an increased randomization between different regression trees. Thus, $\#_I A_n( x_0,\omega,\mathcal{D}_n)$ is the number of observations in the subsample $(X_i)_{i\in I}$ that fall into $A_n( x_0,\omega,\mathcal{D}_n)$. As originally proposed by \cite{Breiman2004}, we consider partitions constructed iteratively. Starting with $A_0=[0,1]^p$, the cells at level $l$ are generated by splitting the cells at a level $l-1$ orthogonal to one of the axes.

With $N\in\mathbb N$ trees, each relying on different subsets $I_j$ and different random partitions determined by $\omega_j$, $j=1,\dots,N$, we obtain the \emph{random forest}
\begin{equation*}
	\hat{m}_{N,n}( x_0):=\frac{1}{N}\sum_{j=1}^{N}\tilde{m}_{n}( x_0;\Theta_{j})
	= \frac{1}{N}\sum_{j=1}^{N} \sum_{i\in  I_j } Y_{i} \frac{\mathbb{I}\{X_{i}\in A_{n}( x_0,\omega_j,\mathcal{D}_{n})\}}{\#_I A_n( x_0,\omega_j,\mathcal{D}_n)}.
\end{equation*}
We throughout consider \emph{centered purely random forests} (CPRF).  \emph{Purely random} means that the partition does not depend on the training data. \emph{Centered} refers to the fact that the cells are always split in the middle. For the tree construction we perform $k\in\mathbb{N}$ iterations of splitting, and in each iteration every existing cell is split. Therefore, each final cell is the result of $k$ splits and has the volume $2^{-k}$.
Due to this construction we subsequently use the notation $A_k(x_0,\omega)$ instead of $A_{n}( x_0,\omega,\mathcal{D}_{n})$. Both properties simplify the analysis of CPRF compared to the CART-based random forests by \cite{Breiman1984} and makes them a good starting point to study statistical inference.  The statistical analysis of CPRFs dates back to \citet{Biau2012}, although statistical inference for CPRFs has only more recently attracted attention.

In order to construct confidence bands, we will interpret random forests as U-statistics. Recall that for i.i.d.\ $Z_{1},\ldots,Z_{n}\in[0,1]^p\times\mathbb R$ and a symmetric kernel $h\colon([0,1]^p\times\mathbb R)^{r_n}\times\Omega\to\mathbb R$ of order $r_n \overset{n \to \infty }{\longrightarrow}  \infty $ a generalized (incomplete) U-statistic is given by
\[
	U_{n,r_n} :=\frac{1}{\hat N}\sum_{I\in \setbinom{n}{r_n}}\rho_Ih((Z_i)_{i \in I};\omega_I),\qquad \rho_{I} \overset{\mathrm{i.i.d.}}{\sim}  \mathrm{Bin}\Big(1,\frac{N}{\binom{n}{r_{n}}}\Big),\quad \hat{N}:=\sum_{I\in \setbinom{n}{r_n}} \rho_{I}.
\]
As pointed out by \cite{Peng2022} with only minor modifications we can rewrite random forests based on subsamples of size $r_n$ in this form: Let $(\rho_I,\omega_I)\in\{0,1\}\times\Omega$ be i.i.d.\ for $I\in\setbinom{n}{r_n}$ and define
\begin{align}
	U_{n,r_{n},N,\omega}^{(\mathrm{RF})}({ x_0})
	:=&\frac{1}{\hat{N}}\sum_{I\in \setbinom{n}{r_n}} \rho_{I} \underbrace{\sum_{i\in I}Y_{i} \frac{ \mathbb{I}\{X_{i}\in A_{k}(x_0,\omega_I)\}}{\#_I A_{k}(x_0,\omega_I)}}_{h_{{ x_0}}((Z_i)_{i \in I};\omega_I)}\label{eq:PRF U-stat}\\
	\approx&\frac{1}{\binom{n}{r_n}}\sum_{I\in \setbinom{n}{r_n}} \sum_{i\in I}Y_{i} \frac{ \mathbb{I}\{X_{i}\in A_{k}(x_0,\omega_I)\}}{\#_I A_{k}(x_0,\omega_I)}=:U_{n,r_{n},\omega}^{(\mathrm{RF})}({ x_0}),\label{U_n-rn-omega}
\end{align}
where the approximation seems appropriate if $N$ is sufficiently large. We will see in the proofs, that $N\gg 2^kn$ is sufficient for asymptotic equivalence. For $N=\binom{n}{r_{n}}=\hat N$ the approximation is exact, such that $U_{n,r_{n},\omega}^{(\mathrm{RF})}$ is the complete U-statistic version of the random forest.

What remains to specify is the distribution of the split direction. For
$l\in[p]$, $t\in[k]$ let $S_{l,t}(x_{0},\omega)$ denote the number
of splits orthogonal to the $l$-th coordinate that were used in the
first $t$ steps in the construction of $A_{k}(x_{0},\omega)$. In particular, $\sum_{l=1}^{p}S_{l,t}(x_{0},\omega)=t$. For $t=k$ we abbreviate
$S_{l,k}(x_{0},\omega)=S_{l}(x_{0},\omega)$ and assume homogeneity in the sense of $S_{l}(x_{1},\omega)\overset{d}{=}S_{l}(x_{2},\omega)$ for all $l\in[p]$ and $x_{1},x_{2}\in[0,1]^{p}$.

The most prominent example is the \emph{uniformly centered purely random forest} where at each step the split direction is chosen uniformly among all $p$ directions. In this case $S_l(x_0,\omega)\sim\mathrm{Bin}(k,1/p)$ and the joint distribution of $S_1(x_0,\omega),\dots,S_p(x_0,\omega)$ is multinomial with $k$ trails and all probabilities equal to $1/p$. Uniform CPRFs have been studied by \cite{Biau2012} and \cite{Klusowski2021} among others, who have demonstrated that this construction leads to suboptimal rates of convergence. The underlying problem is, that the approximation quality of the trees depends on the diameter of the cells and uniformly distributed splits allow for long and narrow cells with too high probability. To cure this problem, we will introduce a random splitting algorithm which can be understood as a regularized version of the uniform CPRF.

\section{A rate optimal centered purely random forest}\label{sec:rates}
The rate of convergence of a CPRF is determined by the following three characteristics:
\begin{enumerate}
 \item The \emph{diameter of a cell} in the partition is important for the ability
of the estimator to approximate the regression function. For any $A\subset\mathbb{R}^{p}$
let us denote its diameter by
\begin{equation*}
\mathfrak{d}(A):=\sup_{x_{1},x_{2}\in A}\Vert x_{1}-x_{2}\Vert.
\end{equation*}
  \item The expected volume of \emph{cell intersections} $A_{k}(x_{0},\omega_{1})\cap A_{k}(x_{0},\omega_{2})$ for i.i.d.\ copies
$\omega_{1}$ and $\omega_{2}$ of $\omega$ is a key quantity for the stochastic error of a CPRF. We denote the expected volume of the intersection by
\begin{equation*}
\mathcal{V}_{\cap,k}:=\mathbb{E}\left[\mathrm{vol}(A_{k}(x_{0},\omega_{1})\cap A_{k}(x_{0},\omega_{2}))\right].
\end{equation*}
  \item By construction any CPRF has a \emph{finest partition of the feature space}. We call a set $A$ undividable if it is a subset of one of the cells in any realization of the random partition. We call a set $A$ maximum undividable if there is no other undividable set $\tilde{A}\neq A$ with $A\subset\tilde{A}$. The set of all maximum undividable sets, denoted by $\mathcal{S}_{k}$, is a partition itself. If $x_{1}$ and $x_{2}$ are in the same maximum undividable set, we have $A_{k}(x_{1},\omega)=A_{k}(x_{2},\omega)$ almost surely. We note that this also implies that every realization of $A_{k}(x,\omega)$ can be represented as a disjoint union of maximum undividable sets. We write
  $$\mathcal N_{k}:=\#\mathcal{S}_{k}$$ and let $\mathcal{X}_{k}\subset[0,1]^{p}$
  with $\#\mathcal{X}_{k}=\mathcal N_{k}$ be a set containing exactly one element in each of the maximum undividable sets. While $\mathcal N_{k}$ and $\mathcal{X}_{k}$ might depend on the distribution of $\omega$, we omit this dependence in their notation for convenience.
\end{enumerate}
We can relate both, the diameter and the cell intersections, to the splits per direction. Since the length of the cell $A_k(x_0,\omega)$ in direction $l\in[p]$ is given by $2^{-S_l(x_0,\omega)}$ we have
\begin{align}
  \mathfrak{d}(A_{k}(x_{0},\omega))&=\Big(\sum_{l=1}^{p}2^{-2S_{l}(x_{0},\omega)}\Big)^{1/2}\qquad\text{and}\nonumber\\
  \mathrm{vol}(A_{k}(x_{0},\omega_{1})\cap A_{k}(x_{0},\omega_{2})) &=\prod_{l=1}^p2^{-\max\{S_{l}(x_{0},\omega_{1}),S_{l}(x_{0},\omega_{2})\}}.\label{eq:vol intersec darst}
\end{align}
Together with $\sum_{l=1}^{p}S_{l}(x_{0},\omega)=k$ for all $\omega$ the volume of cell intersections thus always satisfies
\begin{equation}
2^{-2k}\leq\mathcal{V}_{\cap,k}\leq2^{-k}.\label{eq:V cap simple bounds}
\end{equation}

The  uniform split distribution leads to the following properties:
\begin{lemma}[Characteristics of uniform CPRF]\label{lem:CharacteristicsUCPRF}
  The uniform CPRF satisfies for $q\in(0,2]$
  \begin{gather}
     \mathbb{E}\left[\mathfrak{d}(A_{k}(x_{0},\omega))^{q}\right] \leq p\left(\frac{p-1+2^{-q}}{p}\right)^{k},\nonumber\\
     2^{-k}k^{-(p-1)}\lesssim\mathcal{V}_{\cap,k}\lesssim2^{-k}k^{-(p-1)/2},\label{eq:uni V cap rate}\\
\mathcal{N}_{k}=2^{kp}.\nonumber
  \end{gather}
\end{lemma}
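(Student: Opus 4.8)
The plan is to treat the three assertions separately, using throughout that for the uniform CPRF the vector $(S_1(x_0,\omega),\dots,S_p(x_0,\omega))$ is multinomial with $k$ trials and cell probabilities $1/p$, so that each marginal satisfies $S_l(x_0,\omega)\sim\mathrm{Bin}(k,1/p)$. For the diameter bound I would start from $\mathfrak{d}(A_k(x_0,\omega))^2=\sum_{l=1}^p 2^{-2S_l(x_0,\omega)}$ and use that $t\mapsto t^{q/2}$ is subadditive for $q\in(0,2]$ (since then $q/2\le1$), which gives $\mathfrak{d}(A_k(x_0,\omega))^q\le\sum_{l=1}^p 2^{-qS_l(x_0,\omega)}$. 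Taking expectations and evaluating the probability generating function of the binomial at $2^{-q}$, namely $\E[2^{-qS_l}]=\big((p-1+2^{-q})/p\big)^k$, and summing the $p$ identical terms yields the asserted bound.

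For the cardinality $\mathcal N_k$ I would argue that in every realization of $\omega$ each cell boundary lies on the dyadic grid $\{j2^{-k}:j=0,\dots,2^k\}$ in each coordinate, because a cell receives at most $k$ splits in any single direction and each split halves the current side length. Hence the partition into the $2^{kp}$ cubes of side $2^{-k}$ refines every realized partition, so each such cube is undividable. Conversely, since each direction receives all $k$ splits with positive probability $(1/p)^k$, any set straddling a grid hyperplane is separated into distinct cells by some realization and therefore fails to be undividable; thus the grid cubes are exactly the maximum undividable sets and $\mathcal N_k=2^{kp}$.

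The core of the lemma is the two-sided estimate on $\mathcal V_{\cap,k}$. Writing $S_l^{(i)}=S_l(x_0,\omega_i)$ for the two independent copies and using $\max\{a,b\}=\tfrac12(a+b+|a-b|)$ together with $\sum_l S_l^{(i)}=k$, the representation \eqref{eq:vol intersec darst} gives
$$\mathcal V_{\cap,k}=\E\Big[\prod_{l=1}^p 2^{-\max\{S_l^{(1)},S_l^{(2)}\}}\Big]=2^{-k}\,\E\Big[2^{-\frac12\sum_{l=1}^p|S_l^{(1)}-S_l^{(2)}|}\Big],$$
so everything reduces to controlling $\E[2^{-\frac12\sum_l|D_l|}]$ with $D_l:=S_l^{(1)}-S_l^{(2)}$, a mean-zero lattice vector subject to $\sum_l D_l=0$ and with covariance of order $k$. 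For the lower bound I would restrict the expectation to the event $\{S^{(1)}=S^{(2)}\}$, on which the exponent vanishes, obtaining $\E[2^{-\frac12\sum_l|D_l|}]\ge\P(S^{(1)}=S^{(2)})=\sum_s\P(S=s)^2$; by Cauchy--Schwarz this collision probability is at least $1/\#\mathrm{supp}(S)=1/\binom{k+p-1}{p-1}\gtrsim k^{-(p-1)}$, which is the asserted lower rate.

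For the upper bound I would bound the expectation by $\max_d\P(D=d)$ times the lattice sum $\sum_{\{\sum_l d_l=0\}}2^{-\frac12\sum_l|d_l|}$, and the latter converges to a finite constant depending only on $p$ since the number of lattice points with $\sum_l|d_l|=2m$ grows only polynomially in $m$ while $2^{-m}$ decays geometrically. The remaining and genuinely technical point, which I expect to be the main obstacle, is the local limit estimate $\max_d\P(D=d)\lesssim k^{-(p-1)/2}$ for the difference of two independent multinomials: because of the degeneracy $\sum_l D_l=0$ one works on the $(p-1)$-dimensional sublattice, where $D$ has nondegenerate covariance of order $k$, and the bound follows from a characteristic-function argument such as $\max_d\P(D=d)\le(2\pi)^{-(p-1)}\int|\varphi_D(t)|\,dt$ over the fundamental domain, using $|\varphi_D(t)|=|\varphi_S(t)|^2$ and estimating this factor separately near and away from the origin. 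Combining the two bounds gives $2^{-k}k^{-(p-1)}\lesssim\mathcal V_{\cap,k}\lesssim 2^{-k}k^{-(p-1)/2}$.
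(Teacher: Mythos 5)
Your proposal is correct. For the diameter bound you argue exactly as the paper does: subadditivity of $t\mapsto t^{q/2}$ for $q\le 2$ reduces $\mathfrak{d}(A_k(x_0,\omega))^q$ to $\sum_{l=1}^p 2^{-qS_l}$, and the binomial generating function at $2^{-q}$ gives the stated factor. For $\mathcal N_k=2^{kp}$ you supply the (easy) argument that the paper only asserts, and your two-sided reasoning (every realized cell is a union of $2^{-k}$-grid cubes; with probability $(1/p)^k$ all splits fall in one direction, so no larger set is undividable) is sound. The genuine difference is in the bounds on $\mathcal V_{\cap,k}$: the paper simply invokes Lemma S.1 of Klusowski (2021) for the two-sided estimate, whereas you derive it from scratch via the identity $\mathcal V_{\cap,k}=2^{-k}\,\mathbb E\bigl[2^{-\frac12\sum_l|D_l|}\bigr]$ with $D=S^{(1)}-S^{(2)}$. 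Your lower bound through the collision probability and Cauchy--Schwarz, $\mathbb P(S^{(1)}=S^{(2)})\ge 1/\binom{k+p-1}{p-1}\gtrsim k^{-(p-1)}$, is clean and complete. Your upper bound correctly reduces to $\max_d\mathbb P(D=d)\lesssim k^{-(p-1)/2}$ plus a convergent lattice sum; the Fourier-analytic local limit argument you sketch does work (on the fundamental domain of the dual of $\{\sum_l d_l=0\}$ the only modulus-one point of $\varphi_S(t)=(p^{-1}\sum_l e^{it_l})^k$ is the origin), but you can bypass it entirely: by Cauchy--Schwarz $\mathbb P(D=d)\le\mathbb P(D=0)=\sum_s\mathbb P(S=s)^2\le\max_s\mathbb P(S=s)$, and Stirling's formula at the multinomial mode $s_l\approx k/p$ gives $\max_s\mathbb P(S=s)\asymp k^{-(p-1)/2}$ directly. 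What your route buys is a self-contained proof with explicit, elementary constants in place of an external citation, at the cost of carrying out (or simplifying, as above) the local estimate yourself.
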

The proof can be found in the Section \ref{sec:Proof-strategy chap CBs} of the supplement. 

In particular, we note that the expected diameter ($q=1$) for large dimensions $p$ is of the order $p(1-\frac{1}{2p})^{k}\approx pe^{-k/(2p)}\approx p1.65^{-k/p}$ which is considerably larger than the minimal possible diameter $\sqrt p2^{-k/p}$ of a deterministic partition into $2^k$ cubes along an equidistant grid. This is the reason for the suboptimal rates of convergence of uniform CPRF, cf. \cite{Biau2012}. On the other hand, the specific split distribution allows for a much tighter bound on the cell intersections compared to \eqref{eq:V cap simple bounds}.

To circumvent cells with too large diameter, we now introduce \emph{Ehrenfest centered purely random forests} whose construction is motivated by the Ehrenfest model of particle diffusion. The model
has $p$ numbered containers, each corresponding to one possible split direction. Each contain $B\in\mathbb N$ identical particles in the initial state. In each step, a uniformly chosen particle
from the total of $pB$ particles is moved from its current container to another. The current state of the model after $t$ steps is described by $(B_{l,t})_{l\in[p]}$, denoting the number of particles in the containers $l\in[p]$. The connection from this particle model to the CPRF is given by the following mechanism: If at time $t\in[k]$ a particle in container $l\in[p]$ is selected, the cell $A_t(x_0,\omega)$ is split orthogonal to direction $l$, that means
\[
\mathbb{P}\big(S_{l,t+1}(x_0,\omega)-S_{l,t}(x_0,\omega)=j\mid B_{l,t}\big)=\begin{cases}
\frac{B_{l,t}}{pB}, & j=1,\\
1-\frac{B_{l,t}}{pB}, & j=0.
\end{cases}
\]
We prohibit the addition of particles to a container if the corresponding
feature has already been split too many times in relation to $t/p$.
To this end, the target probability of a particle leaving container $i$, for entering
container $j\neq i$ is chosen as
\begin{equation}
\varrho((S_{l,t}(x_0,\omega))_{l=1}^{p},j,i)=\frac{\mathbb{I}\{pS_{j,t}(x_0,\omega)<t+p\Delta\}}{\sum_{l\in\{1,\ldots,p\}\setminus\{i\}}\mathbb{I}\{pS_{l,t}(x_0,\omega)<t+p\Delta\}},\label{eq:destination prob Ehr}
\end{equation}
for some $\Delta>B/p$. While for small $\Delta$ we obtain a histogram like behavior, for $B,\Delta\to\infty$ we would recover a uniform CPRF.

\begin{prop}[Characteristics of the Ehrenfest CPRF]\label{prop:CharacteristicsEhrenfest}
  The Ehrenfest CPRF with parameters $B$ and $\Delta$ satisfies for any $x_0\in[0,1]^p$ and any realization $\omega$ and any $x_0$
  \begin{align*}
\frac{k}{p}-C_{\Delta,B}^{(2)}\leq S_{l,k}(x_0,\omega) \leq\frac{k}{p}+C_{\Delta,B}^{(1)},\qquad l\in[p],
\end{align*}
with $C_{\Delta,B}^{(1)}:=\Delta+pB$ and $C_{\Delta,B}^{(2)}:=(p-1)(\Delta+pB)$. This implies
  \begin{gather*}
     \mathfrak{d}(A_{k}(x_{0},\omega))\leq\sqrt{p}2^{C_{\Delta,B}^{(1)}}2^{-k/p}\lesssim2^{-k/p},\\
	2^{-k}2^{-p(C_{\Delta,B}^{(1)}+C_{\Delta,B}^{(2)})/2}\leq\mathcal{V}_{\cap,k}\le2^{-k},\\
	\mathcal{N}_{k}\leq2^{k+pC_{\Delta,B}^{(1)}}\lesssim 2^k.
  \end{gather*}
\end{prop}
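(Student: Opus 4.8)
The crux of the statement is the deterministic two-sided bound on the split counts $S_{l,k}(x_0,\omega)$, valid for every realization; all three displayed consequences then follow by direct substitution. The plan is therefore to establish the upper bound on $S_{l,k}$ first, deduce the lower bound from the conservation law $\sum_{l=1}^p S_{l,k}(x_0,\omega)=k$, and finally read off the diameter, intersection-volume and cell-count estimates.

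For the upper bound I would fix $l\in[p]$, $x_0$ and $\omega$ and track the Ehrenfest particle counts $B_{l,t}$. A split in direction $l$ at step $t$ occurs exactly when the selected particle sits in container $l$, in which case $B_l$ drops by one; conversely $B_l$ can only grow when a particle enters container $l$. By the destination law \eqref{eq:destination prob Ehr}, container $l$ can receive a particle at step $t$ only if $pS_{l,t}(x_0,\omega)<t+p\Delta$, i.e. $S_{l,t}<t/p+\Delta$. Let $\tau$ be the last step in $\{0,\dots,k-1\}$ at which a particle enters container $l$; if no such step exists, then $B_l$ is non-increasing from $B_{l,0}=B$ and hence $S_{l,k}\le B\le k/p+C_{\Delta,B}^{(1)}$. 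Otherwise the receiving constraint forces $S_{l,\tau}<\tau/p+\Delta\le k/p+\Delta$, while the split at step $\tau$ lies in the departure direction $i\neq l$, so $S_{l,\tau+1}=S_{l,\tau}$. After $\tau$ no particle enters container $l$, so every later split in direction $l$ consumes one of the at most $B_{l,\tau+1}\le pB$ particles present; therefore $S_{l,k}\le S_{l,\tau+1}+B_{l,\tau+1}<k/p+\Delta+pB=k/p+C_{\Delta,B}^{(1)}$. The lower bound is then immediate: bounding the other $p-1$ coordinates from above in $\sum_{l'}S_{l',k}=k$ gives $S_{l,k}\ge k-(p-1)(k/p+C_{\Delta,B}^{(1)})=k/p-C_{\Delta,B}^{(2)}$.

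For the three consequences I would argue as follows. Inserting the lower bound $S_{l,k}\ge k/p-C_{\Delta,B}^{(2)}$ into $\mathfrak{d}(A_k(x_0,\omega))=(\sum_l 2^{-2S_l(x_0,\omega)})^{1/2}$ bounds each summand by $2^{2C_{\Delta,B}^{(2)}}2^{-2k/p}$ and yields the claimed $2^{-k/p}$ rate. For $\mathcal{V}_{\cap,k}$ the upper bound $\mathcal{V}_{\cap,k}\le 2^{-k}$ is the general estimate \eqref{eq:V cap simple bounds}, while for the lower bound I would use the product formula \eqref{eq:vol intersec darst} together with $\max\{S_l(x_0,\omega_1),S_l(x_0,\omega_2)\}\le k/p+C_{\Delta,B}^{(1)}$, so that $\mathrm{vol}(A_k(x_0,\omega_1)\cap A_k(x_0,\omega_2))\ge 2^{-k-pC_{\Delta,B}^{(1)}}$ surely; taking expectations gives the claim, the exponent $p(C_{\Delta,B}^{(1)}+C_{\Delta,B}^{(2)})/2$ being automatic since $C_{\Delta,B}^{(2)}\ge C_{\Delta,B}^{(1)}$. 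Finally, since centered splits generate nested dyadic grids, the maximum undividable set containing any point equals the intersection over all $\omega$ of the cells $A_k(x_0,\omega)$, whose side length in direction $l$ is $2^{-s_l^{\max}}$ with $s_l^{\max}:=\max_\omega S_l(x_0,\omega)$; hence $\mathcal{N}_k=2^{\sum_l s_l^{\max}}$, and the upper bound on $S_{l,k}$ gives $\sum_l s_l^{\max}\le k+pC_{\Delta,B}^{(1)}$.

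The main obstacle is the upper bound on $S_{l,k}$: converting the throttling condition \eqref{eq:destination prob Ehr} into a hard cap requires careful bookkeeping of the particle flow (departures trigger splits, arrivals are blocked once $S_{l,t}$ is too large) and the identification of the last arrival time as the moment after which container $l$ can only be depleted. Everything else is deterministic substitution, and the lower bound is a pure consequence of the conservation identity $\sum_l S_{l,k}=k$.
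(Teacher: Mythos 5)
Your proposal is correct and follows essentially the same route as the paper: the crux is the hard cap on $S_{l,k}$, which you obtain by locating the last time a particle enters container $l$ (after which the container can only be depleted, costing at most $pB$ further splits in that direction), while the paper equivalently tracks the last up-crossing of the threshold $pS_{l,t}\geq t+p\Delta$; the lower bound via $\sum_{l}S_{l,k}=k$ and the three consequences are then the same in substance. The only cosmetic differences are that you lower-bound $\mathcal{V}_{\cap,k}$ by the pointwise estimate $\mathrm{vol}(A_{k}(x_{0},\omega_{1})\cap A_{k}(x_{0},\omega_{2}))\geq 2^{-k-pC_{\Delta,B}^{(1)}}$ rather than via the identity $\max\{a,b\}=\tfrac{1}{2}(a+b+\vert a-b\vert)$ used in the paper, and that your diameter constant is $2^{C_{\Delta,B}^{(2)}}$ rather than $2^{C_{\Delta,B}^{(1)}}$ --- which is in fact what the proven lower bound on $S_{l,k}$ delivers and is immaterial for the stated rate $\lesssim 2^{-k/p}$.
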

The proof can be found in the Section \ref{sec:Proof-strategy chap CBs} of the supplement. 

\begin{remark}
Proposition~\ref{prop:CharacteristicsEhrenfest} remains true for any other choice of the probabilities \linebreak $\varrho((S_{l,t}(x_{0},\omega))_{l=1}^{p},j,i)$ as long as they are zero, if $pS_{j,t}(x_{0},\omega)\geq t+p\Delta$. 
\end{remark}

Next, we investigate asymptotic properties of a general CPRF $U_{n,r_n,\omega}^{(\mathrm{RF})}$ from (\ref{U_n-rn-omega}) and the Ehrenfest CPRF in particular. Let $\mathcal{H}^\alpha(C_{H})$ denote the set of all bounded $\alpha$-H\"older functions with H\"older constant and supremum norm less or equal to $C_{H}$.

\begin{theorem}[Pointwise convergence of CPRFs]
\label{thm:MSE better} If $\mathbb{E}[\varepsilon^{2}]<\infty$, the subsample size satisfies
$2^{k}\leq r_{n}\le n$ and the regression function $m\in\mathcal{H}^\alpha(C_{H})$ is Hölder regular of order $\alpha\in(0,1]$ with $C_H>0$, then
\begin{align}
\mathbb{E}\left[\big(U_{n,r_{n},\omega}^{(\mathrm{RF})}(x_{0})-m(x_{0})\big)^{2}\right] 
& \lesssim\mathbb{E}\left[\mathfrak{d}(A_{k}(x_{0},\omega))^{\alpha}\right]^{2}+\frac{2^{k}}{n}\Big(2^{k}\mathcal{V}_{\cap,k}+\frac{2^{k}}{r_{n}}\Big)\nonumber \\
 & \qquad+e^{-c_Xr_n/2^k}+\frac{2^{k}}{r_{n}}\left(\frac{r_{n}}{n}\right)^{r_{n}}.\label{eq:MSE new}
\end{align}
If $r_n/n\le c\in(0,1)$ or $r_n=n$, the Ehrenfest CPRF satisfies
\[
\mathbb{E}\big[(U_{n,r_{n},\omega}^{(\mathrm{RF})}(x_{0})-m(x_{0}))^{2}\big]\lesssim2^{-2\alpha k/p}+\frac{2^{k}}{n}+e^{-c_Xr_n/2^k}.
\]
If $k=\lfloor\log_{2}(c'n^{\frac{1}{(1+2\alpha/p)}})\rfloor$ for
a constant $c'$ and $r_{n}$ satisfies $n^{\frac{1}{(1+2\alpha/p)}}\log n=o(r_{n})$, we have
\[
\mathbb{E}\big[(U_{n,r_{n},\omega}^{(\mathrm{RF})}(x_{0})-m(x_{0}))^{2}\big]\lesssim n^{-\frac{2\alpha}{2\alpha+p}}.
\]
\end{theorem}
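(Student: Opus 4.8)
The plan is to bound the mean squared error by its decomposition into squared bias and variance, where expectation is taken jointly over the sample $\mathcal D_n$ and the randomizations $(\omega_I)_{I\in\setbinom{n}{r_n}}$, and to exploit the complete U-statistic representation \eqref{U_n-rn-omega} with kernel $h_{x_0}$ and single-tree outputs $T_I:=h_{x_0}((Z_i)_{i\in I};\omega_I)$. Because $h_{x_0}$ is only defined when the cell is occupied, I would first split every expectation according to the event $\{\#_IA_k(x_0,\omega_I)\ge1\}$. A cell containing $x_0$ has volume $2^{-k}$, hence by \eqref{eq:density assum} mass at least $c_X2^{-k}$, so the number of the $r_n$ subsample points inside it stochastically dominates a $\mathrm{Bin}(r_n,c_X2^{-k})$ variable and the empty-cell probability is at most $(1-c_X2^{-k})^{r_n}\le e^{-c_Xr_n/2^k}$. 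On the empty event I would bound the contribution crudely using $\|m\|_\infty<\infty$ and $\mathbb E[\varepsilon^2]<\infty$, which produces the term $e^{-c_Xr_n/2^k}$ in \eqref{eq:MSE new}.

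For the squared bias I would condition on one randomization $\omega$ and on the design, write $Y_i=m(X_i)+\varepsilon_i$ and use $\mathbb E[\varepsilon_i\mid X_i]=0$, so that the conditional mean of a tree is the weighted average $\sum_{i\in I}m(X_i)\mathbb I\{X_i\in A_k(x_0,\omega)\}/\#_IA_k(x_0,\omega)$, whose weights sum to one on the occupied event. Since $m\in\mathcal H^\alpha(C_H)$ and every included $X_i$ satisfies $\|X_i-x_0\|\le\mathfrak d(A_k(x_0,\omega))$, each summand deviates from $m(x_0)$ by at most $C_H\mathfrak d(A_k(x_0,\omega))^\alpha$; averaging over $\omega$ gives $|\mathbb E[U^{(\mathrm{RF})}_{n,r_n,\omega}(x_0)]-m(x_0)|\lesssim\mathbb E[\mathfrak d(A_k(x_0,\omega))^\alpha]$, i.e.\ the first term of \eqref{eq:MSE new} after squaring.

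The heart of the proof is the variance. Expanding $\var(U^{(\mathrm{RF})}_{n,r_n,\omega}(x_0))=\binom{n}{r_n}^{-2}\sum_{I,J}\cov(T_I,T_J)$ and noting that $T_I$ and $T_J$ are independent whenever $I\cap J=\emptyset$ (disjoint data and independent $\omega$'s), only overlapping pairs survive, so I would group the sum by the overlap $c=\#(I\cap J)\in\{1,\dots,r_n\}$ with the Hoeffding weights $\binom{n}{r_n}^{-1}\binom{r_n}{c}\binom{n-r_n}{r_n-c}$. The dominant $c=1$ term (the H\'ajek projection) is carried by the single shared observation; its noise part equals $\sigma^2\,\mathbb E\big[\mathbb I\{X_i\in A_k(x_0,\omega_1)\cap A_k(x_0,\omega_2)\}/(\#_IA_k(x_0,\omega_1)\,\#_JA_k(x_0,\omega_2))\big]$, and replacing both denominators by their typical size $\asymp r_n2^{-k}$ turns the indicator expectation, via \eqref{eq:density assum} and the definition of $\mathcal V_{\cap,k}$, into $C_X\mathcal V_{\cap,k}$ up to constants; multiplied by the $c=1$ weight $\asymp r_n^2/n$ this yields exactly $2^{2k}\mathcal V_{\cap,k}/n$. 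The degenerate term $c=r_n$ (that is $I=J$) carries the single-tree variance $\asymp\sigma^22^k/r_n$ weighted by $\binom{n}{r_n}^{-1}\lesssim(r_n/n)^{r_n}$, giving the last term of \eqref{eq:MSE new}. The relative-$O(2^k/r_n)$ error from replacing the random counts by $r_n2^{-k}$ multiplies this leading noise term and produces $2^{3k}\mathcal V_{\cap,k}/(nr_n)$, at most $2^{2k}/(nr_n)$ by the bound $\mathcal V_{\cap,k}\le2^{-k}$ of \eqref{eq:V cap simple bounds}. The signal part of the covariances is governed by the bounded increments $m(X_i)-m(x_0)=O(\mathfrak d^\alpha)$ and turns out to be dominated by the bias and the displayed variance terms; for the uniform CPRF this uses that $\mathbb E[\mathfrak d^{2\alpha}]$ decays geometrically in $k$ while $2^k\mathcal V_{\cap,k}$ decays only polynomially, cf.\ Lemma \ref{lem:CharacteristicsUCPRF}.

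The main obstacle is precisely making the heuristic $\#_IA_k(x_0,\omega)\approx r_n2^{-k}$ rigorous: the denominators are random, can be as small as one, and render the kernel a genuinely nonlinear function of the subsample. I would control the negative moments $\mathbb E[(\#_IA_k)^{-1}\mathbb I\{\#_IA_k\ge1\}]$ and $\mathbb E[(\#_IA_k)^{-2}\mathbb I\{\#_IA_k\ge1\}]$ of the shifted Binomial count by $\lesssim 2^k/r_n$ and $\lesssim2^{2k}/r_n^2$ with relative corrections of order $2^k/r_n$, the regime $2^k\le r_n$ keeping the expected count bounded below, while atypically small counts are absorbed into $e^{-c_Xr_n/2^k}$. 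Once \eqref{eq:MSE new} is established the remaining two assertions are immediate. Inserting the deterministic characteristics of Proposition \ref{prop:CharacteristicsEhrenfest}, namely $\mathfrak d(A_k(x_0,\omega))\lesssim2^{-k/p}$, $\mathcal V_{\cap,k}\le2^{-k}$ and $2^k\le r_n$, collapses the first three variance contributions to $2^{-2\alpha k/p}+2^k/n$, while under $r_n/n\le c<1$ or $r_n=n$ the term $(r_n/n)^{r_n}2^k/r_n$ is also $\lesssim2^k/n$, leaving $2^{-2\alpha k/p}+2^k/n+e^{-c_Xr_n/2^k}$. Finally, with $2^k\asymp n^{p/(p+2\alpha)}$ the first two terms balance at $n^{-2\alpha/(2\alpha+p)}$, and the assumption $n^{p/(p+2\alpha)}\log n=o(r_n)$ forces $r_n/2^k\gg\log n$, so that $e^{-c_Xr_n/2^k}$ decays faster than any polynomial and is negligible, yielding the optimal rate.
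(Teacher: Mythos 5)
Your proposal assembles the right ingredients and would arrive at all four terms of \eqref{eq:MSE new}, but it organizes the variance differently from the paper. The paper splits $U_{n,r_{n},\omega}^{(\mathrm{RF})}-m=(U_{n,r_{n},\omega}^{(m)}-m)+U_{n,r_{n},\omega}^{(\varepsilon)}$ (the cross term vanishes since $\varepsilon$ is centered and independent of everything else), treats the first part exactly as you do via the H\"older/diameter bound and the empty-cell probability, and then writes $U_{n,r_{n},\omega}^{(\varepsilon)}=\hat U_{n,r_{n},\omega}^{(\varepsilon)}+R_{n,r_{n},\omega}^{(1)}+R_{n,r_{n},\omega}^{(2)}$ with the explicit H\'ajek surrogate $\hat U_{n,r_{n},\omega}^{(\varepsilon)}(x_0)=n^{-1}\sum_j\varepsilon_jK_k(x_0,X_j)$; the three variance terms are then $\sigma^2\Psi_k(x_0)/n\sim 2^{2k}\mathcal V_{\cap,k}/n$, $\mathbb E[(R^{(1)}_{n,r_n,\omega})^2]=\mathcal O(2^{2k}/(nr_n))$ and $\mathbb E[(R^{(2)}_{n,r_n,\omega})^2]=\mathcal O(\tfrac{2^k}{r_n}(r_n/n)^{r_n})$, the middle one obtained by applying the generalized U-statistic moment bound of Lemma~\ref{lem:Moments U-stat} to a kernel that is already centered and small, together with the negative binomial moment bounds you anticipate. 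Your route instead expands $\var(U)$ over overlap sizes $c$ with hypergeometric weights. This can be pushed through, but two points deserve care. First, calling the $c=1$ stratum ``dominant'' is not literally correct in the regime $r_n\asymp n$ that the theorem permits: the typical overlap $\#(I\cap J)$ is of order $r_n^2/n$, which is large; what rescues the computation is that the noise covariance $\zeta_c$ is (to leading order) linear in $c$, so that $\sum_c c\,w_c=r_n^2/n$ reproduces $\sigma^2 2^{2k}\mathcal V_{\cap,k}/n$ --- the dominance is that of the first-order Hoeffding projection, not of the small-overlap pairs. Second, making the denominator replacement rigorous inside $\cov(T_I,T_J)$ requires joint negative moments of the two \emph{correlated} counts $\#_IA_k$ and $\#_JA_k$; the paper's surrogate-plus-remainder bookkeeping avoids this entirely because $R^{(1)}_{n,r_n,\omega}$ is a single generalized U-statistic to which the crude blocking bound $\mathbb E[(R^{(1)}_{n,r_n,\omega})^2]\lesssim(r_n/n)\,\mathbb E[h_{R,n}^2]$ applies directly, so only marginal negative moments of one binomial count are ever needed. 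Your derivation of the two specializations (Ehrenfest characteristics, then the rate-optimal choice of $k$ and $r_n$) matches the paper.
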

The proof is given in Section \ref{sec:Proof-strategy chap CBs} of the supplement. 

We observe that the Ehrenfest CPRF achieves the minimax optimal rate which is a consequence of the control of the cell diameter. The first terms on the right hand side of (\ref{eq:MSE new}) correspond to the approximation error. The second term is an upper bound of the stochastic error, while the last two terms are usually negligible.

In addition to the rate of convergence, we obtain a central limit theorem in an undersmoothing regime, which not only allows for the construction of pointwise confidence intervals but also gives a first impression on how the uniform confidence band might look like.

\begin{prop}[Pointwise central limit theorem for CPRFs]\label{prop:CLT}
 Let $m\in\mathcal{H}^\alpha(C_{H})$ for $\alpha\in(0,1],C_H>0$. For any fixed $x_{0}\in[0,1]^p$ assume that $r_{n}/n\leq c$ for some $c<1$, $\mathbb{E}[\varepsilon^{2}]<\infty$, and
 \begin{gather*}
   \frac{1}{\mathcal{V}_{\cap,k}r_{n}}	\to0,\qquad
\frac{2^{k}}{r_{n}}\log\big(n2^{-2k}\mathcal{V}_{\cap,k}^{-1}\big)	\to0\qquad\text{and}\\
\mathbb{E}\left[\mathfrak{d}(A_{k}(x_{0},\omega))^{\alpha}\right]^{2}\frac{n}{2^{2k}\mathcal{V}_{\cap,k}}	\to0.
   \end{gather*}
Then
\begin{equation*}
 \sqrt{\frac{n}{\sigma^{2}\Psi_k(x_{0})}}\left(U^{(\mathrm{RF})}_{n,r_{n},\omega}(x_{0})-m(x_{0})\right)\stackrel{d}{\longrightarrow}\mathcal{N}(0,1),
\end{equation*}
where $\Psi_k(x_{0})$ will be defined in \eqref{eq:def psi}.
\end{prop}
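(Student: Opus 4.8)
The plan is to establish the pointwise CLT by exploiting the U-statistic representation of the random forest and applying a Hájek-projection (Hoeffding decomposition) argument, exactly as the introduction advertises. First I would write the normalized error $U^{(\mathrm{RF})}_{n,r_{n},\omega}(x_{0})-m(x_{0})$ as the sum of a bias term and a centered stochastic term. By Theorem~\ref{thm:MSE better} the squared bias is controlled by $\mathbb{E}[\mathfrak{d}(A_{k}(x_{0},\omega))^{\alpha}]^{2}$, and the third undersmoothing assumption $\mathbb{E}[\mathfrak{d}(A_{k}(x_{0},\omega))^{\alpha}]^{2}\,n/(2^{2k}\mathcal{V}_{\cap,k})\to0$ is precisely what forces this bias to be asymptotically negligible relative to the standard deviation $\sqrt{\sigma^{2}\Psi_k(x_{0})/n}$. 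So the limiting distribution is entirely determined by the stochastic part, and the first thing to verify is that $\sigma^{2}\Psi_k(x_0)/n$ is indeed the correct order of the variance, i.e.\ that $\Psi_k(x_0)$ matches the projection variance up to the stated normalization.

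Next I would carry out the Hoeffding decomposition of the complete U-statistic $U^{(\mathrm{RF})}_{n,r_{n},\omega}(x_0)$ with kernel $h_{x_0}$ of order $r_n$. The dominant contribution comes from the first-order Hájek projection $\frac{r_n}{n}\sum_{i=1}^{n}\big(g_1(Z_i)-\mathbb{E}[g_1(Z_i)]\big)$, where $g_1(z)=\mathbb{E}[h_{x_0}((Z_i)_{i\in I};\omega_I)\mid Z_{i_1}=z]$ is the conditional expectation of the kernel given one observation. The plan is to compute the variance of this leading term; because each tree estimate is (conditionally on $\omega$) essentially a local average over the cell $A_k(x_0,\omega)$, the first-order influence function concentrates the mass of $g_1$ near $x_0$, and its variance should scale like $2^{2k}\mathcal{V}_{\cap,k}/(n\,\cdot)$ — the quantity $2^{2k}\mathcal{V}_{\cap,k}$ being the natural effective-sample-size factor that already appears throughout the error bound~\eqref{eq:MSE new}. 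This identifies the explicit asymptotic variance $\sigma^2\Psi_k(x_0)$ and explains why the first assumption $1/(\mathcal{V}_{\cap,k}r_n)\to0$ is needed: it guarantees enough observations fall in a typical cell so that the $1/\#_I A_k$ normalization is stable and the projection is well-behaved.

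I would then control the remainder of the Hoeffding decomposition, namely the higher-order degenerate components and the error incurred by the denominator $\#_I A_k(x_0,\omega_I)$ being random rather than its expectation. The second assumption $\frac{2^k}{r_n}\log(n2^{-2k}\mathcal{V}_{\cap,k}^{-1})\to0$ is tailored to handle the cell-occupancy fluctuations — on the event that $\#_I A_k$ is comparable to its mean $\sim r_n 2^{-k}$, one replaces the random denominator by $\mathbb{E}[\#_I A_k]$ at negligible cost, and this assumption (together with $2^k\le r_n$) makes the complementary event exponentially rare via a Bernstein-type concentration, matching the $e^{-c_X r_n/2^k}$ term in~\eqref{eq:MSE new}. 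With the projection isolated and the remainder shown to be $o_{\mathbb{P}}(\sqrt{\sigma^2\Psi_k(x_0)/n})$, the CLT follows by applying the Lindeberg–Feller central limit theorem to the triangular array of i.i.d.\ summands in the projection, using the moment assumption $\mathbb{E}[\varepsilon^2]<\infty$ (and boundedness of $m$) to verify the Lindeberg condition.

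The main obstacle I expect is the rigorous treatment of the random denominator $\#_I A_k(x_0,\omega_I)$: unlike a classical kernel estimator with a deterministic bandwidth normalization, the random forest kernel divides by a random cell count, so the kernel $h_{x_0}$ is a genuinely nonlinear, data-dependent functional. Linearizing this denominator — showing that the fluctuations of $\#_I A_k$ around its conditional mean contribute only to lower-order terms and do not perturb the leading Gaussian limit — is the delicate step, and it is exactly where the interplay of the three scaling assumptions and the concentration of cell occupancy counts must be combined carefully. Once this linearization is justified, the remaining work is the comparatively routine variance bookkeeping and verification of the Lindeberg condition.
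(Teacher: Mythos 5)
Your proposal follows essentially the same route as the paper's proof: split off the bias (killed by the third, undersmoothing assumption), reduce the stochastic term to its H\'ajek-type linearization $\hat U^{(\varepsilon)}_{n,r_n,\omega}(x_0)=\tfrac1n\sum_j\varepsilon_j K_k(x_0,X_j)$ whose variance is $\sigma^2\Psi_k(x_0)/n$, control the projection/denominator-linearization remainders via the first two assumptions (the paper does this through the moment bounds of Lemmas~\ref{lem:remainder 1} and~\ref{lem:remainder 2} rather than a Bernstein bound, and the second assumption enters through the empty-cell term $(1-c_X2^{-k})^{r_n}$), and conclude with Lindeberg--Feller. Your identification of the random-denominator linearization as the delicate step matches exactly what the paper's remainder term $R^{(1)}_{n,r_n,\omega}$ handles.
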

The proof is given in Section \ref{sec:Proof-strategy chap CBs} of the supplement. 
The conditions in Proposition~\ref{prop:CLT} are weaker than for the central limit theorem by \cite{Peng2022} due to a different proof strategy. More precisely, the above result is a corollary of the proof of our main confidence band result. In contrast to \cite{Peng2022}, we allow for subsamples sizes in the order of $n$ and we only need second moments of the observation errors. In view of Lemma~\ref{lem:CharacteristicsUCPRF} and Proposition~\ref{prop:CharacteristicsEhrenfest} the first condition is $r_{n}2^{-k}	\to\infty$ up to a polynomial term in $k$ for the uniform CPRF and the Ehrenfest CPRF. With different logarithmic terms it thus coincides with the second condition and ensures consistent estimation in all cells. The third condition implies that the bias is negligible compared to the stochastic error and thus allows for a central limit theorem centered around the true regression function $m(x_0)$. The variance term $\sigma^2\Psi(x_0)/n$ will be discussed in more detail after the confidence band construction.

Similar to the pointwise rates of congergence, we obtain uniform convergence of CPRF which seems to be new for the uniform CPRF, too. 
\begin{theorem}[Uniform convergence of a CPRF]
\label{thm:uni convergence} Let $\alpha\in(0,1],C_H>0$. Consider a centered purely random forest
with at most $\mathcal N_{k}$ undividable cells. Assume $r_{n}/n\leq c<1$
and $r_{n}/k\to\infty$. Let $\nu>4$ with $\mathbb{E}[\vert\varepsilon\vert^{\nu}]<\infty$ and suppose $\frac{2^k}{n}\le n^{-2/\nu}(\log n)^{-3}$.
Then we have uniformly for all $m\in\mathcal{H}^\alpha(C_{H})$ that
\begin{align*}
\Vert & U_{n,r_{n},\omega}^{(\mathrm{RF})}-m\Vert_{\infty}
=\mathcal{O}_{\mathbb{P}}\left(\mathbb{E}\big[\mathfrak{d}(A_{k}(x,\omega))^{\alpha}\big]+\sqrt{\frac{2^k}{n}}\Big(1+\mathcal N_k^{1/\nu}\sqrt{\frac{2^k}{r_n}}+\sqrt{k2^k\mathcal V_{\cap,k}}\Big)+e^{-\tau r_n/2^{k}}\right)
\end{align*}
for some constant $\tau>0$.
\end{theorem}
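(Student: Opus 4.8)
The plan is to exploit that the complete U-statistic $U_{n,r_n,\omega}^{(\mathrm{RF})}$ is, for every realisation of the i.i.d.\ randomisations $\omega_I$, piecewise constant on the maximum undividable partition $\mathcal S_k$: each tree $h_{x_0}(\,\cdot\,;\omega_I)$ is constant on every $S\in\mathcal S_k$, because $A_k(x,\omega_I)=A_k(x',\omega_I)$ whenever $x,x'$ lie in the same undividable set, and hence so is their average. Consequently
\[
\|U_{n,r_n,\omega}^{(\mathrm{RF})}-m\|_\infty
\le \max_{x_0\in\mathcal X_k}\big|U_{n,r_n,\omega}^{(\mathrm{RF})}(x_0)-m(x_0)\big|
+C_H\max_{S\in\mathcal S_k}\mathfrak d(S)^\alpha .
\]
Since $S\subseteq A_k(x,\omega)$ for every $\omega$ when $x\in S$, we have $\mathfrak d(S)^\alpha\le \mathbb E[\mathfrak d(A_k(x,\omega))^\alpha]$, so the last term is dominated by the bias term $C_H\mathbb E[\mathfrak d(A_k(x,\omega))^\alpha]$. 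This reduces the uniform problem to a maximum over the finite grid $\mathcal X_k$ of cardinality $\mathcal N_k$, which is exactly what produces the $\mathcal N_k$-dependence in the bound. At each $x_0$ I split $U_{n,r_n,\omega}^{(\mathrm{RF})}(x_0)-m(x_0)$, conditionally on the design $(X_i)_i$ and all $\omega_I$, into a bias part (replacing $Y_i$ by $m(X_i)$) and a stochastic part driven by the errors $\varepsilon_i$.

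For the bias part, Hölder continuity gives $|m(X_i)-m(x_0)|\le C_H\mathfrak d(A_k(x_0,\omega_I))^\alpha$ for every $X_i$ in the cell, so each tree contributes at most $C_H\mathfrak d(A_k(x_0,\omega_I))^\alpha$ and the averaged bias is bounded by $C_H\binom{n}{r_n}^{-1}\sum_I\mathfrak d(A_k(x_0,\omega_I))^\alpha$. By the assumed homogeneity its mean equals $C_H\mathbb E[\mathfrak d(A_k(x,\omega))^\alpha]$ for every $x_0$, and since the summands are bounded by $\sqrt p$ a concentration argument shows that $\max_{x_0\in\mathcal X_k}$ of this average stays of the order $\mathbb E[\mathfrak d(A_k(x,\omega))^\alpha]$, i.e.\ the first term of the bound.

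The core of the proof is the stochastic part $V(x_0)=\binom{n}{r_n}^{-1}\sum_I\sum_{i\in I}\varepsilon_i\,\mathbb{I}\{X_i\in A_k(x_0,\omega_I)\}/\#_IA_k(x_0,\omega_I)$, a generalised U-statistic in $(Z_i)_i$ with external randomness $\omega_I$. I would apply the Hoeffding decomposition together with the Hájek projection and write $V(x_0)=L(x_0)+R(x_0)$, with $L$ the linear projection and $R$ the degenerate higher-order remainder. The pointwise variances are those already appearing in Theorem~\ref{thm:MSE better}: $\var(L(x_0))\asymp \sigma^2 2^{2k}\mathcal V_{\cap,k}/n$ and $\var(R(x_0))\asymp 2^{2k}/(nr_n)$, the former being the sharp intersection bound rather than the crude \eqref{eq:V cap simple bounds}. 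The two remainder terms of the bound then arise from two different maximal inequalities over the $\mathcal N_k$ grid points. For $L$, I truncate the errors at a level $M\asymp (n/2^k)^{1/\nu}$; the truncated part is a sum of independent bounded summands, so a Bernstein-type maximal inequality yields $\max_{x_0}|L(x_0)|\lesssim \sqrt{\log\mathcal N_k}\,\sqrt{\var L}\asymp \sqrt{k\,2^{2k}\mathcal V_{\cap,k}/n}$ (using $\log\mathcal N_k\asymp k$), while the assumption $2^k/n\le n^{-2/\nu}(\log n)^{-3}$ is used precisely to render the sub-exponential correction and the discarded tail $\sum_i\varepsilon_i\mathbb{I}\{|\varepsilon_i|>M\}(\cdots)$ negligible; this gives the $\sqrt{k2^k\mathcal V_{\cap,k}}$ factor. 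For the heavy-tailed degenerate remainder $R$, where only $\nu$ moments are available, I use the crude $L^\nu$ union bound $\mathbb E\max_{x_0}|R(x_0)|\le(\sum_{x_0}\|R(x_0)\|_\nu^\nu)^{1/\nu}\le \mathcal N_k^{1/\nu}\max_{x_0}\|R(x_0)\|_\nu\lesssim \mathcal N_k^{1/\nu}\sqrt{2^{2k}/(nr_n)}$, producing the $\mathcal N_k^{1/\nu}\sqrt{2^k/r_n}$ factor. Finally, on the event that some relevant cell is empty the normalisation $\#_IA_k$ fails; using the density lower bound $c_X$, a fixed cell of volume $2^{-k}$ receives no subsample point with probability at most $(1-c_X2^{-k})^{r_n}\le e^{-c_Xr_n/2^k}$, and the resulting contribution is collected in the additive $e^{-\tau r_n/2^k}$ term, with $r_n/k\to\infty$ guaranteeing it is dominated. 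Combining the bias, $\max|L|$, $\max|R|$ and the empty-cell term yields the claimed rate.

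The main obstacle is this stochastic step: carrying out the Hoeffding decomposition of a complete U-statistic whose order $r_n$ grows with $n$ and whose kernel carries its own randomisation $\omega_I$, and then controlling the two pieces with the correct union-bound cost. The delicate point is that the two contributions must be treated asymmetrically — a logarithmic ($\sqrt k$) inflation for the near-Gaussian linear term, obtained via truncation and Bernstein, versus a polynomial ($\mathcal N_k^{1/\nu}$) inflation for the degenerate remainder that is forced by the mere $\nu$-th moment of $\varepsilon$. Matching the Hájek projection variance to $2^{2k}\mathcal V_{\cap,k}/n$ and choosing the truncation level so that the tail and sub-exponential corrections are absorbed under $2^k/n\le n^{-2/\nu}(\log n)^{-3}$ is where the real work lies.
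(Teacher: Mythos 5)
Your architecture is sound and, in its key step, genuinely different from the paper's. The paper uses the same decomposition you propose (approximation error, H\'ajek-type leading term $\hat U^{(\varepsilon)}_{n,r_n,\omega}(x_0)=n^{-1}\sum_j\varepsilon_jK_k(x_0,X_j)$, and two projection remainders), but it controls the supremum of the leading term by coupling $\sup_{f\in\mathcal F_k}|\mathbb G_nf|$ with the Gaussian supremum $\mathbf S_k$ via the Chernozhukov--Chetverikov--Kato approximation (Theorem \ref{thm:supUhat-approx}) and then bounding $\mathbb E[\mathbf S_k]\lesssim\sqrt k$ by Dudley's entropy integral over the finite class $\mathcal F_k$; you replace this by a direct truncation-plus-Bernstein maximal inequality over the $\mathcal N_k$ grid points. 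Both routes yield the $\sqrt{k\,2^{2k}\mathcal V_{\cap,k}/n}$ term, with the residual $\sqrt{2^k/n}$ being the coupling error in the paper and the Bernstein/tail error in your version. Your $L^\nu$ union bound for the degenerate remainder is essentially the paper's treatment of $R^{(1)}$ via Lemma \ref{lem:remainder 1} and Lemma \ref{lem:Moments U-stat} (note that the moment bound there is stated for even exponents), and your reduction of the supremum to $\mathcal X_k$ plus the oscillation of $m$ on undividable sets, as well as the empty-cell bookkeeping, match Lemmas \ref{lem: approx err diam} and \ref{lem: approx err full cells}.

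The one concrete flaw is the truncation level $M\asymp(n/2^k)^{1/\nu}$: it is far too small, and with it the discarded tail is not negligible. Using $|K_k(x_0,x)|\le c_X^{-1}2^k$, the expected tail contribution to $\max_{x_0}|L(x_0)|$ is at most of order
\begin{equation*}
\frac{2^k}{n}\,n\,\mathbb E\big[|\varepsilon|\mathbb I\{|\varepsilon|>M\}\big]\lesssim 2^k M^{1-\nu}=2^k\Big(\frac{2^k}{n}\Big)^{(\nu-1)/\nu},
\end{equation*}
and its ratio to the target $\sqrt{2^k/n}$ is $2^k(2^k/n)^{(\nu-2)/(2\nu)}$, which diverges: for $2^k=n^{1-2/\nu}$ it is of order $n^{(\nu-1)(\nu-2)/\nu^2}$. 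Moreover $n\,\mathbb P(|\varepsilon|>M)\lesssim 2^k\to\infty$, so you also cannot argue that no truncation occurs with high probability. The repair is to truncate at the natural level $M\asymp n^{1/\nu}$: then the tail term is $\lesssim (2^k/n)n^{1/\nu}$ and the Bernstein correction is $\lesssim k\,n^{1/\nu}2^k/n$, and both are $o(\sqrt{2^k/n})$ exactly under the hypothesis $2^k/n\le n^{-2/\nu}(\log n)^{-3}$ together with $k\lesssim\log n$. With that single change your argument goes through and delivers the stated bound.
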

The proof is given in Section \ref{sec:Proof-strategy chap CBs} of the supplement. 
Compared to the pointwise result in Theorem~\ref{thm:MSE better}, we only need slightly stronger assumptions. First, we need a certain speed of the convergence $2^k/n\to0$ which has to be polynomially in $n$, but if we have sufficiently many finite moments then the polynomial decay can be very slow and does not contradict rate optimal choices of $k$. We also require a minimal speed of $r_n\to\infty$, which basically has to be at least logarithmic in $n$. The most interesting case $r_n=cn$ for some $c\in(0,1)$ is allowed (as in Proposition~\ref{prop:CLT}), while the boundary case $r_n=n$ is permitted in contrast to Theorem~\ref{thm:MSE better}. Unsurprisingly, the uniform bound on the stochastic error is larger than in the pointwise result. On the one hand, it depends moderately on the number of undividable cells $\mathcal N_k$ and, on the other hand, the leading stochastic error term $\sqrt{k2^{2k}\mathcal V_{\cap,k}/n}$ is larger than in the pointwise result by a factor $\sqrt k$. Therefore, in combination with the properties from the Ehrenfest CPRF from Proposition~\ref{prop:CharacteristicsEhrenfest} a rate optimal choice $2^k=(n/\log n)^{1/(1+2\alpha/p)}$ and $r_n=cn$ yields the minimax optimal rate
\[
\Vert  U_{n,r_{n},\omega}^{(\mathrm{RF})}-m\Vert_{\infty}=\mathcal O_{\mathbb P}\left(\Big(\frac{\log n}{n}\Big)^{\alpha/(2\alpha+p)}\right).
\]
Again, the larger approximation error of the uniform CPRF will lead to a deterioration of the convergence rate for the uniform error.


\section{Confidence bands}\label{sec:Confidence-Bands}

This section is dedicated to the construction of confidence bands for a CPRF.
While our focus is on the complete version of the U-statistic $U_{n,r_n,\omega}^{(\mathrm{RF})}$  defined in (\ref{U_n-rn-omega}), the extension of the results to the incomplete version $U_{n,r_{n},N,\omega}^{(\mathrm{RF})}$ from (\ref{eq:PRF U-stat}) is feasible, see Corollary \ref{cor:CB Incomplete U-stat}.



First we need several
definitions. Let us denote
\begin{align}
p_{x_{0}}(\omega) & :=\mathbb{P}(X_{1}\in A_{k}(x_{0},\omega)\mid\omega) =\int_{A_{k}(x_{0},\omega)}f_{X}(x)dx\quad\text{and}\label{eq:def p_x(omega)}
\\
p_{x_{0}} & :=\mathbb{E}[p_{x_{0}}(\omega)]=\mathbb{P}(X_{1}\in A_{k}(x_{0},\omega)).\label{eq:px}
\end{align}
By construction of the centered trees, we have $\int_{A_{k}(x_{0},\omega)}dx=2^{-k}$
and thus (\ref{eq:density assum}) implies
\begin{align}
c_{X}2^{-k} & \leq p_{x_{0}}(\omega)\leq C_{X}2^{-k},
\label{eq:bound px omega}\\
c_{X}2^{-k} & \leq p_{x_{0}}\leq C_{X}2^{-k}.\label{eq:bound px}
\end{align}
Let us define
\begin{equation}
K_{k}(x_{0},x):=\mathbb{E}\left[\mathbb{I}\{x\in A_{k}(x_{0},\omega)\}p_{x_{0}}(\omega)^{-1}\right],\label{eq:RF Hajek kernel}
\end{equation}
which can be understood as a kernel function. Its second moment is denoted by
\begin{equation}
\Psi_{k}(x_{0}):=\mathbb{E}\big[K_{k}(x_{0},X_{1})^{2}\big]\label{eq:def psi}
\end{equation}
and determines the asymptotic variance in the pointwise central limit theorem in Proposition~\ref{prop:CLT}.
Under the assumptions on the density the variance function satisfies
\begin{equation}
\frac{c_{X}}{C_{X}^{2}}2^{2k}\mathcal{V}_{\cap,k}\leq\Psi_{k}(x_{0})\leq\frac{C_{X}}{c_{X}^{2}}2^{2k}\mathcal{V}_{\cap,k}\label{eq:uni bounds for Psi}
\end{equation}
uniformly in $x_{0}\in[0,1]^{p}$.
Moreover, we will require the function class
\begin{align}
\mathcal{F}_{k}:=\big\{&f_{x_{0},k}(x,s)\,\big|\, x_{0}\in[0,1]^{p}\big\},\qquad\text{where}\label{eq:F_k}\\
&f_{x_{0},k}(x,s):=\sigma^{-1}\Psi_{k}^{-1/2}(x_{0})sK_{k}(x_{0},x).\label{eq:f_x;n}
\end{align}


\begin{theorem}[Asymptotic confidence band for CPRF]
\label{thm:CB main01} Consider any centered purely random forest
with at most $\mathcal N_{k}$ undividable sets.
Let $\alpha\in(0,1],C_{H}>0$ and for some $c<1$ assume that
\begin{gather}
r_{n}/n  \leq c\qquad\textit{and}\label{eq:assum remainder 2}\\
\mathbb{E}\left[\mathfrak{d}(A_{k}(x,\omega))^{\alpha}\right]^{2}\frac{n(\log n)^{2}}{2^{2k}\mathcal{V}_{\cap,k}}  \to0\label{eq:assum approx diam}
\end{gather}
for some $x\in[0,1]^p$.
Let $\nu\in\mathbb{N}$ with $\nu\ge2$ such that
\begin{align}
\mathcal N_{k}^{1/\nu}\frac{(\log n)^{2}}{r_{n}\mathcal{V}_{\cap,k}} & \to0,\label{eq:assum remainder 1}\\
n^{1/\nu}\frac{(\log n)^{5}}{n\mathcal{V}_{\cap,k}} & \to0,\label{eq:assum Gauss sup}\\
\mathbb{E}\left[\vert\varepsilon\vert^{2\nu}\right] & <\infty.\label{eq:assum moments}
\end{align}
Let $\mathbf{S}_{k}$ be a sequence of random variables with $\mathbf{S}_{k}\overset{d}{=}\sup_{x_{0}\in[0,1]^{p}}\vert B_{k}f_{x_{0},k}\vert$,
where $B_{k}$ is a sequence of centered Gaussian processes indexed
by $\mathcal{F}_{k}$ and with covariance function
\begin{equation}
\cov\big(B_{k}(f_{x_{1},k}),B_{k}(f_{x_{2},k})\big)=\Psi_{k}^{-1/2}(x_{1})\Psi_{k}^{-1/2}(x_{2})\mathbb{E}\left[K_{k}(x_{1},X_{1})K_{k}(x_{2},X_{1})\right].\label{eq:Gaussian process}
\end{equation}
Let $\hat{\sigma}$ be an estimator of $\sigma$ with
\[
\mathbb{P}\big(\vert\hat{\sigma}^{2}-\sigma^{2}\vert>(\log n)^{-2}\big)\to0.
\]
For $c_{k}(\beta)=F_{\mathbf{S}_{k}}^{-1}(1-\beta):=\inf\{\xi\in\mathbb{R}:\mathbb{P}(\mathbf{S}_{k}\leq\xi)\geq1-\beta\}$
define
\begin{equation*}
\mathcal{C}_{n}(x)=\left[U_{n,r_{n},\omega}^{(\mathrm{RF})}(x)-\hat{\sigma}c_{k}(\beta)\sqrt{\frac{\Psi_{k}(x)}{n}},U_{n,r_{n},\omega}^{(\mathrm{RF})}(x)+\hat{\sigma}c_{k}(\beta)\sqrt{\frac{\Psi_{k}(x)}{n}}\right].
\end{equation*}
Then we have
\[
\liminf_{n\to\infty}\inf_{m\in\mathcal{H}^\alpha(C_{H})}\mathbb{P}(m(x)\in\mathcal{C}_{n}(x),\,\forall x\in[0,1]^{p})\geq1-\beta.
\]
\end{theorem}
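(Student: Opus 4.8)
The plan is to rewrite the coverage event as a single supremum inequality and then control this supremum by its Gaussian counterpart. Observe first that $m(x)\in\mathcal C_n(x)$ for all $x$ is equivalent to
\[
T_n:=\sup_{x_0\in[0,1]^p}\sqrt{\tfrac{n}{\sigma^2\Psi_k(x_0)}}\,\bigl|U^{(\mathrm{RF})}_{n,r_n,\omega}(x_0)-m(x_0)\bigr|\le \tfrac{\hat\sigma}{\sigma}\,c_k(\beta).
\]
A crucial simplification is that, since $A_k(x_1,\omega)=A_k(x_2,\omega)$ almost surely whenever $x_1,x_2$ lie in the same maximum undividable set, the kernel $K_k(x_0,\cdot)$ from \eqref{eq:RF Hajek kernel}, and hence $f_{x_0,k}$ and $\Psi_k(x_0)$, depends on $x_0$ only through its undividable cell. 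Thus the supremum over $[0,1]^p$ is really a maximum over the finite set $\mathcal X_k$ of cardinality $\mathcal N_k$, and the class $\mathcal F_k$ in \eqref{eq:F_k} contains at most $\mathcal N_k$ distinct elements.

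Next I would carry out a Hoeffding/Hájek decomposition of the U-statistic. Writing $Y_i=m(X_i)+\varepsilon_i$ and conditioning on the partition, the centered forest splits into a deterministic bias $\mathbb E[U^{(\mathrm{RF})}_{n,r_n,\omega}(x_0)]-m(x_0)$, a linear Hájek projection, and a degenerate remainder. The bias is controlled uniformly over $\mathcal H^\alpha(C_H)$ by the $\alpha$-Hölder property and the diameter bound: after standardization by $\sqrt{n/\Psi_k(x_0)}\sim\sqrt{n/(2^{2k}\mathcal V_{\cap,k})}$ via \eqref{eq:uni bounds for Psi}, assumption \eqref{eq:assum approx diam} makes it $o((\log n)^{-1})$ uniformly in $x_0$. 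The Hájek projection equals, up to a negligible term, the empirical process
\[
\mathbb G_n f_{x_0,k}=\tfrac1{\sqrt n}\sum_{i=1}^n \sigma^{-1}\Psi_k^{-1/2}(x_0)\,\varepsilon_i K_k(x_0,X_i),
\]
whose pointwise variance equals $1$ and whose cross-covariance reproduces exactly \eqref{eq:Gaussian process}; this is the term already identified in Proposition~\ref{prop:CLT}. The degenerate remainder is shown to be uniformly negligible using \eqref{eq:assum moments} together with \eqref{eq:assum remainder 1} and \eqref{eq:assum remainder 2}, the factor $\mathcal N_k^{1/\nu}$ accounting for the maximum over the $\mathcal N_k$ cells.

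The core step, and the main obstacle, is the Gaussian approximation of $\sup_{x_0}|\mathbb G_n f_{x_0,k}|$ by $\mathbf S_k=\sup_{x_0}|B_k f_{x_0,k}|$. Since the index set is effectively finite of size $\mathcal N_k$, I would invoke the high-dimensional Gaussian approximation for maxima of sums of \citet{Chernozhukov2014}. The summands $\varepsilon_i K_k(x_0,X_i)$ are unbounded, so one truncates the errors at a polynomial level and uses $\mathbb E[|\varepsilon|^{2\nu}]<\infty$ to bound the discarded mass; the envelope of $\mathcal F_k$ is of order $\mathcal V_{\cap,k}^{-1/2}$, so the approximation error is governed by a quantity of the form $n^{1/\nu}(\log n)^5/(n\mathcal V_{\cap,k})$, which vanishes precisely by \eqref{eq:assum Gauss sup}. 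This yields $\sup_\xi|\mathbb P(\sup_{x_0}|\mathbb G_n f_{x_0,k}|\le\xi)-\mathbb P(\mathbf S_k\le\xi)|\to0$. Matching the covariance, controlling the growing-dimension entropy through $\mathcal N_k$, and handling the heavy-tailed multiplicative $\varepsilon$ is exactly where the precise exponents in the assumptions are consumed, which is why I expect this to be the hard part.

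Finally I would transfer the quantile. On the event $\{|\hat\sigma^2-\sigma^2|\le(\log n)^{-2}\}$, which has probability tending to one, one has $\hat\sigma/\sigma=1+O((\log n)^{-2})$, and since $c_k(\beta)\sim\sqrt{\log\mathcal N_k}\sim\sqrt{\log n}$ the induced threshold shift is $\eta_n=O((\log n)^{-3/2})\to0$. Combining the error decomposition with the Gaussian approximation gives
\[
\mathbb P\bigl(m(x)\in\mathcal C_n(x),\,\forall x\bigr)\ge\mathbb P\bigl(\mathbf S_k\le c_k(\beta)-\eta_n\bigr)-o(1),
\]
uniformly over $\mathcal H^\alpha(C_H)$ because every bias bound is uniform in $m$. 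By definition of $c_k(\beta)$ we have $\mathbb P(\mathbf S_k\le c_k(\beta))\ge1-\beta$, and the anti-concentration inequality for suprema of Gaussian processes shows that the shift by $\eta_n$ costs only $o(1)$. Hence $\liminf_{n\to\infty}\inf_{m\in\mathcal H^\alpha(C_H)}\mathbb P(m(x)\in\mathcal C_n(x),\,\forall x)\ge1-\beta$, as claimed.
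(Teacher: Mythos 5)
Your proposal follows essentially the same route as the paper's proof: reduce the supremum to a maximum over the $\mathcal N_k$ undividable cells, split off the approximation error via the H\"older/diameter bound under \eqref{eq:assum approx diam}, approximate the stochastic error by the H\'ajek-projection empirical process $\frac1n\sum_i\varepsilon_iK_k(\cdot,X_i)$ with the remainders controlled by moment bounds and union bounds (whence the $\mathcal N_k^{1/\nu}$ in \eqref{eq:assum remainder 1}), invoke \citet{Chernozhukov2014} for the Gaussian approximation of the supremum, and finish with Gaussian anti-concentration to absorb the $\hat\sigma$ and coupling errors into the quantile. The only cosmetic difference is that you phrase the Gaussian step as a Kolmogorov-distance bound while the paper uses the coupling form of Corollary 2.2 of \citet{Chernozhukov2014}; these are interchangeable here given the anti-concentration argument both versions require.
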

\begin{proof}[Sketch of the proof]
  We first decompose into an approximation and a stochastic error:
  \begin{align*}
   	U_{n,r_{n},\omega}^{(\mathrm{RF})}(x_0)
	=&\frac{1}{\binom{n}{r_n}}\sum_{I\in \setbinom{n}{r_n}} \sum_{i\in I}Y_{i} \frac{ \mathbb{I}\{X_{i}\in A_{k}(x_0,\omega_I)\}}{\# A_{k}(x_0,\omega_I)}\\
	=&\underbrace{\frac{1}{\binom{n}{r_n}}\sum_{I\in \setbinom{n}{r_n}} \sum_{i\in I}m(X_{i}) \frac{ \mathbb{I}\{X_{i}\in A_{k}(x_0,\omega_I)\}}{\# A_{k}(x_0,\omega_I)}}_{\displaystyle =m(x_0)+\mathcal O_{\mathbb P}(\E[\mathfrak{d}(A_{k}(x,\omega))^{\alpha}])}
	+\underbrace{\frac{1}{\binom{n}{r_n}}\sum_{I\in \setbinom{n}{r_n}} \sum_{i\in I}\varepsilon_{i} \frac{ \mathbb{I}\{X_{i}\in A_{k}(x_0,\omega_I)\}}{\# A_{k}(x_0,\omega_I)}}_{\displaystyle=:U_{n,r_{n},\omega}^{(\varepsilon)}(x_0)}.
  \end{align*}
  To deal with the stochastic error term $U_{n,r_n,\omega}^{(\varepsilon)}(x_0)$, we apply a  H\'ajek projection to approximate
  \begin{align}
   	U_{n,r_{n},\omega}^{(\varepsilon)}(x_0)
	=&\frac{r_n}{n}\sum_{i=1}^n \varepsilon_i\frac{1}{\binom{n-1}{r_n-1}}\sum_{i\in I\in \setbinom{n}{r_n}}\frac{ \mathbb{I}\{X_{i}\in A_{k}(x_0,\omega_I )\}}{\# A_{k}(x_0,\omega_I)}\label{eq:Ueps}\\
	\approx&\frac{r_n}{n}\sum_{i=1}^n \varepsilon_i\frac{1}{\binom{n-1}{r_n-1}}\sum_{i\in I\in \setbinom{n}{r_n}}\frac{ \mathbb{I}\{X_{i}\in A_{k}(x_0,\omega_I)\}}{r_n p_{x_0}(\omega_I)}
	\approx\frac{1}{n}\sum_{i=1}^n \varepsilon_i K_k(x_0,X_i)\nonumber
  \end{align}
  with $p_{x_0}(\omega)\sim 2^{-k}$ from (\ref{eq:def p_x(omega)})  and $K_k(x_0,x)$ from (\ref{eq:RF Hajek kernel}).
  With $\Psi_k(x_0)$ from (\ref{eq:def psi}), we can thus approximate the stochastic error by the empirical process
  \begin{align*}
	U_{n,r_{n},\omega}^{(\varepsilon)}(x_0)\approx&\frac{1}{n}\sum_{i=1}^n \varepsilon_i K(x_0,X_i)
	=\sqrt{\frac{\sigma^2\Psi_k(x_0)}{n}}{\mathbb G_n} f_{x_0,k}
  \end{align*}
  with $f_{x_0,k}$ defined in (\ref{eq:f_x;n}). 
  Now applying the result from \cite{Chernozhukov2014} yields for the function class $\mathcal F_k$ from (\ref{eq:F_k}) and a sequence of Gaussian processes $B_k$
  \begin{align*}
    \sup_{f\in\mathcal F_k}|{\mathbb G_n}f|&\overset{n\to\infty}{\approx} \mathbf{S}_k\overset d=\sup_{f\in\mathcal F_k}|B_kf|\quad\text{with}\\
    \cov(B_kf_{x_1,k},B_kf_{x_2,k})&=\E[f_{x_1,k}(X,\varepsilon)f_{x_2,k}(X,\varepsilon)]\\
    &=(\Psi_k(x_1)\Psi_k(x_2))^{-1/2}\E[K_k(x_1,X_1)K_k(x_2,X_1)].
  \end{align*}
  It remains to quantify errors due to the above simplifications, which is detailed out in our proofs in the supplement material.
\end{proof}

The detailed proof is given in Section \ref{sec:Proof-strategy chap CBs} of the supplement. 
The proof idea is similar to deriving confidence bands based on histogram estimators considered by \citet{Neumeyer-Rabe-Trabs}, but considerably more challenging for random forests. For example the approximation of the stochastic error by the empirical process has to be shown in a different way. 

The confidence band from Theorem \ref{thm:CB main01} depends on the function $\Psi_{k}$ from \eqref{eq:def psi}. Since the distribution of $\omega$ is implicitly known from the partitioning algorithm, the function $K_k$ from (\ref{eq:RF Hajek kernel}) can be approximated by a Monte Carlo simulation. This yields an approximation of $\Psi_{k}$ if the distribution of $X$ is known. Otherwise one can estimate the expectations in the definitions of $p_{x_0}(\omega)$ and $\Psi_k$ by empirical means over the observed covariates.  The fact that $\Psi_{k}$ is in general not constant implies that the confidence band does not have a constant diameter. We obtain a smaller than average radius of the confidence band in regions of the feature space where $f_{X}$ is relatively large. 
Note that the result holds uniformly for $m$ in the class $\mathcal{H}^\alpha(C_{H})$ such that we obtain asymptotically honest confidence regions in the sense of \citet{Li1989}.

Our assumption on the variance estimator $\hat\sigma^2$ is very moderate.
In the fixed design case based on a kernel estimator \cite{Hall-Marron} considered a residual-based variance  estimator $\hat\sigma^2=n^{-1}\sum_{i=1}^n\hat\varepsilon_i^2$ which can also be applied with $\hat\varepsilon_i=Y_i-U_{n,r_n,\omega}^{(\mathrm{RF})}(X_i)$. Other possibilities are U-statistic based variance estimators considered by \cite{Mueller2003} and \cite{Shen2020}. 

Thanks to the approximation result by \cite{Chernozhukov2014}, a noteworthy aspect of Theorem \ref{thm:CB main01} is that it does not utilize quantiles
of a limit distribution. Instead, the quantiles $c_{k}(\beta)$ are
those of $\mathbf{S}_{k}$, the supremum of the Gaussian process $B_{k}$,
and thus depend on $k$. It is common in the literature for confidence
bands to be based on a limit distribution, usually an extreme value
distribution, see e.g., \citet{Bickel1973}. The structure of our
result has two advantages. First, we do not need to know the limit
distribution of $\mathbf{S}_{k}$. In particular, it is not even necessary
that a limit distribution exists. Second, there is no additional error term due to the convergence to the limit distribution.

Before discussing the assumptions in the theorem,
we state two corollaries, for the two types of CPRFs we introduced in Sections~\ref{sec:Random-forests} and~\ref{sec:rates}, respectively, and have a look on the resulting radii of the confidence bands.
\begin{cor}[Asymptotic confidence band for the Ehrenfest CPRF]
For the Ehrenfest CPRF let $\alpha\in(0,1],C_H>0$ and assume that $r_n/n\le c$ for some $c\in(0,1)$ and
\[
\frac{n(\log n)^{2}}{2^{k(1+2\alpha/p)}}\to0.
\]
Further let $\nu\in\mathbb{N},\nu\ge2$ such that
\begin{align*}
\frac{2^{k(1+1/\nu)}(\log n)^{2}}{r_{n}}  \to0,\qquad
\frac{2^{k}(\log n)^{5}}{n^{1-1/\nu}}  \to0\qquad \text{and}\qquad 
\mathbb{E}\left[\vert\varepsilon\vert^{2\nu}\right]  <\infty.
\end{align*}
Let $\hat{\sigma}$, $c_{k}(\beta)$ and $\mathcal{C}_{n}$ be as
is in Theorem \ref{thm:CB main01} for $\omega$ distributed according
to the Ehrenfest partition. Then,
\[
\liminf_{n\to\infty}\inf_{m\in\mathcal{H}^\alpha(C_{H})}\mathbb{P}(m(x)\in\mathcal{C}_{n}(x),\,\forall x\in[0,1]^{p})\geq1-\beta.
\]
\end{cor}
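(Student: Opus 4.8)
The plan is to obtain the corollary as a direct specialization of Theorem~\ref{thm:CB main01}: I would check that each of its five hypotheses is implied by the assumptions of the corollary once the characteristics of the Ehrenfest partition from Proposition~\ref{prop:CharacteristicsEhrenfest} are inserted. The key point is that for the Ehrenfest CPRF the parameters $B,\Delta,p$ are fixed, so $C_{\Delta,B}^{(1)}$ and $C_{\Delta,B}^{(2)}$ are absolute constants. Proposition~\ref{prop:CharacteristicsEhrenfest} then delivers the clean orders $\mathbb{E}[\mathfrak{d}(A_{k}(x,\omega))^{\alpha}]^{2}\lesssim 2^{-2\alpha k/p}$, $\mathcal{V}_{\cap,k}\sim 2^{-k}$ and $\mathcal N_{k}\lesssim 2^{k}$, where the middle relation is two-sided because both the upper and the lower bound in Proposition~\ref{prop:CharacteristicsEhrenfest} are constant multiples of $2^{-k}$. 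These three orders are exactly what is needed to collapse the abstract conditions of the theorem into the explicit ones of the corollary.

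Concretely, I would substitute these orders into the four quantitative hypotheses of Theorem~\ref{thm:CB main01} one by one. Using $2^{2k}\mathcal{V}_{\cap,k}\sim 2^{k}$, the left-hand side of (\ref{eq:assum approx diam}) becomes a constant multiple of $n(\log n)^{2}2^{-k(1+2\alpha/p)}$, which is the first displayed condition of the corollary. Bounding $\mathcal N_{k}^{1/\nu}\lesssim 2^{k/\nu}$ and $\mathcal{V}_{\cap,k}^{-1}\lesssim 2^{k}$ reduces (\ref{eq:assum remainder 1}) to a multiple of $2^{k(1+1/\nu)}(\log n)^{2}/r_{n}$, the second corollary condition, and the same bound on $\mathcal{V}_{\cap,k}^{-1}$ turns (\ref{eq:assum Gauss sup}) into a multiple of $2^{k}(\log n)^{5}/n^{1-1/\nu}$, the third corollary condition. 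The remaining hypotheses (\ref{eq:assum remainder 2}) and (\ref{eq:assum moments}) coincide verbatim with assumptions already imposed in the corollary, and the requirement that the forest have at most $\mathcal N_{k}$ undividable sets holds for the Ehrenfest partition by construction. With all five hypotheses verified, the coverage statement follows immediately from Theorem~\ref{thm:CB main01}.

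I do not expect a genuine obstacle, since the argument is a routine substitution of the Ehrenfest characteristics into the master theorem. The only point requiring a moment of care is that the two-sided relation $\mathcal{V}_{\cap,k}\sim 2^{-k}$---rather than a mere upper bound---is what licenses replacing $\mathcal{V}_{\cap,k}^{-1}$ by $2^{k}$ in all three reductions above; the lower bound on $\mathcal{V}_{\cap,k}$ furnished by Proposition~\ref{prop:CharacteristicsEhrenfest} is therefore essential for (\ref{eq:assum approx diam}), (\ref{eq:assum remainder 1}) and (\ref{eq:assum Gauss sup}). One should also confirm that the Ehrenfest hypotheses are compatible with a nonempty admissible range of $k$ (for instance the rate-optimal choice $2^{k}\sim (n/\log n)^{1/(1+2\alpha/p)}$ with $r_{n}=cn$), but this is a direct consequence of the three displayed conditions and does not affect the verification itself.
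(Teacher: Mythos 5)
Your proposal is correct and is essentially the paper's own argument: the corollary is obtained by plugging the Ehrenfest characteristics $\mathbb{E}[\mathfrak{d}(A_{k}(x,\omega))^{\alpha}]\lesssim 2^{-\alpha k/p}$, $\mathcal{V}_{\cap,k}\sim 2^{-k}$ and $\mathcal N_{k}\lesssim 2^{k}$ from Proposition~\ref{prop:CharacteristicsEhrenfest} into the hypotheses of Theorem~\ref{thm:CB main01}, exactly as you describe. Your additional remark that the \emph{lower} bound $\mathcal{V}_{\cap,k}\gtrsim 2^{-k}$ is what justifies replacing $\mathcal{V}_{\cap,k}^{-1}$ by $2^{k}$ in conditions (\ref{eq:assum approx diam}), (\ref{eq:assum remainder 1}) and (\ref{eq:assum Gauss sup}) is accurate and is implicit in the paper's one-line proof.
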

\begin{proof} The corollary for the Ehrenfest CPRF follows directly from Theorem
\ref{thm:CB main01} by noting that $\mathcal{V}_{\cap,k}\sim 2^{-k}$,
$\mathcal N_{k}\lesssim2^{k}$ and using $\mathfrak{d}(A_{k}(x,\omega))^{\alpha}\lesssim2^{-\alpha k/p}$
almost surely, see Proposition \ref{prop:CharacteristicsEhrenfest}. 
\end{proof}
If $\nu$ is sufficiently large, the
assumptions are satisfied, for instance, by $k=\lfloor\log_{2}((\log n)^{3}n^{\frac{p}{p+2\alpha}})\rfloor$
and $r_{n}=\lfloor cn\rfloor$ for some $c\in(0,1)$. In particular, $r_{n}\sim n$ is eligible here. This parameter choice implies
that the rate of the radius is
\begin{align*}
c_{k}(\beta)\sqrt{2^{k}/n} & 
\lesssim\sqrt{k2^k/n}\lesssim(\log n)^{2}n^{-\frac{\alpha}{p+2\alpha}},
\end{align*}
because $c_{k}(\beta)\lesssim\sqrt{k}$, 
which follows from the proof of Theorem~\ref{thm:CB main01}. 

The corollary for the uniform CPRF below requires slightly stronger
assumptions.
\begin{cor}[Asymptotic confidence band for the uniform CPRF]
\label{cor:CB uni}For the uniform CPRF let $\alpha\in(0,1],C_H>0$ and
assume that $r_n/n\le c$ for some $c\in(0,1)$ and
\[
\left(\frac{p-1+2^{-\alpha}}{p}\right)^{2k}\frac{n(\log n)^{2}k^{(p-1)}}{2^{k}}\to0.
\]
Further let $\nu\in\mathbb N$, $\nu\ge2$ such that
\begin{align*}
\frac{2^{k(1+p/\nu)}k^{p-1}(\log n)^{2}}{r_{n}}  \to0,\quad
\frac{2^{k}k^{p-1}(\log n)^{5}}{n^{1-1/\nu}}  \to0\quad \text{and}\quad
\mathbb{E}\left[\vert\varepsilon\vert^{2\nu}\right]  <\infty.
\end{align*}
Let $\hat{\sigma}$, $c_{k}(\beta)$ and $\mathcal{C}_{n}$ be as
is in Theorem \ref{thm:CB main01} for $\omega$ distributed according
to the uniform partition. Then,
\[
\liminf_{n\to\infty}\inf_{m\in\mathcal{H}^\alpha(C_{H})}\mathbb{P}(m(x)\in\mathcal{C}_{n}(x),\,\forall x\in[0,1]^{p})\geq1-\beta.
\]
\end{cor}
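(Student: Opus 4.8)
The plan is to deduce this corollary directly from Theorem~\ref{thm:CB main01}, exactly as in the Ehrenfest case, by feeding the uniform CPRF characteristics from Lemma~\ref{lem:CharacteristicsUCPRF} into the abstract hypotheses (\ref{eq:assum remainder 2})--(\ref{eq:assum moments}) and checking that each reduces to one of the stated conditions. Two hypotheses require no work: the moment assumption (\ref{eq:assum moments}) and the subsampling bound (\ref{eq:assum remainder 2}) are identical to their corollary counterparts. The remaining task is to rewrite the diameter condition (\ref{eq:assum approx diam}) and the two remainder conditions (\ref{eq:assum remainder 1}), (\ref{eq:assum Gauss sup}), for which the only inputs needed are the lower bound $\mathcal{V}_{\cap,k}\gtrsim 2^{-k}k^{-(p-1)}$, the cell count $\mathcal{N}_{k}=2^{kp}$, and the diameter moment bound of Lemma~\ref{lem:CharacteristicsUCPRF} with $q=\alpha$.

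For the diameter condition I would substitute $\mathbb{E}[\mathfrak{d}(A_{k}(x,\omega))^{\alpha}]\le p\big(\frac{p-1+2^{-\alpha}}{p}\big)^{k}$ together with $2^{2k}\mathcal{V}_{\cap,k}\gtrsim 2^{k}k^{-(p-1)}$ to obtain
\[
\mathbb{E}\left[\mathfrak{d}(A_{k}(x,\omega))^{\alpha}\right]^{2}\frac{n(\log n)^{2}}{2^{2k}\mathcal{V}_{\cap,k}}\lesssim\left(\frac{p-1+2^{-\alpha}}{p}\right)^{2k}\frac{n(\log n)^{2}k^{p-1}}{2^{k}},
\]
whose right-hand side is exactly the second hypothesis of the corollary and hence tends to zero. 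For the two remainder conditions I would again use $\mathcal{V}_{\cap,k}^{-1}\lesssim 2^{k}k^{p-1}$ and insert $\mathcal{N}_{k}^{1/\nu}=2^{kp/\nu}$, so that the left-hand side of (\ref{eq:assum remainder 1}) is dominated by $\frac{2^{k(1+p/\nu)}k^{p-1}(\log n)^{2}}{r_{n}}$ and that of (\ref{eq:assum Gauss sup}) by $\frac{2^{k}k^{p-1}(\log n)^{5}}{n^{1-1/\nu}}$; these are precisely the third and fourth corollary hypotheses.

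Since all five assumptions of Theorem~\ref{thm:CB main01} are thereby verified under the corollary's hypotheses, the asymptotic coverage conclusion follows at once. I do not expect any genuine obstacle here: the argument is a mechanical substitution, and the only point requiring care is the bookkeeping of the polynomial factors $k^{p-1}$, which arise because the stochastic-error assumptions involve $\mathcal{V}_{\cap,k}^{-1}$ and we must therefore use its (weaker) lower bound $\mathcal{V}_{\cap,k}\gtrsim 2^{-k}k^{-(p-1)}$ rather than the sharper upper bound in Lemma~\ref{lem:CharacteristicsUCPRF}. It is exactly this gap between the two bounds on $\mathcal{V}_{\cap,k}$ that accounts for the slightly stronger assumptions of the uniform CPRF corollary relative to the Ehrenfest one, where $\mathcal{V}_{\cap,k}\sim 2^{-k}$ eliminates these logarithmic-in-$2^{k}$ corrections.
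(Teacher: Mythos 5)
Your proposal is correct and matches the paper's argument exactly: the paper proves this corollary in one line by citing Theorem~\ref{thm:CB main01} and Lemma~\ref{lem:CharacteristicsUCPRF}, and your substitutions (the diameter bound with $q=\alpha$, the lower bound $\mathcal{V}_{\cap,k}\gtrsim 2^{-k}k^{-(p-1)}$ to control $\mathcal{V}_{\cap,k}^{-1}$, and $\mathcal{N}_k=2^{kp}$) are precisely the bookkeeping the paper leaves implicit.
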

\begin{proof}
Corollary \ref{cor:CB uni} follows directly from Theorem \ref{thm:CB main01} and Lemma \ref{lem:CharacteristicsUCPRF}.
\end{proof}
We note that $r_{n}\sim n$ is eligible again. For $c\in(0,1)$ we thus choose $r_{n}=\lfloor cn\rfloor$ and $k=\lfloor\log_{2}((n(\log_{2}n)^{p+2})^{1-\mu})\rfloor$
for
\[
\mu=\frac{2\log_{2}b(p,\alpha)}{2\log_{2}b(p,\alpha)-1}\qquad\text{for}\qquad b(p,\alpha)=(p-1+2^{-\alpha})/p 
\]
such that $0<\mu\leq\frac{2\alpha}{2\alpha+p}$.
Together with $c_{k}(\beta)=\mathcal{O}(\sqrt{k})$ and (\ref{eq:uni V cap rate}) our choice of $r_n$ and $k$ imply that the rate of the radius satisfies
\[
c_{k}(\beta)\sqrt{\frac{\Psi_{k}(x)}{n}}\lesssim\sqrt{\frac{k2^{2k}\mathcal{V}_{\cap,k}}{n}}\lesssim\sqrt{k^{1-(p-1)/2}2^k/n} \lesssim\sqrt{(\log_{2}n)^{-\mu(p+2)+(p+7)/2}n^{-\mu}}.
\]
On the one hand, for a fixed $k$ the diameter of the confidence bands for the uniform CPRF is smaller than for the Ehrenfest CPRF by a polynomial factor in $k$. On the other hand, an asymptotically optimized choice of $k$ results in a worse rate of convergence in comparison to the Ehrenfest CPRF which is in line with the analysis of the mean squared error.
\medskip

Let us now discuss the assumptions in Theorem \ref{thm:CB main01}.
The two corollaries above allow us to evaluate the assumptions for the two specific CPRF types.

Assumption (\ref{eq:assum remainder 2}) is not a major restriction
at all, and it is important to note that it allows for $r_{n}=cn$, which is not possible with most asymptotic results from the literature if the subsampling is taken into account at all. Often $r_n=o(n)$ is imposed to prove asymptotic normality, see for example \citet[Theorem 1]{Mentch2016}, where relatively small $r_n$ helps to ensure that the first order terms, i.e.\ the H\'ajek projection, dominate in the Hoeffding decomposition of the U-statistic.

Assumption (\ref{eq:assum approx diam}) is an undersmoothing assumption that is common for confidence band constructions if no bias correction is applied. Note that $\mathbb{E}[\mathfrak{d}(A_{k}(x,\omega))^{\alpha}]$ is independent of $x$ due to the construction of the CPRF. 
If the diameter has the optimal rate $2^{-k/p}$ and $\mathcal{V}_{\cap,k}\sim 2^{-k}$, which is the case for the Ehrenfest forest, the assumption simplifies to
\[
\mathbb{E}[\mathfrak{d}(A_{k}(x,\omega))^{\alpha}]^{2}\frac{n(\log n)^{2}}{2^{2k}\mathcal{V}_{\cap,k}}\lesssim n(\log n)^{2}2^{-k(1+2\alpha/p)}\to0.
\]
For the uniform CPRF the exponent of $2^{-k}$ is smaller, namely $1+\log_{2}((p-1+2^{-\alpha})/p)$ instead of $1+2\alpha/p$, and additionally the logarithmic terms are larger. This is the only assumption directly depending on the dimension $p$ which affects the diameter and thereby the bias. For a fixed sample size $n$ a larger $p$ thus requires to choose a larger $k$ to meet the assumptions.

Assumption (\ref{eq:assum remainder 1}) is used
to bound the uniform difference of the stochastic error
to its H\'ajek projection.
Without the logarithmic terms, assumption (\ref{eq:assum remainder 1})
means
\begin{equation}
\frac{\mathcal N_{k}^{1/\nu}}{r_{n}\mathcal{V}_{\cap,k}}\to0.\label{eq:assum rem without log}
\end{equation}
Note that the assumption implicitly depends on $p$ via $\mathcal N_{k}$. Since $\mathcal N_{k}$ increases with $p$, we need higher moments of the errors for larger $p$. If we can choose $\nu$ large enough, (\ref{eq:assum rem without log})
roughly corresponds to $r_{n}\mathcal{V}_{\cap,k}\to\infty$,
which implies
\begin{equation}
2^{k}/r_{n}\leq\mathcal{V}_{\cap,k}^{-1}r_{n}^{-1}\to0.\label{eq:implication assum 2^k/r_n}
\end{equation}
We note that $2^{k}=\mathcal{O}(r_{n})$ is necessary anyway, because otherwise the number of empty cells in a tree partition increases
in $n$. Therefore, the only relevant case that contradicts (\ref{eq:implication assum 2^k/r_n})
is $2^{k}\sim r_{n}$. Recalling the assumption $r_{n}\leq cn$, this allows for the most reasonable choices for $r_{n}$.
For the considered CPRFs we cannot control the number of observations in the cells directly, but only through the tuning parameters $k$ and $r_{n}$. Therefore, an assumption like (\ref{eq:assum remainder 1})
is necessary to ensure non-empty cells.

Omitting again the logarithms, assumption (\ref{eq:assum Gauss sup}) simplifies to
\[
\mathcal{V}_{\cap,k}n^{1-1/\nu}\to\infty.
\]
For both CPRFs, $\mathcal{V}_{\cap,k}$ is of order $2^{-k}$, at least up to terms polynomial in $k$. Note that $\mathcal{V}_{\cap,k}\leq2^{-k}$
requires that $n$ grows at a rate exceeding $2^{k}$. We recall that
$2^{-k/p}$ is comparable to a bandwidth for kernel estimators, which
illustrates that (\ref{eq:assum Gauss sup}) is similar to standard
assumptions. The exact rate by which $n$ must exceed $2^{k}$ depends
mainly on the existing moments captured by $\nu$ and further on
the exact rate of $\mathcal{V}_{\cap,k}$ and the logarithmic terms
from the assumption.

The moment assumption (\ref{eq:assum moments}) on the noise is fulfilled for all sub-Gaussian distributions for an arbitrarily large $\nu$. The interplay of the existing finite moments with the assumptions~\eqref{eq:assum Gauss sup} and \eqref{eq:assum approx diam} controls which regimes of tuning parameters can be used, or vice versa, which moments have to exist in these regimes.

\medskip 

So far we have considered the complete generalized U-statistic. The
following corollary applies to the incomplete generalized U-statistic $U_{n,r_{n},N,\omega}^{(\mathrm{RF})}(x)$  defined in (\ref{eq:PRF U-stat}), where $N$ is the average number of trees in the random forests.
\begin{cor}
\label{cor:CB Incomplete U-stat}In addition to the assumptions of
Theorem \ref{thm:CB main01} assume that
\begin{equation*}
\frac{n(\log n)^{2}\mathcal N_{k}^{2}}{N2^{2k}\mathcal{V}_{\cap,k}}\to0.
\end{equation*}
Consider $U_{n,r_{n},N,\omega}^{(\mathrm{RF})}(x)$ from equation (\ref{eq:PRF U-stat})
on the event $\{\hat{N}>0\}$. For
\[
\mathcal{C}_{n,N}(x)=\left[U_{n,r_{n},N,\omega}^{(\mathrm{RF})}(x)-\hat{\sigma}c_{k}(\beta)\sqrt{\frac{\Psi_{k}(x)}{n}},U_{n,r_{n},N,\omega}^{(\mathrm{RF})}(x)+\hat{\sigma}c_{k}(\beta)\sqrt{\frac{\Psi_{k}(x)}{n}}\right]
\]
we have
\[
\liminf_{n\to\infty}\inf_{m\in\mathcal{H}^\alpha(C_{H})}\mathbb{P}\left(m(x)\in\mathcal{C}_{n,N}(x),\,\forall x\in[0,1]^{p}\right)\geq1-\beta.
\]
\end{cor}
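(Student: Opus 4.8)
The plan is to treat the incomplete $U$-statistic as a small, controllable perturbation of the complete one and to transfer the coverage statement of Theorem~\ref{thm:CB main01} verbatim. The decisive observation is that $\mathcal{C}_{n,N}$ and $\mathcal{C}_n$ share \emph{exactly} the same radius $\hat\sigma c_{k}(\beta)\sqrt{\Psi_{k}(x)/n}$; only the center is moved from $U_{n,r_{n},\omega}^{(\mathrm{RF})}$ to $U_{n,r_{n},N,\omega}^{(\mathrm{RF})}$. Setting $D_{n}(x):=U_{n,r_{n},N,\omega}^{(\mathrm{RF})}(x)-U_{n,r_{n},\omega}^{(\mathrm{RF})}(x)$, the two associated sup-statistics $\sup_{x}\frac{|U_{n,r_n,N,\omega}^{(\mathrm{RF})}(x)-m(x)|}{\hat\sigma\sqrt{\Psi_k(x)/n}}$ and $\sup_{x}\frac{|U_{n,r_n,\omega}^{(\mathrm{RF})}(x)-m(x)|}{\hat\sigma\sqrt{\Psi_k(x)/n}}$ differ by at most $\sup_{x}\sqrt{n/\Psi_{k}(x)}\,|D_{n}(x)|/\hat\sigma$. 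Hence it suffices to show that, up to the relevant poly-logarithmic scale, $\sup_{x}\sqrt{n/\Psi_{k}(x)}\,|D_{n}(x)|=o_{\mathbb{P}}(1)$: then the anti-concentration of the Gaussian supremum $\mathbf{S}_{k}$ already exploited in the proof of Theorem~\ref{thm:CB main01} (together with $\hat\sigma\to\sigma$) absorbs the shift and leaves the limiting coverage unchanged. Because $A_{k}(x_{1},\omega)=A_{k}(x_{2},\omega)$ almost surely whenever $x_{1},x_{2}$ lie in the same maximum undividable set, both $U$-statistics are constant on each such set, so the supremum over $[0,1]^{p}$ reduces to a maximum over the $\mathcal{N}_{k}$ representatives $\mathcal{X}_{k}$.

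The core step is to bound $D_{n}$ conditionally on $\mathcal{D}_{n}$ and on all $\omega_{I}$, so that the only remaining randomness is carried by the i.i.d.\ $\rho_{I}\sim\mathrm{Bin}(1,p_{\rho})$ with $p_{\rho}=N/\binom{n}{r_{n}}$. Writing $h_{I}(x):=h_{x}((Z_{i})_{i\in I};\omega_{I})$ for the $I$-th tree, I would use the identity $\tfrac{p_\rho}{N}\sum_I h_I = \tfrac{1}{\binom{n}{r_n}}\sum_I h_I$ to split
\[
D_{n}(x)=\frac{N-\hat{N}}{N}\,U_{n,r_{n},N,\omega}^{(\mathrm{RF})}(x)+\frac{1}{N}\sum_{I\in\setbinom{n}{r_n}}(\rho_{I}-p_{\rho})\,h_{I}(x).
\]
The first summand factorizes into the $x$-independent ratio $(N-\hat{N})/N=\mathcal{O}_{\mathbb{P}}(N^{-1/2})$ (since $\hat{N}\sim\mathrm{Bin}(\binom{n}{r_n},p_\rho)$ has mean $N$ and variance at most $N$) and the uniformly bounded estimator $U_{n,r_{n},N,\omega}^{(\mathrm{RF})}$; rescaled by $\sqrt{n/\Psi_{k}}\sim\sqrt{n/(2^{2k}\mathcal{V}_{\cap,k})}$ it is $o_{\mathbb{P}}(1)$ under the displayed assumption, which in particular forces $n/(N2^{2k}\mathcal{V}_{\cap,k})\to0$. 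For the centered second summand, its conditional variance equals $\frac{p_{\rho}(1-p_{\rho})}{N^{2}}\sum_{I}h_{I}(x)^{2}\le\frac{1}{N\binom{n}{r_{n}}}\sum_{I}h_{I}(x)^{2}$, and $\mathbb{E}\big[\frac{1}{\binom{n}{r_{n}}}\sum_{I}h_{I}(x)^{2}\big]=\mathbb{E}[h_{x}(\cdot;\omega)^{2}]=\mathcal{O}(1)$, because the within-cell noise average contributes only $\mathcal{O}(2^{k}/r_{n})=o(1)$ and empty cells are negligible. A union bound over the $\mathcal{N}_{k}$ points in $\mathcal{X}_{k}$, combined with a conditional maximal (Bernstein) inequality and Markov's inequality applied to the random variance proxy $\frac{1}{\binom{n}{r_{n}}}\sum_{I}h_{I}(x)^{2}$ held uniformly over the cells, controls $\max_{x\in\mathcal{X}_{k}}|\frac{1}{N}\sum_{I}(\rho_{I}-p_{\rho})h_{I}(x)|$; multiplying by $\sqrt{n/\Psi_{k}}$ and inserting $\Psi_{k}\sim2^{2k}\mathcal{V}_{\cap,k}$ produces exactly an expression of the order $n(\log n)^{2}\mathcal{N}_{k}^{2}/(N2^{2k}\mathcal{V}_{\cap,k})$, which vanishes by hypothesis.

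Putting both pieces together gives $\sup_{x}\sqrt{n/\Psi_{k}(x)}\,|D_{n}(x)|=o_{\mathbb{P}}(1)$ at the required rate, and the corollary follows from Theorem~\ref{thm:CB main01}. I expect the main obstacle to be the \emph{simultaneous} control, over all $\mathcal{N}_{k}$ cells at once, of the random conditional variance $\frac{1}{\binom{n}{r_{n}}}\sum_{I}h_{I}(x)^{2}$ together with the random normalization $1/\hat N$: handling the small (and occasionally empty) cell counts $\#_{I}A_{k}$ uniformly and the deviation of $\hat{N}$ from $N$ is precisely what forces the extra $\mathcal{N}_{k}$- and logarithmic factors in the additional assumption, whereas the transfer of coverage itself reuses unchanged the Gaussian-approximation and anti-concentration machinery of Theorem~\ref{thm:CB main01}.
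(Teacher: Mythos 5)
Your proposal follows essentially the same route as the paper: the identical algebraic decomposition of $U_{n,r_{n},N,\omega}^{(\mathrm{RF})}-U_{n,r_{n},\omega}^{(\mathrm{RF})}$ into the $\hat N$-versus-$N$ normalization discrepancy and the centered $\sum_{I}(\rho_{I}-N/\binom{n}{r_{n}})h_{I}$ fluctuation, the same conditional second-moment bounds giving an $\mathcal{O}(N^{-1})$ variance, the same reduction of the supremum to the $\mathcal{N}_{k}$ representatives with a union bound producing exactly the assumed quantity $n(\log n)^{2}\mathcal{N}_{k}^{2}/(N2^{2k}\mathcal{V}_{\cap,k})$, and the same absorption into the Gaussian approximation and anti-concentration step of Theorem~\ref{thm:CB main01}. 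The one spot to tighten is your claim that $U_{n,r_{n},N,\omega}^{(\mathrm{RF})}$ is \emph{uniformly bounded}: since the $Y_{i}=m(X_{i})+\varepsilon_{i}$ are unbounded this is not literally true, and the paper instead controls the first summand via Cauchy--Schwarz, combining $\mathbb{E}\big[\mathbb{I}\{\hat N>0\}(1/\hat N-1/N)^{2}\big]=\mathcal{O}(N^{-3})$ with the second-moment bound $\mathbb{E}\big[\big(\sum_{j}Y_{j}W_{j,k}\big)^{2}\big]\lesssim 1$ --- the same moment control you already invoke for the second summand --- so this is a repairable imprecision rather than a structural gap.
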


The proof can be found in Section \ref{sec:Proof-strategy chap CBs} of the supplement. \textbf{}
It suffices to consider the estimator only on the event $\{\hat{N}>0\}$. Otherwise we would use an empty random forest. The corollary shows
that the incomplete version of the U-statistic can still be used to construct confidence bands provided that $N$ is large enough. For
the Ehrenfest forest with independent covariates, the condition simplifies
to
\[
\frac{n(\log n)^{2}\mathcal N_{k}^{2}}{2^{2k}\mathcal{V}_{\cap,k}}\frac{1}{N}\lesssim\frac{2^{k}n(\log n)^{2}}{N}\to0.
\]
In practice, it may not be necessary to select $N$ as large as this sufficient
condition demands. The simulations in Section \ref{chap:Simulation-study}
will suggest that a considerably smaller $N$ is sufficient in practice.

\section{Simulation study\label{chap:Simulation-study}}

In this section we present a simulation study to illustrate our theoretical results in practice. The complete source code related to this section
is available\footnote{GitHub repository available at https://github.com/Jan-Rabe/Asymptotic-CBs-for-CPRF}. 

Throughout we use uniformly
distributed $X\sim\mathcal{U}([0,1]^p)$. In this case the confidence band has a constant diameter, the asymptotic variance function is given by 
$\Psi_k(x_0)=2^{2k}\mathcal V_{\cap,k}$
and the covariance from (\ref{eq:Gaussian process}) simplifies to 
\begin{equation}
\cov(B_{k}(f_{x_{1},k}),B_{k}(f_{x_{2},k}))=\mathcal{V}_{\cap,k}^{-1}\mathbb{E}\left[\mathrm{vol}\left(A_{k}(x_{1},\omega_{1})\cap A_{k}(x_{2},\omega_{2})\right)\right].\label{eq:cov for sim}
\end{equation}
To construct the confidence band from Theorem
\ref{thm:CB main01}, we thus need $\mathcal{V}_{\cap,k}$
as well as the quantiles of $\mathbf{S}_{k}$.
To this end, the covariance matrix can be calculated on some fine grid in $[0,1]^{p}$, where it is not always computationally feasible to use representatives of all cells of the finest partition of the feature space. Simulating the Gaussian process $B_{k}$ on this grid, we compute its supremum via a Monte Carlo approximation. A detailed description of the numerical calculation of the covariance \eqref{eq:cov for sim} of the Gaussian process together with the quantiles of its supremum can be found in Section~\ref{sec:Dist of S_k} of the supplementary material.


For all simulations, we use $50\,000$ simulated pairs of splits to estimate the covariance matrix entries \eqref{eq:cov for sim} and $\mathcal{V}_{\cap,k}$. We simulate $100\,000$ suprema of the Gaussian process $B_k$ to approximate the quantiles of the supremum. For estimating the variance $\sigma^2$ the U-statistic estimator by \citet{Shen2020} with  bandwidth $h=n^{-1/2}$ and a Gaussian kernel is applied. 
 With the empirical quantiles, the estimation of $\mathcal{V}_{\cap,k}$ and the estimated standard deviation of the errors we are able to compute the confidence
band diameter. 

A comparison of the diameter to the maximal estimation error $\vert U_{n,r_{n},\omega}^{(\mathrm{RF})}(x_{0})-m(x_{0})\vert$ reveals the coverage properties of the confidence band. We evaluate the estimation error on the grid
\begin{equation}
x_0\in G_{\tilde{k}}^{(sup)}:=\left(\left\{a2^{-\tilde{k}}-\epsilon,a2^{-\tilde{k}}+\epsilon\;\Big|\; a\in\{1,\ldots,2^{\tilde{k}}-1\}\right\}\cup\{\epsilon,1-\epsilon\}\right)^{p}\label{eq:sup test grid}
\end{equation}
for a small $\epsilon>0$. Since the distance between a smooth function and a piecewise constant function is the largest at the jump points of the piecewise function, if the smooth function is monotone on the
corresponding intervals, the maximum over the above grid should almost capture the full uniform error. 

We start with simulations in the two dimensional case $p=2$, where we use the regression function
\begin{equation}
m(x^{(1)},x^{(2)})=\frac{1}{10}\big(\sin(2\pi x^{(1)})+x^{(2)}\big).\label{eq:m sim p2}
\end{equation}
Table \ref{tab:sim asymp 1} summarizes the simulation results for depth $k=5$, sample size $n=2000$, subsample size $r_{n}=\frac{3}{4}n=1500$, noise levels $\sigma\in\{0.75,1,1.25\}$ and 4 different error distributions, namely a normal, a
uniform and two $t$-distributions. The Ehrenfest forest uses the tuning parameters $B=12$ and $\Delta=7$. The table shows the empirical coverage and the average radius of the confidence
bands. For a fixed error distribution and $\sigma$, we use the same random seed for the training sample generation, to achieve a better comparison. In particular, the uniform CPRF and the Ehrenfest CPRFs build on the same data for both values of $N$. Since we apply an estimator $\hat\sigma$ which does not rely on a the random forest, the confidence band radius does not depend on $N$.

\begin{table}[t]
\caption[Empirical coverage and average confidence band radius of RF confidence
bands for different error distributions.]{\label{tab:sim asymp 1}Empirical coverage (top) and average confidence
band radius (Rd., bottom) of $1000$ RF confidence bands with $k=5$,
$n=2000$ and $r_{n}=1500$ for different error distributions.}

\centering{}%
\begin{tabular}{ccccccc}
\toprule
$\sigma$ & RF & $N$ & Normal & Uniform & $t$-dist. $4$ df & $t$-dist. $6$ df\tabularnewline
\midrule
 &  &  & \multicolumn{4}{c}{$1-\beta$}\tabularnewline
 &  &  & $.90$, $.95$, $.99$ & $.90$, $.95$, $.99$ & $.90$, $.95$, $.99$ & $.90$, $.95$, $.99$\tabularnewline
\midrule
\multirow{6}{*}{$0.75$} & \multirow{3}{*}{Uni.} & $50$ & $.656$, $.782$, $.941$ & $.679$, $.816$, $.954$ & $.617$, $.755$, $.911$ & $.606$, $.752$, $.924$\tabularnewline
 &  & $100$ & $.682$, $.812$, $.946$ & $.697$, $.828$, $.957$ & $.636$, $.772$, $.923$ & $.621$, $.760$, $.943$\tabularnewline
\cmidrule{3-7} \cmidrule{4-7} \cmidrule{5-7} \cmidrule{6-7} \cmidrule{7-7}
 &  & \multirow{1}{*}{Rd.} & $.228$, $.243$, $.272$ & $.228$, $.243$, $.273$ & $.228$, $.242$, $.272$ & $.228$, $.243$, $.272$\tabularnewline
\cmidrule{2-7} \cmidrule{3-7} \cmidrule{4-7} \cmidrule{5-7} \cmidrule{6-7} \cmidrule{7-7}
 & \multirow{3}{*}{Ehr.} & $50$ & $.648$, $.809$, $.945$ & $.681$, $.825$, $.958$ & $.646$, $.778$, $.918$ & $.620$, $.767$, $.933$\tabularnewline
 &  & $100$ & $.675$, $.810$, $.953$ & $.702$, $.826$, $.953$ & $.646$, $.786$, $.925$ & $.620$, $.767$, $.931$\tabularnewline
\cmidrule{3-7} \cmidrule{4-7} \cmidrule{5-7} \cmidrule{6-7} \cmidrule{7-7}
 &  & \multirow{1}{*}{Rd.} & $.236$, $.252$, $.283$ & $.236$, $.252$, $.283$ & $.235$, $.251$, $.282$ & $.236$, $.252$, $.282$\tabularnewline
\midrule
\multirow{6}{*}{$1$} & \multirow{3}{*}{Uni.} & $50$ & $.734$, $.839$, $.961$ & $.750$, $.884$, $.970$ & $.699$, $.821$, $.935$ & $.689$, $.815$, $.952$\tabularnewline
 &  & $100$ & $.748$, $.870$, $.965$ & $.773$, $.873$, $.975$ & $.722$, $.816$, $.941$ & $.707$, $.830$, $.953$\tabularnewline
\cmidrule{3-7} \cmidrule{4-7} \cmidrule{5-7} \cmidrule{6-7} \cmidrule{7-7}
 &  & \multirow{1}{*}{Rd.} & $.304$, $.324$, $.363$ & $.304$, $.324$, $.363$ & $.304$, $.323$, $.363$ & $.304$, $.323$, $.363$\tabularnewline
\cmidrule{2-7} \cmidrule{3-7} \cmidrule{4-7} \cmidrule{5-7} \cmidrule{6-7} \cmidrule{7-7}
 & \multirow{3}{*}{Ehr.} & $50$ & $.745$, $.866$, $.970$ & $.760$, $.876$, $.967$ & $.710$, $.825$, $.944$ & $.711$, $.822$, $.954$\tabularnewline
 &  & $100$ & $.754$, $.866$, $.966$ & $.770$, $.885$, $.971$ & $.724$, $.824$, $.942$ & $.701$, $.831$, $.952$\tabularnewline
\cmidrule{3-7} \cmidrule{4-7} \cmidrule{5-7} \cmidrule{6-7} \cmidrule{7-7}
 &  & \multirow{1}{*}{Rd.} & $.314$, $.336$, $.377$ & $.314$, $.336$, $.377$ & $.314$, $.335$, $.376$ & $.314$, $.335$, $.376$\tabularnewline
\midrule
\multirow{6}{*}{$1.25$} & \multirow{3}{*}{Uni.} & $50$ & $.777$, $.871$, $.969$ & $.789$, $.910$, $.975$ & $.735$, $.841$, $.945$ & $.738$, $.842$, $.956$\tabularnewline
 &  & $100$ & $.793$, $.897$, $.969$ & $.803$, $.908$, $.978$ & $.749$, $.838$, $.951$ & $.748$, $.859$, $.957$\tabularnewline
\cmidrule{3-7} \cmidrule{4-7} \cmidrule{5-7} \cmidrule{6-7} \cmidrule{7-7}
 &  & \multirow{1}{*}{Rd.} & $.380$, $.404$, $.454$ & $.380$, $.405$, $.454$ & $.379$, $.404$, $.454$ & $.380$, $.404$, $.454$\tabularnewline
\cmidrule{2-7} \cmidrule{3-7} \cmidrule{4-7} \cmidrule{5-7} \cmidrule{6-7} \cmidrule{7-7}
 & \multirow{3}{*}{Ehr.} & $50$ & $.790$, $.888$, $.973$ & $.804$, $.894$, $.975$ & $.752$, $.848$, $.950$ & $.740$, $.858$, $.961$\tabularnewline
 &  & $100$ & $.795$, $.888$, $.971$ & $.804$, $.908$, $.979$ & $.756$, $.854$, $.956$ & $.753$, $.865$, $.965$\tabularnewline
\cmidrule{3-7} \cmidrule{4-7} \cmidrule{5-7} \cmidrule{6-7} \cmidrule{7-7}
 &  & \multirow{1}{*}{Rd.} & $.393$, $.419$, $.471$ & $.393$, $.420$, $.471$ & $.392$, $.419$, $.470$ & $.392$, $.419$, $.470$\tabularnewline
\bottomrule
\end{tabular}
\end{table}

The coverage of the uniform RF and the Ehrenfest RF are similar, but both are smaller than the theoretical coverage probability $1-\beta$. There is no clear trend as to which CPRF has better coverage. However, the radius of the confidence bands is larger for the Ehrenfest forest which is in line with our theoretical discussion above. As the noise level $\sigma$ increases, the estimation error becomes dominated by the stochastic error resulting in a better coverage. 

On average, a larger number $N$ of trees in the forest increases the coverage. This effect is more pronounced for the uniform CPRF than for the Ehrenfest CPRF, which might indicate a computational advantage of the Ehrenfest CPRF. Indeed, the uniform CPRF has more diverse partitions, and thus one would expect it to require a larger $N$ to create this diversity in practice. For $N=100$ the coverage of the uniform CPRF is close to that of the Ehrenfest CPRF across the entire range of parameters. This is noteworthy because, at first glance, it seems to contradict the larger approximation error bound of the unifrom CPRF. Again, a possible reason could be the more diverse partitions leading to smaller undividable cells, even though the approximation error bound does not capture this effect.


The overall impression across the board is that the coverage for the uniform distribution is the best and  for the $t$-distribution with four degrees of freedom the worst. The coverage for the $t$-distribution with six degrees of freedom is worse than that for the normal distribution, but the difference is moderate. This suggests that a larger number of finite moments is helpful, which is again in line with
the theory.

\begin{figure}[!t]
\begin{centering}
\includegraphics[viewport=29bp 0bp 547bp 520bp,clip,width=0.7\textwidth]{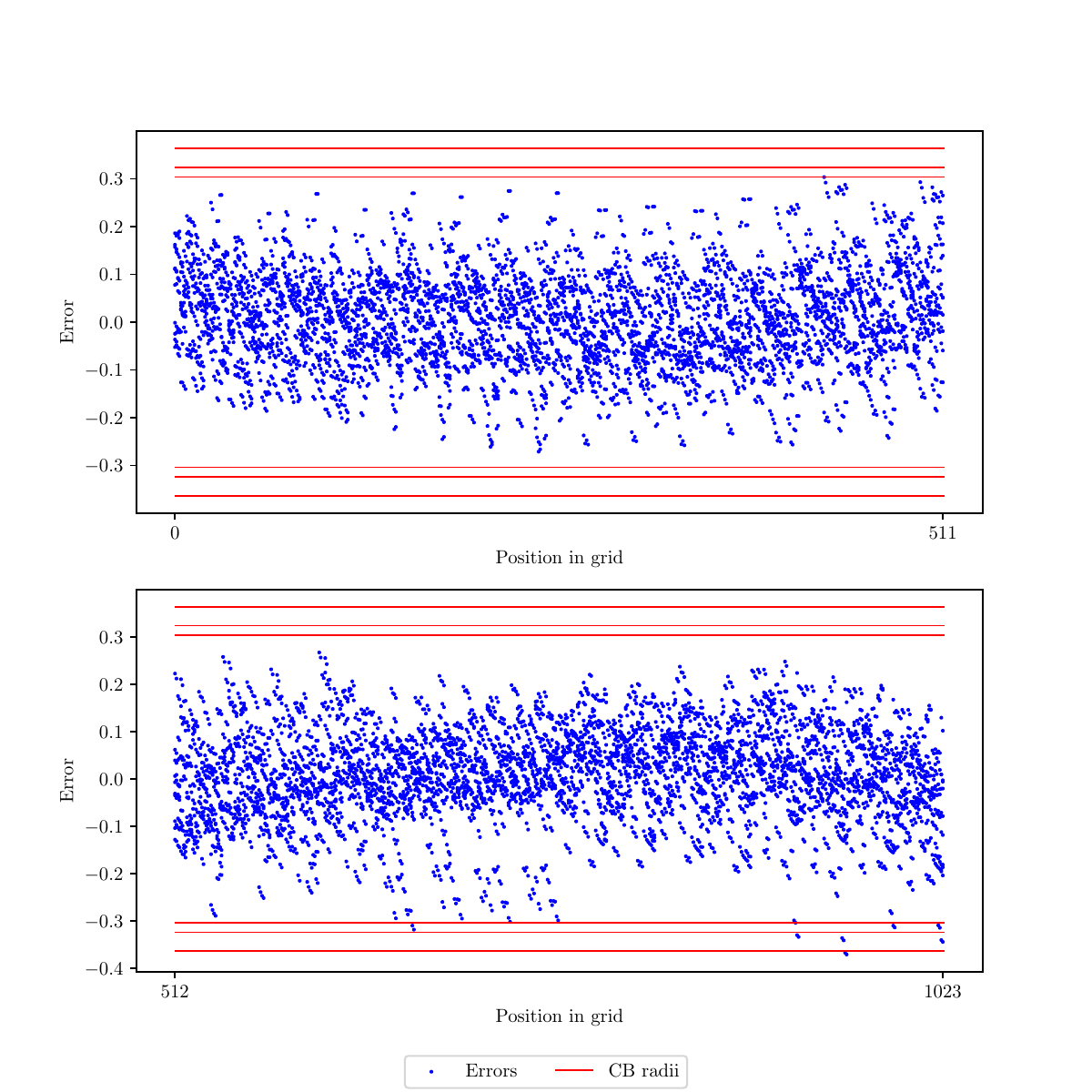}
\par\end{centering}
\caption{Scatter plot of estimation errors (blue dots) of ten uniform CPRF estimators with $n=2000$, $k=5$ and $\varepsilon\sim\mathcal{N}(0,1)$ on a non equidistant test grid and confidence band radii (red lines) corresponding to confidence levels $1-\beta\in\{0.9,0.95,0.99\}$. The horizontal axis corresponds to the test grid of the feature space from (\ref{eq:sup test grid}). Blocks of $32$ values on the horizontal axis correspond
to entries in the grid, that have the same first component. \label{fig:Errors scatter}}
\end{figure}

Figure~\ref{fig:Errors scatter} shows the estimation errors of ten uniform CPRF estimators in the case $n=2000$, $k=5$ and $\varepsilon\sim\mathcal{N}(0,1)$ together with the radii of the confidence bands for confidence levels $1-\beta\in\{0.9,0.95,0.99\}$. The $0.99$ and $0.95$
confidence bands cover nine of these ten estimators, while the $0.9$ confidence band covers eight. The plot provides insight into the dispersion of errors across the width of the confidence bands, demonstrating that errors outside the bands do not constitute extreme outliers relative to the remaining observations.

\begin{table}[t]
\begin{centering}
\caption{\label{tab:sim asymp 2}Empirical coverage and average confidence
band radius of $1000$ RF confidence bands with $\varepsilon\sim\mathcal{N}(0,1)$,
$N=100$ and $r_{n}=\frac{3}{4}n$.}
\par\end{centering}
\centering{}%
\begin{tabular}{cccccc}
\toprule
$n$ & RF & \multicolumn{2}{c}{$k=5$} & \multicolumn{2}{c}{$k=6$}\tabularnewline
\midrule
 &  & \multicolumn{4}{c}{$1-\beta$}\tabularnewline
 &  & $.90$, $.95$, $.99$ & $.90$, $.95$, $.99$ & $.90$, $.95$, $.99$ & $.90$, $.95$, $.99$\tabularnewline
 &  & Coverage & Radius & Coverage & Radius\tabularnewline
\midrule
\multirow{2}{*}{$250$} & \multirow{1}{*}{Uni.} & $.676$, $.778$, $.916$ & $.859$, $.915$, $1.027$ & $.599$, $.712$, $.877$ & $1.280$, $1.351$, $1.495$\tabularnewline
 & \multirow{1}{*}{Ehr.} & $.668$, $.781$, $.909$ & $.889$, $.949$, $1.065$ & $.627$, $.733$, $.897$ & $1.333$, $1.408$, $1.565$\tabularnewline
\midrule
\multirow{2}{*}{$500$} & \multirow{1}{*}{Uni.} & $.759$, $.852$, $.954$ & $.608$, $.648$, $.727$ & $.611$, $.756$, $.901$ & $.906$, $.956$, $1.058$\tabularnewline
 & \multirow{1}{*}{Ehr.} & $.770$, $.856$, $.946$ & $.629$, $.672$, $.754$ & $.629$, $.759$, $.903$ & $.944$, $.997$, $1.108$\tabularnewline
\midrule
\multirow{2}{*}{$1000$} & \multirow{1}{*}{Uni.} & $.802$, $.878$, $.963$ & $.430$, $.458$, $.514$ & $.778$, $.867$, $.962$ & $.641$, $.676$, $.748$\tabularnewline
 & \multirow{1}{*}{Ehr.} & $.805$, $.889$, $.964$ & $.445$, $.475$, $.533$ & $.760$, $.870$, $.955$ & $.667$, $.705$, $.783$\tabularnewline
\midrule
\multirow{2}{*}{$2000$} & \multirow{1}{*}{Uni.} & $.748$, $.870$, $.965$ & $.304$, $.324$, $.363$ & $.786$, $.881$, $.967$ & $.453$, $.478$, $.529$\tabularnewline
 & \multirow{1}{*}{Ehr.} & $.754$, $.866$, $.966$ & $.314$, $.336$, $.377$ & $.806$, $.887$, $.969$ & $.472$, $.498$, $.554$\tabularnewline
\midrule
\multirow{2}{*}{$4000$} & \multirow{1}{*}{Uni.} & $.658$, $.797$, $.947$ & $.215$, $.229$, $.257$ & $.814$, $.898$, $.967$ & $.320$, $.338$, $.374$\tabularnewline
 & \multirow{1}{*}{Ehr.} & $.674$, $.809$, $.946$ & $.222$, $.237$, $.266$ & $.813$, $.897$, $.968$ & $.333$, $.352$, $.391$\tabularnewline
\midrule
\multirow{2}{*}{$8000$} & \multirow{1}{*}{Uni.} & $.450$, $.640$, $.873$ & $.152$, $.162$, $.182$ & $.763$, $.856$, $.967$ & $.226$, $.239$, $.264$\tabularnewline
 & \multirow{1}{*}{Ehr.} & $.463$, $.639$, $.877$ & $.157$, $.168$, $.188$ & $.756$, $.870$, $.966$ & $.236$, $.249$, $.277$\tabularnewline
\bottomrule
\end{tabular}
\end{table}

In Table \ref{tab:sim asymp 2} we showcase the empirical coverage for different values of $n$ and $k$. For all results, the error distribution is $\varepsilon\sim\mathcal N(0,1)$, the tree number is $N=100$ and we use $r_{n}=0.75n$. Our theory suggests two important effects of $n$ and $k$. If $2^{k}$ is large in relation too $n$, the distribution of the stochastic error $U_{n,r_{n},\omega}^{(\varepsilon)}$, cf. \eqref{eq:Ueps}, is not well approximated by the asymptotic regime. If $2^{k}$ is small relative to $n$, the approximation error will be larger. Both effects may deteriorate the coverage of the confidence band. In the table we can see that the best coverage for $k=5$ is achieved at $n=1000$
(and $n=2000$ for the $99$\% bands), while in the case of $k=6$
the best coverage is achieved at $n=4000$ in line with the
theory. For the smaller values of $n$ we see a suboptimal coverage
for both values of $k$ which suggests that these sample sizes are too small for our asymptotics. For the largest values of $n$ we get the smallest radii of the confidence
bands, and we can see that this affects the coverage negatively because the approximation error  dominates the stochastic error for the given value of $k$ such that we are not anymore in an undersmoothing regime. This effect is more pronounced for the lower confidence levels.

Figure~\ref{fig:Comp CB cov and radii dep n} gives a graphical comparison of coverage and the radii of the uniform and Ehrenfest CPRF for $k=\{5,6\}$. As a reference estimator, we also added the corresponding numerical results for the classical histogram regression estimator relying on equidistant partition into squares of edge length $\delta\in\{1/5,1/7\}$, which is chosen such that the confidence bands have a similar radius as the CPRFs. The confidence band construction for the histogram estimator follows \cite{Neumeyer-Rabe-Trabs}. 
For all $\beta$ we observe that the radii of the two random forest bands for $k=5$ are slightly smaller than the radii of the histogram confidence bands for $\delta^{-1}=5.$
At the same time, the empirical coverage for these random forests is larger than that of the histogram for almost all $n$. The same observation can be made, when comparing the random forests for $k=6$ and the histogram for $\delta^{-1}=7$. Hence, the random forest bands work better throughout. As an illustration Figure~\ref{fig:heat map} shows a realization of the uniform CPRF estimator compared to the histogram estimator, $n=8000$, $k=5$, $\delta^{-1}=5$ for the same parameters as in Table \ref{tab:sim asymp 2}. 

\begin{figure}
\begin{centering}
\includegraphics[viewport=38bp 0bp 542bp 650bp,clip,width=0.9\textwidth]{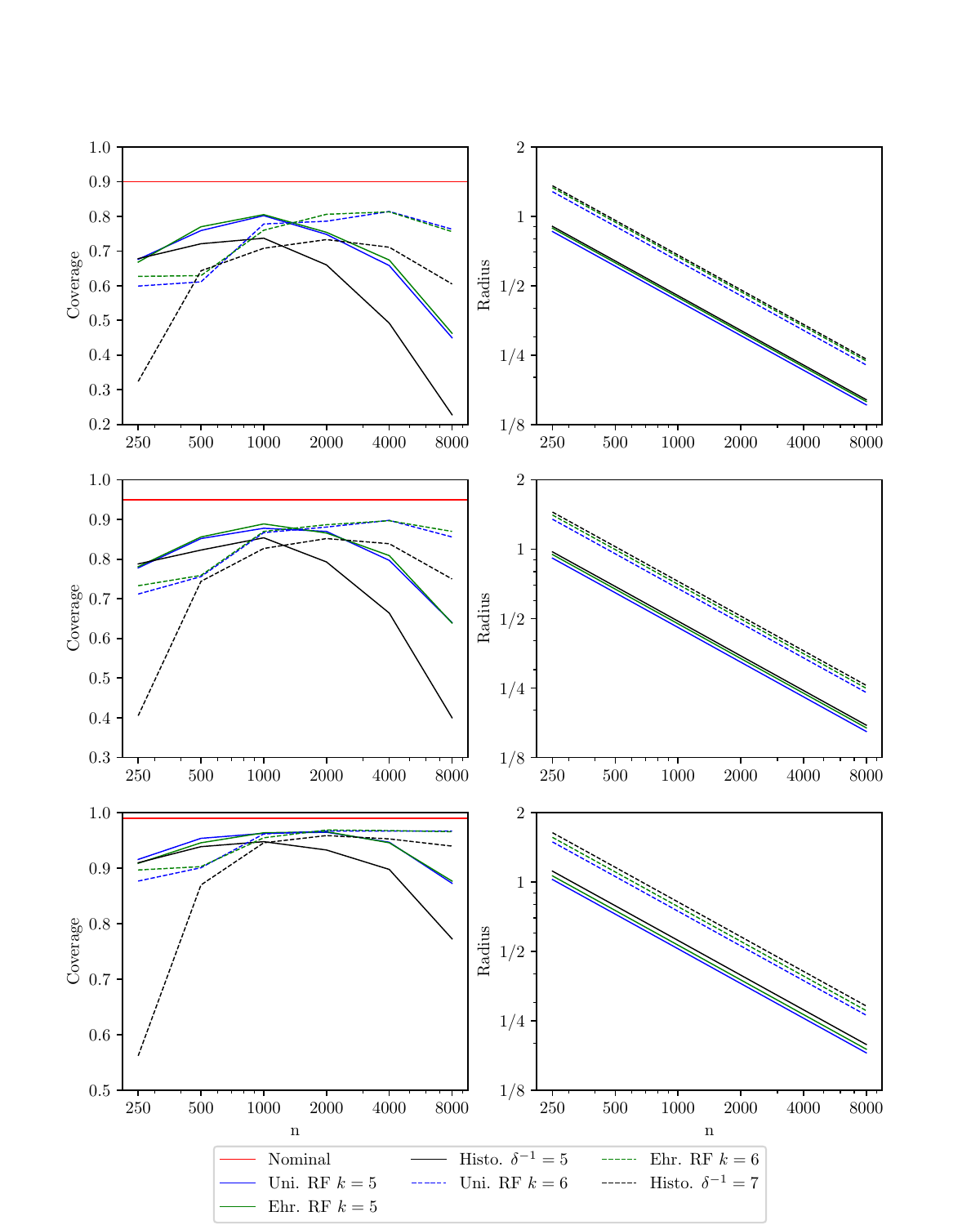}
\par\end{centering}
\caption{\label{fig:Comp CB cov and radii dep n}Comparison of confidence band coverage and radii for the uniform CPRF, the Ehrenfest CPRF and a histogram regression estimator in dependence on $n$. On the left we observe the empirical coverage together with the nominal coverage as horizonal line. On the right, the radii are plotted on a logarithmic scale. From top to bottom, the plots are for $1-\beta\in\{0.9,0.95,0.99\}$.}
\end{figure}

\begin{figure}
\begin{centering}
\includegraphics[viewport=30bp 14bp 431bp 330bp,clip,width=0.9\textwidth]{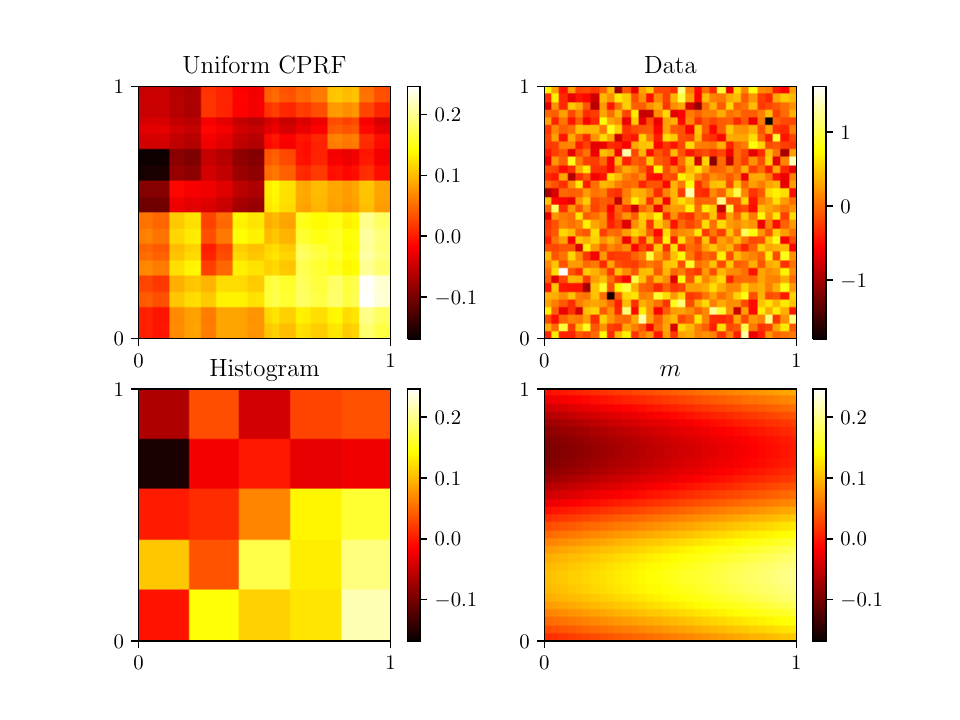}
\par\end{centering}
\caption{\label{fig:heat map}Realization of a uniform CPRF (top left) based on the observation shown in the heat map (top right), histogram estimation on the $2^{k}$-grid as a reference (bottom left) and true regression function $m$ from (\ref{eq:m sim p2}) (bottom right).}
\end{figure}

Finally, we compare the empirical coverage for two regression models with $p=2$ and $p=4$ for different values
of $\sigma$. To this end, we complement the  two-dimensional regression function from \eqref{eq:m sim p2} with
\[
m(x^{(1)},x^{(2)},x^{(3)},x^{(4)})=\frac{1}{10}\big(\sin(2\pi x^{(1)})+x^{(2)}+x^{(3)}x^{(4)}\big)
\]
for the case $p=4$.
Table~\ref{tab:sim asymp 3} provides the exact numbers, while a graphical representation of the results is given in Figure~\ref{fig:Comp CB cov rad dep sigma}.
The regression function in the first case is the same as before. In Theorem \ref{thm:CB main01} the dimension $p$ mainly effects the assumption on the diameter which controls the approximation error. Further, $p$ affects the quantities $\mathcal{V}_{\cap,k}$ and $\mathcal N_{k}$. The approximation error increases with $p$, which is standard in
multivariate regression problems due to the curse of dimensionality. The size of $\sigma$ directly affects the signal to noise ratio and thus the size of the approximation and the stochastic error. If the noise level is decreased similarly for $p=2$ and $p=4$, the empirical coverage decreases faster in the latter case. This is as expected
due to the larger approximation error. We point out that the confidence band radii are smaller for $p=4$ due to the effect on $\mathcal{V}_{\cap,k}$. When the empirical coverage is evaluated in a regression model with
$m=0$, i.e., one only considers $U_{n,r_{n},\omega}^{(\varepsilon)}$, the results are much better and comparable to the case $p=2$. This demonstrates that the effect of the higher dimension can be attributed almost exclusively to the approximation error.


\begin{table}
\caption{\label{tab:sim asymp 3}Empirical coverage (left) and average confidence
band radius (right) of $1000$ confidence bands with $\varepsilon\sim\mathcal{N}(0,\sigma^{2})$,
$n=2000$, $k=5$, $N=50$ and $r_{n}=\frac{3}{4}n$.}

\centering{}%
\begin{tabular}{cccccc}
\toprule
$\sigma$ & RF & \multicolumn{2}{c}{$p=2$} & \multicolumn{2}{c}{$p=4$}\tabularnewline
\midrule
 &  & \multicolumn{4}{c}{$1-\beta$}\tabularnewline
 &  & $.90$, $.95$, $.99$ & $.90$, $.95$, $.99$ & $.90$, $.95$, $.99$ & $.90$, $.95$, $.99$\tabularnewline
 &  & Coverage & Radius & Coverage & Radius\tabularnewline
\midrule
\multirow{2}{*}{$0.5$} & \multirow{1}{*}{Uni.} & $.416$, $.616$, $.849$ & $.152$, $.162$, $.182$ & $.002$, $.006$, $.041$ & $.124$, $.131$, $.146$\tabularnewline
 & \multirow{1}{*}{Ehr.} & $.416$, $.598$, $.868$ & $.157$, $.168$, $.188$ & $.001$, $.004$, $.046$ & $.126$, $.133$, $.149$\tabularnewline
\midrule
\multirow{2}{*}{$0.75$} & \multirow{1}{*}{Uni.} & $.656$, $.782$, $.941$ & $.228$, $.243$, $.272$ & $.065$, $.116$, $.359$ & $.186$, $.197$, $.219$\tabularnewline
 & \multirow{1}{*}{Ehr.} & $.648$, $.809$, $.945$ & $.236$, $.252$, $.283$ & $.056$, $.132$, $.382$ & $.189$, $.200$, $.223$\tabularnewline
\midrule
\multirow{2}{*}{$1$} & \multirow{1}{*}{Uni.} & $.734$, $.839$, $.961$ & $.304$, $.324$, $.363$ & $.188$, $.321$, $.607$ & $.248$, $.262$, $.291$\tabularnewline
 & \multirow{1}{*}{Ehr.} & $.745$, $.866$, $.970$ & $.314$, $.336$, $.377$ & $.202$, $.345$, $.635$ & $.252$, $.267$, $.297$\tabularnewline
\midrule
\multirow{2}{*}{$1.25$} & \multirow{1}{*}{Uni.} & $.777$, $.871$, $.969$ & $.380$, $.404$, $.454$ & $.320$, $.469$, $.747$ & $.310$, $.328$, $.364$\tabularnewline
 & \multirow{1}{*}{Ehr.} & $.790$, $.888$, $.973$ & $.393$, $.419$, $.471$ & $.338$, $.502$, $.742$ & $.315$, $.334$, $.371$\tabularnewline
\bottomrule
\end{tabular}
\end{table}

\begin{figure}
\begin{centering}
\includegraphics[viewport=38bp 0bp 538bp 650bp,clip,width=0.9\textwidth]{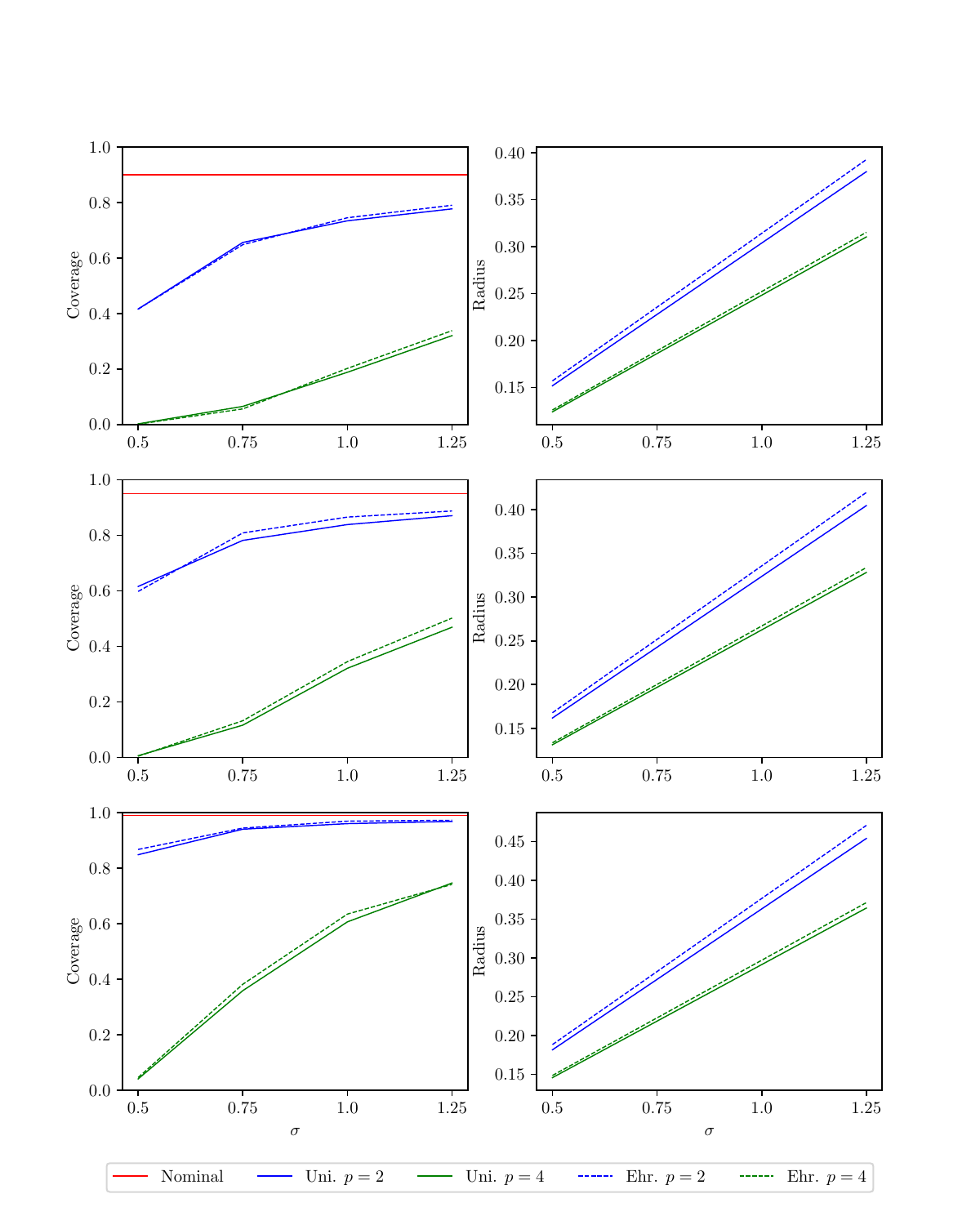}
\par\end{centering}
\caption{\label{fig:Comp CB cov rad dep sigma}Comparison of confidence band coverage and radii for the uniform CPRF and the Ehrenfest CPRF in dependence on $n$ and the dimension $p=2,4$. On the left we observe the empirical coverage together with the nominal coverage as horizonal line. On the right, the radii are plotted on a logarithmic scale. From top to bottom, the plots are for $1-\beta\in\{0.9,0.95,0.99\}$.}
\end{figure}

\appendix
\clearpage
\begin{center}
   {\Large\bf Supplementary material for asymptotic confidence bands for centered purely random forests}
\end{center}

\medskip

In Section \ref{sec:Dist of S_k}  the distribution of $\mathbf{S}_{k}$ is analyzed. In Section \ref{sec:Proof-strategy chap CBs}
all proofs of the  results in Section \ref{sec:Confidence-Bands} and \ref{sec:rates} are given. In the proofs moment inequalities are applied which are presented in Section \ref{sec:emp pro =000026 lit Results}. 

\section{Distribution of $\mathbf{S}_{k}$ }\label{sec:Dist of S_k}

\subsection{Theoretical properties}\label{secA1}

In this section we will analyze the distribution of $\mathbf{S}_{k}\overset{d}{=}\sup_{f\in\mathcal{F}_{k}}\vert B_{k}f\vert$ from Theorem \ref{thm:CB main01}.
The results will help to approximate the distribution of $\mathbf{S}_{k}$ by a Monte Carlo simulation. Recall the definition of the stochastic error term 
\begin{align*}
   	U_{n,r_{n},\omega}^{(\varepsilon)}(x_0)
	=&\frac{r_n}{n}\sum_{i=1}^n \varepsilon_i\frac{1}{\binom{n-1}{r_n-1}}\sum_{i\in I\in \setbinom{n}{r_n}}\frac{ \mathbb{I}\{X_{i}\in A_{n}(x_0,\omega_I )\}}{\# A_{n}(x_0,\omega_I)}
\end{align*}
from \eqref{eq:Ueps}. Figure \ref{fig:densities S_k} shows the estimated density of $\mathbf{S}_{k}$ for $k=5$ and the densities of
\[
\sqrt{\frac{n}{\sigma^{2}2^{2k}\mathcal{V}_{\cap,k}}}\Vert U_{n,r_{n},\omega}^{(\varepsilon)}\Vert_{\infty}
\]
for $k=5$ and different values of $n$. All densities are estimated
by a Gaussian kernel density estimator with the same bandwidth. The
density of $\Vert U_{n,r_{n},\omega}^{(\varepsilon)}\Vert_{\infty}$
is estimated based on $1\,000$ copies of the supremum for each $n$.
For $\mathbf{S}_{k}$, the more efficient simulation described in
Section~\ref{chap:Simulation-study} (and below) allows us to use $100\,000$ copies of $\mathbf{S}_{k}$ for the density estimation. 

\begin{figure}[h]
\centering{}\includegraphics[viewport=15bp 5bp 446bp 330bp,clip,width=0.7\textwidth]{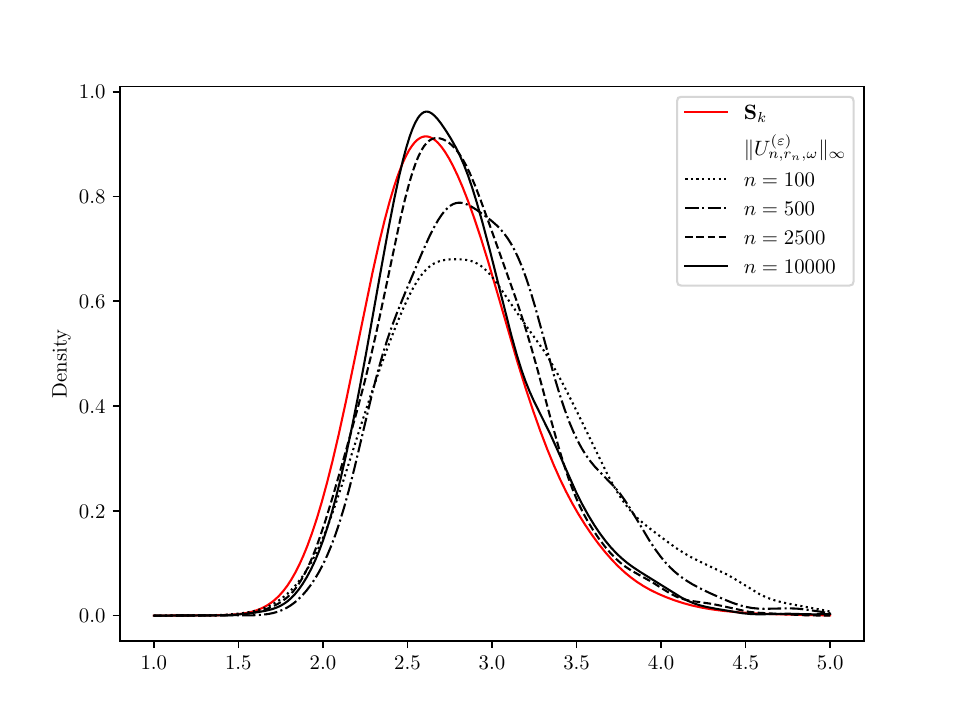}\caption[Estimated densities of $\mathbf{S}_{k}$ and standardized $\Vert U_{n,r_{n},\omega}^{(\varepsilon)}\Vert_{\infty}$
for different $n$.]{\label{fig:densities S_k}Estimated densities of standardized $\Vert U_{n,r_{n},\omega}^{(\varepsilon)}\Vert_{\infty}$
for different $n$ and $\mathbf{S}_{k}$, both for $k=5$.}
\end{figure}

We consider the covariance function of the processes $B_k$ given by
\begin{equation*}
\cov(B_{k}(f_{x_{1},k}),B_{k}(f_{x_{2},k}))=\Psi_{k}^{-1/2}(x_{1})\Psi_{k}^{-1/2}(x_{2})\mathbb{E}\left[K_{k}(x_{1},X_{1})K_{k}(x_{2},X_{1})\right],
\end{equation*}
see (\ref{eq:Gaussian process}).
The definition of $f_{x_{0},k}\in\mathcal{F}_{k}$ from (\ref{eq:f_x;n}) implies that the functions
only depend on $x_{0}$ via the cells $A_{k}(x_{0},\omega)$.
Since there are finitely many of these cells the function
class is finite with $\#\mathcal{F}_{k}=\mathcal N_{k}$ in the notations from Section~\ref{sec:rates}. Therefore, the covariance function can be characterized by a $\mathcal N_k\times\mathcal N_k$ matrix.

If $X$ is uniformly distributed, (\ref{eq:def p_x(omega)}) and (\ref{eq:px}) imply $p_{x_{0}}(\omega)=2^{-k}$
and  $p_{x_{0}}=2^{-k}$. For $K_{k}$ from (\ref{eq:RF Hajek kernel}) we obtain
$K_{k}(x_{0},x)=\mathbb{P}(x\in A_{k}(x_{0},\omega))2^{k}
$ and for $\Psi_{k}$ from (\ref{eq:def psi}) we have $\Psi_{k}(x_{0})=\mathcal{V}_{\cap,k}2^{2k}$. The covariance thus simplifies to
\[\cov(B_{k}(f_{x_{1},k}),B_{k}(f_{x_{2},k}))=\mathcal{V}_{\cap,k}^{-1}\mathbb{E}\left[\mathrm{vol}\left(A_{k}(x_{1},\omega_{1})\cap A_{k}(x_{2},\omega_{2})\right)\right]\]
from (\ref{eq:cov for sim}). To calculate the volume of the intersection of two cells recall that $S_{l}(x_{0},\omega)$ denotes the number of splits orthogonal to coordinate
$l$ used in the construction of the cell $A_{k}(x_{0},\omega)$.
\begin{lemma}
\label{lem:vol intersec}For $x_i=(x_i^{(1)},\dots,x_i^{(p)})\in[0,1]^p,i=1,2,$ we have
\begin{align*}
\mathrm{vol} & \left(A_{k}(x_{1},\omega_{1})\cap A_{k}(x_{2},\omega_{2})\right)\\
 & =2^{-\sum_{l=1}^{p}\max\{S_{l}(x_{1},\omega_{1}),S_{l}(x_{2},\omega_{2})\}}\\
 & \qquad\times\prod_{l=1}^{p}\mathbb{I}\left\{ \lfloor x_{1}^{(l)}2^{\min\{S_{l}(x_{1},\omega_{1}),S_{l}(x_{2},\omega_{2})\}}\rfloor=\lfloor x_{2}^{(l)}2^{\min\{S_{l}(x_{1},\omega_{1}),S_{l}(x_{2},\omega_{2})\}}\rfloor\right\} .
\end{align*}
\end{lemma}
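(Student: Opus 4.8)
The plan is to reduce this multivariate volume computation to a one-dimensional statement about dyadic intervals, exploiting that a centered partition produces \emph{product} cells whose $l$-th factor is a dyadic interval whose resolution is exactly the number of splits $S_l(x_0,\omega)$ in direction $l$. Write $J(y;s):=[\lfloor y2^{s}\rfloor 2^{-s},(\lfloor y2^{s}\rfloor+1)2^{-s})$ for the unique level-$s$ dyadic interval containing $y\in[0,1]$.

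First I would record the product structure of the cells. Since every split is orthogonal to a coordinate axis and always occurs at the midpoint, a bisection in direction $l$ affects only the $l$-th coordinate interval and replaces it by the half containing $x_0^{(l)}$. Hence, after the $k$ splits producing $A_k(x_0,\omega)$, the cell factorizes as
\[
A_k(x_0,\omega)=\prod_{l=1}^{p}J\big(x_0^{(l)};S_l(x_0,\omega)\big),
\]
where only the \emph{count} $S_l(x_0,\omega)$ enters, not the order in which the splits were performed, because each bisection in direction $l$ simply halves the $l$-th interval around $x_0^{(l)}$ and $s$ such halvings yield precisely $J(x_0^{(l)};s)$.

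Because both cells factorize over coordinates, their intersection factorizes as well: with $s_l^{(i)}:=S_l(x_i,\omega_i)$,
\[
A_k(x_1,\omega_1)\cap A_k(x_2,\omega_2)=\prod_{l=1}^{p}\Big(J\big(x_1^{(l)};s_l^{(1)}\big)\cap J\big(x_2^{(l)};s_l^{(2)}\big)\Big),
\]
so the volume is the product over $l$ of the one-dimensional intersection lengths. The key one-dimensional fact is the standard nesting property of dyadic intervals: two dyadic intervals at levels $s^{(1)}$ and $s^{(2)}$ are either nested or essentially disjoint, and the finer one (of length $2^{-\max\{s^{(1)},s^{(2)}\}}$) is contained in the coarser one exactly when they agree at the coarser resolution $\min\{s^{(1)},s^{(2)}\}$. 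This gives
\[
\mathrm{length}\Big(J\big(x_1^{(l)};s_l^{(1)}\big)\cap J\big(x_2^{(l)};s_l^{(2)}\big)\Big)=2^{-\max\{s_l^{(1)},s_l^{(2)}\}}\,\mathbb{I}\big\{\lfloor x_1^{(l)}2^{\min\{s_l^{(1)},s_l^{(2)}\}}\rfloor=\lfloor x_2^{(l)}2^{\min\{s_l^{(1)},s_l^{(2)}\}}\rfloor\big\}.
\]

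Finally I would multiply these one-dimensional lengths over $l=1,\dots,p$, collecting the powers of two into the exponent $-\sum_{l=1}^{p}\max\{S_l(x_1,\omega_1),S_l(x_2,\omega_2)\}$ and the indicators into the stated product, which is exactly the claimed formula. The only point requiring care—and the main (minor) obstacle—is a clean justification of the dyadic nesting claim, specifically that the coarser interval containing $x_1^{(l)}$ contains the finer interval if and only if the floor values agree at the coarser level; everything else is bookkeeping, and the boundary points where the half-open-interval convention matters form a Lebesgue-null set that does not affect the volume.
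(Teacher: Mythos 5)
Your proposal is correct and follows essentially the same route as the paper's proof: both rest on the coordinate-wise product structure of the cells (each factor being the level-$S_l$ dyadic interval around the corresponding coordinate) and on the dyadic nesting property, which yields the length $2^{-\max}$ together with the floor-agreement indicator at the coarser level $\min\{S_l(x_1,\omega_1),S_l(x_2,\omega_2)\}$. The only organizational difference is that the paper first reads off the volume of a nonempty intersection from \eqref{eq:vol intersec darst} via a common point and then separately characterizes nonemptiness through the coordinate projections, whereas you compute the one-dimensional intersection lengths directly and multiply; this is cosmetic, and your handling of the half-open-interval convention is on par with the paper's.
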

The proof is given below in Subsection \ref{appendix A.3}. 
The lemma reveals that the size of the intersection is determined by the maximum number of cuts per direction if the intersection is not empty. Furthermore, it is not empty if and only if the $l$-th components
of $x_{1}$ and $x_{2}$ are in the same cell of the grid with cell
size $2^{-\min(S_{l}(x_{1},\omega_{1}),S_{l}(x_{2},\omega_{2})}$
for all $l\in[p]$. We can thus calculate the
size of the intersection based on the number of splits per direction in the partitions created by $\omega_{1}$ and $\omega_{2}$. For a fixed $x_{1},x_{2}$ we only need to simulate the number of cuts per direction.

Next we characterize the positions in the covariance matrix with equal entries, i.e., conditions on $x_1,\dots,x_4\in[0,1]^p$ such that
\[
\E\big[\mathrm{vol}\left(A_{k}(x_{1},\omega_{1})\cap A_{k}(x_{2},\omega_{2})\right)\big]=\E\big[\mathrm{vol}\left(A_{k}(x_{3},\omega_{1})\cap A_{k}(x_{4},\omega_{2})\right)\big].
\]

\begin{lemma}
\label{lem:equal cov mat entries}Let $\Pi(p)$ be the permutations of $\{1,\ldots,p\}$. We have
\begin{align*}
\mathbb{P} & \left(X_{1}\in A_{k}(x_{1},\omega_{1})\cap A_{k}(x_{2},\omega_{2})\right)=\mathbb{P}\left(X_{1}\in A_{k}(x_{3},\omega_{1})\cap A_{k}(x_{4},\omega_{2})\right)\\
 & \Leftrightarrow\exists\pi\in\Pi(p)\,\forall l\in[p]:\max\big\{t\in\{0,\ldots k\}:\lfloor x_{1}^{(l)}2^{t}\rfloor=\lfloor x_{2}^{(l)}2^{t}\rfloor\big\}\\
 & \qquad\qquad\qquad\qquad\qquad\qquad=\max\big\{t\in\{0,\ldots k\}:\lfloor x_{3}^{(\pi(l))}2^{t}\rfloor=\lfloor x_{4}^{(\pi(l))}2^{t}\rfloor\big\},
\end{align*}
where the probability is taken over the joint distribution of $X_1,\omega_1,\omega_2$.
\end{lemma}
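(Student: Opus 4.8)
The plan is to reduce the two probabilities to evaluations of a single symmetric function of the dyadic agreement levels. Since throughout this subsection $X$ is uniformly distributed and independent of $\omega_1,\omega_2$, I would first write $\mathbb{P}(X_1\in A_k(x_1,\omega_1)\cap A_k(x_2,\omega_2))=\E[\mathrm{vol}(A_k(x_1,\omega_1)\cap A_k(x_2,\omega_2))]$ and insert the expression from Lemma~\ref{lem:vol intersec}. Abbreviating the agreement level $T_l(x,y):=\max\{t\in\{0,\dots,k\}:\lfloor x^{(l)}2^t\rfloor=\lfloor y^{(l)}2^t\rfloor\}$ (which is exactly the quantity appearing on the right-hand side of the claim), the task becomes to show that this expectation depends on $(x_1,x_2)$ only through the vector $(T_1(x_1,x_2),\dots,T_p(x_1,x_2))$, and in a way that is symmetric in its entries.

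The key observation is a nesting property of dyadic intervals: a pair of points lying in a common dyadic interval of length $2^{-m}$ also lies in a common dyadic interval of every coarser length $2^{-m'}$, $m'\le m$. Consequently, for $0\le m\le k$ one has $\mathbb{I}\{\lfloor x_1^{(l)}2^{m}\rfloor=\lfloor x_2^{(l)}2^{m}\rfloor\}=\mathbb{I}\{m\le T_l(x_1,x_2)\}$. Applying this with $m=\min\{S_l(x_1,\omega_1),S_l(x_2,\omega_2)\}$ converts the indicator product in Lemma~\ref{lem:vol intersec} into $\prod_l\mathbb{I}\{\min\{S_l(x_1,\omega_1),S_l(x_2,\omega_2)\}\le T_l(x_1,x_2)\}$. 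Since by homogeneity the joint law of the split counts $(S_1(x_0,\omega),\dots,S_p(x_0,\omega))$ does not depend on $x_0$, I may replace $(S_l(x_1,\omega_1))_l$ and $(S_l(x_2,\omega_2))_l$ by two independent copies $S^{(1)},S^{(2)}$ of this law and obtain
\[
\mathbb{P}(X_1\in A_k(x_1,\omega_1)\cap A_k(x_2,\omega_2))=\Phi\big(T_1(x_1,x_2),\dots,T_p(x_1,x_2)\big),
\]
where $\Phi(\tau):=\E\big[2^{-\sum_l\max\{S_l^{(1)},S_l^{(2)}\}}\prod_l\mathbb{I}\{\min\{S_l^{(1)},S_l^{(2)}\}\le\tau_l\}\big]$ for $\tau\in\{0,\dots,k\}^p$. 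The same identity holds for $(x_3,x_4)$, so the claim reduces to a statement about $\Phi$: the two probabilities coincide if and only if the vectors $(T_l(x_1,x_2))_l$ and $(T_l(x_3,x_4))_l$ agree up to a permutation.

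The direction ``$\Leftarrow$'' is then immediate from symmetry. For both splitting mechanisms considered here (the uniform and the Ehrenfest CPRF), the distribution of the split directions is invariant under permutations of the coordinate axes, so the vector $(S_1,\dots,S_p)$ is exchangeable and $\Phi$ is a symmetric function of $\tau$. Hence, whenever $T_l(x_1,x_2)=T_{\pi(l)}(x_3,x_4)$ for all $l$ and some $\pi\in\Pi(p)$, relabelling the product over $l$ yields $\Phi((T_l(x_1,x_2))_l)=\Phi((T_l(x_3,x_4))_l)$, which is the asserted equality of probabilities.

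The hard part is the converse ``$\Rightarrow$'', which amounts to showing that $\Phi$ separates multisets: if $\Phi(\tau)=\Phi(\tau')$ then $\tau$ and $\tau'$ are permutations of one another. My plan is to argue by contraposition on the sorted vectors $\tau_{(1)}\ge\cdots\ge\tau_{(p)}$, isolating the contribution of the configurations $\{\min\{S_l^{(1)},S_l^{(2)}\}=\tau_l\}$ in a suitable finite difference of $\Phi$ and showing it is strictly positive whenever such configurations carry positive probability. The essential input is the full support of the split-count law — a symmetric multinomial for the uniform CPRF, and the admissible range for the Ehrenfest CPRF — which prevents distinct multisets from producing identical weighted volumes; the degenerate case $p=1$ (deterministic partition) is handled separately. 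I expect this separation property to be the main obstacle, because $\Phi$ is only strictly monotone in each single coordinate and symmetry alone does not preclude coincidences between incomparable sorted vectors, so the argument must genuinely exploit the explicit $2^{-\max}$ weighting together with the exchangeable full-support structure of the split counts.
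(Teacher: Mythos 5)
Your reduction of both probabilities to a single symmetric function $\Phi$ of the dyadic agreement vector, and your proof of the direction ``$\Leftarrow$'' via exchangeability of the split counts, is in substance exactly the paper's argument: the paper assumes without loss of generality (by symmetry of the CPRF under coordinate permutations) that the two agreement vectors coincide, expands $\E[\mathrm{vol}(A_k(x_1,\omega_1)\cap A_k(x_2,\omega_2))]$ over the values of $S(x_1,\omega_1)$ and $S(x_2,\omega_2)$ using Lemma~\ref{lem:vol intersec}, and observes that the resulting indicators depend on $(x_1,x_2)$ only through the agreement levels. Up to this point your proposal is correct and matches the paper.

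The gap is the converse. You rightly flag ``$\Rightarrow$'' as the hard part, but you only offer a plan (a contraposition on sorted vectors exploiting full support of the split law), and that plan cannot be completed in the stated generality because the separation property of $\Phi$ fails whenever both probabilities vanish. Concretely, for the Ehrenfest CPRF Proposition~\ref{prop:CharacteristicsEhrenfest} gives $S_{l}(x_0,\omega)\geq k/p-C_{\Delta,B}^{(2)}$ for every realization, so if both agreement vectors have some entry strictly below this bound, then $\min\{S_l(\omega_1),S_l(\omega_2)\}>\mathbf{c}_l$ almost surely and both probabilities equal zero, with no permutation matching the two vectors; the case $p=1$ (where $S_1\equiv k$) is an even simpler counterexample. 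Even for the uniform CPRF, strict monotonicity of $\Phi$ in each coordinate does not by itself rule out coincidences between incomparable sorted vectors, as you yourself note. You should be aware that the paper's own proof also establishes only the ``$\Leftarrow$'' implication, which is the only direction used downstream (to group equal covariance entries in Algorithm~\ref{alg:Cov Estimation}); so either restrict the claim to that direction or supply additional hypotheses under which your separation argument can actually be closed.
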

The proof can be found in Subsection \ref{appendix A.3}. 
The lemma implies that the covariance is determined by the order statistic of
\begin{equation*}
\mathfrak{C}_{k}(x_{1},x_{2}):=\left(\max\{t\in\{0,\ldots k\}:\lfloor x_{1}^{(l)}2^{t}\rfloor=\lfloor x_{2}^{(l)}2^{t}\rfloor\}\right)_{l=1}^{p}\in\{0,\ldots,k\}^{p}.
\end{equation*}
Let $\mathfrak{C}_{k}^{(\uparrow)}(x_{1},x_{2})$ denote the order
statistic of the above vector, that is the increasingly ordered version
of $\mathfrak{C}_{k}(x_{1},x_{2})$. In the unordered vector above
each of the entries corresponds to the finest $2^{-t}$-grid on $[0,1]$
in which both $x_{1}^{(l)}$ and $x_{2}^{(l)}$ are in the same cell.
We can interpret the vector $\mathfrak{C}_{k}(x_{1},x_{2})$ as the componentwise closeness of $x_{1}$ and $x_{2}$ in this grid. Larger entries imply that they are closer, and if $x_{1}$ and $x_{2}$ are in the same undividable cell, we have $\mathfrak{C}_{k}(x_{0},x_{0})=(k,\ldots,k)$.
Denoting
\[
\Omega_{S}:=\left\{ (t_{1},\ldots,t_{p})\in\{0,\ldots k\}^p\mid t_{1}\leq t_{2}\leq\ldots\leq t_{p}\right\},
\]
we have $\mathfrak{C}_{k}^{(\uparrow)}(x_{1},x_{2})\in\Omega_{S}$
for all $x_{1},x_{2}\in[0,1]^{p}$. Hence, there are at most $\#\Omega_{S}=\binom{k+p}{p}$ distinct entries in the covariance matrix. 

Another implication of these two lemmas is that the volume of the intersection does not directly depend on $x_{1}$ and $x_{2}$ but
rather their relation to each other. We note that
\begin{align*}
\mathrm{vol} & \left(A_{k}(x_{1},\omega_{1})\cap A_{k}(x_{2},\omega_{2})\right)\\
 & =2^{-\sum_{l=1}^{p}\max\{S_{l}(x_{1},\omega_{1}),S_{l}(x_{2},\omega_{2})\}}\prod_{l=1}^{p}\mathbb{I}\left\{ \min\{S_{l}(x_{1},\omega_{1}),S_{l}(x_{2},\omega_{2})\}\leq\mathfrak{C}_{k}^{(l)}(x_{1},x_{2})\right\}.
\end{align*}
Together with the homogeneity $S_l(x_1,\omega)\overset{d}{=}S_l(x_2,\omega)$ for all $l\in[p]$ and $x_1,x_2\in[0,1]^p$ and independence of $\omega_1$ and $\omega_2$ we have
\begin{align*}
\mathrm{vol}  \left(A_{k}(x_{1},\omega_{1})\cap A_{k}(x_{2},\omega_{2})\right)&\stackrel{d}{=}\mathbb{V}_{\cap}\left(\mathbf{c},S(\omega_{1}),S(\omega_{2})\right)\notag\\
&:=2^{-\sum_{l=1}^{p}\max\{S_{l}(\omega_{1}),S_{l}(\omega_{2})\}}\prod_{l=1}^{p}\mathbb{I}\left\{ \min\{S_{l}(\omega_{1}),S_{l}(\omega_{2})\}\leq\mathbf{c}_{l}\right\}.
\end{align*}
for $\mathbf c=\mathfrak{C}_{k}(x_{1},x_{2})$ and  $S(\omega):=(S_{l}(\omega))_{l=1}^{p}\overset{d}{=}(S_{l}(x_{0},\omega))_{l=1}^{p}$ for a some arbitrary fixed $x_0\in[0,1]^p$.

We can use this to make another observation regarding the covariance.
\begin{lemma}\label{lem:max split per dir}
  If $\cov(B_{k}(f_{x_{1},k}),B_{k}(f_{x_{2},k}))=1$ for some $x_1,x_2\in[0,1]^p$, then $\mathbb{P}\left(S_{l}(\omega_{1})\leq \mathbf c_l\right)=1$ for all $l\in[p]$ with $\mathbf{c}=\mathfrak{C}_{k}(x_{1},x_{2})$.
\end{lemma}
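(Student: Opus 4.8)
The plan is to convert the correlation condition into an equality of expected intersection volumes and then exploit a sign argument. First I would record that each standardized variable $B_k(f_{x_0,k})$ has unit variance: by the covariance formula \eqref{eq:Gaussian process} and the definition \eqref{eq:def psi} of $\Psi_k$ we have $\cov(B_k(f_{x_0,k}),B_k(f_{x_0,k}))=\Psi_k^{-1}(x_0)\E[K_k(x_0,X_1)^2]=1$. Hence the hypothesis $\cov(B_k(f_{x_1,k}),B_k(f_{x_2,k}))=1$ is a perfect correlation. In the uniform setting of this subsection the covariance simplifies through \eqref{eq:cov for sim} to $\mathcal{V}_{\cap,k}^{-1}\E[\mathrm{vol}(A_k(x_1,\omega_1)\cap A_k(x_2,\omega_2))]$, so that the assumption reads $\E[\mathrm{vol}(A_k(x_1,\omega_1)\cap A_k(x_2,\omega_2))]=\mathcal{V}_{\cap,k}$. (For a general density the same equality follows from the Cauchy--Schwarz equality case applied to the unit-norm kernels $\Psi_k^{-1/2}(x_i)K_k(x_i,\cdot)$ in $L^2(f_X)$.)

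Next I would invoke the distributional identity established just before the lemma, $\mathrm{vol}(A_k(x_1,\omega_1)\cap A_k(x_2,\omega_2))\overset{d}{=}\mathbb{V}_{\cap}(\mathbf{c},S(\omega_1),S(\omega_2))$ with $\mathbf{c}=\mathfrak{C}_k(x_1,x_2)$, together with the observation that the choice $x_1=x_2$ yields $\mathbf{c}=(k,\dots,k)$ and recovers $\mathcal{V}_{\cap,k}=\E[2^{-\sum_{l}\max\{S_l(\omega_1),S_l(\omega_2)\}}]$, since then every indicator in $\mathbb{V}_{\cap}$ equals one. Subtracting the two representations, the assumption becomes
\[
\E\Big[2^{-\sum_{l=1}^p\max\{S_l(\omega_1),S_l(\omega_2)\}}\Big(\prod_{l=1}^p\mathbb{I}\{\min\{S_l(\omega_1),S_l(\omega_2)\}\le\mathbf{c}_l\}-1\Big)\Big]=0.
\]
Here the exponential prefactor is strictly positive while the product of $0/1$ indicators minus one is always nonpositive, so the integrand is a nonpositive random variable with vanishing expectation and thus equals zero almost surely. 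As the prefactor never vanishes, the product of indicators must equal one almost surely, which forces $\min\{S_l(\omega_1),S_l(\omega_2)\}\le\mathbf{c}_l$ almost surely for every $l\in[p]$.

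Finally I would upgrade this bound on the minimum to the claimed bound on $S_l(\omega_1)$ alone by using that $\omega_1$ and $\omega_2$ are independent copies, so that $S_l(\omega_1)$ and $S_l(\omega_2)$ are i.i.d. If $\P(S_l(\omega_1)>\mathbf{c}_l)>0$ held for some $l$, then by independence $\P(\min\{S_l(\omega_1),S_l(\omega_2)\}>\mathbf{c}_l)=\P(S_l(\omega_1)>\mathbf{c}_l)^2>0$, contradicting the almost sure bound just obtained; hence $\P(S_l(\omega_1)\le\mathbf{c}_l)=1$ for all $l$, as claimed. I expect this last step to be the main obstacle conceptually: the sign argument only controls the \emph{minimum} of the two independent split counts, and it is precisely the i.i.d.\ structure of $\omega_1,\omega_2$ that converts this into the pointwise bound on $S_l(\omega_1)$ itself. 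A secondary point demanding care is the bookkeeping that identifies $\mathcal{V}_{\cap,k}$ with the $\mathbf{c}=(k,\dots,k)$ instance of $\mathbb{V}_{\cap}$, which is what makes the difference of expectations collapse to the clean nonpositive integrand above.
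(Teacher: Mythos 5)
Your proposal is correct and follows essentially the same route as the paper: equate $\mathbb{E}[\mathrm{vol}(A_{k}(x_{1},\omega_{1})\cap A_{k}(x_{2},\omega_{2}))]$ with $\mathcal{V}_{\cap,k}=\mathbb{E}[\mathbb{V}_{\cap}(\{k\}^{p},S(\omega_{1}),S(\omega_{2}))]$, use the nonpositivity of the indicator product minus one to force $\min\{S_{l}(\omega_{1}),S_{l}(\omega_{2})\}\leq\mathbf{c}_{l}$ almost surely, and then use that $\omega_{1},\omega_{2}$ are i.i.d.\ to pass from the minimum to $S_{l}(\omega_{1})$ itself. You merely make two steps more explicit than the paper does (the sign argument and the squaring of the tail probability), which is fine.
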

The proof can be found in Subsection \ref{appendix A.3}. Since the axes are interchangeable in the feature space and in the partitioning algorithm, there is a $c^{*}\leq k$ such that 
\[
\mathbb{P}\left(S_{l}(\omega_{1})\leq c^*\right)=1\quad\text{for all } l\in\{1,\ldots,p\}.
\]
In particular, the CPRF estimator is constant on all cells in the $2^{-c^{*}}$grid because these cells
are never split by any partition.


\subsection{Simulation of $\mathbf{S}_{k}$\label{subsec:Estimation cov mat}}


To calculate the quantiles of the distribtuion of $\mathbf{S}_{k}\overset{d}{=}
\sup_{f\in\mathcal{F}_k}|B_kf|
$, we first have to sample trajectories of the centered Gaussian process $B_k$ with covariance structure from (\ref{eq:cov for sim}). Throughout we consider the uniformly distributed features $X\sim\mathcal U([0,1]^p)$.

Before we sample $B_k$, we have to approximate the covariance. Let us fix $x_1,x_2\in[0,1]^p$. We need to simulate a large number of independent vectors $S(x_{1},\omega_{1})\in[p]^k$ and $S(x_{2},\omega_{2})\in[p]^k$ containing the number of splits per direction for two cells $A_{k}(x_{1},\omega_{1})$ and $A_{k}(x_{2},\omega_{2})$, respectively. Lemma~\ref{lem:vol intersec} then allows us to compute the volume of their intersection. An empirical mean over these volumes yields an approximation of the expected volume. For $x_1=x_2$ we obtain $\mathcal{V}_{\cap,k}$ and for $x_1\neq x_2$ we obtain the covariance $\cov(B_{k}(f_{x_{1},k}),B_{k}(f_{x_{2},k}))$.

Lemma \ref{lem:equal cov mat entries} shows that there are only $\binom{k+p}{p}$ distinct covariance values for $x_{1},x_{2}\in[0,1]^{p}$ and instead of calculating the volume of the intersection for all pairs
of $x_{1}$ and $x_{2}$ it suffices to calculate $\mathbb{V}_{\cap}\left(\mathbf{c},S(\omega_{1}),S(\omega_{2})\right)$ for all closeness vectors $\mathbf{c}\in\Omega_{S}$.
Algorithm \ref{alg:Cov Estimation} describes the numerical approximation of the covariances
\[
\cov(\mathbf{c}):=\frac{1}{\mathcal{V}_{\cap,k}}\mathbb{E}\left[\mathbb{V}_{\cap}\left(\mathbf{c},S(\omega_{1}),S(\omega_{2})\right)\right],\qquad\mathbf{c}\in\Omega_{S}.
\]
To order the elements of $\Omega_{S}$, we use some fixed bijection
\[
\tau\colon\Omega_{S}\to\left\{ 1,\ldots,\binom{k+p}{p}\right\} .
\]
\begin{algorithm}[t]
\caption{\label{alg:Cov Estimation}Covariance approximation}

\begin{algorithmic}[1]
\Input Number of partitions $n_{P}$, number of splits $k$, split distribution of $\omega$.
\State $V_{cum}=\{0\}^{\binom{k+p}{p}}$
\For{$j=1,\ldots,n_{P}$}
\State Simulate $S(\omega_{1}) \in \{0,\ldots,k\}^p$ according to the cell split distribution.
\State Simulate $S(\omega_{2}) \in \{0,\ldots,k\}^p$ according to the cell split distribution.
\For{$i=1,\ldots,\binom{k+p}{p}$ }
\State $V_{cum}(i)=V_{cum}(i)+\mathbb{V}_{\cap}(\tau^{-1}(i),S(\omega_{1}),S(\omega_{2}))$
\EndFor
\EndFor
\State $\bar{V}=V_{cum}/n_{P}$
\State $\hat{\mathcal{V}}_{\cap,k}=\bar{V}(\tau(\{k\}^p))$
\State $\hat{C}=\bar{V}/\hat{\mathcal{V}}_{\cap,k}$
\State \Return $\hat{C}$ and $\hat{\mathcal{V}}_{\cap,k}$
\Output Estimators $\hat{C}(\tau(\mathbf{c}))$ of $\cov(\mathbf{c})$ for $\mathbf{c} \in \Omega_S$ and $\hat{\mathcal{V}}_{\cap,k}$ of $\mathcal{V}_{\cap,k}$.
\end{algorithmic}
\end{algorithm}
The algorithm generates approximations $\hat{C}(\tau(\mathbf{c}))$ for $\cov(\mathbf{c})$
and an approximation for $\mathcal{V}_{\cap,k}$ via
$\hat{\mathcal{V}}_{\cap,k}=\bar{V}\left(\tau(\{k\}^{p})\right)$,
due to $\mathfrak{C}_{k}(x_{0},x_{0})=\{k\}^{p}$.

To (approximately) generate the Gaussian process $B_k$ on a grid
\[
G_{\tilde{k}}:=\{a2^{-\tilde{k}}+2^{-\tilde{k}+1}\mid a\in\{0,\ldots,2^{\tilde{k}}-1\}\}^{p}
\]
for some $\tilde k\le k$, i.e. to sample a centered multivariate Gaussian vector with covariance matrix  
\[
\Sigma_{\tilde{k}}:=\frac{1}{\mathcal{V}_{\cap,k}}\left(\mathbb{E}\left[\mathrm{vol}\left(A_{k}(x_{1},\omega_{1})\cap A_{k}(x_{2},\omega_{2})\right)\right]\right)_{x_{1},x_{2}\in G_{\tilde{k}}},
\]
we approximate the latter with
\[
\hat{\Sigma}_{\tilde{k}}:=\left(\hat{C}\left(\tau\left(\mathfrak{C}_{k}^{(\uparrow)}(x_{1},x_{2})\right)\right)\right)_{x_{1},x_{2}\in G_{\tilde{k}}}
\]
and apply a Cholesky decomposition $\hat\Sigma_{\tilde k}=\hat L\hat L^{T}$ to obtain $\hat LZ\sim\mathcal N(0,\hat\Sigma_{\tilde k})$ for some standard $\tilde kp$-dimensional normal random vector $Z$.

It may be the case that the estimated covariance $\hat{\Sigma}_{\tilde{k}}$
is not positive semidefinite, which is necessary for the Cholesky decomposition. Therefore, we slightly adjust the estimated covariance matrix by adding a small constant times  the identity matrix to the approximated covariance matrix before computing the Cholesky decomposition. If this is not sufficient to get a positive semidefinite matrix, we use a reconstruction of the covariance matrix using only the eigenvalues that are larger than a small constant:
Let $Q_{\tilde{k}}$ denote a matrix with the eigenvectors of $\hat{\Sigma}_{\tilde{k}}$
as its columns and let $(\lambda_{i})_{i=1}^{2^{p\tilde{k}}}$ be
the corresponding eigenvalues. If $\Lambda_{\epsilon}$ denotes the diagonal
matrix with entries $(\max\{\lambda_{i},\epsilon\})_{i=1}^{2^{p\tilde{k}}}$, we may replace $\hat{\Sigma}_{\tilde{k}}$ by
\[
\hat{\Sigma}_{\tilde{k},2}:=Q_{\tilde{k}}\Lambda_{\epsilon}Q_{\tilde{k}}^{T}.
\]

It remains to discuss how we choose $\tilde{k}$ and thereby the grid for the simulation of the Gaussian process. A good choice of $\tilde k$ should reflect the trade-off between a sufficiently fine grid, to ensure a good approximation of the supremum 
$\sup_{f\in\mathcal{F}_k}|B_kf|$,
and numerical feasibility since the dimension of the covariance matrix grows like $\tilde kp$. 
A natural choice is the set $\mathcal{X}_{k}$ that contains one point in every undividable cell. We know that
a $2^{k}$-grid would always contain one point in every undividable cell, which is however computationally very expensive. As a way out, Lemma~\ref{lem:max split per dir}
can be used to determine which grid is fine enough. Since we estimate
the covariance for all $\mathbf{c}\in\Omega_{S}$ we can check which
estimated covariance entries are equal to one. If there is a $c^{*}\in\{0,\ldots,k\}$
with
\[
\mathbb{P}\left(S_{l}(\omega_{1})\leq c^{*}\right)=1\;\forall l\in\{1,\ldots,p\},
\]
the grid with $\tilde{k}=c^{*}$ is fine enough because its grid cells
are undividable.
This implies that all points in these cells correspond to the same
function in $\mathcal{F}_{k}$. Thus, it suffices to simulate the
Gaussian process on this grid to estimate the distribution of the
supremum. An equivalent perspective is, that the estimator and the
stochastic error are constant on these cells and thus it is sufficient
to check one point in each cell to calculate the supremum of the stochastic
error.

In practice, we can choose $\tilde{k}$ such that the covariance of
the Gaussian process at $x_{1}$ and $x_{2}$ with $\mathfrak{C}_{k}(x_{1},x_{2})=\{\tilde{k}\}^{p}$,
which is
\[
\frac{1}{\mathcal{V}_{\cap,k}}\mathbb{E}\left[\mathbb{V}_{\cap}\left(\{\tilde{k}\}^{p},S(\omega_{1}),S(\omega_{2})\right)\right],
\]
is reasonably close to one. In this case the probability $\mathbb{P}(S_{l}(\omega_{1})\leq\tilde{k})$
should also be close to one for every $l$. With this method we can
determine the finest grid spacing that is necessary. If this grid
is computationally feasible we choose $\tilde{k}$ accordingly. Otherwise,
we can use the method to get an idea how good the best computationally
feasible grid is.

\subsection{Proofs}\label{appendix A.3}

\begin{proof}[Proof of Lemma \ref{lem:vol intersec}]

We start with the case $A_{k}(x_{1},\omega_{1})\cap A_{k}(x_{2},\omega_{2})\neq\emptyset$.
Thus, there exists $x_{0}\in A_{k}(x_{1},\omega_{1})\cap A_{k}(x_{2},\omega_{2})$.
Equation (\ref{eq:vol intersec darst}) yields
\begin{align*}
\mathrm{vol}\left(A_{k}(x_{1},\omega_{1})\cap A_{k}(x_{2},\omega_{2})\right) & =\mathrm{vol}\left(A_{k}(x_{0},\omega_{1})\cap A_{k}(x_{0},\omega_{2})\right)\\
 & =2^{-\sum_{l=1}^{p}\max\{S_{l}(x_{0},\omega_{1}),S_{l}(x_{0},\omega_{2})\}}\\
 & =2^{-\sum_{l=1}^{p}\max\{S_{l}(x_{1},\omega_{1}),S_{l}(x_{2},\omega_{2})\}}.
\end{align*}
It remains to prove that
\begin{align}
\mathbb{I} & \left\{ A_{k}(x_{1},\omega_{1})\cap A_{k}(x_{2},\omega_{2})\neq\emptyset\right\} \nonumber \\
 & =\prod_{l=1}^{p}\mathbb{I}\left\{ \lfloor x_{1}^{(l)}2^{\min\{S_{l}(x_{1},\omega_{1}),S_{l}(x_{2},\omega_{2})\}}\rfloor=\lfloor x_{2}^{(l)}2^{\min\{S_{l}(x_{1},\omega_{1}),S_{l}(x_{2},\omega_{2})\}}\rfloor\right\} .\label{eq:lim obj lem 1 claim}
\end{align}
The projection of the cell $A_{k}(x_{0},\omega)$ on the $l$-th coordinate is given by
\begin{align}
A_{k}^{(l)}(x_{0},\omega):=\{\xi\in[0,1]:\exists x\in A_{k}(x_{0},\omega),x^{(l)}=\xi\}\notag\\
=2^{-S_{l}(x_{0},\omega)}\big(\lfloor x_{0}^{(l)}2^{S_{l}(x_{0},\omega)}\rfloor,\lfloor x_{0}^{(l)}2^{S_{l}(x_{0},\omega)}\rfloor+1\big].\label{eq:cell interval}
\end{align}
We have that
\begin{equation}
\mathbb{I}\left\{ A_{k}(x_{1},\omega_{1})\cap A_{k}(x_{2},\omega_{2})\neq\emptyset\right\} =\prod_{l=1}^{p}\mathbb{I}\left\{ A_{k}^{(l)}(x_{1},\omega_{1})\cap A_{k}^{(l)}(x_{2},\omega_{2})\neq\emptyset\right\} .\label{eq:lim obj lem 1 proof eq 1}
\end{equation}
In view of \eqref{eq:cell interval} the dyadic construction of the interval boundaries yields
\begin{align*}
A_{k}^{(l)} & (x_{1},\omega_{1})\cap A_{k}^{(l)}(x_{2},\omega_{2})\neq\emptyset\\
 & \Leftrightarrow\left(A_{k}^{(l)}(x_{1},\omega_{1})\subset A_{k}^{(l)}(x_{2},\omega_{2})\right)\text{ or }\left(A_{k}^{(l)}(x_{1},\omega_{1})\supset A_{k}^{(l)}(x_{2},\omega_{2})\right).
\end{align*}
Therefore, both $x_{1}^{(l)}$ and $x_{2}^{(l)}$ are contained in the larger of both cells. We conclude from \eqref{eq:cell interval}
\begin{align*}
A_{k}^{(l)} & (x_{1},\omega_{1})\cup A_{k}^{(l)}(x_{2},\omega_{2})\\
 & =2^{-\min\{S_{l}(x_{1},\omega_{1}),S_{l}(x_{2},\omega_{2})\}}\big[\lfloor x_{1}^{(l)}2^{\min\{S_{l}(x_{1},\omega_{1}),S_{l}(x_{2},\omega_{2})\}}\rfloor,\lfloor x_{1}^{(l)}2^{\min\{S_{l}(x_{1},\omega_{1}),S_{l}(x_{2},\omega_{2})\}}\rfloor+1\big).
\end{align*}
Together this implies
\begin{align*}
A_{k}^{(l)} & (x_{1},\omega_{1})\cap A_{k}^{(l)}(x_{2},\omega_{2})\neq\emptyset\\
 & \Leftrightarrow\left(A_{k}^{(l)}(x_{1},\omega_{1})\subset A_{k}^{(l)}(x_{2},\omega_{2})\right)\vee\left(A_{k}^{(l)}(x_{1},\omega_{1})\supset A_{k}^{(l)}(x_{2},\omega_{2})\right)\\
 & \Leftrightarrow x_{1}^{(l)},x_{2}^{(l)}\in2^{-\min\{S_{l}(x_{1},\omega_{1}),S_{l}(x_{2},\omega_{2})\}}\\
 & \qquad\times\big[\lfloor x_{1}^{(l)}2^{\min\{S_{l}(x_{1},\omega_{1}),S_{l}(x_{2},\omega_{2})\}}\rfloor,\lfloor x_{1}^{(l)}2^{\min\{S_{l}(x_{1},\omega_{1}),S_{l}(x_{2},\omega_{2})\}}\rfloor+1\big)\\
 & \Leftrightarrow\lfloor x_{1}^{(l)}2^{\min\{S_{l}(x_{1},\omega_{1}),S_{l}(x_{2},\omega_{2})\}}\rfloor=\lfloor x_{2}^{(l)}2^{\min\{S_{l}(x_{1},\omega_{1}),S_{l}(x_{2},\omega_{2})\}}\rfloor.
\end{align*}
Plugging this into (\ref{eq:lim obj lem 1 proof eq 1}) we obtain
(\ref{eq:lim obj lem 1 claim}).
\end{proof}

\begin{proof}[Proof of Lemma \ref{lem:equal cov mat entries}]

We can assume that
\begin{align*}
\big(\max & \{t\in\{0,\ldots k\}:\lfloor x_{1}^{(l)}2^{t}\rfloor=\lfloor x_{2}^{(l)}2^{t}\rfloor\}\big)_{l=1}^{p}\\
 & =\big(\max\{t\in\{0,\ldots k\}:\lfloor x_{3}^{(l)}2^{t}\rfloor=\lfloor x_{4}^{(l)}2^{t}\rfloor\}\big)_{l=1}^{p}
\end{align*}
without loss of generality because the CPRF is symmetric. For $s_{1},s_{2}\in\{0,\ldots,k\}^{p}$
we denote $s_{1}=(s_{1}(l))_{l=1}^{p}$ and $s_{2}$ analogously.
For $S(x,\omega)=(S_{l}(x,\omega))_{l=1}^{p}$ we obtain
\begin{align*}
\mathbb{P} & \left(X_{1}\in A_{k}(x_{1},\omega_{1})\cap A_{k}(x_{2},\omega_{2})\right)\\
 & =\mathbb{E}\left[\mathrm{vol}\left(A_{k}(x_{1},\omega_{1})\cap A_{k}(x_{2},\omega_{2})\right)\right]\\
 & =\mathbb{E}\Big[2^{-\sum_{l=1}^{p}\max\{S_{l}(x_{1},\omega_{1}),S_{l}(x_{2},\omega_{2})\}}\\
 & \qquad\times\prod_{l=1}^{p}\mathbb{I}\left\{ \lfloor x_{1}^{(l)}2^{\min\{S_{l}(x_{1},\omega_{1}),S_{l}(x_{2},\omega_{2})\}}\rfloor=\lfloor x_{2}^{(l)}2^{\min\{S_{l}(x_{1},\omega_{1}),S_{l}(x_{2},\omega_{2})\}}\rfloor\right\} \Big]\\
 & =\sum_{s_{1},s_{2}\in\{0,\ldots,k\}^{p}}\mathbb{P}\left(S(x_{1},\omega_{1})=s_{1},S(x_{2},\omega_{2})=s_{2}\right)\\
 & \qquad\times2^{-\sum_{l=1}^{p}\max\{s_{1}(l),s_{2}(l)\}}\prod_{l=1}^{p}\mathbb{I}\left\{ \lfloor x_{1}^{(l)}2^{\min\{s_{1}(l),s_{2}(l)\}}\rfloor=\lfloor x_{2}^{(l)}2^{\min\{s_{1}(l),s_{2}(l)\}}\rfloor\right\} \\
 & =\sum_{s_{1},s_{2}\in\{0,\ldots,k\}^{p}}\mathbb{P}\left(S(x_{3},\omega_{1})=s_{1},S(x_{4},\omega_{2})=s_{2}\right)\\
 & \qquad\times2^{-\sum_{l=1}^{p}\max\{s_{1}(l),s_{2}(l)\}}\prod_{l=1}^{p}\mathbb{I}\left\{ \lfloor x_{3}^{(l)}2^{\min\{s_{1}(l),s_{2}(l)\}}\rfloor=\lfloor x_{4}^{(l)}2^{\min\{s_{1}(l),s_{2}(l)\}}\rfloor\right\} \\
 & =\mathbb{P}\left(X_{1}\in A_{k}(x_{3},\omega_{1})\cap A_{k}(x_{4},\omega_{2})\right)
\end{align*}
because for all $l\in[p]$ and $t\in\{0,\ldots k\}$ we know that
\[
\mathbb{I}\left\{ \lfloor x_{1}^{(l)}2^{t}\rfloor=\lfloor x_{2}^{(l)}2^{t}\rfloor\right\} =\mathbb{I}\left\{ \lfloor x_{1}^{(l)}2^{t}\rfloor=\lfloor x_{2}^{(l)}2^{t}\rfloor\right\}
\]
and in particular,
\begin{align*}
\mathbb{I} & \left\{ \lfloor x_{1}^{(l)}2^{\min\{s_{1}(l),s_{2}(l)\}}\rfloor=\lfloor x_{2}^{(l)}2^{\min\{s_{1}(l),s_{2}(l)\}}\rfloor\right\} \\
 & =\mathbb{I}\left\{ \lfloor x_{1}^{(l)}2^{\min\{s_{1}(l),s_{2}(l)\}}\rfloor=\lfloor x_{2}^{(l)}2^{\min\{s_{1}(l),s_{2}(l)\}}\rfloor\right\} .\qedhere
\end{align*}
\end{proof}

\begin{proof}[Proof of Lemma~\ref{lem:max split per dir}]
If the covariance from (\ref{eq:cov for sim}) is equal to one we have 
\[
\mathbb{E}\left[\mathrm{vol}\left(A_{k}(x_{1},\omega_{1})\cap A_{k}(x_{2},\omega_{2})\right)\right]=\mathcal{V}_{\cap,k}=\mathbb{E}\left[\mathrm{vol}\left(A_{k}(x_{0},\omega_{1})\cap A_{k}(x_{0},\omega_{2})\right)\right].
\]
Let the ``relation'' between $\mathfrak{C}_{k}(x_{1},x_{2})=\mathbf{c}$
be fixed. The above implies that
\begin{align*}
\mathbb{E} & \left[2^{-\sum_{l=1}^{p}\max\{S_{l}(\omega_{1}),S_{l}(\omega_{2})\}}\prod_{l=1}^{p}\mathbb{I}\left\{ \min\{S_{l}(\omega_{1}),S_{l}(\omega_{2})\}\leq\mathbf{c}_{l}\right\} \right]\\
 & =\mathbb{E}\left[\mathbb{V}_{\cap}\left(\mathbf{c},S(\omega_{1}),S(\omega_{2})\right)\right]
  =\mathbb{E}\left[\mathrm{vol}\left(A_{k}(x_{1},\omega_{1})\cap A_{k}(x_{2},\omega_{2})\right)\right]\\
 & =\mathbb{E}\left[\mathrm{vol}\left(A_{k}(x_{0},\omega_{1})\cap A_{k}(x_{0},\omega_{2})\right)\right]
 =\mathbb{E}\left[\mathbb{V}_{\cap}\left(\{k\}^{p},S(\omega_{1}),S(\omega_{2})\right)\right]\\
 & =\mathbb{E}\left[2^{-\sum_{l=1}^{p}\max\{S_{l}(\omega_{1}),S_{l}(\omega_{2})\}}\prod_{l=1}^{p}\mathbb{I}\left\{ \min\{S_{l}(\omega_{1}),S_{l}(\omega_{2})\}\leq k\right\} \right]\\
 & =\mathbb{E}\left[2^{-\sum_{l=1}^{p}\max\{S_{l}(\omega_{1}),S_{l}(\omega_{2})\}}\right].
\end{align*}
Therefore,
\[
\mathbb{P}\left(\bigcap_{l=1}^{p}\left\{ \min\{S_{l}(\omega_{1}),S_{l}(\omega_{2})\}\leq\mathbf{c}_{l}\right\} \right)=1
\]
and hence
\[
\mathbb{P}\left(\min\{S_{l}(\omega_{1}),S_{l}(\omega_{2})\}\leq\mathbf{c}_{l}\right)=1\;\forall l\in\{1,\ldots,p\}.
\]
Since $\omega_{1}$ and $\omega_{2}$ are independent and identically distributed we conclude
\[
\mathbb{P}\left(S_{l}(\omega_{1})\leq\mathbf{c}_{l}\right)=1\;\forall l\in\{1,\ldots,p\}.\qedhere
\]
\end{proof}

\section{Proofs}\label{sec:Proof-strategy chap CBs}

\subsection{Proof strategy for Theorem~\ref{thm:CB main01}}\label{sec:ProofSketchMain}

We use a decomposition of the estimator that also leads to a decomposition
of the error. We denote
\begin{align}
U_{n,r_{n},\omega}^{(m)}(x_{0}) & :=
\frac{1}{\binom{n}{r_n}}\sum_{I\in \setbinom{n}{r_n}} \sum_{i\in I}m(X_i) \frac{ \mathbb{I}\{X_{i}\in A_{n}(x_0,\omega_I)\}}{\# A_{n}(x_0,\omega_I)}\label{eq:U^(m) CPRF}\\
\text{and}\qquad U_{n,r_{n},\omega}^{(\varepsilon)}(x_{0}) & :=
\frac{1}{\binom{n}{r_n}}\sum_{I\in \setbinom{n}{r_n}} \sum_{i\in I}\varepsilon_i \frac{ \mathbb{I}\{X_{i}\in A_{n}(x_0,\omega_I)\}}{\# A_{n}(x_0,\omega_I)}
,\label{eq:U^(eps) CPRF}
\end{align}
such that for the random forest defined in (\ref{U_n-rn-omega}) we have the decomposition
\begin{equation*}
U_{n,r_{n},\omega}^{(\mathrm{RF})}(x_{0})-m(x_{0})=U_{n,r_{n},\omega}^{(\varepsilon)}(x_{0})+U_{n,r_{n},\omega}^{(m)}(x_{0})-m(x_{0}).
\end{equation*}
We note that $U_{n,r_{n},\omega}^{(m)}$ and $U_{n,r_{n},\omega}^{(\varepsilon)}$
are both generalized complete U-statistics. For any set $I\subset[n]$,
with $\vert I\vert=r_{n}$ and $j\in I$ let us further denote
\[
W_{j,k}(x_{0},\omega,I):=\frac{\mathbb{I}\{X_{j}\in A_{k}(x_{0},\omega)\}}{\sum_{i\in I}\mathbb{I}\{X_{i}\in A_{k}(x_{0},\omega)\}}.
\]
To simplify the notation we use
\begin{equation}
W_{j,k}(x_{0},I):=W_{j,k}(x_{0},\omega_{I},I)\qquad\text{and }\qquad W_{j,k}(x_{0},\omega):=W_{j,k}(x_{0},\omega,[r_{n}]).\label{eq:notation weights abbrev}
\end{equation}
The kernels of the two U-statistics in (\ref{eq:U^(m) CPRF}) and
(\ref{eq:U^(eps) CPRF}) are
\begin{align*}
h_{n}^{(m)}(x_{0},(X_{j})_{j=1}^{r_{n}},\omega) & =\sum_{j=1}^{r_{n}}m(X_{j})W_{j,k}(x_{0},\omega)\qquad\text{and }\nonumber \\
h_{n}^{(\varepsilon)}(x_{0},(X_{j},\varepsilon_{j})_{j=1}^{r_{n}},\omega) & =\sum_{j=1}^{r_{n}}\varepsilon_{j}W_{j,k}(x_{0},\omega).
\end{align*}
The kernel $h_{n}^{(\varepsilon)}$ and thus also $U_{n,r_{n},\omega}^{(\varepsilon)}$
have expectation zero due to the independence of the $\varepsilon_{j}$
from $X_{j}$ and $\omega$. We note that the $W_{j,k}(x_{0},\omega,I)$
are the weights of the observations in the kernel applied to the subsample
$I$. If there is at least one observation in $A_{k}(x_{0},\omega)$
the sum of the weights is equal to one. Otherwise ou definition 
yields that their sum is zero.

We call $U_{n,r_{n},\omega}^{(m)}-m$ the {\it approximation error} and
$U_{n,r_{n},N,\omega}^{(\varepsilon)}$ the {\it stochastic error}. We need
to handle both these terms uniformly in $x_{0}$. The error can be
bounded by
\begin{align*}
\sup_{x_{0}\in[0,1]^{p}}\vert U_{n,r_{n},\omega}^{(\mathrm{RF})}(x_{0})-m(x_{0})\vert & =\Vert U_{n,r_{n},\omega}^{(\mathrm{RF})}-m\Vert_{\infty} \leq\Vert U_{n,r_{n},\omega}^{(m)}-m\Vert_{\infty}+\Vert U_{n,r_{n},\omega}^{(\varepsilon)}\Vert_{\infty}.
\end{align*}
For $K_{k}(x_{0},x)$ from (\ref{eq:RF Hajek kernel}) let us denote
\begin{equation}
\hat{U}_{n,r_{n},\omega}^{(\varepsilon)}(x_{0})=\frac{1}{n}\sum_{j=1}^{n}\varepsilon_{j}K_{k}(x_{0},X_{j}).\label{eq:U hat eps}
\end{equation}
This definition is based on the H\'ajek projection of the U-statistic
$U_{n,r_{n},\omega}^{(\varepsilon)}(x_{0})$. The only difference
to (\ref{eq:U hat eps}) is a term that converges to one uniformly
under reasonable assumptions on $k$ and $r_{n}$. Under the assumptions
in Theorem \ref{thm:CB main01}, the term $\hat{U}_{n,r_{n},\omega}^{(\varepsilon)}$
is the asymptotically leading term of $U_{n,r_{n},\omega}^{(\varepsilon)}$.
We call $U_{n,r_{n},\omega}^{(\varepsilon)}-\hat{U}_{n,r_{n},\omega}^{(\varepsilon)}$
the {\it projection error} due to its connection to the H\'ajek projection.
Later we will decompose the projection error into two terms, which
we will call remainder terms.

The idea for the leading term is that $\hat{U}_{n,r_{n},\omega}^{(\varepsilon)}$
is close to a Gaussian process in $x_{0}$ if $n$ is large enough.
We can express $\hat{U}_{n,r_{n},\omega}^{(\varepsilon)}(x_{0})$
as an empirical process applied to the observations $(X_{j},\varepsilon_{j})_{j=1}^{n}$
and the function class $\mathcal{F}_{k}$ from (\ref{eq:F_k}):
\begin{align*}
\hat{U}_{n,r_{n},\omega}^{(\varepsilon)}(x_{0}) & =\frac{1}{n}\sum_{j=1}^{n}\varepsilon_{j}K_{k}(x_{0},X_{j})
=\sqrt{\frac{\sigma^{2}\Psi_{k}(x_{0})}{n}}\mathbb{G}_{n}f_{x_{0},k},
\end{align*}
where $\mathbb{G}_{n}=n^{1/2}(P_n-P)$ denotes the empirical process based on the sample $(X_i,Y_i)$, $i=1,\dots,n$.
We use Corollary 2.2 by \citet{Chernozhukov2014} to approximate
\[
\sup_{f\in\mathcal{F}_{k}}\vert\mathbb{G}_{n}f\vert=\sqrt{\frac{n}{\sigma^{2}}}\sup_{x_{0}\in[0,1]^{p}}\vert\Psi_{k}^{-1/2}(x_{0})\hat{U}_{n,r_{n},\omega}^{(\varepsilon)}(x_{0})\vert
\]
by a sequence of random variables $\mathbf{S}_{k}$ with $\mathbf{S}_{k}\overset{d}{=}\sup_{f\in\mathcal{F}_{k}}\vert B_{k}f\vert$.
Therein $B_{k}$ is a sequence of centered Gaussian processes indexed
by $\mathcal{F}_{k}$ that has the covariance function
\begin{align*}
\cov(B_{k}(f_{x_{1},k}),B_{k}(f_{x_{2},k})) & =\mathbb{E}[f_{x_{1},k}(X_{1},\varepsilon_{1})f_{x_{2},k}(X_{1},\varepsilon_{1})]\\
 & =\Psi_{k}^{-1/2}(x_{1})\Psi_{k}^{-1/2}(x_{2})\sigma^{-2}\mathbb{E}[\varepsilon_{1}K_{k}(x_{1},X_{1})\varepsilon_{1}K_{k}(x_{2},X_{1})]\\
 & =\Psi_{k}^{-1/2}(x_{1})\Psi_{k}^{-1/2}(x_{2})\mathbb{E}[K_{k}(x_{1},X_{1})K_{k}(x_{2},X_{1})].
\end{align*}
In particular, this implies $\var(B_{k}(f))=1$. The application of
Corollary 2.2 by \citet{Chernozhukov2014} leads to the following
theorem, whose proof can be found in Section \ref{subsec:Proof sup approx appli}.
\begin{theorem}
\label{thm:supUhat-approx} Let $B_{k}(f)$, $f\in\mathcal{F}_{k}$,
be a sequence of centered Gaussian processes with covariance
\[
\cov(B_{k}(f_{x_{1},k}),B_{k}(f_{x_{2},k}))=\Psi_{k}^{-1/2}(x_{1})\Psi_{k}^{-1/2}(x_{2})\mathbb{E}\left[K_{k}(x_{1},X_{1})K_{k}(x_{2},X_{1})\right].
\]
If $\nu\in[4,\infty)$ and $\mathbb{E}[\vert\varepsilon_{1}\vert^{\nu}]<\infty$
there exists a sequence of random variables $\mathbf{S}_{k}\overset{\mathcal{D}}{=}\sup_{x_{0}\in[0,1]^{p}}\vert B_{k}f_{x_{0},k}\vert$
such that
\[
\bigg\vert\sqrt{\frac{n}{\sigma^{2}}}\sup_{x_{0}\in[0,1]^{p}}\vert\Psi_{k}^{-1/2}(x_{0})\hat{U}_{n,r_{n},\omega}^{(\varepsilon)}(x_{0})\vert-\mathbf{S}_{k}\bigg\vert=\mathcal{O}_{\mathbb{P}}\left(\frac{(\log n)^{3/2}}{\mathcal{V}_{\cap,k}^{1/2}n^{1/2-1/\nu}}+\frac{(\log n)^{5/4}}{\mathcal{V}_{\cap,k}^{1/4}n^{1/4}}+\frac{\log n}{\mathcal{V}_{\cap,k}^{1/6}n^{1/6}}\right).
\]
\end{theorem}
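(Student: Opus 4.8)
The plan is to identify the centered quantity on the left with the supremum of an empirical process indexed by $\mathcal F_k$ and then to invoke Corollary~2.2 of \citet{Chernozhukov2014}. The identity already recorded above shows that
\[
\sqrt{\tfrac{n}{\sigma^{2}}}\sup_{x_{0}\in[0,1]^{p}}\big|\Psi_{k}^{-1/2}(x_{0})\hat{U}_{n,r_{n},\omega}^{(\varepsilon)}(x_{0})\big|=\sup_{f\in\mathcal{F}_{k}}|\mathbb{G}_{n}f|,
\]
with $\mathbb{G}_n=n^{1/2}(P_n-P)$ the empirical process of the i.i.d.\ pairs $(X_j,\varepsilon_j)$. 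Since $\varepsilon$ is centered and independent of $X$, every $f_{x_0,k}\in\mathcal F_k$ has $Pf_{x_0,k}=0$, so the Gaussian object coupled to $\sup_{f}|\mathbb G_n f|$ by the cited corollary is a centered process with covariance $\cov_P(f_{x_1,k},f_{x_2,k})=P(f_{x_1,k}f_{x_2,k})=\mathbb E[f_{x_1,k}f_{x_2,k}]$, which is precisely the covariance of $B_k$ in the statement. Thus the limiting random variable is the desired $\mathbf S_k$, and what remains is to check the hypotheses of the coupling and to evaluate its error bound for the class $\mathcal F_k$.

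To verify the hypotheses I would track three characteristics of $\mathcal F_k$. First, a measurable envelope: combining $K_k(x_0,x)\le c_X^{-1}2^{k}$, which follows from (\ref{eq:bound px omega}), with the lower bound $\Psi_k(x_0)\ge \tfrac{c_X}{C_X^{2}}2^{2k}\mathcal V_{\cap,k}$ from (\ref{eq:uni bounds for Psi}) gives $\Psi_k^{-1/2}(x_0)K_k(x_0,x)\lesssim\mathcal V_{\cap,k}^{-1/2}$ uniformly, so that $F_k(x,s):=C\sigma^{-1}\mathcal V_{\cap,k}^{-1/2}|s|$ serves as an envelope. Second, the weak variance: by (\ref{eq:def psi}) each function is standardized, $Pf_{x_0,k}^{2}=\sigma^{-2}\Psi_k^{-1}(x_0)\mathbb E[\varepsilon^{2}]\mathbb E[K_k(x_0,X_1)^{2}]=1$, whence $\sup_{f\in\mathcal F_k}Pf^{2}=1=:\bar\sigma^{2}$. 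Third, the envelope moment: $\|F_k\|_{P,\nu}=C\sigma^{-1}\mathcal V_{\cap,k}^{-1/2}(\mathbb E[|\varepsilon|^{\nu}])^{1/\nu}\lesssim\mathcal V_{\cap,k}^{-1/2}=:b$, which is finite exactly because $\nu\ge4$ and $\mathbb E[|\varepsilon|^{\nu}]<\infty$. Finally, as $f_{x_0,k}$ depends on $x_0$ only through the cell $A_k(x_0,\omega)$, the class $\mathcal F_k$ is finite with $\#\mathcal F_k\le\mathcal N_k$; its covering numbers are therefore bounded by $\mathcal N_k$, so it is of VC-type with entropy $\log\mathcal N_k$, and in the regime where $2^{k}\lesssim n$ one has $\log\mathcal N_k\lesssim k\lesssim\log n$.

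It then remains to substitute $\bar\sigma=1$, $b\sim\mathcal V_{\cap,k}^{-1/2}$ and the entropy factor $K_n\sim\log n$ into the error bound of Corollary~2.2 of \citet{Chernozhukov2014}, which is a sum of three terms decaying like $n^{-1/6}$, $n^{-1/4}$ and $n^{-(1/2-1/\nu)}$ and carrying the envelope factors $b^{1/3}$, $b^{1/2}$ and $b$, respectively. These give $\mathcal V_{\cap,k}^{-1/6}n^{-1/6}$, $\mathcal V_{\cap,k}^{-1/4}n^{-1/4}$ and $\mathcal V_{\cap,k}^{-1/2}n^{-(1/2-1/\nu)}$; after absorbing the confidence level $\gamma$ of the coupling into the $\mathcal O_{\mathbb P}$-statement and bounding the powers of $K_n$ by $(\log n)^{2/3}\le\log n$, $(\log n)^{5/4}$ and $(\log n)^{3/2}$, one recovers exactly the three terms claimed. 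I expect the main difficulty to be the bookkeeping forced by the unbounded noise: the class only has a finitely integrable envelope, so the slowest, moment-driven term $n^{-(1/2-1/\nu)}$ is controlled by $\mathbb E[|\varepsilon|^{\nu}]$, and one must make sure the VC characteristics $(\mathcal N_k,1)$ enter the coupling bound only through $\log\mathcal N_k$ rather than polynomially, which is what keeps all three error terms merely logarithmic in $n$.
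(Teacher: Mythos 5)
Your proposal takes essentially the same route as the paper's proof: identify $\sqrt{n/\sigma^{2}}\sup_{x_{0}}|\Psi_{k}^{-1/2}(x_{0})\hat{U}_{n,r_{n},\omega}^{(\varepsilon)}(x_{0})|$ with $\sup_{f\in\mathcal F_{k}}|\mathbb G_{n}f|$, verify that $\mathcal F_{k}$ is a finite VC-type class with envelope $F\lesssim \mathcal V_{\cap,k}^{-1/2}|s|$, unit weak variance and $\|F\|_{P,\nu}\lesssim\mathcal V_{\cap,k}^{-1/2}$, and then plug $\bar\sigma=1$, $b\sim\mathcal V_{\cap,k}^{-1/2}$ and $K_{n}\sim\log n$ into Corollary~2.2 of \citet{Chernozhukov2014}. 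The only detail you leave implicit is the third-moment hypothesis $\sup_{f\in\mathcal F_{k}}P|f|^{3}\le b\bar\sigma^{2}$ of that corollary (and the passage to $\mathcal F_{k}\cup-\mathcal F_{k}$ to handle the absolute value), both of which the paper checks explicitly and which hold with the same $b\sim\mathcal V_{\cap,k}^{-1/2}$ since $\mathbb E[|\varepsilon|^{3}]<\infty$.
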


Using this direct approximation of the supremum it will not be necessary
to approximate the whole empirical process uniformly. This is an important
difference to previous results in the literature like those by \citet{Johnston1982},
\citet{Claeskens2003} or \citet{Chao2017}. With the above theorem
for the leading term of the asymptotic distribution of
\[
\sqrt{\frac{n}{\sigma^{2}}}\Vert\Psi_{k}^{-1/2}U_{n,r_{n},\omega}^{(\varepsilon)}\Vert_{\infty},
\]
it remains to handle the projection error uniformly in $x_{0}$.The observations concerning
the finite size of $\mathcal{F}_{k}$ for analogous terms imply that
the supremum over $x_{0}\in[0,1]^{p}$ of the projection error is  a maximum over $x_{0}\in\mathcal{X}_{k}$. In particular,
the argument implies that for any function $g$ and any realizations
$(w_{j})_{j\in J}$ of random variables $(\omega_{j})_{j\in J}$ it
holds that
\[
\sup_{x_{0}\in[0,1]^{p}}g\big((A_{k}(x_{0},w_{j}))_{j\in J},\eta\big)=\max_{x_{0}\in\mathcal{X}_{k}}g\big((A_{k}(x_{0},w_{j}))_{j\in J},\eta\big),
\]
where $\eta$ can be any arbitrary argument that does not depend on
$x_{0}$. The remainder terms from the projection error are of the
above form. This will allow for the utilization of union bounds in conjunction
with bounds of finite moments of the remainder terms, thereby enabling
their uniform handling.

For the approximation error $U_{n,r_{n},\omega}^{(m)}-m$ we cannot
directly use this argument because $m$ is continuous in $x_{0}$
and thus the supremum is not reduced to a maximum. But after using
the H\"older continuity the bound
\[
\vert m(X_{1})-m(x_{0})\vert\mathbb{I}\{X_{1}\in A_{k}(x_{0},\omega)\}\leq C_{H}\mathfrak{d}(A_{k}(x_{0},\omega))^{\alpha}
\]
only depends on $x_{0}$ via the cell $A_{k}(x_{0},\omega)$ and allows
us to use the same argument. To bound the uniform approximation error
we will use a bound for the expected diameter of the cells. The averaging
over many trees in the random forests leads to
\[
\frac{1}{\binom{n}{r_{n}}}\sum_{I\in \setbinom{n}{r_n}}\mathfrak{d}(A_{k}(x_{0},\omega_{I}))^{\alpha}\approx\mathbb{E}[\mathfrak{d}(A_{k}(x_{0},\omega))^{\alpha}].
\]
Hence, the expectation of the supremum will be reduced to supremum
of the expectation. These expectations are all the same and we will
be able to use a non uniform bound of the diameter.

\subsection{Proofs for Section \ref{sec:Confidence-Bands}\label{sec:Proofs chap CBs}}

\subsubsection{Proof of Theorem \ref{thm:CB main01}\label{subsec:Proof Thm main}}

The following proposition is essential for the proof of Theorem \ref{thm:CB main01}.
Its proof is postponed to Section \ref{subsec:Proof-of-Proposition main CB}.
\begin{prop}
\label{prop:asymp dist} Under the assumptions of Theorem \ref{thm:CB main01}
let $B_{k}$ be the sequence of Gaussian processes defined in equation
(\ref{eq:Gaussian process}). For any $\nu\geq4$ with $\mathbb{E}[\vert\varepsilon_{1}\vert^{\nu}]<\infty$
there exists a sequence of random variables $\mathbf{S}_{k}\overset{\mathcal{D}}{=}\sup_{x_{0}\in[0,1]^{p}}\vert B_{k}f_{x_{0},k}\vert$
with

\begin{align*}
\vert\sqrt{n} & \Vert\Psi_{k}^{-1/2}(U_{n,r_{n},\omega}^{(\mathrm{RF})}-m)\Vert_{\infty}-\sigma\mathbf{S}_{k}\vert\\
 & =\mathcal{O}_{\mathbb{P}}\left(\frac{(\log n)^{3/2}}{\mathcal{V}_{\cap,k}^{1/2}n^{1/2-1/\nu}}+\frac{(\log n)^{5/4}}{\mathcal{V}_{\cap,k}^{1/4}n^{1/4}}+\frac{\log n}{\mathcal{V}_{\cap,k}^{1/6}n^{1/6}}\right)+o_{\mathbb{P}}\left((\log n)^{-1}\right).
\end{align*}
\end{prop}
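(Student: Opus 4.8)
The plan is to reduce the statement to the Gaussian approximation of the Hájek projection, which is precisely Theorem~\ref{thm:supUhat-approx}, by showing that both the approximation error and the projection error become negligible after the standardizing scaling $\sqrt{n}\,\Psi_{k}^{-1/2}$. Concretely, I would start from the decomposition
\begin{equation*}
U_{n,r_{n},\omega}^{(\mathrm{RF})}-m = \big(U_{n,r_{n},\omega}^{(m)}-m\big) + \big(U_{n,r_{n},\omega}^{(\varepsilon)}-\hat{U}_{n,r_{n},\omega}^{(\varepsilon)}\big) + \hat{U}_{n,r_{n},\omega}^{(\varepsilon)},
\end{equation*}
with $U_{n,r_{n},\omega}^{(m)}$ and $U_{n,r_{n},\omega}^{(\varepsilon)}$ from (\ref{eq:U^(m) CPRF})--(\ref{eq:U^(eps) CPRF}) and $\hat{U}_{n,r_{n},\omega}^{(\varepsilon)}$ from (\ref{eq:U hat eps}). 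Applying $\sqrt{n}\,\Psi_{k}^{-1/2}$, taking $\|\cdot\|_{\infty}$ and using the reverse triangle inequality reduces the claim to two negligibility statements plus Theorem~\ref{thm:supUhat-approx}: it suffices to show that $\sqrt{n}\|\Psi_{k}^{-1/2}(U_{n,r_{n},\omega}^{(m)}-m)\|_{\infty}$ and $\sqrt{n}\|\Psi_{k}^{-1/2}(U_{n,r_{n},\omega}^{(\varepsilon)}-\hat{U}_{n,r_{n},\omega}^{(\varepsilon)})\|_{\infty}$ are both $o_{\mathbb{P}}((\log n)^{-1})$, since multiplying the conclusion of Theorem~\ref{thm:supUhat-approx} by $\sigma$ produces exactly $\sigma\mathbf{S}_{k}$ together with the three explicit rate terms.

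For the approximation error I would bound, on every non-empty cell, $|m(X_{i})-m(x_{0})|\le C_{H}\mathfrak{d}(A_{k}(x_{0},\omega_{I}))^{\alpha}$ by Hölder continuity, so that $|U_{n,r_{n},\omega}^{(m)}(x_{0})-m(x_{0})|\lesssim\binom{n}{r_{n}}^{-1}\sum_{I}\mathfrak{d}(A_{k}(x_{0},\omega_{I}))^{\alpha}$ up to an exponentially small empty-cell correction of order $e^{-\tau r_{n}2^{-k}}$, which is negligible since $r_{n}2^{-k}\to\infty$. Averaging over the $\binom{n}{r_{n}}$ trees concentrates this quantity around $\mathbb{E}[\mathfrak{d}(A_{k}(x,\omega))^{\alpha}]$, which is independent of $x_{0}$; combined with $\Psi_{k}^{-1/2}\lesssim(2^{2k}\mathcal{V}_{\cap,k})^{-1/2}$ from (\ref{eq:uni bounds for Psi}), the scaled approximation error is of order $\sqrt{n/(2^{2k}\mathcal{V}_{\cap,k})}\,\mathbb{E}[\mathfrak{d}(A_{k}(x,\omega))^{\alpha}]$, and assumption (\ref{eq:assum approx diam}) forces this to be $o_{\mathbb{P}}((\log n)^{-1})$.

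The projection error is where the real work lies. Writing $U_{n,r_{n},\omega}^{(\varepsilon)}$ in its Hájek form (\ref{eq:Ueps}), I would split $U_{n,r_{n},\omega}^{(\varepsilon)}-\hat{U}_{n,r_{n},\omega}^{(\varepsilon)}$ into two remainder terms: one arising from replacing the random denominator $\#_{I}A_{k}(x_{0},\omega_{I})$ by its conditional mean $r_{n}p_{x_{0}}(\omega_{I})\sim r_{n}2^{-k}$, and one from approximating the averaged kernel by its expectation $K_{k}(x_{0},X_{i})$ from (\ref{eq:RF Hajek kernel}). The crucial simplification is that each $f_{x_{0},k}\in\mathcal{F}_{k}$, and hence each remainder term viewed as a function of $x_{0}$, depends on $x_{0}$ only through the cell $A_{k}(x_{0},\cdot)$; since there are at most $\mathcal{N}_{k}$ distinct such cells, the supremum over $x_{0}\in[0,1]^{p}$ is in fact a maximum over the finite representative set $\mathcal{X}_{k}$. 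This lets me combine finite moment bounds for the remainder terms, available because of the moment assumption (\ref{eq:assum moments}) and the denominator concentration guaranteed by $r_{n}2^{-k}\to\infty$, with a union bound over $\mathcal{X}_{k}$, producing the factor $\mathcal{N}_{k}^{1/\nu}$. After scaling by $\sqrt{n}\,\Psi_{k}^{-1/2}$, assumption (\ref{eq:assum remainder 1}) is exactly what renders this term $o_{\mathbb{P}}((\log n)^{-1})$.

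Finally I would invoke Theorem~\ref{thm:supUhat-approx} for the leading term $\sqrt{n/\sigma^{2}}\,\|\Psi_{k}^{-1/2}\hat{U}_{n,r_{n},\omega}^{(\varepsilon)}\|_{\infty}$, whose error relative to $\mathbf{S}_{k}$ is the stated three-term rate; multiplying by $\sigma$ and assembling the three pieces through the triangle inequality yields the claim. I expect the main obstacle to be the rigorous control of the projection error: one must carry out the Hoeffding decomposition of an \emph{incomplete} generalized U-statistic whose kernel is itself random through $\omega_{I}$ and carries a random denominator, obtain sharp moment bounds on each remainder, and verify that reducing the supremum to a maximum over $\mathcal{X}_{k}$ together with the union bound costs no more than the allotted $\mathcal{N}_{k}^{1/\nu}$ factor absorbed by (\ref{eq:assum remainder 1}).
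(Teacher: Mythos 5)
Your proposal follows essentially the same route as the paper's proof: the same three-way decomposition into approximation error, projection error (split into the denominator-replacement and kernel-averaging remainders $R^{(1)}$, $R^{(2)}$), and the H\'ajek leading term handled by Theorem~\ref{thm:supUhat-approx}, with the supremum of each remainder reduced to a maximum over $\mathcal{X}_{k}$ and controlled by moment bounds plus a union bound so that assumptions (\ref{eq:assum approx diam}) and (\ref{eq:assum remainder 1}) deliver the $o_{\mathbb{P}}((\log n)^{-1})$ rates. The only slip is terminological: the proposition concerns the \emph{complete} generalized U-statistic, so no incomplete-U-statistic machinery is needed here.
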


\begin{proof}[Proof of Theorem \ref{thm:CB main01}]
 We lower bound the coverage probability by
\begin{align}
\mathbb{P} & \left(m(x)\in\mathcal{C}_{n}(x),\,\forall x\in[0,1]^{p}\right)\nonumber \\
 & =\mathbb{P}\left(\Vert\Psi_{k}^{-1/2}(U_{n,r_{n},\omega}^{(\mathrm{RF})}-m)\Vert_{\infty}\leq\frac{\hat{\sigma}c_{k}(\beta)}{\sqrt{n}}\right)\nonumber \\
 & =1-\mathbb{P}\left(\Vert\Psi_{k}^{-1/2}(U_{n,r_{n},\omega}^{(\mathrm{RF})}-m)\Vert_{\infty}>\frac{\hat{\sigma}c_{k}(\beta)}{\sqrt{n}}\right)\nonumber \\
 & =1-\mathbb{P}\left(\Vert\Psi_{k}^{-1/2}(U_{n,r_{n},\omega}^{(\mathrm{RF})}-m)\Vert_{\infty}>\sigma\frac{\hat{\sigma}}{\sigma}\frac{c_{k}(\beta)}{\sqrt{n}},\vert\hat{\sigma}/\sigma-1\vert\leq(\log n)^{-2}\right)\nonumber \\
 & \qquad\quad-\mathbb{P}\left(\Vert\Psi_{k}^{-1/2}(U_{n,r_{n},\omega}^{(\mathrm{RF})}-m)\Vert_{\infty}>\sigma\frac{\hat{\sigma}}{\sigma}\frac{c_{k}(\beta)}{\sqrt{n}},\vert\hat{\sigma}/\sigma-1\vert>(\log n)^{-2}\right)\nonumber \\
 & \geq1-\mathbb{P}\left(\Vert\Psi_{k}^{-1/2}(U_{n,r_{n},\omega}^{(\mathrm{RF})}-m)\Vert_{\infty}>\sigma(1-(\log n)^{-2})\frac{c_{k}(\beta)}{\sqrt{n}},\vert\hat{\sigma}/\sigma-1\vert\leq(\log n)^{-2}\right)\nonumber \\
 & \qquad\quad-\mathbb{P}\left(\Vert\Psi_{k}^{-1/2}(U_{n,r_{n},\omega}^{(\mathrm{RF})}-m)\Vert_{\infty}>\sigma\frac{\hat{\sigma}}{\sigma}\frac{c_{k}(\beta)}{\sqrt{n}},\vert\hat{\sigma}/\sigma-1\vert>(\log n)^{-2}\right)\nonumber \\
 & \geq1-\mathbb{P}\left(\Vert\Psi_{k}^{-1/2}(U_{n,r_{n},\omega}^{(\mathrm{RF})}-m)\Vert_{\infty}>\sigma(1-(\log n)^{-2})\frac{c_{k}(\beta)}{\sqrt{n}}\right)\nonumber \\
 & \qquad\quad-\mathbb{P}\left(\vert\hat{\sigma}/\sigma-1\vert>(\log n)^{-2}\right)\nonumber \\
 & \geq1-\mathbb{P}\left(\big\vert\sqrt{n}\Vert\Psi_{k}^{-1/2}(U_{n,r_{n},\omega}^{(\mathrm{RF})}-m)\Vert_{\infty}-\sigma\mathbf{S}_{k}\big\vert+\sigma\mathbf{S}_{k}>\sigma(1-(\log n)^{-2})c_{k}(\beta)\right)\nonumber \\
 & \qquad\quad-\mathbb{P}\left(\vert\hat{\sigma}-\sigma\vert>\sigma(\log n)^{-2}\right)\nonumber \\
 & \geq1-\mathbb{P}\left(\sigma\mathbf{S}_{k}>\sigma(1-(\log n)^{-2})c_{k}(\beta)-(\log n)^{-1}\right)-\mathbb{P}\left(\vert\hat{\sigma}-\sigma\vert>\sigma(\log n)^{-2}\right)\nonumber \\
 & \qquad\quad-\mathbb{P}\left(\big\vert\sqrt{n}\Vert\Psi_{k}^{-1/2}(U_{n,r_{n},\omega}^{(\mathrm{RF})}-m)\Vert_{\infty}-\sigma\mathbf{S}_{k}\big\vert>(\log n)^{-1}\right).\label{eq:main proof first bound}
\end{align}
Assumption (\ref{eq:assum moments}) implies $\mathbb{E}[\vert\varepsilon_{1}\vert^{2\nu}]<\infty$
for $\nu\geq2$, therefore Proposition \ref{prop:asymp dist} yields
\begin{align*}
\vert\sqrt{n} & \Vert\Psi_{k}^{-1/2}(U_{n,r_{n},\omega}^{(\mathrm{RF})}-m)\Vert_{\infty}-\sigma\mathbf{S}_{k}\vert\\
 & =\mathcal{O}_{\mathbb{P}}\left(\frac{(\log n)^{3/2}}{\mathcal{V}_{\cap,k}^{1/2}n^{1/2-1/(2\nu)}}+\frac{(\log n)^{5/4}}{\mathcal{V}_{\cap,k}^{1/4}n^{1/4}}+\frac{\log n}{\mathcal{V}_{\cap,k}^{1/6}n^{1/6}}\right)+o_{\mathbb{P}}\left((\log n)^{-1}\right).
\end{align*}
We argue why these terms are $o_{\mathbb{P}}((\log n)^{-1})$. With
(\ref{eq:assum Gauss sup}) we have
\[
\frac{(\log n)^{5/2}}{\mathcal{V}_{\cap,k}^{1/2}n^{1/2-1/(2\nu)}}=\Big(\frac{(\log n)^{5}}{\mathcal{V}_{\cap,k}n}n^{1/\nu}\Big)^{1/2}\to0.
\]
For the second 
and third term it works similarly. Hence
\begin{equation}
\mathbb{P}\big(\vert\sqrt{n}\Vert\Psi_{k}^{-1/2}(U_{n,r_{n},\omega}^{(\mathrm{RF})}-m)\Vert_{\infty}-\sigma\mathbf{S}_{k}\vert>(\log n)^{-1}\big)\to0.\label{eq:main proof gauss approx to zero}
\end{equation}
Further we have
\begin{align}
\mathbb{P}\big(\vert\hat{\sigma}-\sigma\vert>\sigma(\log n)^{-2}\big) & =\mathbb{P}\left(\frac{\vert\hat{\sigma}^{2}-\sigma^{2}\vert}{\hat{\sigma}+\sigma}>\sigma(\log n)^{-2}\right)\nonumber \\
 & \leq\mathbb{P}\big(\vert\hat{\sigma}^{2}-\sigma^{2}\vert>\sigma^{2}(\log n)^{-2}\big)\to0\label{eq:main proof sigma est to zero}
\end{align}
by assumption on $\hat{\sigma}$. It remains to handle
\begin{align}
1 & -\mathbb{P}\left(\sigma\mathbf{S}_{k}>\sigma(1-(\log n)^{-2})c_{k}(\beta)-(\log n)^{-1}\right)\nonumber \\
 & =\mathbb{P}\left(\sigma\mathbf{S}_{k}\leq\sigma c_{k}(\beta)-\sigma(\log n)^{-2}c_{k}(\beta)-(\log n)^{-1}\right)\nonumber \\
 & =\mathbb{P}\left(\mathbf{S}_{k}\leq c_{k}(\beta)\right)-\mathbb{P}\left(\sigma c_{k}(\beta)-\sigma(\log n)^{-2}c_{k}(\beta)-(\log n)^{-1}<\sigma\mathbf{S}_{k}\leq\sigma c_{k}(\beta)\right)\nonumber \\
 & =1-\beta-\mathbb{P}\left(-(\log n)^{-2}c_{k}(\beta)-(\log n)^{-1}\sigma^{-1}<\mathbf{S}_{k}-c_{k}(\beta)\leq0\right)\nonumber \\
 & \geq1-\beta-\mathbb{P}\left(\vert\mathbf{S}_{k}-c_{k}(\beta)\vert\leq(\log n)^{-2}c_{k}(\beta)+(\log n)^{-1}\sigma^{-1}\right).\label{eq:main proof remaining term S_k}
\end{align}
Corollary 2.1.\ by \citet{Chernozhukov2014a} yields that
\begin{equation}\label{CCK2.1}
\sup_{\xi\in\mathbb{R}}\mathbb{P}\left(\vert\mathbf{S}_{k}-\xi\vert\leq\kappa\right)\leq4\kappa\left(\mathbb{E}\left[\sup_{f\in\mathcal{F}_{k}}\vert B_{k}f\vert\right]+1\right).
\end{equation}
We apply Corollary 2.2.8.\ by \citet{Vaart1996} to bound the above
expectation. Let $d$ be the standard deviation semimetric on $\mathcal{F}_{k}$
that is involved in this corollary. It holds that
\begin{align}
d(f_{x_{1},k},f_{x_{2},k}) & :=\mathbb{E}\left[\left(B_{k}f_{x_{1},k}-B_{k}f_{x_{2},k}\right)^{2}\right]^{1/2}\nonumber \\
 & \:=\mathbb{E}\left[(B_{k}f_{x_{1},k})^{2}-2B_{k}f_{x_{1},k}B_{k}f_{x_{2},k}+(B_{k}f_{x_{2},k})^{2}\right]^{1/2}\nonumber \\
 & \:=\left(2-2\mathbb{E}\left[B_{k}f_{x_{1},k}B_{k}f_{x_{2},k}\right]\right)^{1/2}\nonumber \\
 & \:=\sqrt{2}\left(1-\cov(B_{k}(f_{x_{1},k}),B_{k}(f_{x_{2},k}))\right)^{1/2}\nonumber \\
 & \:=\sqrt{2}\left(1-\Psi_{k}^{-1/2}(x_{1})\Psi_{k}^{-1/2}(x_{2})\mathbb{E}\left[K_{k}(x_{1},X_{1})K_{k}(x_{2},X_{1})\right]\right)^{1/2}
  \:\leq\sqrt{2}.\label{eq:bound std dev semimetric}
\end{align}
The finite size of $\mathcal{F}_{k}$, that is $\mathcal N_{k}\leq2^{kp}$,
implies that the packing number is bounded by $2^{kp}$ for any semimetric.
For the semimetric $d$ from above $D(\mathcal{F}_{k},d,\epsilon)\leq \mathcal N_{k}\leq2^{kp}$
is also implied by its equality to zero if $x_{1}$ and $x_{2}$ are
in one undividable cell. Note that (\ref{eq:bound std dev semimetric})
further implies that $D(\mathcal{F}_{k},d,\epsilon)=1$ if $\epsilon>\sqrt{2}$.
Using these observations Corollary 2.2.8.\ by \citet{Vaart1996}
yields for a universal constant $K$ that

\begin{align}
\mathbb{E}\left[\sup_{f\in\mathcal{F}_{k}}\vert B_{k}f\vert\right] & \leq\mathbb{E}\left[\vert B_{k}f_{x_{0},k}\vert\right]+K\int_{0}^{\infty}\sqrt{\log D(\mathcal{F}_{k},d,\epsilon)}d\epsilon\nonumber \\
 & \leq\sigma+K\sqrt{2}\sqrt{pk\log2}
 \lesssim\sqrt{k}.\label{eq:upper bound expect sup}
\end{align}
With (\ref{CCK2.1}) and (\ref{eq:upper bound expect sup}) we obtain
\begin{equation*}
\sup_{\xi\in\mathbb{R}}\mathbb{P}(\vert\mathbf{S}_{k}-\xi\vert\leq\kappa)\lesssim\kappa\sqrt{k}.
\end{equation*}
and thus
\begin{equation}
\mathbb{P}\left(\vert\mathbf{S}_{k}-c_{k}(\beta)\vert\leq(\log n)^{-2}c_{k}(\beta)+(\log n)^{-1}\sigma^{-1}\right)\lesssim\left((\log n)^{-2}c_{k}(\beta)+(\log n)^{-1}\sigma^{-1}\right)\sqrt{k}.\label{eq:o(1) mit c_k}
\end{equation}
Thus, we need an upper bound for $c_{k}(\beta)$. Again using (\ref{eq:upper bound expect sup}),
yields that
\[
\mathbb{P}(\mathbf{S}_{k}\leq\xi)=1-\mathbb{P}(\mathbf{S}_{k}>\xi)\geq1-\frac{1}{\xi}\mathbb{E}[\mathbf{S}_{k}]\geq1-\xi^{-1}C\sqrt{k}
\]
for some constant $C$. We recall that $c_{k}(\beta)=\inf\{\xi\in\mathbb{R}:\mathbb{P}(\mathbf{S}_{k}\leq\xi)\geq1-\beta\}$.
For $\xi=C\sqrt{k}\beta^{-1}$ we get
\begin{equation*}
c_{k}(\beta)\leq C\beta^{-1}\sqrt{k}\lesssim\sqrt{k}.
\end{equation*}
With (\ref{eq:assum remainder 2}) and (\ref{eq:assum remainder 1})
which implies $2^{k}=o(n)$ it holds that $k=\mathcal{O}(\log n)$
and therefore (\ref{eq:o(1) mit c_k}) is $o(1)$. Thus (\ref{eq:main proof remaining term S_k})
yields
\begin{align*}
\mathbb{P} & \left(\sigma\mathbf{S}_{k}\leq\sigma c_{k}(\beta)-\sigma(\log n)^{-2}c_{k}(\beta)-(\log n)^{-1}\right)\\
 & \geq1-\beta-\mathbb{P}\left(\vert\mathbf{S}_{k}-c_{k}(\beta)\vert\leq(\log n)^{-2}c_{k}(\beta)+(\log n)^{-1}\sigma^{-1}\right)\\
 & =1-\beta-o(1).
\end{align*}
Together with (\ref{eq:main proof first bound}), (\ref{eq:main proof gauss approx to zero})
and (\ref{eq:main proof sigma est to zero}) we obtain
\[
\liminf_{n\to\infty}\inf_{m\in\mathcal{H}^\alpha(C_{H})}\mathbb{P}\left(m(x)\in\hat{\mathcal{C}}_{n}(x),\,\forall x\in[0,1]^{p}\right)\geq1-\beta.\qedhere
\]
\end{proof}

\subsubsection{Proof of Proposition \ref{prop:asymp dist}\label{subsec:Proof-of-Proposition main CB}}

To prove the proposition we use Theorem \ref{thm:supUhat-approx}
and the auxiliary results below. Their proofs can be found in Section
\ref{subsec:Proof aux}. We first present the results that handle the approximation error
$U_{n,r_{n},\omega}^{(m)}(x_{0})-m(x_{0})$. For
\begin{equation}
U_{n,r_{n},\omega}^{(1)}(x_{0}):=\frac{1}{\binom{n}{r_{n}}}\sum_{I\in \setbinom{n}{r_n}}\mathbb{I}\{\exists j\in I:X_{j}\in A_{k}(x_{0},\omega_{I})\}\label{eq:U^(1)}
\end{equation}
we have
\begin{equation}
U_{n,r_{n},\omega}^{(m)}(x_{0})-m(x_{0})=U_{n,r_{n},\omega}^{(m)}(x_{0})-m(x_{0})U_{n,r_{n},\omega}^{(1)}(x_{0})+m(x_{0})(U_{n,r_{n},\omega}^{(1)}(x_{0})-1).\label{eq:approx error decomp}
\end{equation}
The two results below handle both terms from this decomposition. Their
proofs are postponed to Section \ref{subsec:Proof aux}.
\begin{lemma}
\label{lem: approx err diam}For an $\alpha$-H\"older continuous $m:[0,1]^{p}\to\mathbb{R}$
with H\"older constant $C_{H}$ it holds that
\[
\mathbb{E}\big[\Vert U_{n,r_{n},\omega}^{(m)}-mU_{n,r_{n},\omega}^{(1)}\Vert_{\infty}\big]\leq C_{H}\left(\mathbb{E}[\mathfrak{d}(A_{k}(x,\omega))^{\alpha}]+p^{\alpha/2}\mathcal N_{k}\binom{n}{r_{n}}^{-1/2}\right)
\]
for an arbitrary $x\in[0,1]^{p}$.
\end{lemma}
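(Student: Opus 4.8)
The plan is to reduce the uniform bound to a pointwise Hölder estimate and then exploit the finiteness of the partition. First I would fix a subsample $I\in\setbinom{n}{r_n}$ and rewrite the summand: whenever $A_k(x_0,\omega_I)$ contains at least one $X_j$, $j\in I$, the weights $W_{i,k}(x_0,I)$ sum to one, so that $m(x_0)\,\mathbb{I}\{\exists j\in I:X_j\in A_k(x_0,\omega_I)\}=m(x_0)\sum_{i\in I}W_{i,k}(x_0,I)$, while on the empty-cell event both sides vanish. Subtracting term by term yields the exact identity
\[
U_{n,r_n,\omega}^{(m)}(x_0)-m(x_0)U_{n,r_n,\omega}^{(1)}(x_0)=\frac{1}{\binom{n}{r_n}}\sum_{I\in\setbinom{n}{r_n}}\sum_{i\in I}\big(m(X_i)-m(x_0)\big)W_{i,k}(x_0,I).
\]
Since $W_{i,k}(x_0,I)$ is supported on $\{X_i\in A_k(x_0,\omega_I)\}$, where $\|X_i-x_0\|\le\mathfrak d(A_k(x_0,\omega_I))$, the Hölder property gives $|m(X_i)-m(x_0)|\le C_H\mathfrak d(A_k(x_0,\omega_I))^\alpha$ on that support, and $\sum_{i\in I}W_{i,k}(x_0,I)\le1$ leads to the pointwise bound
\[
\big|U_{n,r_n,\omega}^{(m)}(x_0)-m(x_0)U_{n,r_n,\omega}^{(1)}(x_0)\big|\le\frac{C_H}{\binom{n}{r_n}}\sum_{I\in\setbinom{n}{r_n}}\mathfrak d(A_k(x_0,\omega_I))^\alpha=:C_H A(x_0).
\]

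Next I would take the supremum over $x_0$. The quantity $A(x_0)$ depends on $x_0$ only through the cells $A_k(x_0,\omega_I)$, which coincide for all points of a common maximum undividable set; hence the reduction recorded in Section~\ref{sec:ProofSketchMain} gives $\sup_{x_0\in[0,1]^p}A(x_0)=\max_{x_0\in\mathcal X_k}A(x_0)$ almost surely, with $\#\mathcal X_k=\mathcal N_k$. Setting $\mu:=\mathbb E[\mathfrak d(A_k(x,\omega))^\alpha]$, which is independent of $x$ by the homogeneity of the CPRF so that $\mathbb E[A(x_0)]=\mu$ for every $x_0$, I center and use a union bound followed by Jensen's inequality,
\[
\mathbb E\Big[\max_{x_0\in\mathcal X_k}A(x_0)\Big]=\mu+\mathbb E\Big[\max_{x_0\in\mathcal X_k}\big(A(x_0)-\mu\big)\Big]\le\mu+\sum_{x_0\in\mathcal X_k}\var(A(x_0))^{1/2}.
\]

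It then remains to estimate the variance. Since the $\omega_I$, $I\in\setbinom{n}{r_n}$, are i.i.d., $A(x_0)$ is an average of $\binom{n}{r_n}$ independent copies of $\mathfrak d(A_k(x_0,\omega))^\alpha$, so $\var(A(x_0))=\binom{n}{r_n}^{-1}\var(\mathfrak d(A_k(x_0,\omega))^\alpha)\le\binom{n}{r_n}^{-1}\mathbb E[\mathfrak d(A_k(x_0,\omega))^{2\alpha}]$. The bound $\mathfrak d(A_k(x_0,\omega))\le\mathfrak d([0,1]^p)=\sqrt p$ controls the last expectation by $p^\alpha$, and summing over the $\mathcal N_k$ representatives produces exactly $C_H(\mu+p^{\alpha/2}\mathcal N_k\binom{n}{r_n}^{-1/2})$. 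I expect the only mildly delicate points to be the verification that the weighted-sum identity holds simultaneously on the empty- and nonempty-cell events and the passage from the supremum over $[0,1]^p$ to the maximum over $\mathcal X_k$; the remaining moment computation is routine.
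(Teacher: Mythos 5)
Your proposal is correct and follows essentially the same route as the paper's proof: the same exact identity for $U_{n,r_{n},\omega}^{(m)}-mU_{n,r_{n},\omega}^{(1)}$, the same Hölder bound on the cell, the reduction of the supremum to the finite set $\mathcal X_{k}$, centering at $\mathbb{E}[\mathfrak{d}(A_{k}(x,\omega))^{\alpha}]$, and the union bound combined with the variance of an average of $\binom{n}{r_{n}}$ i.i.d.\ copies, capped via $\mathfrak{d}(A_{k}(x_0,\omega))\leq\sqrt{p}$. No gaps; the two delicate points you flag (the weighted-sum identity on the empty-cell event and the sup-to-max reduction) are handled exactly as in the paper.
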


\begin{lemma}
\label{lem: approx err full cells}For a bounded $m:[0,1]^{p}\to\mathbb{R}$
it holds that
\[
\mathbb{E}\big[\Vert m(U_{n,r_{n},\omega}^{(1)}-1)\Vert_{\infty}\big]\leq\Vert m\Vert_{\infty}2^{k}(1-c_{X}2^{-k})^{r_{n}}.
\]
\end{lemma}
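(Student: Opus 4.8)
The plan is to interpret $1-U_{n,r_{n},\omega}^{(1)}$ as a normalized count of empty cells and then to decouple the supremum over $x_{0}$ from the probabilistic estimate. Since $0\le U_{n,r_{n},\omega}^{(1)}\le 1$ by its definition (\ref{eq:U^(1)}) and $m$ is bounded, I would first bound pointwise
\[
\big\vert m(x_{0})\big(U_{n,r_{n},\omega}^{(1)}(x_{0})-1\big)\big\vert\le\Vert m\Vert_{\infty}\big(1-U_{n,r_{n},\omega}^{(1)}(x_{0})\big),
\]
so that $\Vert m(U_{n,r_{n},\omega}^{(1)}-1)\Vert_{\infty}\le\Vert m\Vert_{\infty}\sup_{x_{0}\in[0,1]^{p}}(1-U_{n,r_{n},\omega}^{(1)}(x_{0}))$. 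Taking the complement of the indicator in (\ref{eq:U^(1)}), I would then write
\[
1-U_{n,r_{n},\omega}^{(1)}(x_{0})=\frac{1}{\binom{n}{r_n}}\sum_{I\in\setbinom{n}{r_n}}g_{I}(x_{0}),\qquad g_{I}(x_{0}):=\mathbb{I}\{\forall j\in I:X_{j}\notin A_{k}(x_{0},\omega_{I})\},
\]
where $g_{I}(x_{0})$ is the indicator that the cell of the tree $\omega_{I}$ containing $x_{0}$ receives no observation from the subsample $I$.

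The key step is to bound, for every fixed $I$ and every $x_{0}$, $g_{I}(x_{0})\le E(\omega_{I})$, where $E(\omega_{I})$ denotes the total number of empty cells among the $2^{k}$ cells of the partition $\omega_{I}$ with respect to the subsample $(X_{j})_{j\in I}$: indeed, if $g_{I}(x_{0})=1$ the cell of $x_{0}$ is empty and thus counted in $E(\omega_{I})\ge1$, while otherwise the bound is trivial. Summing over $I$, this estimate holds termwise and hence uniformly in $x_{0}$, so
\[
\sup_{x_{0}\in[0,1]^{p}}\big(1-U_{n,r_{n},\omega}^{(1)}(x_{0})\big)\le\frac{1}{\binom{n}{r_n}}\sum_{I\in\setbinom{n}{r_n}}E(\omega_{I}).
\]
Taking expectations and using that the pairs $(\omega_{I},(X_{j})_{j\in I})$ are exchangeable in $I$, each summand contributes the same amount, so the right-hand side has expectation $\mathbb{E}[E(\omega)]$, the expected number of empty cells when a single random partition $\omega$ is applied to $r_{n}$ i.i.d.\ observations.

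It then remains to estimate $\mathbb{E}[E(\omega)]$. Writing $E(\omega)=\sum_{c=1}^{2^{k}}\mathbb{I}\{C_{c}\text{ contains no observation}\}$ over the $2^{k}$ cells $C_{c}$ of $\omega$ and conditioning on $\omega$, independence of the $r_{n}$ observations gives $\mathbb{P}(C_{c}\text{ empty}\mid\omega)=(1-\int_{C_{c}}f_{X}\,dx)^{r_{n}}$. Since every cell can be written as $C_{c}=A_{k}(x,\omega)$ for some $x\in C_{c}$, the bound (\ref{eq:bound px omega}) yields $\int_{C_{c}}f_{X}\,dx=p_{x}(\omega)\ge c_{X}2^{-k}$, whence $(1-\int_{C_{c}}f_{X}\,dx)^{r_{n}}\le(1-c_{X}2^{-k})^{r_{n}}$; summing over the $2^{k}$ cells gives $\mathbb{E}[E(\omega)]\le2^{k}(1-c_{X}2^{-k})^{r_{n}}$, which is the claimed bound. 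The only subtle point, and the place where the argument must be run carefully, is this empty-cell counting: the naive reduction of the supremum to a maximum over the $\mathcal N_{k}$ representatives in $\mathcal{X}_{k}$ followed by a union bound would produce the far larger prefactor $\mathcal N_{k}\le2^{kp}$, whereas bounding each per-$x_{0}$ indicator by the per-tree empty-cell count replaces the finest-refinement count $\mathcal N_{k}$ by the per-tree cell count $2^{k}$.
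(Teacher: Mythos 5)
Your proof is correct and follows essentially the same route as the paper's: after the pointwise bound by $\Vert m\Vert_{\infty}$, both arguments reduce via exchangeability to a single tree, bound the supremum over $x_{0}$ of the empty-cell indicator by the number of empty cells among the $2^{k}$ cells of that tree (a union bound over the tree's own partition, not over $\mathcal{X}_{k}$), and conclude with $\mathbb{P}(\text{cell empty}\mid\omega)=(1-p_{x_0}(\omega))^{r_n}\le(1-c_X2^{-k})^{r_n}$. The only difference is cosmetic ordering of the sup-inside-sum and union-bound steps, and your closing observation about avoiding the $\mathcal N_k$ prefactor matches the paper's reasoning exactly.
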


The next results will handle the projection error introduced in Section
\ref{sec:Proof-strategy chap CBs}. We have from (\ref{eq:U^(eps) CPRF})
\begin{align*}
U_{n,r_{n},\omega}^{(\varepsilon)}(x_{0}) 
 & =\frac{r_{n}}{n}\sum_{j=1}^{n}\varepsilon_{j}\frac{1}{\binom{n-1}{r_{n}-1}}\sum_{I\in \setbinom{n}{r_n}:j\in I}\frac{\mathbb{I}\{X_{j}\in A_{k}(x_{0},\omega_{I})\}}{\sum_{i\in I}\mathbb{I}\{X_{i}\in A_{k}(x_{0},\omega_{I})\}}.
\end{align*}
For $\hat{U}_{n,r_{n},\omega}^{(\varepsilon)}$ from (\ref{eq:U hat eps})
we obtain

\begin{align}
 & U_{n,r_{n},\omega}^{(\varepsilon)}(x_{0})-\hat{U}_{n,r_{n},\omega}^{(\varepsilon)}(x_{0})\nonumber \\
 & =\frac{1}{n}\sum_{j=1}^{n}\varepsilon_{j}\left(\frac{r_{n}}{\binom{n-1}{r_{n}-1}}\sum_{I\in \setbinom{n}{r_n}:j\in I}\frac{\mathbb{I}\{X_{j}\in A_{k}(x_{0},\omega_{I})\}}{\sum_{i\in I}\mathbb{I}\{X_{i}\in A_{k}(x_{0},\omega_{I})\}}-K_{k}(x_{0},X_{j})\right)\nonumber \\
 & =\frac{1}{n}\sum_{j=1}^{n}\varepsilon_{j}\frac{r_{n}}{\binom{n-1}{r_{n}-1}}\sum_{I\in \setbinom{n}{r_n}:j\in I}\left(\frac{\mathbb{I}\{X_{j}\in A_{k}(x_{0},\omega_{I})\}}{\sum_{i\in I}\mathbb{I}\{X_{i}\in A_{k}(x_{0},\omega_{I})\}}-\frac{1}{r_{n}p_{x_{0}}(\omega_{I})}\mathbb{I}\{X_{j}\in A_{k}(x_{0},\omega_{I})\}\right)\nonumber \\
 & \qquad+\frac{1}{n}\sum_{j=1}^{n}\varepsilon_{j}\left(\frac{r_{n}}{\binom{n-1}{r_{n}-1}}\sum_{I\in \setbinom{n}{r_n}:j\in I}\frac{1}{r_{n}p_{x_{0}}(\omega_{I})}\mathbb{I}\{X_{j}\in A_{k}(x_{0},\omega_{I})\}-K_{k}(x_{0},X_{j})\right)\nonumber \\
 & =\frac{1}{\binom{n}{r_{n}}}\sum_{I\in \setbinom{n}{r_n}}\sum_{j\in I}\varepsilon_{j}\mathbb{I}\{X_{j}\in A_{k}(x_{0},\omega_{I})\}\left(\frac{1}{\sum_{i\in I}\mathbb{I}\{X_{i}\in A_{k}(x_{0},\omega_{I})\}}-\frac{1}{r_{n}p_{x_{0}}(\omega_{I})}\right)\nonumber \\
 & \qquad+\frac{1}{n}\sum_{j=1}^{n}\varepsilon_{j}\frac{1}{\binom{n-1}{r_{n}-1}}\sum_{I\in \setbinom{n}{r_n}:j\in I}\left(p_{x_{0}}(\omega_{I})^{-1}\mathbb{I}\{X_{j}\in A_{k}(x_{0},\omega_{I})\}-K_{k}(x_{0},X_{j})\right)\nonumber \\
 & =:R_{n,r_{n},\omega}^{(1)}(x_{0})+R_{n,r_{n},\omega}^{(2)}(x_{0}).\label{eq:proj error decomp}
\end{align}
The lemmas below handle the two remainder terms $R_{n,r_{n},\omega}^{(1)}(x_{0})$
and $R_{n,r_{n},\omega}^{(2)}(x_{0})$ separately. Their proofs are
again postponed and can be found in Section \ref{subsec:Proof aux}.
\begin{lemma}
\label{lem:remainder 1} Let $q\in2\mathbb{N}$ be fixed with $q\leq r_{n}$
and $r_{n}\to\infty$. Assume that $\mathbb{E}\left[\vert\varepsilon_{1}\vert^{q}\right]<\infty$
and $2^{k}\leq r_{n}$, then there exists a constant $C$ that depends
on $q$ but not on $n$ and $k$ such that
\[
\mathbb{E}\left[R_{n,r_{n},\omega}^{(1)}(x_{0})^{q}\right]\leq C\left(\frac{2^{2k}}{r_{n}n}\right)^{q/2}.
\]
\end{lemma}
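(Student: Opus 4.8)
The plan is to condition on the covariates and the partition randomness, on which $R_{n,r_{n},\omega}^{(1)}$ depends linearly on the independent centered errors. Abbreviating $N_{I}:=\sum_{i\in I}\mathbb{I}\{X_{i}\in A_{k}(x_{0},\omega_{I})\}$ and $D_{I}:=N_{I}^{-1}-(r_{n}p_{x_{0}}(\omega_{I}))^{-1}$ and interchanging the order of summation gives $R_{n,r_{n},\omega}^{(1)}(x_{0})=\sum_{j=1}^{n}\varepsilon_{j}c_{j}$ with
\[
c_{j}=\binom{n}{r_{n}}^{-1}\sum_{I\ni j}\mathbb{I}\{X_{j}\in A_{k}(x_{0},\omega_{I})\}\,D_{I},
\]
which is measurable with respect to $(X_{i})_{i}$ and $(\omega_{I})_{I}$; the indicator forces $N_{I}\ge1$ on every nonvanishing summand, so $N_{I}^{-1}$ is always well defined. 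Since $q$ is even I would expand $\mathbb{E}[(\sum_{j}\varepsilon_{j}c_{j})^{q}\mid X,\omega]$ and use that a term survives only if each error index occurs at least twice; the perfect matchings dominate and are bounded, up to a constant depending on $q$ and $\mathbb{E}[|\varepsilon|^{q}]$, by $(\sum_{j}c_{j}^{2})^{q/2}$ (equivalently, the Rosenthal inequality from Section~\ref{sec:emp pro =000026 lit Results} applies). After taking expectations this reduces the lemma to $\mathbb{E}[(\sum_{j}c_{j}^{2})^{q/2}]\lesssim(2^{2k}/(r_{n}n))^{q/2}$, the higher-multiplicity terms being of strictly smaller order.

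By exchangeability of the coordinates and Minkowski's inequality in $L^{q/2}$ one has $\mathbb{E}[(\sum_{j}c_{j}^{2})^{q/2}]\le n^{q/2}\mathbb{E}[|c_{1}|^{q}]$, so it suffices to prove $\mathbb{E}[|c_{1}|^{q}]\lesssim(2^{2k}/(r_{n}n^{2}))^{q/2}$. Writing $I=\{1\}\cup J$ exhibits $c_{1}=\tfrac{r_{n}}{n}\bar{D}$, where $\bar{D}=\binom{n-1}{r_{n}-1}^{-1}\sum_{J}D_{\{1\}\cup J}\mathbb{I}\{X_{1}\in A_{k}(x_{0},\omega_{\{1\}\cup J})\}$ is itself a randomized U-statistic of order $r_{n}-1$ in $(X_{i})_{i\ge2}$. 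The goal becomes $\mathbb{E}[|\bar{D}|^{q}]\lesssim(2^{2k}/r_{n}^{3})^{q/2}$, which I would get by splitting $\bar{D}$ into its conditional mean $\mathbb{E}[\bar{D}\mid X_{1}]$ and its centered part, applying a $q$-th moment inequality for U-statistics to the latter, and bounding everything through the Hoeffding projections of the kernel $D_{I}\mathbb{I}\{X_{1}\in A_{k}(x_{0},\omega_{I})\}$.

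The computational heart is a family of moment estimates for $D_{I}$. Conditionally on $\omega_{I}$ and on the membership event $\{X_{1}\in A_{k}(x_{0},\omega_{I})\}$ one has $N_{I}=1+\mathrm{Bin}(r_{n}-1,p_{x_{0}}(\omega_{I}))$ with mean $\mu:=r_{n}p_{x_{0}}(\omega_{I})\ge c_{X}r_{n}2^{-k}\ge c_{X}$ by (\ref{eq:bound px omega}) and $2^{k}\le r_{n}$. Using a second-order Taylor expansion of $x\mapsto1/x$ with binomial central-moment bounds on the bulk event $\{N_{I}\ge\mu/2\}$, and Chernoff together with the crude estimate $|D_{I}|\le1+1/\mu$ on the lower tail $\{N_{I}<\mu/2\}$, I would establish a bias bound of order $\mu^{-2}$ and centered $q$-th moment bounds of order $\mu^{-3q/2}$. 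Feeding these into the projections gives a negligible mean contribution $(\mathbb{E}[\bar{D}\mid X_{1}])^{2}\lesssim2^{2k}/r_{n}^{4}$ and a leading projection variance which, after the U-statistic rescaling by $r_{n}^{2}/n$ and using $r_{n}\le n\le n2^{k}$, stays within the target $2^{2k}/r_{n}^{3}$.

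The step I expect to be the main obstacle is precisely this last one, controlling $\bar{D}$ to the order $(2^{2k}/r_{n}^{3})^{1/2}$. A termwise (Minkowski) bound on $\bar{D}$ loses a factor $2^{k/2}$, so the averaging gain of the U-statistic is genuinely required, and extracting it forces one to combine (i) the delicate moment control of the reciprocal cell-count $D_{I}$, in particular handling atypically small counts when $\mu$ is only bounded below by a constant, i.e. the boundary regime $2^{k}\sim r_{n}$, with (ii) the combinatorial bookkeeping of the overlaps between the subsets $I$ that enters the Hoeffding projections, where the cross-subset randomization $\omega_{I}$ (independent across $I$) is absorbed by taking conditional expectations inside each projection. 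The evenness of $q$ and the restriction $q\le r_{n}$ are the technical hooks that keep this pairing and the binomial moment estimates tractable.
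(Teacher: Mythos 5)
Your first reduction is sound and parallel to the paper's: conditioning on $(X_i)_i$ and $(\omega_I)_I$ and applying the conditional Marcinkiewicz--Zygmund inequality does reduce the lemma to $\mathbb{E}[(\sum_{j}c_{j}^{2})^{q/2}]\lesssim(2^{2k}/(r_{n}n))^{q/2}$, and that target is true. The gap is the next step, $\mathbb{E}[(\sum_{j}c_{j}^{2})^{q/2}]\le n^{q/2}\mathbb{E}[|c_{1}|^{q}]$: the inequality is valid, but the resulting target $\mathbb{E}[|\bar{D}|^{q}]\lesssim(2^{2k}/r_{n}^{3})^{q/2}$ is \emph{false} for $q\ge4$ in the boundary regime $2^{k}\asymp r_{n}$, which the lemma must cover since it only assumes $2^{k}\le r_{n}$. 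Concretely, conditioning on $\omega$ and on $\{X_{1}\in A_{k}(x_{0},\omega)\}$ gives $N_{I}=1+\mathrm{Bin}(r_{n}-1,p_{x_{0}}(\omega))$ and hence
\[
\mathbb{E}\big[\bar{D}\mid X_{1}\big]=-\,\mathbb{E}_{\omega}\Big[\mathbb{I}\{X_{1}\in A_{k}(x_{0},\omega)\}\,\frac{(1-p_{x_{0}}(\omega))^{r_{n}}}{r_{n}p_{x_{0}}(\omega)}\Big],
\]
and this identical bias appears in every term $T_{J}$, so the averaging over $J$ cannot remove it. When $r_{n}p_{x_{0}}(\omega)=O(1)$, i.e.\ $2^{k}\asymp r_{n}$, the factor $(1-p)^{r_{n}}/(r_{n}p)$ is a nonzero constant (not $O(\mu^{-2})$ as you assert), so $|\mathbb{E}[\bar{D}\mid X_{1}]|$ is of constant order whenever $X_{1}$ lies in the undividable cell containing $x_{0}$. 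By Jensen, $\mathbb{E}[|\bar{D}|^{q}]\ge\mathbb{E}[|\mathbb{E}[\bar{D}\mid X_{1}]|^{q}]\gtrsim\mathrm{vol}(\text{that cell})$, which is $\gtrsim 2^{-k}$ for the Ehrenfest forest, while your target is $(2^{2k}/r_{n}^{3})^{q/2}=2^{-kq/2}$ there. These are incompatible for $q>2$ (and the lemma is applied with $q=2\nu\ge4$). The underlying reason the Minkowski step is too lossy is that $c_{1}$ is heavy-tailed: it is of order $r_{n}/n$ on an event of probability $\approx2^{-k}$, so $\|c_{1}\|_{q}^{2}\gg\mathbb{E}[c_{1}^{2}]$ for $q>2$, whereas $\sum_{j}c_{j}^{2}$ concentrates near $n\mathbb{E}[c_{1}^{2}]$ because the roughly $n2^{-k}$ indices $j$ with $X_{j}$ near $x_{0}$ average these rare events out. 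Your bound discards exactly this averaging over $j$, which is the entire content of the lemma.

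The paper avoids forming $c_{j}$ altogether: it keeps the sum over $j\in I$ inside the kernel $h_{R,n}((X_{i},\varepsilon_{i})_{i\in I},\omega_{I})=\sum_{j\in I}\varepsilon_{j}\mathbb{I}\{X_{j}\in A_{k}(x_{0},\omega_{I})\}D_{I}$, applies the blocking moment inequality for generalized U-statistics (Lemma \ref{lem:Moments U-stat}) to get $\mathbb{E}[R^{(1)}(x_{0})^{q}]\lesssim(r_{n}/n)^{q/2}\mathbb{E}[h_{R,n}^{q}]$, and then shows $\mathbb{E}[h_{R,n}^{q}]\lesssim(2^{k}/r_{n})^{q}$ by a multinomial expansion in which each distinct active index $j$ contributes a factor $p_{x_{0}}(\omega)\le C_{X}2^{-k}$ and the binomial-denominator moments are controlled by Lemma \ref{lem:bin +h fractures mom}; the count of index patterns with $\check{q}$ distinct entries is $\lesssim r_{n}^{\check{q}}$, and the product $r_{n}^{\check{q}}2^{-k\check{q}}$ is exactly the $(r_{n}2^{-k})^{q/2}$ combinatorial gain over the worst single term that your route needs but cannot recover after the $L^{q/2}$-triangle inequality. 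If you want to salvage your decomposition you would have to bound $\mathbb{E}[(\sum_{j}c_{j}^{2})^{q/2}]$ directly, e.g.\ by splitting off $\mathbb{E}[\sum_{j}c_{j}^{2}]$ and controlling the fluctuation of $\sum_{j}c_{j}^{2}$, rather than passing to a single-coordinate $L^{q}$ norm.
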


\begin{lemma}
\label{lem:remainder 2} For $R_{n,r_{n},\omega}^{(2)}(x_{0})$ it
holds that
\[
\mathbb{E}\left[R_{n,r_{n},\omega}^{(2)}(x_{0})^{2}\right]=\var\left(R_{n,r_{n},\omega}^{(2)}(x_{0})\right)\leq\frac{\sigma^{2}C_{X}}{c_{X}^{2}}\frac{2^{k}}{r_{n}}\left(\frac{r_{n}}{n}\right)^{r_{n}}.
\]
\end{lemma}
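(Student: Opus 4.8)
The plan is to exploit two layers of independence built into the model --- the errors $\varepsilon_j$ across observations and the partition variables $\omega_I$ across subsamples --- so that the double sum defining the variance collapses onto its diagonal twice. Writing $R_{n,r_n,\omega}^{(2)}(x_0)=\frac1n\sum_{j=1}^n\varepsilon_j g_j$ with $g_j:=\binom{n-1}{r_n-1}^{-1}\sum_{I\in\setbinom{n}{r_n}:j\in I}Z_I^{(j)}$ and $Z_I^{(j)}:=p_{x_0}(\omega_I)^{-1}\mathbb I\{X_j\in A_k(x_0,\omega_I)\}-K_k(x_0,X_j)$, I would first record the key conditional centering $\mathbb E[Z_I^{(j)}\mid X_j]=0$. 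This holds because $\omega_I\overset{d}{=}\omega$ is independent of $X_j$, so that $\mathbb E_{\omega_I}[p_{x_0}(\omega_I)^{-1}\mathbb I\{X_j\in A_k(x_0,\omega_I)\}\mid X_j]=K_k(x_0,X_j)$ by the definition (\ref{eq:RF Hajek kernel}) of $K_k$. Since $\varepsilon_j$ is centered and independent of $(X_j,\omega)$, every summand has mean zero, which gives $\mathbb E[R_{n,r_n,\omega}^{(2)}(x_0)]=0$ and hence the first (variance) identity in the statement.

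Next I would reduce the double sum to its diagonal at both levels. Expanding $\mathbb E[(R^{(2)})^2]=\frac1{n^2}\sum_{j,j'}\mathbb E[\varepsilon_j\varepsilon_{j'}g_jg_{j'}]$, all off-diagonal terms vanish because for $j\neq j'$ the error $\varepsilon_j$ is centered and independent of $(\varepsilon_{j'},g_j,g_{j'})$; on the diagonal $\varepsilon_j\perp g_j$ together with exchangeability of the observations yields $\mathbb E[(R^{(2)})^2]=\frac{\sigma^2}{n}\mathbb E[g_1^2]$. For $\mathbb E[g_1^2]$ I would condition on $X_1$: the variables $(Z_I^{(1)})_{I\ni1}$ are then i.i.d.\ (measurable functions of the i.i.d.\ $\omega_I$) with conditional mean zero, so all cross terms $\mathbb E[Z_I^{(1)}Z_{I'}^{(1)}]$ with $I\neq I'$ vanish and only the $\binom{n-1}{r_n-1}$ diagonal terms survive, giving
\[
\mathbb E[g_1^2]=\binom{n-1}{r_n-1}^{-1}\,\mathbb E\big[(Z_{I_0}^{(1)})^2\big]
\]
for any fixed $I_0\ni1$.

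It then remains to bound the single-summand second moment and to convert the binomial factor. Viewing $Z_{I_0}^{(1)}$ as the deviation of $p_{x_0}(\omega)^{-1}\mathbb I\{X_1\in A_k(x_0,\omega)\}$ from its conditional mean $K_k(x_0,X_1)$ given $X_1$, the bound $\operatorname{Var}(\cdot\mid X_1)\le\mathbb E[(\cdot)^2\mid X_1]$ together with $\mathbb I\{\cdot\}^2=\mathbb I\{\cdot\}$ and the tower property yields $\mathbb E[(Z_{I_0}^{(1)})^2]\le\mathbb E_\omega[p_{x_0}(\omega)^{-1}]\le c_X^{-1}2^k$ by (\ref{eq:bound px omega}). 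Finally, using $\binom{n-1}{r_n-1}=\tfrac{r_n}{n}\binom{n}{r_n}$ and the elementary inequality $\binom{n}{r_n}\ge(n/r_n)^{r_n}$, I get $\binom{n-1}{r_n-1}^{-1}\le\frac{n}{r_n}(r_n/n)^{r_n}$, and combining the three displays gives
\[
\mathbb E\big[(R_{n,r_n,\omega}^{(2)}(x_0))^2\big]\le\frac{\sigma^2}{n}\cdot\frac{n}{r_n}\Big(\frac{r_n}{n}\Big)^{r_n}\cdot\frac{2^k}{c_X}=\frac{\sigma^2}{c_X}\frac{2^k}{r_n}\Big(\frac{r_n}{n}\Big)^{r_n},
\]
which is dominated by the claimed bound since $c_X^{-1}\le C_Xc_X^{-2}$.

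The expectation bookkeeping and the combinatorial inequality are routine; the conceptual heart is the two-level collapse of cross terms, first over observation indices via the centered errors and then over subsample indices. The hard part is the second collapse: the summands $Z_I^{(1)}$ all share the same $X_1$ and are therefore only conditionally, not unconditionally, independent, so one must condition on $X_1$ to make the centering $\mathbb E[Z_I^{(1)}\mid X_1]=0$ and the independence of the $\omega_I$ available simultaneously. Once this is seen, the tiny factor $(r_n/n)^{r_n}$ emerges purely from inverting the binomial coefficient.
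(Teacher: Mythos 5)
Your proposal is correct and follows essentially the same route as the paper's proof: the variance collapses first over the observation indices via the centered, independent errors, and then over the subsample indices via the conditional (given $X_1$) independence and centering of $p_{x_0}(\omega_I)^{-1}\mathbb I\{X_1\in A_k(x_0,\omega_I)\}-K_k(x_0,X_1)$, after which the single-term second moment is bounded by a constant times $2^k$ and the inverted binomial coefficient produces the factor $\frac{n}{r_n}(r_n/n)^{r_n}$. Your constant $\sigma^2/c_X$ is even slightly sharper than the paper's $\sigma^2 C_X/c_X^2$ (the paper bounds $p_{x_0}(\omega)^{-2}\mathbb I\{\cdot\}$ crudely rather than integrating out $X_1$ first), and you correctly note it is dominated by the claimed bound.
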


 Combining  these results, we are
able to prove the Proposition \ref{prop:asymp dist}.
\begin{proof}[Proof of Proposition \ref{prop:asymp dist}]
 Using the reverse triangle inequality we get
\begin{align*}
\big\vert\Vert\Psi_{k}^{-1/2} & \hat{U}_{n,r_{n},\omega}^{(\varepsilon)}\Vert_{\infty}-\Vert\Psi_{k}^{-1/2}(U_{n,r_{n},\omega}^{(\mathrm{RF})}-m)\Vert_{\infty}\big\vert\\
 & \leq\Vert\Psi_{k}^{-1/2}(\hat{U}_{n,r_{n},\omega}^{(\varepsilon)}-(U_{n,r_{n},\omega}^{(\mathrm{RF})}-m))\Vert_{\infty}\\
 & \leq\Vert\Psi_{k}^{-1/2}(\hat{U}_{n,r_{n},\omega}^{(\varepsilon)}-U_{n,r_{n},\omega}^{(\varepsilon)})\Vert_{\infty}+\Vert\Psi_{k}^{-1/2}(U_{n,r_{n},\omega}^{(m)}-m)\Vert_{\infty}\\
 & \leq\Vert\Psi_{k}^{-1/2}R_{n,r_{n},\omega}^{(1)}\Vert_{\infty}+\Vert\Psi_{k}^{-1/2}R_{n,r_{n},\omega}^{(2)}\Vert_{\infty}+\Vert\Psi_{k}^{-1/2}(U_{n,r_{n},\omega}^{(m)}-m)\Vert_{\infty}.
\end{align*}
Hence, for the sequence of random variables $\mathbf{S}_{k}$ as defined
in the theorem we get
\begin{align}
\vert\sqrt{n} & \Vert\Psi_{k}^{-1/2}(U_{n,r_{n},\omega}^{(\mathrm{RF})}-m)\Vert_{\infty}-\sigma\mathbf{S}_{k}\vert\nonumber \\
 & =\Big\vert\sqrt{n}\left(\Vert\Psi_{k}^{-1/2}(U_{n,r_{n},\omega}^{(\mathrm{RF})}-m)\Vert_{\infty}-\Vert\Psi_{k}^{-1/2}\hat{U}_{n,r_{n},\omega}^{(\varepsilon)}\Vert_{\infty}+\Vert\Psi_{k}^{-1/2}\hat{U}_{n,r_{n},\omega}^{(\varepsilon)}\Vert_{\infty}\right)-\sigma\mathbf{S}_{k}\Big\vert\nonumber \\
 & \leq\sqrt{n}\Vert\Psi_{k}^{-1/2}\Vert_{\infty}\left(\Vert U_{n,r_{n},\omega}^{(m)}-m\Vert_{\infty}+\Vert R_{n,r_{n},\omega}^{(1)}\Vert_{\infty}+\Vert R_{n,r_{n},\omega}^{(2)}\Vert_{\infty}\right)\nonumber \\
 & \qquad+\vert\sqrt{n}\Vert\Psi_{k}^{-1/2}\hat{U}_{n,r_{n},\omega}^{(\varepsilon)}\Vert_{\infty}-\sigma\mathbf{S}_{k}\vert\nonumber \\
 & \lesssim\Big(\frac{n}{2^{2k}\mathcal{V}_{\cap,k}}\Big)^{1/2}\big(\Vert U_{n,r_{n},\omega}^{(m)}-m\Vert_{\infty}+\Vert R_{n,r_{n},\omega}^{(1)}\Vert_{\infty}+\Vert R_{n,r_{n},\omega}^{(2)}\Vert_{\infty}\big)\nonumber \\
 & \qquad+\vert\sqrt{n}\Vert\Psi_{k}^{-1/2}\hat{U}_{n,r_{n},\omega}^{(\varepsilon)}\Vert_{\infty}-\sigma\mathbf{S}_{k}\vert\label{eq:error asymp dist decomp proof prop}
\end{align}
because
\[
\Vert\Psi_{k}^{-1/2}\Vert_{\infty}\leq\left(\frac{C_{X}^{2}}{c_{X}}2^{2k}\mathcal{V}_{\cap,k}\right)^{-1/2}
\]
owing to (\ref{eq:uni bounds for Psi}). We omit the constants for
a shorter notation. In the rest of the proof we will show that all
terms from (\ref{eq:error asymp dist decomp proof prop}) except the
last are $o_{\mathbb{P}}((\log n)^{-1})$. For the last term we apply
Theorem \ref{thm:supUhat-approx}.

For the approximation error we have with (\ref{eq:approx error decomp})
that
\[
\Vert U_{n,r_{n},\omega}^{(m)}-m\Vert_{\infty}\leq\Vert U_{n,r_{n},\omega}^{(m)}-mU_{n,r_{n},\omega}^{(1)}\Vert_{\infty}+\Vert m(U_{n,r_{n},\omega}^{(1)}-1)\Vert_{\infty}.
\]
Using Lemma \ref{lem: approx err diam} we get
\begin{align}
\mathbb{P} & \left(\Vert U_{n,r_{n},\omega}^{(m)}-mU_{n,r_{n},\omega}^{(1)}\Vert_{\infty}\geq\kappa(\log n)^{-1}\Big(\frac{n}{2^{2k}\mathcal{V}_{\cap,k}}\Big)^{-1/2}\right)\nonumber \\
 & \leq\left(\frac{n(\log n)^{2}}{\kappa^{2}2^{2k}\mathcal{V}_{\cap,k}}\right)^{1/2}\mathbb{E}\left[\Vert U_{n,r_{n},\omega}^{(m)}-mU_{n,r_{n},\omega}^{(1)}\Vert_{\infty}\right]\nonumber \\
 & \leq C_{H}\left(\frac{n(\log n)^{2}}{\kappa^{2}2^{2k}\mathcal{V}_{\cap,k}}\right)^{1/2}\left(\mathbb{E}[\mathfrak{d}(A_{k}(x,\omega))^{\alpha}]+p^{\alpha/2}\mathcal N_{k}\binom{n}{r_{n}}^{-1/2}\right)\label{eq:approx second term proof prop}
\end{align}
for all $\kappa>0$. Assumption (\ref{eq:assum approx diam}) yields
\[
C_{H}\big(\frac{n(\log n)^{2}}{\kappa^{2}2^{2k}\mathcal{V}_{\cap,k}}\big)^{1/2}\mathbb{E}[\mathfrak{d}(A_{k}(x,\omega))^{\alpha}]\to0
\]
for all $\kappa>0$. The fact that $\mathbb{E}[\mathfrak{d}(A_{k}(x,\omega))^{\alpha}]\geq p^{\alpha/2}2^{-\alpha k/p}$
and $\mathcal{V}_{\cap,k}\leq2^{-k}$ together with (\ref{eq:assum approx diam})
imply
\[
p^{\alpha}2^{-2\alpha k/p}\frac{n}{2^{k}}\leq\mathbb{E}[\mathfrak{d}(A_{k}(x,\omega))^{\alpha}]^{2}\frac{n(\log n)^{2}}{2^{2k}\mathcal{V}_{\cap,k}}.
\]
Hence $n=o(2^{k(1+2\alpha/p)})$. Again using $\mathcal{V}_{\cap,k}\leq2^{-k}$
assumption (\ref{eq:assum remainder 1}) implies that $2^{k}=o(r_{n})$.
For the squared second term from (\ref{eq:approx second term proof prop})
we obtain with (\ref{eq:assum remainder 2}) that
\begin{align}
C_{H}^{2}\frac{n(\log n)^{2}}{\kappa^{2}2^{2k}\mathcal{V}_{\cap,k}}p^{2\alpha}\mathcal N_{k}^{2}\binom{n}{r_{n}}^{-1} & \leq\frac{C_{H}^{2}p^{2\alpha}}{\kappa^{2}}n(\log n)^{2}2^{2kp}\big(\frac{r_{n}}{n}\big)^{r_{n}}\nonumber \\
 & =o\big(2^{k(2p+2+2\alpha/p)}c^{r_{n}}\big)\nonumber \\
 & =o\big(r_{n}^{(2p+2+2\alpha/p)}\exp(r_{n}\log(c))\big)\to0\label{eq:Nf(k) und bin coeff convergence}
\end{align}
because $(\log n)^{2}=o(2^{k})$ and $\log(c)<0$. Lemma \ref{lem: approx err full cells}
yields
\begin{align*}
\mathbb{P} & \left(\Vert m(U_{n,r_{n},\omega}^{(1)}-1)\Vert_{\infty}\geq\kappa(\log n)^{-1}\Big(\frac{n}{2^{2k}\mathcal{V}_{\cap,k}}\Big)^{-1/2}\right)\\
 & \leq\kappa^{-1}\left(\frac{n(\log n)^{2}}{2^{2k}\mathcal{V}_{\cap,k}}\right)^{1/2}\mathbb{E}\left[\Vert m(U_{n,r_{n},\omega}^{(1)}-1)\Vert_{\infty}\right]\\
 & =\kappa^{-1}\left(\frac{n(\log n)^{2}}{\mathcal{V}_{\cap,k}}\right)^{1/2}\Vert m\Vert_{\infty}(1-c_{X}2^{-k})^{r_{n}}\to0
\end{align*}
for all $\kappa>0$ with (\ref{eq:assum remainder 1}).

Assumption (\ref{eq:assum moments}) implies that $\mathbb{E}[\vert\varepsilon_{1}\vert^{2\nu}]<\infty$.
Hence for every $\kappa>0$ Lemma \ref{lem:remainder 1} yields
\begin{align*}
\mathbb{P}\left(\Vert R_{n,r_{n},\omega}^{(1)}\Vert_{\infty}\geq\kappa(\log n)^{-1}\Big(\frac{n}{2^{2k}\mathcal{V}_{\cap,k}}\Big)^{-1/2}\right) & \leq\mathbb{E}\left[\Vert R_{n,r_{n},\omega}^{(1)}\Vert_{\infty}^{2\nu}\right]\left(\frac{n(\log n)^{2}}{\kappa^{2}2^{2k}\mathcal{V}_{\cap,k}}\right)^{\nu}\\
 & \lesssim\sum_{x_{0}\in\mathcal{X}_{k}}\mathbb{E}\left[R_{n,r_{n},\omega}^{(1)}(x_{0})^{2\nu}\right]\left(\frac{n(\log n)^{2}}{2^{2k}\mathcal{V}_{\cap,k}}\right)^{\nu}\\
 & \lesssim \mathcal N_{k}\left(\frac{(\log n)^{2}}{r_{n}\mathcal{V}_{\cap,k}}\right)^{\nu}\to0
\end{align*}
due to Assumption (\ref{eq:assum remainder 1}). Lemma \ref{lem:remainder 2}
implies with a union bound and by using $\mathcal N_{k}\leq2^{kp}$ and
$\mathcal{V}_{\cap,k}\geq2^{-2k}$ that
\begin{align*}
\mathbb{P}\left(\Vert R_{n,r_{n},\omega}^{(2)}\Vert_{\infty}\geq\kappa(\log n)^{-1}\Big(\frac{n}{2^{2k}\mathcal{V}_{\cap,k}}\Big)^{-1/2}\right) & \leq\mathbb{E}\left[\Vert R_{n,r_{n},\omega}^{(2)}\Vert_{\infty}^{2}\right]\frac{n(\log n)^{2}}{\kappa^{2}2^{2k}\mathcal{V}_{\cap,k}}\\
 & \leq\frac{n(\log n)^{2}}{\kappa^{2}2^{2k}\mathcal{V}_{\cap,k}}\sum_{x_{0}\in\mathcal{X}_{k}}\mathbb{E}\left[R_{n,r_{n},\omega}^{(2)}(x_{0})^{2}\right]\\
 & \leq 
\mathcal N_{k}\frac{\sigma^{2}C_{X}}{c_{X}^{2}\kappa^{2}}\frac{(\log n)^{2}}{2^{k}\mathcal{V}_{\cap,k}}\left(\frac{r_{n}}{n}\right)^{r_{n}-1}\\
 & \leq\frac{\sigma^{2}C_{X}}{c_{X}^{2}\kappa^{2}}2^{k(p+1)}(\log n)^{2}c^{r_{n}-1}\to0
\end{align*}
with the same argument we already used for (\ref{eq:Nf(k) und bin coeff convergence}).
Applying (\ref{eq:error asymp dist decomp proof prop}) and Theorem
\ref{thm:supUhat-approx} for any $\nu\geq4$ with $\mathbb{E}[\vert\varepsilon_{1}\vert^{\nu}]<\infty$
we end up with
\begin{align*}
\big\vert\sqrt{n} & \Vert\Psi_{k}^{-1/2}(U_{n,r_{n},\omega}^{(\mathrm{RF})}-m)\Vert_{\infty}-\sigma\mathbf{S}_{k}\big\vert\\
 & \lesssim\Big(\frac{n}{2^{2k}\mathcal{V}_{\cap,k}}\Big)^{1/2}\big(\Vert U_{n,r_{n},\omega}^{(m)}-m\Vert_{\infty}+\Vert R_{n,r_{n},\omega}^{(1)}\Vert_{\infty}+\Vert R_{n,r_{n},\omega}^{(2)}\Vert_{\infty}\big)\\
 & \qquad+\vert\sqrt{n}\Vert\Psi_{k}^{-1/2}\hat{U}_{n,r_{n},\omega}^{(\varepsilon)}\Vert_{\infty}-\sigma\mathbf{S}_{k}\vert\\
 & =o_{\mathbb{P}}\left((\log n)^{-1}\right)+\mathcal{O}_{\mathbb{P}}\left(\frac{(\log n)^{3/2}}{\mathcal{V}_{\cap,k}^{1/2}n^{1/2-1/\nu}}+\frac{(\log n)^{5/4}}{\mathcal{V}_{\cap,k}^{1/4}n^{1/4}}+\frac{\log n}{\mathcal{V}_{\cap,k}^{1/6}n^{1/6}}\right).\qedhere
\end{align*}
\end{proof}

\subsubsection{Proof of Corollary \ref{cor:CB Incomplete U-stat}}

We prove the corollary on the event that $\{\hat{N}>0\}$. The case
$\hat{N}=0$ is not of interest as it corresponds to an empty random
forest. We omit the indicator by bounding it by one whenever possible.
The incomplete generalized U-statistic is
\[
U_{n,r_{n},N,\omega}^{(\mathrm{RF})}(x_{0})=\frac{1}{\hat{N}}\sum_{I\in \setbinom{n}{r_n}}\rho_{I}\sum_{j\in I}Y_{j}W_{j,k}(x_{0},I)
\]
with
$W_{j,k}$ 
from (\ref{eq:notation weights abbrev}). We decompose its difference
to the complete U-statistic
\begin{align}
U_{n,r_{n},N,\omega}^{(\mathrm{RF})} & (x_{0})-U_{n,r_{n},\omega}^{(\mathrm{RF})}(x_{0})\nonumber \\
 & =\frac{1}{\hat{N}}\sum_{I\in \setbinom{n}{r_n}}\rho_{I}\sum_{j\in I}Y_{j}W_{j,k}(x_{0},I)-\frac{1}{\binom{n}{r_{n}}}\sum_{I\in \setbinom{n}{r_n}}\sum_{j\in I}Y_{j}W_{j,k}(x_{0},I)\nonumber \\
 & =\Big(\frac{1}{\hat{N}}-\frac{1}{N}\Big)\sum_{I\in \setbinom{n}{r_n}}\rho_{I}\sum_{j\in I}Y_{j}W_{j,k}(x_{0},I)\label{eq:proof cor inc. decomp}\\
 & \qquad+\sum_{I\in \setbinom{n}{r_n}}\Big(\frac{\rho_{I}}{N}-\frac{1}{\binom{n}{r_{n}}}\Big)\sum_{j\in I}Y_{j}W_{j,k}(x_{0},I).\label{eq:proof cor inc. decomp-2}
\end{align}
We start with a bound that will be used for both terms. We exploit
that the sum of the $W_{j,k}$ is either zero or one. In the first
case the following bound holds trivially. We recall the abbreviated
notation $W_{j,k}(x_{0},\omega)$ for the set $[r_{n}]$ from (\ref{eq:notation weights abbrev}).
In the second case we use Jensen's inequality, Lemma \ref{lem:Gy=0000F6rfi 4.1}
and the bound for $p_{x_{0}}(\omega)$ from (\ref{eq:bound px omega})
to obtain
\begin{align}
\mathbb{E} & \left[\left(\sum_{j=1}^{r_{n}}Y_{j}W_{j,k}(x_{0},\omega)\right)^{2}\right]
  \leq\mathbb{E}\left[\sum_{j=1}^{r_{n}}Y_{j}^{2}W_{j,k}(x_{0},\omega)\right]\nonumber \\
 & =r_{n}\mathbb{E}\left[Y_{1}^{2}\frac{\mathbb{I}\{X_{1}\in A_{k}(x_{0},\omega)\}}{\sum_{i=1}^{r_{n}}\mathbb{I}\{X_{i}\in A_{k}(x_{0},\omega)\}}\right]\nonumber \\
 & =r_{n}\mathbb{E}\left[(m(X_{1})+\varepsilon_{1})^{2}\mathbb{I}\{X_{1}\in A_{k}(x_{0},\omega)\}\mathbb{E}\left[\frac{1}{1+\sum_{i=2}^{r_{n}}\mathbb{I}\{X_{i}\in A_{k}(x_{0},\omega)\}}\:\big\vert\:X_{1},\varepsilon_{1},\omega\right]\right]\nonumber \\
 & \leq r_{n}2\mathbb{E}\left[(m(X_{1})^{2}+\varepsilon_{1}^{2})\mathbb{I}\{X_{1}\in A_{k}(x_{0},\omega)\}\frac{1}{r_{n}p_{x_{0}}(\omega)}\right]\nonumber \\
 & \leq2c_{X}^{-1}2^{k}\mathbb{E}\left[(\Vert m\Vert_{\infty}^{2}+\varepsilon_{1}^{2})\mathbb{I}\{X_{1}\in A_{k}(x_{0},\omega)\}\right] =2c_{X}^{-1}(\Vert m\Vert_{\infty}^{2}+\sigma^{2}).\label{eq:bound of r weighted Y squared}
\end{align}
Using that the $\rho_{I}$ are i.i.d.\ and independent of all the
other random variables and (\ref{eq:bound of r weighted Y squared})
we get for the expectation of the term from (\ref{eq:proof cor inc. decomp-2})
\begin{align}
 & \sum_{I\in \setbinom{n}{r_n}}\mathbb{E}\left[\Big(\rho_{I}\frac{1}{N}-\frac{1}{\binom{n}{r_{n}}}\Big)^{2}\right]\mathbb{E}\left[\left(\sum_{j=1}^{r_{n}}Y_{j}W_{j,k}(x_{0},\omega)\right)^{2}\right]\nonumber \\
 & =\binom{n}{r_{n}}\frac{1}{N^{2}}\var(\rho_{I})2c_{X}^{-1}(\Vert m\Vert_{\infty}^{2}+\sigma^{2})\nonumber \\
 & =\binom{n}{r_{n}}\frac{1}{N^{2}}\frac{N}{\binom{n}{r_{n}}}\left(1-\frac{N}{\binom{n}{r_{n}}}\right)2c_{X}^{-1}(\Vert m\Vert_{\infty}^{2}+\sigma^{2})\nonumber \\
 & \leq\frac{1}{N}2c_{X}^{-1}(\Vert m\Vert_{\infty}^{2}+\sigma^{2})
  =\mathcal{O}(N^{-1}).\label{eq:proof cor inc. bound second part}
\end{align}
With (\ref{eq:proof cor inc. bound second part}) and (\ref{eq:bound of r weighted Y squared})
we further obtain
\begin{align}
\mathbb{E} & \left[\left(\sum_{I\in \setbinom{n}{r_n}}\rho_{I}\sum_{j\in I}Y_{j}W_{j,k}(x_{0},I)\right)^{2}\right]\nonumber \\
 & \leq2\mathbb{E}\left[\left(\sum_{I\in \setbinom{n}{r_n}}\left(\rho_{I}-\frac{N}{\binom{n}{r_{n}}}\right)\sum_{j\in I}Y_{j}W_{j,k}(x_{0},I)\right)^{2}\right]+2\mathbb{E}\left[\left(\sum_{I\in \setbinom{n}{r_n}}\frac{N}{\binom{n}{r_{n}}}\sum_{j\in I}Y_{j}W_{j,k}(x_{0},I)\right)^{2}\right]\nonumber \\
 & =2N^{2}\mathbb{E}\left[\left(\sum_{I\in \setbinom{n}{r_n}}\left(\frac{\rho_{I}}{N}-\frac{1}{\binom{n}{r_{n}}}\right)\sum_{j\in I}Y_{j}W_{j,k}(x_{0},I)\right)^{2}\right]+2N^{2}\mathbb{E}\left[\left(\frac{1}{\binom{n}{r_{n}}}\sum_{I\in \setbinom{n}{r_n}}\sum_{j\in I}Y_{j}W_{j,k}(x_{0},I)\right)^{2}\right]\nonumber \\
 & =\mathcal{O}(N)+\mathcal{O}(N^{2}).\label{eq:proof inc. cor bound 1}
\end{align}
We note that $\hat{N}\sim\text{Bin}(N,N/\binom{n}{r_{n}})$. Thus,
for the first term from (\ref{eq:proof cor inc. decomp}) we get with
the Cauchy-Schwarz inequality, Lemma \ref{lem:moments binomial fractures}
and (\ref{eq:proof inc. cor bound 1}) that
\begin{align*}
\mathbb{E} & \left[\mathbb{I}\{\hat{N}>0\}\bigg\vert\left(\frac{1}{\hat{N}}-\frac{1}{N}\right)\sum_{I\in \setbinom{n}{r_n}}\rho_{I}\sum_{j\in I}Y_{j}W_{j,k}(x_{0},I)\bigg\vert\right]^{2}\\
 & \leq\mathbb{E}\left[\mathbb{I}\{\hat{N}>0\}\left(\frac{1}{\hat{N}}-\frac{1}{N}\right)^{2}\right]\mathbb{E}\left[\left(\sum_{I\in \setbinom{n}{r_n}}\rho_{I}\sum_{j\in I}Y_{j}W_{j,k}(x_{0},I)\right)^{2}\right]\\
 & =\mathcal{O}(N^{-3})\mathcal{O}(N^{2}) =\mathcal{O}(N^{-1}).
\end{align*}
Together with (\ref{eq:proof cor inc. decomp}) and (\ref{eq:proof cor inc. bound second part})
this implies
\[
\mathbb{E}\left[\mathbb{I}\{\hat{N}>0\}\vert U_{n,r_{n},N,\omega}^{(\mathrm{RF})}(x_{0})-U_{n,r_{n},\omega}^{(\mathrm{RF})}(x_{0})\vert\right]=\mathcal{O}(N^{-1/2}).
\]
We obtain
\begin{align*}
\mathbb{P} & \bigg(\mathbb{I}\{\hat{N}>0\}\sqrt{n}\sup_{x_{0}\in[0,1]^{p}}\Psi_{k}^{-1/2}(x_{0})\vert U_{n,r_{n},N,\omega}^{(\mathrm{RF})}(x_{0})-U_{n,r_{n},\omega}^{(\mathrm{RF})}(x_{0})\vert>(\log n)^{-1}\bigg)\\
 & \leq\log n\mathbb{E}\bigg[\mathbb{I}\{\hat{N}>0\}\sqrt{n}\sup_{x_{0}\in[0,1]^{p}}\Psi_{k}^{-1/2}(x_{0})\vert U_{n,r_{n},N,\omega}^{(\mathrm{RF})}(x_{0})-U_{n,r_{n},\omega}^{(\mathrm{RF})}(x_{0})\vert\bigg]\\
 & \leq\log n\sqrt{\frac{n}{2^{2k}\mathcal{V}_{\cap,k}}}\sum_{x_{0}\in\mathcal{X}_{k}}\mathbb{E}\left[\mathbb{I}\{\hat{N}>0\}\vert U_{n,r_{n},N,\omega}^{(\mathrm{RF})}(x_{0})-U_{n,r_{n},\omega}^{(\mathrm{RF})}(x_{0})\vert\right]\\
 & =\mathcal{O}\left(\log n\sqrt{\frac{n}{2^{2k}\mathcal{V}_{\cap,k}}}\frac{\mathcal N_{k}}{N^{1/2}}\right)\to0.
\end{align*}
Including this in the decomposition in the proof of Proposition \ref{prop:asymp dist}
yields
\begin{align*}
\mathbb{I} & \{\hat{N}>0\}\vert\sqrt{n}\Vert\Psi_{k}^{-1/2}(U_{n,r_{n},N,\omega}^{(\mathrm{RF})}-m)\Vert_{\infty}-\sigma\mathbf{S}_{k}\vert\\
 & =\mathcal{O}_{\mathbb{P}}\left(\frac{(\log n)^{3/2}}{\mathcal{V}_{\cap,k}^{1/2}n^{1/2-1/\nu}}+\frac{(\log n)^{5/4}}{\mathcal{V}_{\cap,k}^{1/4}n^{1/4}}+\frac{\log n}{\mathcal{V}_{\cap,k}^{1/6}n^{1/6}}\right)+o_{\mathbb{P}}\left((\log n)^{-1}\right).
\end{align*}
Using this instead of Proposition \ref{prop:asymp dist} in the proof
of Theorem \ref{thm:CB main01} yields the result. $\qed$

\subsection{Proofs for Section~\ref{sec:rates}}\label{sec:proofRates}

\begin{proof}[Proof of Lemma~\ref{lem:CharacteristicsUCPRF}]
  Since $S_l(x_0,\omega)\sim\mathrm{Bin}(k,1/p)$, we have for $q\le2$
  \begin{align*}
\mathbb{E}\left[\mathfrak{d}(A_{k}(x_{0},\omega))^{q}\right] & \leq
p\mathbb{E}\left[\exp\big(\log(2^{-q})S_{1}(x_{0},\omega)\big)\right]=p\left(\frac{p-1+2^{-q}}{p}\right)^{k}.
\end{align*}
We proceed with $\mathcal{V}_{\cap,k}$. Note that $(S_{l}(x_{0},\omega_{1}))_{l=1}^{p}$ is multinomial distributed with $p$ trials and probabilities all equal to $1/p$.
The same holds for its independent copy $(S_{l}(x_{0},\omega_{2}))_{l=1}^{p}$.
Lemma S.1 by \citet{Klusowski2021} in the supplementary material
implies
\[
2^{-k}\frac{p^{p}}{(47)^{p}k^{p-1}}\leq\mathcal{V}_{\cap,k}\leq2^{-k}\frac{8^{p}p^{p/2}}{\sqrt{k^{p-1}}},
\]
and for the rate in $k$ we obtain
\begin{equation*}
2^{-k}k^{-(p-1)}\lesssim\mathcal{V}_{\cap,k}\lesssim2^{-k}k^{-(p-1)/2}.
\end{equation*}
Finally, the finest partition of the feature space is given via an equidistant grid with mesh length $2^{-k}$, such that $\mathcal{N}_{k}=2^{kp}$.
\end{proof}

\begin{proof}[Proof of Proposition~\ref{prop:CharacteristicsEhrenfest}]
We drop the dependence of $S_{l,t}$ on $x_{0}$ and $\omega$
in the notation for convenience. Without loss of generality we consider the splits in direction $1$. We first prove the upper bound. If $pS_{1,t}<t+p\Delta$ for all $t\in[k]$, we have nothing to prove. Otherwise, let $t$ be the last time such that $pS_{1,t}\geq t+p\Delta$ and $pS_{1,t-1}<t-1+p\Delta$. Hence, 
\[
pS_{1,t}=p(S_{1,t-1}+1)<t-1+p\Delta+p=t+p\Delta+(p-1).
\]
To exceed the threshold in time step $t$ one particle had to leave container $1$. Thus there can be at most $pB-1$ particles left in the container and due to (\ref{eq:destination prob Ehr}) no particle will enter the first container between times $t$ and $k$. We conclude for any $s=0,1,2,\dots$
\[
pS_{1,t+s}\le p(S_{1,t}+\min\{s,pB-1\})\le t+p\Delta+(p-1)+p(pB-1)\le t+s+p\Delta+p^2B-1.
\]
Therefore, we have $S_{1,k}\leq\frac{k}{p}+\Delta+pB=\frac{k}{p}+C_{\Delta,B}^{(1)}$ which proves the first claim. Using this we get the lower bound
\begin{align*}
S_{1,k}  =k-\sum_{l=2}^{p}S_{l,k}
 & \geq k-(p-1)\left(\frac{k}{p}+\Delta+pB\right)\\
 & =\frac{t}{p}-(p-1)(\Delta+pB)=\frac{t}{p}-C_{\Delta,B}^{(2)}.
\end{align*}

The upper bound on $S_{j,k}$ directly implies an almost sure bound for the diameter:
\[
\mathfrak{d}(A_{k}(x_{0},\omega))^{2}=\sum_{l=1}^{p}2^{-2S_{l}(x_{0},\omega)}=2^{-2k/p}\sum_{l=1}^{p}2^{2(k/p-S_{l}(x_{0},\omega))}\leq2^{-2k/p}p2^{2C_{\Delta,B}^{(1)}}
\]
Taking the square root this yields the claim. The resulting term
is $\mathcal{O}(2^{-k/p})$ because $C_{\Delta,B}^{(1)}$ does not
depend on $k$.

To analyse the cell intersections, we use equation \eqref{eq:vol intersec darst} to deduce
\[
\mathcal{V}_{\cap,k}=\mathbb{E}\left[2^{-\sum_{l=1}^{p}\max\{S_{l}(x_{0},\omega_{1}),S_{l}(x_{0},\omega_{2})\}}\right]=2^{-k}\mathbb{E}\left[2^{-\frac{1}{2}\sum_{l=1}^{p}\vert S_{l}(x_{0},\omega_{1})-S_{l}(x_{0},\omega_{2})\vert}\right].
\]
Together with $
\frac{k}{p}-C_{\Delta,B}^{(2)}\leq S_{l}(x_{0},\omega)\leq\frac{k}{p}+C_{\Delta,B}^{(1)}
$ from above, we obtain
\[
\vert S_{l}(x_{0},\omega_{1})-S_{l}(x_{0},\omega_{2})\vert\leq C_{\Delta,B}^{(1)}+C_{\Delta,B}^{(2)}
\]
which leads to
\[
\mathcal{V}_{\cap,k}=2^{-k}\mathbb{E}\left[2^{-\frac{1}{2}\sum_{l=1}^{p}\vert S_{l}(x_{0},\omega_{1})-S_{l}(x_{0},\omega_{2})\vert}\right]\geq2^{-k}2^{-p(C_{\Delta,B}^{(1)}+C_{\Delta,B}^{(2)})/2}.
\]
This implies $2^{-k}=\mathcal{O}(\mathcal{V}_{\cap,k})$ because the
constants do not depend on $k$ and with the bound $\mathcal{V}_{\cap,k}\leq2^{-k}$
from (\ref{eq:V cap simple bounds}) it holds that $\mathcal{V}_{\cap,k}\sim 2^{-k}$.

It remains to investigate the finest partition. The upper bound $
S_{j,k}\leq\frac{k}{p}+C_{\Delta,B}^{(1)}
$
for all $l$ and $k$ implies that the volume of the undividable
sets is larger than or equal to
\[
2^{-p(\frac{k}{p}+C_{\Delta,B}^{(1)})}=2^{-(k+pC_{\Delta,B}^{(1)})}.
\]
This implies for the number of the cells that
$
\mathcal N_{k}\leq2^{k+pC_{\Delta,B}^{(1)}}\lesssim2^{k}.
$
\end{proof}

\begin{proof}[Proof of Theorem~\ref{thm:MSE better}]

We only consider the stochastic error from
\[
\mathbb{E}\left[\big(U_{n,r_{n},\omega}^{(\mathrm{RF})}(x_{0})-m(x_{0})\big)^{2}\right]=\mathbb{E}\left[\big(U_{n,r_{n},\omega}^{(m)}(x_{0})-m(x_{0})\big)^{2}\right]+\mathbb{E}\left[U_{n,r_{n},\omega}^{(\varepsilon)}(x_{0})^{2}\right].
\]
For the approximation error we use similar arguments to the proof
of Lemma \ref{lem: approx err diam} and Lemma \ref{lem: approx err full cells}.
With (\ref{eq:U hat eps}) and the decomposition from (\ref{eq:proj error decomp})
we have
\[
U_{n,r_{n},\omega}^{(\varepsilon)}(x_{0})=\hat{U}_{n,r_{n},\omega}^{(\varepsilon)}(x_{0})+R_{n,r_{n},\omega}^{(1)}(x_{0})+R_{n,r_{n},\omega}^{(2)}(x_{0}).
\]
The definition of $K_{k}$ in (\ref{eq:RF Hajek kernel}) and (\ref{eq:uni bounds for Psi})
yield
\[
\mathbb{E}\left[\hat{U}_{n,r_{n},\omega}^{(\varepsilon)}(x_{0})^{2}\right]=\frac{1}{n}\mathbb{E}\left[\varepsilon_{1}^{2}K_{k}(x_{0},X_{1})^{2}\right]=\frac{\sigma^{2}}{n}\Psi_{k}(x_{0})\sim 2^{2k}\mathcal{V}_{\cap,k}/n.
\]
Lemma \ref{lem:remainder 1} implies that
\[
\mathbb{E}\left[R_{n,r_{n},\omega}^{(1)}(x_{0})^{2}\right]=\mathcal{O}\left(\frac{2^{2k}}{r_{n}n}\right),
\]
and Lemma \ref{lem:remainder 2} implies
\[
\mathbb{E}\left[R_{n,r_{n},\omega}^{(2)}(x_{0})^{2}\right]=\mathcal{O}\left(\frac{2^{k}}{r_{n}}\left(\frac{r_{n}}{n}\right)^{r_{n}}\right).
\]
Together this yields the claim of the corollary.
\end{proof}

\begin{proof}[Proof of Proposition~\ref{prop:CLT}]
From the proof of Theorem \ref{thm:CB main01} we know that
the remainder terms are negligible. We only need to consider
\begin{align*}
\hat{U}_{n,r_{n},\omega}^{(\varepsilon)}(x_{0})= & \frac{1}{n}\sum_{j=1}^{n}\varepsilon_{j}K_{k}(x_{0},X_{j}).
\end{align*}
We know that
\[
\var\left(\varepsilon_{j}K_{k}(x_{0},X_{j})\right)=\sigma^{2}\Psi(x_{0}).
\]
We apply the Lindeberg Feller central limit theorem to $\hat{U}_{n,r_{n},\omega}^{(\varepsilon)}(x_{0})$.
The Lindeberg condition is fulfilled because
\begin{align*}
 & \lim_{n\to\infty}\frac{1}{n\sigma^{2}\Psi(x_{0})}\sum_{j=1}^{n}\mathbb{E}\left[\varepsilon_{j}^{2}K_{k}^{2}(x_{0},X_{j})\mathbb{I}\left\{ \varepsilon_{j}^{2}K_{k}^{2}(x_{0},X_{j})>\kappa^{2}n\sigma^{2}\Psi(x_{0})\right\} \right]\\
= & \lim_{n\to\infty}\frac{1}{\sigma^{2}\Psi(x_{0})}\mathbb{E}\left[\varepsilon_{1}^{2}K_{k}^{2}(x_{0},X_{1})\mathbb{I}\left\{ \sigma^{-2}\varepsilon_{1}^{2}K_{k}^{2}(x_{0},X_{1})\Psi^{-1}(x_{0})>\kappa^{2}n\right\} \right]\\
\to & 0
\end{align*}
because
\[
\frac{1}{\sigma^{2}\Psi(x_{0})}\mathbb{E}\left[\varepsilon_{1}^{2}K_{k}^{2}(x_{0},X_{1})\right]=1
\]
and $n\to\infty$. Hence
\[
\sqrt{\frac{n}{\sigma^{2}\Psi(x_{0})}}\hat{U}_{n,r_{n},\omega}^{(\varepsilon)}(x_{0})\to\mathcal{N}(0,1).
\]
We have several rest terms. As previously we have
\begin{align*}
 & \mathbb{P}\left(\vert U_{n,r_{n},\omega}^{(m)}(x_{0})-m(x_{0})\vert>\sqrt{\frac{2^{2k}\mathcal{V}_{\cap,k}}{n}}\right)\\
\leq & \mathbb{E}\left[\vert U_{n,r_{n},\omega}^{(m)}(x_{0})-m(x_{0})\vert\right]\sqrt{\frac{n}{2^{2k}\mathcal{V}_{\cap,k}}}\\
\leq & \sqrt{\frac{n}{2^{2k}\mathcal{V}_{\cap,k}}}\mathbb{E}\left[\mathfrak{d}(A_{k}(x_{0},\omega))^{\alpha}\right]\\
 & \qquad+\sqrt{\frac{n}{2^{2k}\mathcal{V}_{\cap,k}}}m(x_{0})(1-c_{X}2^{-k})^{r_{n}}\to0.
\end{align*}
Using that $\mathbb{E}[\varepsilon_{1}^{2}]<\infty$ we have
\begin{align*}
  \mathbb{P}\left(\bigg\vert R_{n,r_{n},\omega}^{(1)}(x_{0})\bigg\vert>\sqrt{\frac{2^{2k}\mathcal{V}_{\cap,k}}{n}}\right)
\leq  \mathbb{E}\left[\left(R_{n,r_{n},\omega}^{(1)}(x_{0})\right)^{2}\right]\frac{n}{2^{2k}\mathcal{V}_{\cap,k}}
\leq  \frac{C}{r_{n}\mathcal{V}_{\cap,k}}\to0.
\end{align*}
Further
\[
\mathbb{P}\left(\bigg\vert R_{n,r_{n},\omega}^{(2)}(x_{0})\bigg\vert>\sqrt{\frac{2^{2k}\mathcal{V}_{\cap,k}}{n}}\right)\to0
\]
because $r_{n}/n\leq c<1$.
\end{proof}

\begin{proof}[Proof of Theorem~\ref{thm:uni convergence}]

Using the triangle inequality we have
\begin{align}
\Vert U_{n,r_{n},\omega}^{(\mathrm{RF})}-m\Vert_{\infty} & \leq\Vert\hat{U}_{n,r_{n},\omega}^{(\varepsilon)}\Vert_{\infty}+\Vert\hat{U}_{n,r_{n},\omega}^{(\varepsilon)}-U_{n,r_{n},\omega}^{(\varepsilon)}\Vert_{\infty}+\Vert U_{n,r_{n},\omega}^{(m)}-m\Vert_{\infty}\nonumber \\
 & \leq\Vert\Psi_{k}^{1/2}\Vert_{\infty}\Vert\Psi_{k}^{-1/2}\hat{U}_{n,r_{n},\omega}^{(\varepsilon)}\Vert_{\infty}+\Vert\hat{U}_{n,r_{n},\omega}^{(\varepsilon)}-U_{n,r_{n},\omega}^{(\varepsilon)}\Vert_{\infty}+\Vert U_{n,r_{n},\omega}^{(m)}-m\Vert_{\infty}\nonumber \\
 & \leq\Vert\Psi_{k}^{1/2}\Vert_{\infty}\sqrt{\frac{\sigma^{2}}{n}}\mathbf{S}_{k}+\Vert\Psi_{k}^{1/2}\Vert_{\infty}\bigg\vert\Vert\Psi_{k}^{-1/2}\hat{U}_{n,r_{n},\omega}^{(\varepsilon)}\Vert_{\infty}-\sqrt{\frac{\sigma^{2}}{n}}\mathbf{S}_{k}\bigg\vert\nonumber \\
 & \qquad+\Vert R_{n,r_{n},\omega}^{(1)}\Vert_{\infty}+\Vert R_{n,r_{n},\omega}^{(2)}\Vert_{\infty}+\Vert U_{n,r_{n},\omega}^{(m)}-m\Vert_{\infty}.\label{eq:prf uni conv triangle inequ}
\end{align}
We handle the terms one by one. We note that (\ref{eq:uni bounds for Psi})
implies
\begin{equation}
\Vert\Psi_{k}^{1/2}\Vert_{\infty}\sim 2^{k}\mathcal{V}_{\cap,k}^{1/2}.\label{eq:prf uni conv psi ^1/2}
\end{equation}
With (\ref{eq:upper bound expect sup}) we get
\[
\mathbb{P}\left(\mathbf{S}_{k}>\kappa\right)\leq\frac{1}{\kappa}\mathbb{E}\left[\mathbf{S}_{k}\right]\lesssim\frac{1}{\kappa}\sqrt{k},
\]
and hence $\mathbf{S}_{k}=\mathcal{O}_{\mathbb{P}}(\sqrt{k})$. Thus,
the first term satisfies
\begin{equation}
\Vert\Psi_{k}^{1/2}\Vert_{\infty}\sqrt{\frac{\sigma^{2}}{n}}\mathbf{S}_{k}=\mathcal{O}\left(\sqrt{2^{2k}\mathcal{V}_{\cap,k}k/n}\right).\label{eq:prf uni conv S_k term}
\end{equation}
Theorem \ref{thm:supUhat-approx} and (\ref{eq:prf uni conv psi ^1/2})
yield
\begin{align}
\Vert\Psi_{k}^{1/2}\Vert_{\infty} & \Big\vert\Vert\Psi_{k}^{-1/2}\hat{U}_{n,r_{n},\omega}^{(\varepsilon)}\Vert_{\infty}-\sqrt{\frac{\sigma^{2}}{n}}\mathbf{S}_{k}\Big\vert\nonumber \\
 & =\sqrt{\sigma^{2}/n}\Vert\Psi_{k}^{1/2}\Vert_{\infty}\Big\vert\sqrt{n/\sigma^{2}}\Vert\Psi_{k}^{-1/2}\hat{U}_{n,r_{n},\omega}^{(\varepsilon)}\Vert_{\infty}-\mathbf{S}_{k}\Big\vert\nonumber \\
 &= \mathcal{O}(n^{-1/2}2^{k}\mathcal{V}_{\cap,k}^{1/2})\mathcal{O}_{\mathbb{P}}\left(\frac{(\log n)^{3/2}}{\mathcal{V}_{\cap,k}^{1/2}n^{1/2-1/\nu}}+\frac{(\log n)^{5/4}}{\mathcal{V}_{\cap,k}^{1/4}n^{1/4}}+\frac{\log n}{\mathcal{V}_{\cap,k}^{1/6}n^{1/6}}\right)\nonumber \\
 & =\mathcal{O}_{\mathbb{P}}\left(2^{k}\left(\frac{(\log n)^{3/2}}{n^{1-1/\nu}}+\frac{\mathcal{V}_{\cap,k}^{1/4}(\log n)^{5/4}}{n^{3/4}}+\frac{\mathcal{V}_{\cap,k}^{1/3}\log n}{n^{2/3}}\right)\right)\notag\\
 &=\mathcal O_{\mathbb P}\Big(\sqrt{\frac{2^k}{n}}\Big),\label{eq:prf uni conv gauss approx}
\end{align}
where we used assumption $\frac{2^k}{n}\le n^{-2/\nu}(\log n)^{-3}\le1$ together with $\mathcal V_{\cap,k}\le 2^{-k}$ in the last line.
Lemma \ref{lem:remainder 1} yields for $\nu$ with $\mathbb{E}[\vert\varepsilon_{1}\vert^{\nu}]<\infty$
that
\[
\mathbb{P}\left(\Vert R_{n,r_{n},\omega}^{(1)}\Vert_{\infty}>\kappa\right)\leq\sum_{x_{0}\in\mathcal{X}_{k}}\frac{1}{\kappa^{\nu}}\mathbb{E}\left[\left(R_{n,r_{n},\omega}^{(1)}(x_{0})\right)^{\nu}\right]\leq \mathcal N_{k}\frac{1}{\kappa^{\nu}}C\left(\frac{2^{2k}}{r_{n}n}\right)^{\nu/2}.
\]
For $\epsilon>0$ let $\kappa>(\epsilon/C)^{-1/\nu}$, we obtain
\[
\mathbb{P}\left(\Vert R_{n,r_{n},\omega}^{(1)}\Vert_{\infty}>\kappa\sqrt{\frac{2^{2k}}{r_{n}n}}\mathcal N_{k}^{1/\nu}\right)\leq C\frac{1}{\kappa^{\nu}}<\epsilon,
\]
and subsequently we have
\begin{equation}
\Vert R_{n,r_{n},\omega}^{(1)}\Vert_{\infty}=\mathcal{O}_{\mathbb{P}}\left(\sqrt{\frac{2^{2k}}{r_{n}n}}\mathcal N_{k}^{1/\nu}\right).\label{eq:prf uni conv rem 1}
\end{equation}
With Lemma \ref{lem:remainder 2}, $\mathcal N_{k}\leq2^{kp}$, $\mathcal{V}_{\cap,k}\geq2^{-2k}$
and the assumption $r_{n}/n\leq c<1$ we get
\begin{align*}
\mathbb{P} & \left(\Vert R_{n,r_{n},\omega}^{(2)}\Vert_{\infty}>\sqrt{2^{2k}\mathcal{V}_{\cap,k}k/n}\right)
\leq\frac{n}{2^{2k}\mathcal{V}_{\cap,k}k}\mathbb{E}\left[\Vert R_{n,r_{n},\omega}^{(2)}\Vert_{\infty}^{2}\right]\\
 & \leq\frac{n}{2^{2k}\mathcal{V}_{\cap,k}k}\sum_{x_{0}\in\mathcal{X}_{k}}\mathbb{E}\left[R_{n,r_{n},\omega}^{(2)}(x_{0})^{2}\right]\\
 & \leq\frac{n}{2^{2k}\mathcal{V}_{\cap,k}k}\mathcal N_{k}\frac{\sigma^{2}C_{X}}{c_{X}^{2}}\frac{2^{k}}{r_{n}}\left(\frac{r_{n}}{n}\right)^{r_{n}}
 \leq\frac{\sigma^{2}C_{X}}{c_{X}^{2}}2^{k(p+1)}\frac{1}{k}c^{r_{n}-1}\\
 & =\frac{\sigma^{2}C_{X}}{c_{X}^{2}}\exp\left(k(p+1)\log2-\log k+(r_{n}-1)\log c\right)\to0
\end{align*}
because we assumed that $r_{n}/k\to\infty$. Hence
\begin{equation}
\Vert R_{n,r_{n},\omega}^{(2)}\Vert_{\infty}=o_{\mathbb{P}}\left(\sqrt{2^{2k}\mathcal{V}_{\cap,k}k/n}\right)\label{eq:prf uni conv rem 2}
\end{equation}
which is negligible compared to (\ref{eq:prf uni conv S_k term}).
We note that
\[
\Vert U_{n,r_{n},\omega}^{(m)}-m\Vert_{\infty}\leq\Vert U_{n,r_{n},\omega}^{(m)}-mU_{n,r_{n},\omega}^{(1)}\Vert_{\infty}+\Vert m(U_{n,r_{n},\omega}^{(1)}-1)\Vert_{\infty}.
\]
Let $\epsilon>0$ be arbitrary and choose $\kappa>C_{H}/\epsilon$.
Lemma \ref{lem: approx err diam} yields
\begin{align*}
\mathbb{P} & \left(\Vert U_{n,r_{n},\omega}^{(m)}-mU_{n,r_{n},\omega}^{(1)}\Vert_{\infty}>\kappa\big(\mathbb{E}[\mathfrak{d}(A_{k}(x,\omega))^{\alpha}]+p^{\alpha/2}\mathcal N_{k}\binom{n}{r_{n}}^{-1/2}\big)\right)\\
 & \leq\frac{\mathbb{E}\left[\Vert U_{n,r_{n},\omega}^{(m)}-mU_{n,r_{n},\omega}^{(1)}\Vert_{\infty}\right]}{\kappa\big(\mathbb{E}[\mathfrak{d}(A_{k}(x,\omega))^{\alpha}]+p^{\alpha/2}\mathcal N_{k}\binom{n}{r_{n}}^{-1/2}\big)}
 \leq\frac{C_{H}}{\kappa}<\epsilon
\end{align*}
and in combination with $\binom{n}{r_{n}}^{-1/2}\le(r_n/n)^{r_n/2}$ we obtain
\begin{equation}
\Vert U_{n,r_{n},\omega}^{(m)}-mU_{n,r_{n},\omega}^{(1)}\Vert_{\infty}=\mathcal{O}_{\mathbb{P}}\left(\mathbb{E}[\mathfrak{d}(A_{k}(x,\omega))^{\alpha}]+\mathcal N_{k}(r_n/n)^{r_n/2}\right).\label{eq:prf uni conv approx 1}
\end{equation}
Lemma \ref{lem: approx err full cells} directly implies
\begin{equation}
\Vert m(U_{n,r_{n},\omega}^{(1)}-1)\Vert_{\infty}=\mathcal{O}_{\mathbb{P}}(2^{k}(1-c_{X}2^{-k})^{r_{n}}).\label{eq:prf uni conv approx 2}
\end{equation}
In conjunction (\ref{eq:prf uni conv triangle inequ}), (\ref{eq:prf uni conv S_k term}),
(\ref{eq:prf uni conv gauss approx}), (\ref{eq:prf uni conv rem 1}),
(\ref{eq:prf uni conv rem 2}), (\ref{eq:prf uni conv approx 1})
and (\ref{eq:prf uni conv approx 2}) yield
\begin{align*}
\Vert U_{n,r_{n},\omega}^{(\mathrm{RF})}-m\Vert_{\infty} & \leq\Vert\Psi_{k}^{1/2}\Vert_{\infty}\Big\vert\Vert\Psi_{k}^{-1/2}\hat{U}_{n,r_{n},\omega}^{(\varepsilon)}\Vert_{\infty}-\sqrt{\frac{\sigma^{2}}{n}}\mathbf{S}_{k}\Big\vert\\
 & \qquad+\sqrt{\frac{\sigma^{2}}{n}}\mathbf{S}_{k}\Vert\Psi_{k}^{1/2}\Vert_{\infty}+\Vert R_{n,r_{n},\omega}^{(1)}\Vert_{\infty}+\Vert R_{n,r_{n},\omega}^{(2)}\Vert_{\infty}+\Vert U_{n,r_{n},\omega}^{(m)}-m\Vert_{\infty}\\
 & =\mathcal{O}_{\mathbb{P}}\left(\sqrt{\frac{2^k}{n}}\Big(1+\mathcal N_k^{1/\nu}\sqrt{\frac{2^k}{r_n}}+\sqrt{k2^k\mathcal V_{\cap,k}}\Big)\right)+\mathcal{O}_{\mathbb{P}}\left(\mathbb{E}[\mathfrak{d}(A_{k}(x,\omega))^{\alpha}]\right)\\
 & \qquad+\mathcal{O}_{\mathbb{P}}\left(\mathcal N_{k}(r_n/n)^{r_n/2}+2^{k}(1-c_{X}2^{-k})^{r_{n}}\right).
\end{align*}
Since  and $\mathcal N_k\le 2^{kp}$ we have $\mathcal N_k\binom{n}{r_{n}}^{-1/2}\le e^{-c' r_n}$ for some $c'>0$ if $r_n\le cn$ and $r_n/k\to\infty$. Similarly, $k2^k/r_n\to0$ yields the upper bound $e^{-c'' 2^{-k}r_n}$ for some $c''>0$ for the last term. 
\end{proof}

\subsection{Proofs of the auxiliary results\label{subsec:Proof aux}}

In this section we collect the proofs of the auxiliary results that
have been used so far. We start with the proof of Theorem \ref{thm:supUhat-approx},
continue with the proofs for the approximation error results, and
conclude the section with the proofs for the remainder terms that
arise from the projection error.

\begin{proof}[Proof of Theorem \ref{thm:supUhat-approx}]\label{subsec:Proof sup approx appli}

We want to apply Corollary 2.2 by \citet{Chernozhukov2014}. We note
that the function class in the theorem is not dependent on $n$. However,
we can apply the theorem to $\mathcal{F}_{k}$ for every $n$ or $k$,
respectively. Also the constant in the theorem does not depend on
$n$. This is why we get a sequence of random variables in the claim
of the theorem. \citet{Chernozhukov2014} also point this out in their
Remark 2.1. We consider the function class $\mathcal{F}_{k}$ from
(\ref{eq:F_k}). To get the claimed result we need to consider $\mathcal{F}_{k}\cup-\mathcal{F}_{k}$.
Due to \citet[Corollary 2.1]{Chernozhukov2014} we consider $\mathcal{F}_{k}$
without loss of generality.

The function class  $\mathcal{F}_{k}$
is finite, and the functions $f_{x_{0},k}$
are measurable. Now, we prove that $\mathcal{F}_{k}$ is a VC type
class. Using the definition of $K_{k}$ in (\ref{eq:RF Hajek kernel})
together with the bound for $p_{x_{0}}(\omega)$ in (\ref{eq:bound px omega})
and $\Psi_{k}(x_{0})\geq c_{X}C_{X}^{-2}2^{2k}\mathcal{V}_{\cap,k}$
from (\ref{eq:uni bounds for Psi}) we obtain
\begin{align}
\sup_{x_{0}\in[0,1]^{p}}\vert f_{x_{0},k}(x,s)\vert & =\sup_{x_{0}\in[0,1]^{p}}\sigma^{-1}\vert s\vert\Psi_{k}^{-1/2}(x_{0})K_{k}(x_{0},x)\nonumber \\
 & \leq\sigma^{-1}\vert s\vert\frac{C_{X}}{c_{X}^{1/2}}2^{-k}\mathcal{V}_{\cap,k}^{-1/2}\sup_{x_{0}\in[0,1]^{p}}\mathbb{E}\left[\mathbb{I}\{x\in A_{k}(x_{0},\omega)\}p_{x_{0}}(\omega)^{-1}\right]\nonumber \\
 & \leq\sigma^{-1}\vert s\vert\frac{C_{X}}{c_{X}^{1/2}}2^{-k}\mathcal{V}_{\cap,k}^{-1/2}c_{X}^{-1}2^{k}\nonumber \\
 & =\sigma^{-1}\vert s\vert\frac{C_{X}}{c_{X}^{3/2}}\mathcal{V}_{\cap,k}^{-1/2}
 =:F(x,s).\label{eq:envelope cherno}
\end{align}
Therefore, $\mathcal{F}_{k}$ is equipped with the measurable envelope
$F$ that does not depend on $x$. The finite size $\vert\mathcal{F}_{k}\vert=\mathcal N_{k}\leq2^{kp}$
is an upper bound for any covering number of $\mathcal{F}_{k}$ and
thus,
\[
\sup_{Q\in\mathcal{Q}}N(\mathcal{F}_{k},\Vert\cdot\Vert_{Q,2},\kappa\Vert F\Vert_{Q,2})\leq2^{kp}\leq2^{kp}/\kappa
\]
for all $\kappa\in(0,1]$, implies that $\mathcal{F}_{k}$ is a VC
type class with $A$ and $v$ from \citet[Corollary 2.2]{Chernozhukov2014}
satisfying $A=2^{kp}$ and $v=1$.

We proceed by verifying the conditions on the moments. With the definitions
of $K_{k}$ in (\ref{eq:RF Hajek kernel}) and $\Psi_{k}$ in (\ref{eq:def psi})
we obtain
\begin{align*}
\sup_{f\in\mathcal{F}_{k}}P\vert f\vert^{2} & =\sup_{x_{0}\in[0,1]^{p}}\sigma^{-2}\Psi_{k}^{-1}(x_{0})\mathbb{E}\left[\varepsilon_{1}^{2}K_{k}^{2}(x_{0},X_{1})\right]\\
 & =\sup_{x_{0}\in[0,1]^{p}}\Psi_{k}^{-1}(x_{0})\mathbb{E}\left[K_{k}^{2}(x_{0},X_{1})\right]=1.
\end{align*}
Thus, $\tilde{\sigma}$ from \citet[Corollary 2.2]{Chernozhukov2014}
is equal to $1$. For $q\leq\nu$ we denote $\tau_{q}:=\mathbb{E}[\vert\varepsilon_{1}\vert^{q}]^{1/q}$.
The definitions used above, the lower bound for $p_{x_{0}}$ in (\ref{eq:bound px})
and $\Psi_{k}(x_{0})\geq c_{X}C_{X}^{-2}2^{2k}\mathcal{V}_{\cap,k}$
from (\ref{eq:uni bounds for Psi}) further yield
\begin{align*}
\sup_{f\in\mathcal{F}_{k}}P\vert f\vert^{3} & =\sup_{x_{0}\in[0,1]^{p}}\sigma^{-3}\Psi_{k}^{-3/2}(x_{0})\mathbb{E}\left[\vert\varepsilon_{1}K_{k}(x_{0},X_{1})\vert^{3}\right]\\
 & =\frac{\mathbb{E}\left[\vert\varepsilon_{1}\vert^{3}\right]}{\sigma^{3}}\sup_{x_{0}\in[0,1]^{p}}\Psi_{k}^{-3/2}(x_{0})\mathbb{E}\left[K_{k}^{2}(x_{0},X_{1})\mathbb{E}\left[\mathbb{I}\{X_{1}\in A_{k}(x_{0},\omega)\}p_{x_{0}}(\omega)^{-1}\mid X_{1}\right]\right]\\
& \leq
\frac{\tau_{3}^{3}}{\sigma^{3}}c_{X}^{-1}2^{k}\sup_{x_{0}\in[0,1]^{p}}\Psi_{k}^{-1/2}(x_{0}) \leq
 \frac{\tau_{3}^{3}}{\sigma^{3}}\frac{C_{X}}{c_{X}^{3/2}}\mathcal{V}_{\cap,k}^{-1/2}.
\end{align*}
For the envelope $F$ from (\ref{eq:envelope cherno}) we obtain
\[
\Vert F\Vert_{P,\nu}=\mathbb{E}\left[\vert F(X_{1},\varepsilon_{1})\vert^{\nu}\right]^{1/\nu}=\sigma^{-1}\frac{C_{X}}{c_{X}^{3/2}}\mathcal{V}_{\cap,k}^{-1/2}\mathbb{E}\left[\vert\varepsilon_{1}\vert^{\nu}\right]^{1/\nu}=\frac{C_{X}}{c_{X}^{3/2}}\frac{\tau_{\nu}}{\sigma}\mathcal{V}_{\cap,k}^{-1/2}.
\]
We choose $b$ from \citet[Corollary 2.2]{Chernozhukov2014} as
\[
b:=\mathcal{V}_{\cap,k}^{-1/2}C_{b}:=\mathcal{V}_{\cap,k}^{-1/2}\frac{C_{X}}{c_{X}^{3/2}}\max\left\{ \frac{\tau_{\nu}}{\sigma},\frac{\tau_{3}^{3}}{\sigma^{3}}\right\} .
\]
It holds that $b\geq1$ because $c_{X}\leq1\leq C_{X}$, $\mathcal{V}_{\cap,k}^{-1/2}\geq2^{k/2}\geq1$
and $\tau_{q}\geq\sigma$ due to Jensen's inequality. Further we have
$b=\mathcal{O}(\mathcal{V}_{\cap,k}^{-1/2})$ because $C_{b}$ is
a constant that only depends on the distributions of $\varepsilon_{1}$
and $X_{1}$. We get
\begin{align*}
\sup_{f\in\mathcal{F}_{k}\cup-\mathcal{F}_{k}}\mathbb{G}_{n}f & =\sup_{f\in\mathcal{F}_{k}}\vert\mathbb{G}_{n}f\vert =\sup_{x_{0}\in[0,1]^{p}}\vert\mathbb{G}_{n}\sigma^{-1}\Psi_{k}^{-1/2}(x_{0})sK_{k}(x_{0},x)\vert\\
 & =\sup_{x_{0}\in[0,1]^{p}}\vert\frac{1}{\sqrt{\sigma^{2}n}}\Psi_{k}^{-1/2}(x_{0})\sum_{j=1}^{n}\varepsilon_{j}K_{k}(x_{0},X_{j})\vert\\
 & =\sqrt{\frac{n}{\sigma^{2}}}\sup_{x_{0}\in[0,1]^{p}}\vert\Psi_{k}^{-1/2}(x_{0})\hat{U}_{n,r_{n},\omega}^{(\varepsilon)}(x_{0})\vert.
\end{align*}
In summary, the parameters from \citet[Corollary 2.2]{Chernozhukov2014}
are $\tilde{\sigma}=1$, $b=\mathcal{V}_{\cap,k}^{-1/2}C_{b}$, $A=2^{kp}$
and $v=1$. We use $\mathcal{V}_{\cap,k}\geq2^{-2k}$ from (\ref{eq:V cap simple bounds})
and the fact that $2^{k}=o(n)$ (see (\ref{eq:implication assum 2^k/r_n})
and (\ref{eq:assum remainder 2})) to obtain
\begin{align*}
K_{n} & =cv(\log n\vee\log(Ab/\tilde{\sigma})) =c(\log n\vee\log(2^{kp}\mathcal{V}_{\cap,k}^{-1/2}C_{b}))\\
 & \leq c(\log n\vee\log(2^{k(p+1)}C_{b}))\\
 & \leq c(p+1)(\log n\vee\log(2^{k}C_{b})) =\mathcal{O}(\log n).
\end{align*}
Let $B_{k}$ be the centered Gaussian process defined in the claim.
For $\gamma=(\log n)^{-1}$ \citet[Corollary 2.2]{Chernozhukov2014}
yields that there exists a random variable
\[
\mathbf{S}_{k}\overset{\mathcal{D}}{=}\sup_{f\in\mathcal{F}_{k}\cup-\mathcal{F}_{k}}B_{k}f=\sup_{f\in\mathcal{F}_{k}}\vert B_{k}f\vert=\sup_{x_{0}\in[0,1]^{p}}\vert B_{k}f_{x_{0},k}\vert
\]
such that
\begin{align*}
\mathbb{P} & \left(\Big\vert\sup_{f\in\mathcal{F}_{k}}\vert\mathbb{G}_{n}f\vert-\mathbf{S}_{k}\Big\vert>\frac{bK_{n}(\log n)^{1/2}}{n^{1/2-1/\nu}}+\frac{(b\log n)^{1/2}K_{n}^{3/4}}{n^{1/4}}+\frac{(bK_{n}^{2}\log n)^{1/3}}{n^{1/6}}\right)\\
 & \leq C\left((\log n)^{-1}+\frac{\log n}{n}\right)
\end{align*}
since $\tilde{\sigma}=1$. Hence
\begin{align*}
\bigg\vert\sqrt{\frac{n}{\sigma^{2}}} & \sup_{x_{0}\in[0,1]^{p}}\vert\Psi_{k}^{-1/2}(x_{0})\hat{U}_{n,r_{n},\omega}^{(\varepsilon)}(x_{0})\vert-\mathbf{S}_{k}\bigg\vert
=\Big\vert\sup_{f\in\mathcal{F}_{k}}\vert\mathbb{G}_{n}f\vert-\mathbf{S}_{k}\Big\vert\\
 & =\mathcal{O}_{\mathbb{P}}\left(\frac{bK_{n}(\log n)^{1/2}}{n^{1/2-1/\nu}}+\frac{(b\log n)^{1/2}K_{n}^{3/4}}{n^{1/4}}+\frac{(bK_{n}^{2}\log n)^{1/3}}{n^{1/6}}\right)\\
 & =\mathcal{O}_{\mathbb{P}}\left(\frac{(\log n)^{3/2}}{\mathcal{V}_{\cap,k}^{1/2}n^{1/2-1/\nu}}+\frac{(\log n)^{5/4}}{\mathcal{V}_{\cap,k}^{1/4}n^{1/4}}+\frac{\log n}{\mathcal{V}_{\cap,k}^{1/6}n^{1/6}}\right).
\end{align*}
since $K_{n}=\mathcal{O}((\log n))$ and $b=\mathcal{O}(\mathcal{V}_{\cap,k}^{-1/2})$.
\end{proof}

\begin{proof}[Proof of Lemma \ref{lem: approx err diam}]
 With (\ref{eq:U^(1)}) we have
\begin{align*}
U_{n,r_{n},\omega}^{(m)} & (x_{0})-m(x_{0})U_{n,r_{n},\omega}^{(1)}(x_{0})\\
 & =\frac{1}{\binom{n}{r_{n}}}\sum_{I\in \setbinom{n}{r_n}}\sum_{j\in I}(m(X_{j})-m(x_{0}))\frac{\mathbb{I}\{X_{j}\in A_{k}(x_{0},\omega_{I})\}}{\sum_{i\in I}\mathbb{I}\{X_{i}\in A_{k}(x_{0},\omega_{I})\}},
\end{align*}
which implies
\begin{align*}
\sup_{x_{0}\in[0,1]^{p}} & \vert U_{n,r_{n},\omega}^{(m)}(x_{0})-m(x_{0})U_{n,r_{n},\omega}^{(1)}(x_{0})\vert\\
 & \leq\sup_{x_{0}\in[0,1]^{p}}\frac{1}{\binom{n}{r_{n}}}\sum_{I\in \setbinom{n}{r_n}}\sum_{j\in I}\vert m(X_{j})-m(x_{0})\vert\frac{\mathbb{I}\{X_{j}\in A_{k}(x_{0},\omega_{I})\}}{\sum_{i\in I}\mathbb{I}\{X_{i}\in A_{k}(x_{0},\omega_{I})\}}\\
 & \leq C_{H}\sup_{x_{0}\in[0,1]^{p}}\frac{1}{\binom{n}{r_{n}}}\sum_{I\in \setbinom{n}{r_n}}\mathfrak{d}(A_{k}(x_{0},\omega_{I}))^{\alpha}\frac{\sum_{j\in I}\mathbb{I}\{X_{j}\in A_{k}(x_{0},\omega_{I})\}}{\sum_{i\in I}\mathbb{I}\{X_{i}\in A_{k}(x_{0},\omega_{I})\}}\\
 & \leq C_{H}\sup_{x_{0}\in[0,1]^{p}}\frac{1}{\binom{n}{r_{n}}}\sum_{I\in \setbinom{n}{r_n}}\mathfrak{d}(A_{k}(x_{0},\omega_{I}))^{\alpha}.
\end{align*}
For the expectation we get
\begin{align*}
\mathbb{E} & \left[\sup_{x_{0}\in[0,1]^{p}}\frac{1}{\binom{n}{r_{n}}}\sum_{I\in \setbinom{n}{r_n}}\mathfrak{d}(A_{k}(x_{0},\omega_{I}))^{\alpha}\right]\\
 & =\mathbb{E}\left[\sup_{x_{0}\in[0,1]^{p}}\frac{1}{\binom{n}{r_{n}}}\sum_{I\in \setbinom{n}{r_n}}\mathfrak{d}(A_{k}(x_{0},\omega_{I}))^{\alpha}-\mathbb{E}[\mathfrak{d}(A_{k}(x,\omega))^{\alpha}]\right]+\mathbb{E}[\mathfrak{d}(A_{k}(x,\omega))^{\alpha}].
\end{align*}
Using the independence of the $\omega_{I}$ we obtain for the latter
expectation that
\begin{align*}
\mathbb{E} & \left[\sup_{x_{0}\in[0,1]^{p}}\frac{1}{\binom{n}{r_{n}}}\sum_{I\in \setbinom{n}{r_n}}\mathfrak{d}(A_{k}(x_{0},\omega_{I}))^{\alpha}-\mathbb{E}[\mathfrak{d}(A_{k}(x,\omega))^{\alpha}]\right]\\
 & \leq\sum_{x_{0}\in\mathcal{X}_{k}}\mathbb{E}\left[\bigg\vert\frac{1}{\binom{n}{r_{n}}}\sum_{I\in \setbinom{n}{r_n}}\mathfrak{d}(A_{k}(x_{0},\omega_{I}))^{\alpha}-\mathbb{E}[\mathfrak{d}(A_{k}(x,\omega))^{\alpha}]\bigg\vert\right]\\
 & \leq \mathcal N_{k}\var\left(\frac{1}{\binom{n}{r_{n}}}\sum_{I\in \setbinom{n}{r_n}}\mathfrak{d}(A_{k}(x_{0},\omega_{I}))^{\alpha}\right)^{1/2}\\
 & \leq \mathcal N_{k}\frac{1}{\sqrt{\binom{n}{r_{n}}}}\mathbb{E}\left[\mathfrak{d}(A_{k}(x_{0},\omega))^{2\alpha}\right]^{1/2}
 \leq \mathcal N_{k}\frac{p^{\alpha/2}}{\sqrt{\binom{n}{r_{n}}}}.
\end{align*}
This leads to
\begin{align*}
\mathbb{E} & \left[\sup_{x_{0}\in[0,1]^{p}}\vert U_{n,r_{n},\omega}^{(m)}(x_{0})-m(x_{0})U_{n,r_{n},\omega}^{(1)}(x_{0})\vert\right]\\
 & \leq C_{H}\mathbb{E}\left[\sup_{x_{0}\in[0,1]^{p}}\frac{1}{\binom{n}{r_{n}}}\sum_{I\in \setbinom{n}{r_n}}\mathfrak{d}(A_{k}(x_{0},\omega_{I}))^{\alpha}\right]\\
 & \leq C_{H}\left(\mathbb{E}[\mathfrak{d}(A_{k}(x,\omega))^{\alpha}]+\mathcal N_{k}p^{\alpha/2}\binom{n}{r_{n}}^{-1/2}\right).\qedhere
\end{align*}
\end{proof}
\begin{proof}[Proof of Lemma \ref{lem: approx err full cells}]
 For any $\omega$, let $\mathcal{X}_{k}(\omega)$ denote a set of
$2^{k}$ points, with exactly one point in each of the partition cells
created by omega. That means $\vert\mathcal{X}_{k}(\omega)\cap A_{k}(x_{0},\omega)\vert=1$
for every $x_{0}\in[0,1]^{p}$. Using (\ref{eq:U^(1)}) for $U_{n,r_{n},\omega}^{(1)}(x_{0})$
we obtain
\begin{align*}
\mathbb{E} & \left[\sup_{x_{0}\in[0,1]^{p}}\vert m(x_{0})(U_{n,r_{n},\omega}^{(1)}(x_{0})-1)\vert\right]\\
 & \leq\Vert m\Vert_{\infty}\mathbb{E}\left[\sup_{x_{0}\in[0,1]^{p}}\bigg\vert\frac{1}{\binom{n}{r_{n}}}\sum_{I\in \setbinom{n}{r_n}}(\mathbb{I}\{\exists j\in I:X_{j}\in A_{k}(x_{0},\omega_{I})\}-1)\bigg\vert\right]\\
 & \leq\Vert m\Vert_{\infty}\mathbb{E}\left[\frac{1}{\binom{n}{r_{n}}}\sum_{I\in \setbinom{n}{r_n}}\sup_{x_{0}\in[0,1]^{p}}\mathbb{I}\{\nexists j\in I:X_{j}\in A_{k}(x_{0},\omega_{I})\}\right]\\
 & =\Vert m\Vert_{\infty}\mathbb{E}\big[\sup_{x_{0}\in[0,1]^{p}}\mathbb{I}\{\nexists j\in[r_{n}]:X_{j}\in A_{k}(x_{0},\omega)\}\big]\\
 & =\Vert m\Vert_{\infty}\mathbb{E}\Big[\mathbb{E}\big[\max_{x_{0}\in\mathcal{X}_{k}(\omega)}\mathbb{I}\{\nexists j\in[r_{n}]:X_{j}\in A_{k}(x_{0},\omega)\}\mid\omega\big]\Big]\\
 & \leq\Vert m\Vert_{\infty}\mathbb{E}\left[\mathbb{E}\left[\sum_{x_{0}\in\mathcal{X}_{k}(\omega)}\mathbb{I}\{\nexists j\in[r_{n}]:X_{j}\in A_{k}(x_{0},\omega)\}\mid\omega\right]\right]\\
 & =\Vert m\Vert_{\infty}\mathbb{E}\left[\sum_{x_{0}\in\mathcal{X}_{k}(\omega)}(1-p_{x_{0}}(\omega))^{r_{n}}\right]
  \leq\Vert m\Vert_{\infty}2^{k}(1-c_{X}2^{-k})^{r_{n}}.
\end{align*}
This completes the proof.
\end{proof}
\begin{remark}
\label{rem:empty cells sharp} We note that
\begin{align*}
\mathbb{E} & \left[\Big\vert\frac{1}{\binom{n}{r_{n}}}\sum_{I\in \setbinom{n}{r_n}}\mathbb{I}\{\exists j\in I:X_{j}\in A_{k}(x_{0},\omega_{I})\}-1\Big\vert\right]\\
 & =\mathbb{E}\left[\frac{1}{\binom{n}{r_{n}}}\sum_{I\in \setbinom{n}{r_n}}\mathbb{I}\{\nexists j\in I:X_{j}\in A_{k}(x_{0},\omega_{I})\}\right]\\
 & =\mathbb{E}\left[\mathbb{I}\{\nexists j\in[r_{n}]:X_{j}\in A_{k}(x_{0},\omega)\}\right]
  =\mathbb{E}\left[(1-p_{x_{0}}(\omega))^{r_{n}}\right]\\
 & \geq(1-C_{X}2^{-k})^{r_{n}}.
\end{align*}
Therefore, the bound in Lemma \ref{lem: approx err full cells} is
sharp up to the constant and the union bound. The bound
\[
2^{k}(1-c_{X}2^{-k})^{r_{n}}\lesssim\exp\left(k\log2-c_{X}r_{n}/2^{k}\right)=\exp\left(-r_{n}/2^{k}(c_{X}-k2^{k}r_{n}^{-1}\log2)\right)
\]
illustrates that the union bound is negligible as long as $k2^{k}r_{n}^{-1}\to0$.
\end{remark}

\begin{proof}[Proof of Lemma \ref{lem:remainder 1}]
 We omit $x_{0}$ in the notation of
\[
h_{R,n}((X_{i},\varepsilon_{i})_{i\in I},\omega_{I}):=\sum_{j\in I}\varepsilon_{j}\mathbb{I}\{X_{j}\in A_{k}(x_{0},\omega_{I})\}\left(\frac{1}{\sum_{i\in I}\mathbb{I}\{X_{i}\in A_{k}(x_{0},\omega_{I})\}}-\frac{1}{r_{n}p_{x_{0}}(\omega_{I})}\right)
\]
because it is fixed throughout the proof. Then we have
\[
R_{n,r_{n},\omega}^{(1)}(x_{0})=\frac{1}{\binom{n}{r_{n}}}\sum_{I\in \setbinom{n}{r_n}}h_{R,n}((X_{i},\varepsilon_{i})_{i\in I},\omega_{I}),
\]
which is a generalized U-statistic with centered kernel $h_{R,n}$.
For even $q>0$ Lemma \ref{lem:Moments U-stat} yields
\begin{equation}
\mathbb{E}\left[R_{n,r_{n},\omega}^{(1)}(x_{0})^{q}\right]\lesssim\left(\frac{r_{n}}{n}\right)^{q/2}\mathbb{E}\left[h_{R,n}((X_{i},\varepsilon_{i})_{i=1}^{r_{n}},\omega)^{q}\right].\label{eq:prf rem 1 u-stat bound}
\end{equation}
With equation (\ref{eq:cond MZ inequality}) and H\"older's inequality
we get
\begin{align}
\mathbb{E} & \left[h_{R,n}((X_{i},\varepsilon_{i})_{i=1}^{r_{n}},\omega)^{q}\right]\nonumber \\
 & =\mathbb{E}\left[\left(\sum_{j=1}^{r_{n}}\varepsilon_{j}\mathbb{I}\{X_{j}\in A_{k}(x_{0},\omega)\}\left(\frac{1}{\sum_{i=1}^{r_{n}}\mathbb{I}\{X_{i}\in A_{k}(x_{0},\omega)\}}-\frac{1}{r_{n}p_{x_{0}}(\omega)}\right)\right)^{q}\right]\nonumber \\
 & \lesssim\mathbb{E}\left[\left(\sum_{j=1}^{r_{n}}\varepsilon_{j}^{2}\mathbb{I}\{X_{j}\in A_{k}(x_{0},\omega)\}\left(\frac{1}{\sum_{i=1}^{r_{n}}\mathbb{I}\{X_{i}\in A_{k}(x_{0},\omega)\}}-\frac{1}{r_{n}p_{x_{0}}(\omega)}\right)^{2}\right)^{q/2}\right]\nonumber \\
 & =\sum_{q_{1}+...+q_{r_{n}}=q/2}\binom{q/2}{q_{1},\ldots,q_{r_{n}}}\nonumber \\
 & \qquad\times\mathbb{E}\left[\prod_{j=1}^{r_{n}}\varepsilon_{j}^{2q_{j}}\mathbb{I}\{X_{j}\in A_{k}(x_{0},\omega)\}\left(\frac{1}{\sum_{i=1}^{r_{n}}\mathbb{I}\{X_{i}\in A_{k}(x_{0},\omega)\}}-\frac{1}{r_{n}p_{x_{0}}(\omega)}\right)^{2q_{j}}\right]\nonumber \\
 & =\sum_{q_{1}+...+q_{r_{n}}=q/2}\binom{q/2}{q_{1},\ldots,q_{r_{n}}}\mathbb{E}\left[\prod_{j=1}^{r_{n}}\varepsilon_{j}^{2q_{j}}\right]\nonumber \\
 & \qquad\times\mathbb{E}\left[\prod_{j=1}^{r_{n}}\mathbb{I}\{X_{j}\in A_{k}(x_{0},\omega)\}\left(\frac{1}{\sum_{i=1}^{r_{n}}\mathbb{I}\{X_{i}\in A_{k}(x_{0},\omega)\}}-\frac{1}{r_{n}p_{x_{0}}(\omega)}\right)^{2q_{j}}\right]\nonumber \\
 & \leq\mathbb{E}\left[\varepsilon_{1}^{q}\right]\sum_{q_{1}+...+q_{r_{n}}=q/2}\binom{q/2}{q_{1},\ldots,q_{r_{n}}}\nonumber \\
 & \qquad\times\mathbb{E}\left[\prod_{j=1}^{r_{n}}\mathbb{I}\{X_{j}\in A_{k}(x_{0},\omega)\}\left(\frac{1}{\sum_{i=1}^{r_{n}}\mathbb{I}\{X_{i}\in A_{k}(x_{0},\omega)\}}-\frac{1}{r_{n}p_{x_{0}}(\omega)}\right)^{2q_{j}}\right],\label{eq:prf rem 1 ex h to q}
\end{align}
with $q_{j}\in\mathbb{N}_{0}$. For any $\mathbf{q}=(q_{j})_{j\in[r_{n}]}$
with $\sum_{j=1}^{r_{n}}q_{j}=q/2$ let $J_{\mathbf{q}>0}:=\{j\in[r_{n}]:q_{j}>0\}$,
$J_{\mathbf{q}=0}:=\{j\in[r_{n}]:q_{j}=0\}$ and $\tilde{q}(\mathbf{q}):=\vert J_{\mathbf{q}>0}\vert$.
For the expectation from (\ref{eq:prf rem 1 ex h to q}) we get with
$p_{x_{0}}(\omega)\leq C_{X}2^{-k}$ (see (\ref{eq:bound px omega}))
and $\tilde{q}(\mathbf{q})\leq q/2$ that
\begin{align}
\mathbb{E} & \left[\prod_{j=1}^{r_{n}}\mathbb{I}\{X_{j}\in A_{k}(x_{0},\omega)\}\left(\frac{1}{\sum_{i=1}^{r_{n}}\mathbb{I}\{X_{i}\in A_{k}(x_{0},\omega)\}}-\frac{1}{r_{n}p_{x_{0}}(\omega)}\right)^{2q_{j}}\right]\nonumber \\
 & =\mathbb{E}\left[\prod_{j\in J_{\mathbf{q}>0}}\mathbb{I}\{X_{j}\in A_{k}(x_{0},\omega)\}\left(\frac{1}{\tilde{q}(\mathbf{q})+\sum_{i\in J_{\mathbf{q}=0}}\mathbb{I}\{X_{i}\in A_{k}(x_{0},\omega)\}}-\frac{1}{r_{n}p_{x_{0}}(\omega)}\right)^{2q_{j}}\right]\nonumber \\
 & =\mathbb{E}\left[\left(\frac{1}{\tilde{q}(\mathbf{q})+\sum_{i\in J_{\mathbf{q}=0}}\mathbb{I}\{X_{i}\in A_{k}(x_{0},\omega)\}}-\frac{1}{r_{n}p_{x_{0}}(\omega)}\right)^{q}\prod_{j\in J_{\mathbf{q}>0}}\mathbb{I}\{X_{j}\in A_{k}(x_{0},\omega)\}\right]\nonumber \\
 & =\mathbb{E}\Bigg[\left(\frac{1}{\tilde{q}(\mathbf{q})+\sum_{i\in J_{\mathbf{q}=0}}\mathbb{I}\{X_{i}\in A_{k}(x_{0},\omega)\}}-\frac{1}{r_{n}p_{x_{0}}(\omega)}\right)^{q}\nonumber \\
 & \qquad\times\mathbb{E}\bigg[\prod_{j\in J_{\mathbf{q}>0}}\mathbb{I}\{X_{j}\in A_{k}(x_{0},\omega)\}\Big\vert\,\omega,(X_{i})_{i\in J_{\mathbf{q}=0}}\bigg]\Bigg]\nonumber \\
 & =\mathbb{E}\left[\left(\frac{1}{\tilde{q}(\mathbf{q})+\sum_{i\in J_{\mathbf{q}=0}}\mathbb{I}\{X_{i}\in A_{k}(x_{0},\omega)\}}-\frac{1}{r_{n}p_{x_{0}}(\omega)}\right)^{q}p_{x_{0}}(\omega)^{\tilde{q}(\mathbf{q})}\right]\nonumber \\
 & \leq C_{X}^{q/2}2^{-k\tilde{q}(\mathbf{q})}\mathbb{E}\left[\left(\frac{1}{\tilde{q}(\mathbf{q})+\sum_{i\in J_{\mathbf{q}=0}}\mathbb{I}\{X_{i}\in A_{k}(x_{0},\omega)\}}-\frac{1}{r_{n}p_{x_{0}}(\omega)}\right)^{q}\right].\label{eq:prf rem 1 exp prod bound}
\end{align}
Conditioned on $\omega$, the sum in the expression above is Binomial
distributed with parameters $r_{n}-\tilde{q}(\mathbf{q})$ and $p_{x_{0}}(\omega)$.
We note that $\tilde{q}(\mathbf{q})\leq q/2$ and hence we get with
Lemma \ref{lem:bin +h fractures mom} that
\begin{align}
\mathbb{E} & \left[\left(\frac{1}{\tilde{q}(\mathbf{q})+\sum_{i=\tilde{q}(\mathbf{q})+1}^{r_{n}}\mathbb{I}\{X_{i}\in A_{k}(x_{0},\omega)\}}-\frac{1}{r_{n}p_{x_{0}}(\omega)}\right)^{q}\right]\nonumber \\
 & =\mathbb{E}\left[\mathbb{E}\left[\left(\frac{1}{\tilde{q}(\mathbf{q})+\sum_{i=\tilde{q}(\mathbf{q})+1}^{r_{n}}\mathbb{I}\{X_{i}\in A_{k}(x_{0},\omega)\}}-\frac{1}{r_{n}p_{x_{0}}(\omega)}\right)^{q}\:\bigg\vert\:\omega\right]\right]\nonumber \\
 & \lesssim\mathbb{E}\left[(r_{n}p_{x_{0}}(\omega))^{-q3/2}\right]
 \leq(c_{X}r_{n}2^{-k})^{-q3/2}.\label{eq:prf rem 1 bound binomial denom}
\end{align}
Combining (\ref{eq:prf rem 1 ex h to q}), (\ref{eq:prf rem 1 exp prod bound})
and (\ref{eq:prf rem 1 bound binomial denom}), we obtain that there
exists a constant $C$, independent of parameters involved, such that
\begin{align}
\mathbb{E} & \left[h_{R,n}((X_{i},\varepsilon_{i})_{i=1}^{r_{n}},\omega)^{q}\right]\nonumber \\
 & \leq\frac{C\mathbb{E}\left[\varepsilon_{1}^{q}\right]C_{X}^{q/2}}{c_{X}^{q3/2}}\left(r_{n}2^{-k}\right)^{-q3/2}\sum_{q_{1}+...+q_{r_{n}}=q/2}\binom{q/2}{q_{1},\ldots,q_{r_{n}}}2^{-k\tilde{q}((q_{j})_{j\in[r_{n}]})}.\label{eq:prf rem 1 h^q temp bound}
\end{align}
We note that
\[
\sum_{q_{1}+...+q_{r_{n}}=q/2}\binom{q/2}{q_{1},\ldots,q_{r_{n}}}=r_{n}^{q/2}.
\]
For $\check{q}\in\{1,\ldots,q/2\}$, let
\[
N(\check{q})=\sum_{q_{1}+...+q_{r_{n}}=q/2}\binom{q/2}{q_{1},\ldots,q_{r_{n}}}\mathbb{I}\{\vert\{j\in[r_{n}],q_{j}>0\}\vert=\check{q}\}.
\]
This implies
\[
\sum_{\check{q}=1}^{q/2}N(\check{q})=r_{n}^{q/2}.
\]
Using that $0\leq\check{q}\leq q/2$, we obtain
\begin{align*}
N(\check{q}) & =\sum_{q_{1}+...+q_{r_{n}}=q/2}\binom{q/2}{q_{1},\ldots,q_{r_{n}}}\mathbb{I}\{\check{q}=\vert\{j\in[r_{n}],q_{j}>0\}\vert\}\\
 & =\sum_{q_{1}+...+q_{r_{n}}=q/2}\frac{(q/2)!}{q_{1}!\ldots q_{r_{n}}!}\mathbb{I}\{\check{q}=\vert\{j\in[r_{n}],q_{j}>0\}\vert\}\\
 & \leq\frac{(q/2)!}{(\lfloor\frac{q}{2\check{q}}\rfloor!)^{\check{q}}}\sum_{q_{1}+...+q_{r_{n}}=q/2}\mathbb{I}\{\check{q}=\vert\{j\in[r_{n}],q_{j}>0\}\vert\}\\
 & =\frac{(q/2)!}{(\lfloor\frac{q}{2\check{q}}\rfloor!)^{\check{q}}}\binom{r_{n}}{\check{q}}\check{q}^{q/2-\check{q}}
 \leq r_{n}^{\check{q}}(q/2)^{q}.
\end{align*}
Most importantly this does not depend on the $q_{j}$. We note that
$\tilde{q}((q_{j})_{j\in[r_{n}]})=\vert\{j\in[r_{n}],q_{j}>0\}\vert$.
Using $2^{k}\leq r_{n}$, we obtain for the sum in (\ref{eq:prf rem 1 h^q temp bound})
that
\begin{align}
\sum_{q_{1}+...+q_{r_{n}}=q/2} & \binom{q/2}{q_{1},\ldots,q_{r_{n}}}2^{-k\tilde{q}((q_{j})_{j\in[r_{n}]})}\nonumber \\
 & =\sum_{\check{q}=1}^{q/2}2^{-\check{q}k}\sum_{q_{1}+...+q_{r_{n}}=q/2}\binom{q/2}{q_{1},\ldots,q_{r_{n}}}\mathbb{I}\{\check{q}=\vert\{j\in[r_{n}],q_{j}>0\}\vert\}\nonumber \\
 & =\sum_{\check{q}=1}^{q/2}2^{-\check{q}k}N(\check{q})\nonumber \\
 & \leq(q/2)^{q+1}\left(\frac{r_{n}}{2^{k}}\right)^{q/2} =\mathcal{O}\left(\left(r_{n}2^{-k}\right)^{q/2}\right).\label{eq:prf rem 1 bound sum qj}
\end{align}
Using that $\mathbb{E}\left[\varepsilon_{1}^{q}\right]$, $C_{X}^{q}$
and $c_{X}^{-q}$ are constants independent of $n$ and $k$ together
with (\ref{eq:prf rem 1 bound sum qj}) and (\ref{eq:prf rem 1 h^q temp bound})
we obtain
\[
\mathbb{E}\left[h_{R,n}((X_{i},\varepsilon_{i})_{i=1}^{r_{n}},\omega)^{q}\right]=\mathcal{O}\left((r_{n}2^{-k})^{-q}\right).
\]
Thus, with (\ref{eq:prf rem 1 u-stat bound}) we end up with
\begin{align*}
\mathbb{E}\left[\left(R_{n,r_{n},\omega}^{(1)}(x_{0})\right)^{q}\right] & \lesssim\left(\frac{r_{n}}{n}\right)^{q/2}\mathbb{E}\left[\left(h_{R,n}((X_{i},\varepsilon_{i})_{i=1}^{r_{n}},\omega)\right)^{q}\right]\\
 & \lesssim\left(\frac{r_{n}}{n}\right)^{q/2}\left(\frac{2^{k}}{r_{n}}\right)^{q}=\left(\frac{2^{2k}}{nr_{n}}\right)^{q/2}.\qedhere
\end{align*}
\end{proof}
\begin{proof}[Proof of Lemma \ref{lem:remainder 2}]
We obtain
\begin{align*}
\mathbb{E} & \left[R_{n,r_{n},\omega}^{(2)}(x_{0})^{2}\right] =\var\left(R_{n,r_{n},\omega}^{(2)}(x_{0})\right)\\
 & =\frac{1}{n^{2}}\var\left(\sum_{j=1}^{n}\varepsilon_{j}\frac{1}{\binom{n-1}{r_{n}-1}}\sum_{I\in \setbinom{n}{r_n}:j\in I}\left(p_{x_{0}}(\omega_{I})^{-1}\mathbb{I}\{X_{j}\in A_{k}(x_{0},\omega_{I})\}-K_{k}(x_{0},X_{j})\right)\right)\\
 & =\frac{\sigma^{2}}{n}\var\left(\frac{1}{\binom{n-1}{r_{n}-1}}\sum_{I\in \setbinom{n}{r_n}:1\in I}\left(p_{x_{0}}(\omega_{I})^{-1}\mathbb{I}\{X_{1}\in A_{k}(x_{0},\omega_{I})\}-K_{k}(x_{0},X_{1})\right)\right)\\
 & =\frac{\sigma^{2}}{n}\frac{1}{\binom{n-1}{r_{n}-1}}\var\left(p_{x_{0}}(\omega)^{-1}\mathbb{I}\{X_{1}\in A_{k}(x_{0},\omega)\}-K_{k}(x_{0},X_{1})\right)
\end{align*}
because
\begin{align*}
\cov & \left(\frac{\mathbb{I}\{X_{1}\in A_{k}(x_{0},\omega_{1})\}}{p_{x_{0}}(\omega_{1})}-K_{k}(x_{0},X_{1}),\frac{\mathbb{I}\{X_{1}\in A_{k}(x_{0},\omega_{2})\}}{p_{x_{0}}(\omega_{2})}-K_{k}(x_{0},X_{1})\right)\\
 & =\mathbb{E}\left[\left(\frac{\mathbb{I}\{X_{1}\in A_{k}(x_{0},\omega_{1})\}}{p_{x_{0}}(\omega_{1})}-K_{k}(x_{0},X_{1})\right)\left(\frac{\mathbb{I}\{X_{1}\in A_{k}(x_{0},\omega_{2})\}}{p_{x_{0}}(\omega_{2})}-K_{k}(x_{0},X_{1})\right)\right]\\
 & =0.
\end{align*}
The latter equality follows by conditioning on $X_{1}$ and using
that
\[
K_{k}(x_{0},X_{1})=\mathbb{E}\left[\frac{\mathbb{I}\{X_{1}\in A_{k}(x_{0},\omega_{1})\}}{p_{x_{0}}(\omega_{1})}\:\Big\vert\:X_{1}\right]
\]
in conjunction with the independence of $\omega_{1}$ and $\omega_{2}$.
We get
\begin{align*}
\mathbb{E} & \left[R_{n,r_{n},\omega}^{(2)}(x_{0})^{2}\right]\\
 & =\frac{\sigma^{2}}{n}\frac{1}{\binom{n-1}{r_{n}-1}}\var\left(p_{x_{0}}(\omega)^{-1}\mathbb{I}\{X_{1}\in A_{k}(x_{0},\omega)\}-K_{k}(x_{0},X_{1})\right)\\
 & =\frac{\sigma^{2}}{\kappa^{2}n}\frac{1}{\binom{n-1}{r_{n}-1}}\mathbb{E}\left[\left(p_{x_{0}}(\omega)^{-1}\mathbb{I}\{X_{1}\in A_{k}(x_{0},\omega)\}-K_{k}(x_{0},X_{1})\right)^{2}\right]\\
 & =\frac{\sigma^{2}}{n}\frac{1}{\binom{n-1}{r_{n}-1}}\left(\mathbb{E}\left[p_{x_{0}}(\omega)^{-2}\mathbb{I}\{X_{1}\in A_{k}(x_{0},\omega)\}\right]-\mathbb{E}\left[K_{k}^{2}(x_{0},X_{1})\right]\right)\\
 & \leq\frac{\sigma^{2}}{n}\frac{1}{\binom{n-1}{r_{n}-1}}\mathbb{E}\left[p_{x_{0}}(\omega)^{-2}\mathbb{I}\{X_{1}\in A_{k}(x_{0},\omega)\}\right]
\leq \frac{\sigma^{2}C_{X}}{c_{X}^{2}}\frac{2^{k}}{r_{n}}\left(\frac{r_{n}}{n}\right)^{r_{n}}.
\end{align*}
This yields the claim.
\end{proof}

\section{Moment inequalities\label{sec:emp pro =000026 lit Results}}

The Marcinkiewicz-Zygmund inequality below will be employed repeatedly
in this article. In particular, we will frequently use an implication
of the result which we add as a second statement of the proposition.
The first statement and its proof can be found as Theorem 2 in \citet[Section 10.3]{Chow1997}.
\begin{prop}[Marcinkiewicz-Zygmund Inequality]
 \label{prop:MZ ineq} Let $q\in[1,\infty)$, for centered and independent
random variables $(X_{i})_{i=1}^{n}$ it holds that there exist constants
$C_{q,1}$ and $C_{q,2}$ only dependent on $q$ such that
\[
C_{q,1}\mathbb{E}\left[\left(\sum_{i=1}^{n}X_{i}^{2}\right)^{q/2}\right]\leq\mathbb{E}\left[\Big\vert\sum_{i=1}^{n}X_{i}\Big\vert^{q}\right]\leq C_{q,2}\mathbb{E}\left[\left(\sum_{i=1}^{n}X_{i}^{2}\right)^{q/2}\right].
\]
Let $(Y_{i})_{i=1}^{n}$ be random variables that are independent
of $(X_{i})_{i=1}^{n}$ but not necessarily independent of each other.
Assume that $\mathbb{E}[g(X_{i},Y_{i})\mid Y_{i}]=0$, then it holds
that
\begin{align}
\mathbb{E}\left[\Big\vert\sum_{i=1}^{n}g(X_{i},Y_{i})\Big\vert^{q}\right] & \leq C_{q,2}\mathbb{E}\left[\left(\sum_{i=1}^{n}g^{2}(X_{i},Y_{i})\right)^{q/2}\right].\label{eq:cond MZ inequality}
\end{align}
\end{prop}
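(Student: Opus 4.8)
The plan is to obtain the second (conditional) assertion as a direct consequence of the first, via a conditioning argument; the first assertion itself is the classical Marcinkiewicz–Zygmund inequality and is taken verbatim from the cited reference. The key observation is that, conditionally on $Y=(Y_{1},\dots,Y_{n})$, the summands $g(X_{i},Y_{i})$ become independent and centered, so that the unconditional inequality applies pathwise with the same constant $C_{q,2}$.

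First I would make the conditional independence and centering explicit. Since the block $(X_{i})_{i=1}^{n}$ is independent of the block $(Y_{i})_{i=1}^{n}$ and the $X_{i}$ are mutually independent, the conditional law of $(X_{1},\dots,X_{n})$ given $Y$ coincides with their unconditional (product) law; in particular $X_{1},\dots,X_{n}$ remain conditionally independent. Hence, for a fixed realization of $Y$, the variables $g(X_{1},Y_{1}),\dots,g(X_{n},Y_{n})$ are independent functions of the independent $X_{i}$. For the centering, the independence of $X_{i}$ from $Y$ gives $\mathbb{E}[g(X_{i},Y_{i})\mid Y]=\mathbb{E}[g(X_{i},Y_{i})\mid Y_{i}]=0$ by hypothesis, so each summand is conditionally centered.

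Next I would apply the first statement conditionally. Working with a regular conditional distribution of $(X_{i})_{i}$ given $Y$, the upper Marcinkiewicz–Zygmund bound holds for almost every realization of $Y$ and yields
\[
\mathbb{E}\left[\Big|\sum_{i=1}^{n}g(X_{i},Y_{i})\Big|^{q}\,\Big|\,Y\right]\leq C_{q,2}\,\mathbb{E}\left[\Big(\sum_{i=1}^{n}g^{2}(X_{i},Y_{i})\Big)^{q/2}\,\Big|\,Y\right],
\]
with the very same constant $C_{q,2}$, because that constant depends only on $q$ and not on the laws of the summands nor on $n$. Taking expectations on both sides and invoking the tower property then gives the stated inequality \eqref{eq:cond MZ inequality}.

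The main point requiring care is the measure-theoretic justification of the pathwise application: one must verify that a regular conditional distribution of $(X_{i})_{i}$ given $Y$ exists and that this conditional law is the unconditional product law, so that the deterministic inequality may legitimately be applied for almost every value of $Y$ and then integrated. This is guaranteed by the block independence of $X$ and $Y$; once it is in place, the remainder of the argument is routine, the only genuine input being the conditional centering supplied by the hypothesis $\mathbb{E}[g(X_{i},Y_{i})\mid Y_{i}]=0$ together with $X_{i}\perp (Y_{j})_{j\neq i}$.
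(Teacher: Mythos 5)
Your proposal is correct and follows essentially the same route as the paper: condition on $(Y_i)_{i=1}^n$, observe that the $g(X_i,Y_i)$ are conditionally independent and centered, apply the unconditional Marcinkiewicz--Zygmund upper bound pathwise with the same constant $C_{q,2}$, and integrate via the tower property. The additional measure-theoretic remarks on regular conditional distributions are a sound (if slightly more explicit) justification of the step the paper takes implicitly.
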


\begin{proof}[Proof of Proposition \ref{prop:MZ ineq}]
We prove the second assertion. Conditioned on $(Y_{i})_{i=1}^{n}$
the random variables $g(X_{i},Y_{i})$ are independent and centered.
We use the first inequality and get
\begin{align*}
\mathbb{E}\left[\Big\vert\sum_{i=1}^{n}g(X_{i},Y_{i})\Big\vert^{q}\right] & =\mathbb{E}\left[\mathbb{E}\left[\Big\vert\sum_{i=1}^{n}g(X_{i},Y_{i})\Big\vert^{q}\:\bigg\vert\:(Y_{i})_{i=1}^{n}\right]\right]\\
 & \leq C_{q,2}\mathbb{E}\left[\mathbb{E}\left[\left(\sum_{i=1}^{n}g^{2}(X_{i},Y_{i})\right)^{q/2}\:\bigg\vert\:(Y_{i})_{i=1}^{n}\right]\right]\\
 & =C_{q,2}\mathbb{E}\left[\left(\sum_{i=1}^{n}g^{2}(X_{i},Y_{i})\right)^{q/2}\right].\qedhere
\end{align*}
\end{proof}
The Lemma below is useful to bound moments of generalized U-statistics.
The result and the proof are a generalization of an analogue result
for standard U-statistics, see \citet[Section 1.5, Theorem 1]{Lee1990}.
\begin{lemma}
\label{lem:Moments U-stat} Let $U_{n,r_{n},\omega}$ be a generalized
U-statistic with kernel $h_{n}(Z_{1},\ldots,Z_{r_{n}},\omega)$ satisfying
$\mathbb{E}[h_{n}(Z_{1},\ldots,Z_{r_{n}},\omega)]=0$. For $q\in[2,\infty)$
there exists a constant $C$ such that
\[
\mathbb{E}\left[\vert U_{n,r_{n},\omega}\vert^{q}\right]\leq C\left(\frac{r_{n}}{n}\right)^{q/2}\mathbb{E}\left[\vert h_{n}(Z_{1},\ldots Z_{r_{n}},\omega)\vert^{q}\right].
\]
\end{lemma}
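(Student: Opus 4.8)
The plan is to mimic the proof of the analogous bound for ordinary U-statistics in \citet[Section 1.5, Theorem 1]{Lee1990}, the key device being Hoeffding's averaging representation adapted to the extra randomization $\omega_{I}$. Recall that
\[
U_{n,r_{n},\omega}=\binom{n}{r_{n}}^{-1}\sum_{I\in\setbinom{n}{r_n}}h_{n}\big((Z_{i})_{i\in I},\omega_{I}\big),
\]
where the $\omega_{I}$ are i.i.d.\ copies of $\omega$ and independent of the data. Set $k:=\lfloor n/r_{n}\rfloor$. For a permutation $\pi$ of $[n]$ write $I_{j}(\pi):=\{\pi((j-1)r_{n}+1),\dots,\pi(jr_{n})\}$ for $j\in[k]$, so that $I_{1}(\pi),\dots,I_{k}(\pi)$ are disjoint $r_{n}$-subsets of $[n]$, and define the block average
\[
V_{\pi}:=\frac{1}{k}\sum_{j=1}^{k}h_{n}\big((Z_{i})_{i\in I_{j}(\pi)},\omega_{I_{j}(\pi)}\big).
\]
First I would establish the pointwise identity $U_{n,r_{n},\omega}=\frac{1}{n!}\sum_{\pi}V_{\pi}$ by a counting argument: for a fixed subset $I_{0}$ and index $j$, exactly $r_{n}!\,(n-r_{n})!$ permutations satisfy $I_{j}(\pi)=I_{0}$, so the total coefficient of $h_{n}((Z_{i})_{i\in I_{0}},\omega_{I_{0}})$ equals $\tfrac{1}{n!}\cdot\tfrac{1}{k}\cdot k\,r_{n}!(n-r_{n})!=\binom{n}{r_{n}}^{-1}$, which reproduces $U_{n,r_{n},\omega}$.

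By Jensen's inequality (convexity of $t\mapsto|t|^{q}$, $q\geq1$), together with exchangeability of the data and the i.i.d.\ property of the $\omega_{I}$, I would then bound
\[
\mathbb{E}\big[|U_{n,r_{n},\omega}|^{q}\big]\leq\frac{1}{n!}\sum_{\pi}\mathbb{E}\big[|V_{\pi}|^{q}\big]=\mathbb{E}\big[|V|^{q}\big],\qquad V:=\frac{1}{k}\sum_{j=1}^{k}H_{j},
\]
where $H_{j}:=h_{n}\big((Z_{(j-1)r_{n}+1},\dots,Z_{jr_{n}}),\omega_{j}\big)$ are i.i.d.\ and centered (since $\mathbb{E}[h_{n}]=0$), with $\mathbb{E}[|H_{j}|^{q}]=\mathbb{E}[|h_{n}|^{q}]$. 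The crucial point is that the disjointness of the blocks \emph{and} the independence of the attached $\omega_{I_{j}(\pi)}$ make the $H_{j}$ genuinely independent; this is exactly where the extra randomization is handled and is the main place where care is needed relative to the classical statement.

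Finally I would apply the Marcinkiewicz--Zygmund inequality (Proposition~\ref{prop:MZ ineq}) to the centered i.i.d.\ sum, giving $\mathbb{E}[|\sum_{j}H_{j}|^{q}]\leq C_{q,2}\,\mathbb{E}[(\sum_{j}H_{j}^{2})^{q/2}]$, and then the power-mean bound $(\sum_{j=1}^{k}a_{j})^{q/2}\leq k^{q/2-1}\sum_{j=1}^{k}a_{j}^{q/2}$ (valid for $q\geq2$) to obtain $\mathbb{E}[(\sum_{j}H_{j}^{2})^{q/2}]\leq k^{q/2}\mathbb{E}[|h_{n}|^{q}]$. Combining,
\[
\mathbb{E}\big[|V|^{q}\big]=k^{-q}\,\mathbb{E}\Big[\Big|\sum_{j=1}^{k}H_{j}\Big|^{q}\Big]\leq C_{q,2}\,k^{-q/2}\,\mathbb{E}\big[|h_{n}|^{q}\big].
\]
Since $r_{n}\leq n$ gives $k=\lfloor n/r_{n}\rfloor\geq\tfrac{1}{2}\,n/r_{n}$, we have $k^{-q/2}\leq 2^{q/2}(r_{n}/n)^{q/2}$, and absorbing $2^{q/2}C_{q,2}$ into a constant $C$ depending only on $q$ yields the claim. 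I expect no serious analytic obstacle: the only genuinely delicate step is the combinatorial averaging identity and verifying that the block terms remain independent in the presence of the kernel-level randomization $\omega$.
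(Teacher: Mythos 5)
Your proposal is correct and follows essentially the same route as the paper's proof: the Hoeffding blocking representation $U_{n,r_n,\omega}=\frac{1}{n!}\sum_{\pi}V_{\pi}$ with the $\omega_{I}$ carried along so the block summands remain i.i.d., followed by Jensen, the Marcinkiewicz--Zygmund inequality, and the power-mean (H\"older) step yielding the factor $\lfloor n/r_n\rfloor^{-q/2}\lesssim (r_n/n)^{q/2}$. The only cosmetic difference is that you make the final bound $\lfloor n/r_n\rfloor\geq\tfrac12 n/r_n$ explicit, which the paper absorbs into the constant.
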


\begin{proof}
We denote $[n]:=\{1,\ldots,n\}$ and
\[
\Pi(n):=\{\pi:[n]\to[n]:\pi([n])=[n]\}
\]
which is the set of all permutations of $\{1,\ldots,n\}$. Let $n_{b}=\lfloor n/r_{n}\rfloor$,
we further denote
\[
g(z_{1},\ldots,z_{n})=\frac{1}{n_{b}}\sum_{l=0}^{n_{b}-1}h_{n}(z_{lr_{n}+1},\ldots,z_{(l+1)r_{n}},\omega_{\{lr_{n}+1,\ldots,(l+1)r_{n}\}})
\]
By counting the appropriate permutations it can be seen that
\begin{align*}
n_{b} & \sum_{\pi\in\Pi(n)}g(Z_{\pi(1)},\ldots,Z_{\pi(n)})\\
 & =\sum_{\pi\in\Pi(n)}\sum_{l=0}^{n_{b}-1}h_{n}(Z_{\pi(lr_{n}+1)},\ldots,Z_{\pi((l+1)r_{n})},\omega_{\{\pi(lr_{n}+1),\ldots,\pi((l+1)r_{n})\}})\\
 & =n_{b}r_{n}!(n-r_{n})!\sum_{I\in \setbinom{n}{r_n}}h_{n}((Z_{i})_{i\in I},\omega_{I}).
\end{align*}
Hence we can write a generalized U-statistic as
\[
U_{n,r_{n},\omega}=\frac{1}{n!}\sum_{\pi\in\Pi(n)}g(Z_{\pi(1)},\ldots,Z_{\pi(n)}).
\]
For any convex function $f$ we can now use
\[
f(U_{n,r_{n},\omega})=f\left(\frac{1}{n!}\sum_{\pi\in\Pi(n)}g(Z_{\pi(1)},\ldots,Z_{\pi(n)})\right)\leq\frac{1}{n!}\sum_{\pi\in\Pi(n)}f\left(g(Z_{\pi(1)},\ldots,Z_{\pi(n)})\right)
\]
and thus
\begin{align*}
\mathbb{E}\big[f(U_{n,r_{n},\omega})\big] & \leq\mathbb{E}\big[f(g(Z_{1},\ldots,Z_{n}))\big].
\end{align*}
For this inequality we do not need the centered kernel yet. We choose
$f(\xi)=\vert\xi\vert^{q}$ and obtain
\begin{align*}
\mathbb{E}\left[\vert U_{n,r_{n},\omega}\vert^{q}\right] & \leq\mathbb{E}\left[\vert g(Z_{1},\ldots,Z_{n})\vert^{q}\right]\\
 & =\mathbb{E}\left[\left|\frac{1}{n_{b}}\sum_{l=0}^{n_{b}-1}h_{n}(Z_{lr_{n}+1},\ldots,Z_{(l+1)r_{n}},\omega_{\{lr_{n}+1,\ldots,(l+1)r_{n}\}})\right|^{q}\right]\\
 & =\frac{1}{n_{b}^{q}}\mathbb{E}\left[\left|\sum_{l=0}^{n_{b}-1}h_{n}(Z_{lr_{n}+1},\ldots,Z_{(l+1)r_{n}},\omega_{\{lr_{n}+1,\ldots,(l+1)r_{n}\}})\right|^{q}\right].
\end{align*}
The terms
\[
\left(h_{n}(Z_{lr_{n}+1},\ldots Z_{(l+1)r_{n}},\omega_{\{lr_{n}+1,\ldots,(l+1)r_{n}\}})\right)_{l=0}^{n_{b}-1}
\]
are independent because $h_{n}$ is applied to blocks of independent
$Z_{j}$ and the $\omega_{I}$ are independent as well. We can apply
Proposition \ref{prop:MZ ineq} and H\"older's inequality to obtain
\begin{align*}
\mathbb{E}\left[\vert U_{n,r_{n},\omega}\vert^{q}\right] & \leq\frac{1}{n_{b}^{q}}\mathbb{E}\left[\left|\sum_{l=0}^{n_{b}-1}h_{n}(Z_{lr_{n}+1},\ldots,Z_{(l+1)r_{n}},\omega_{\{lr_{n}+1,\ldots,(l+1)r_{n}\}})\right|^{q}\right]\\
 & \leq C_{q,2}\frac{1}{n_{b}^{q}}\mathbb{E}\left[\left(\sum_{l=0}^{n_{b}-1}h_{n}(Z_{lr_{n}+1},\ldots Z_{(l+1)r_{n}},\omega_{\{lr_{n}+1,\ldots,(l+1)r_{n}\}})^{2}\right)^{q/2}\right]\\
 & \leq C_{q,2}\frac{1}{n_{b}^{q/2}}\mathbb{E}\left[\vert h_{n}(Z_{1},\ldots Z_{r_{n}},\omega)\vert^{q}\right]\\
 & =C_{q,2}\frac{1}{\left(\lfloor n/r_{n}\rfloor\right)^{q/2}}\mathbb{E}\left[\vert h_{n}(Z_{1},\ldots Z_{r_{n}},\omega)\vert^{q}\right]\\
 & \leq C\left(\frac{r_{n}}{n}\right)^{q/2}\mathbb{E}\left[\vert h_{n}(Z_{1},\ldots Z_{r_{n}},\omega)\vert^{q}\right]
\end{align*}
for a suitable constant $C$.
\end{proof}

\subsection{Moments of Binomial denominators \label{sec:Binomial-results}}

Throughout the work we will need different results for the binomial
distribution, especially for its moments and for moments of terms
with a Binomial distributed denominator. These results are gathered
in this section. Throughout the section $B$ will denote a Binomial
random variable. Its parameters will vary and will be stated in the
results. The first lemma is similar to Lemma 4.1 by \citet{Gyoerfi2002}.
The first statement in the lemma is a slight variation that gives
equality instead of an upper bound.
\begin{lemma}
\label{lem:Gy=0000F6rfi 4.1} Let $B\sim\text{Bin}(n,p)$, it holds
that
\begin{align*}
\mathbb{E}\left[\frac{1}{1+B}\right] & =\frac{1}{(n+1)p}\left(1-(1-p)^{n+1}\right)\leq\frac{1}{(n+1)p}\\
\textit{and}\qquad\mathbb{E}\left[\frac{\mathbb{I}\{B>0\}}{B}\right] & \leq\frac{2}{(n+1)p}.
\end{align*}
\end{lemma}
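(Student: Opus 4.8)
The plan is to compute the first expectation exactly via a binomial coefficient identity, and then to deduce the second bound from the first through an elementary pointwise inequality. Both statements are elementary, so I expect no genuine obstacle; the computation essentially follows Lemma~4.1 by \citet{Gyoerfi2002}, with the first inequality there replaced by an exact identity.

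For the first statement I would write out the expectation explicitly as
\[
\mathbb{E}\left[\frac{1}{1+B}\right]=\sum_{j=0}^{n}\frac{1}{1+j}\binom{n}{j}p^{j}(1-p)^{n-j},
\]
and then apply the identity $\frac{1}{1+j}\binom{n}{j}=\frac{1}{n+1}\binom{n+1}{j+1}$, which is immediate from $(1+j)\binom{n+1}{j+1}=(n+1)\binom{n}{j}$. After substituting this and reindexing with $i=j+1$, the sum becomes
\[
\mathbb{E}\left[\frac{1}{1+B}\right]=\frac{1}{(n+1)p}\sum_{i=1}^{n+1}\binom{n+1}{i}p^{i}(1-p)^{(n+1)-i}.
\]
The remaining sum is the binomial expansion of $(p+(1-p))^{n+1}=1$ with the $i=0$ term removed, hence it equals $1-(1-p)^{n+1}$. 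This gives the claimed identity, and the bound $\mathbb{E}[1/(1+B)]\le\frac{1}{(n+1)p}$ follows at once from $1-(1-p)^{n+1}\le1$.

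For the second statement the key observation is the pointwise inequality $\frac{1}{B}\le\frac{2}{1+B}$ whenever $B\ge1$, which is equivalent to $1+B\le2B$. Consequently, on the event $\{B>0\}$ one has $\frac{\mathbb{I}\{B>0\}}{B}\le\frac{2\,\mathbb{I}\{B>0\}}{1+B}\le\frac{2}{1+B}$, so taking expectations and invoking the first statement yields
\[
\mathbb{E}\left[\frac{\mathbb{I}\{B>0\}}{B}\right]\le 2\,\mathbb{E}\left[\frac{1}{1+B}\right]\le\frac{2}{(n+1)p},
\]
which is the desired bound. The only minor point requiring care is a clean verification of the combinatorial identity and the correct handling of the truncated binomial sum, but neither presents any real difficulty.
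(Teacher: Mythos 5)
Your proof is correct and follows essentially the same route as the paper: the exact identity via the combinatorial relation $\tfrac{1}{1+j}\binom{n}{j}=\tfrac{1}{n+1}\binom{n+1}{j+1}$ followed by reindexing the binomial sum, and then the pointwise bound $\mathbb{I}\{B>0\}/B\leq 2/(1+B)$ combined with the first part. No gaps.
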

\begin{proof}
Using that $p\leq1$, we obtain the first claim by
\begin{align*}
\mathbb{E}\left[\frac{1}{1+B}\right] & =\sum_{i=0}^{n}\frac{1}{i+1}\binom{n}{i}p^{i}(1-p)^{(n-i)}\\
 & =\sum_{i=1}^{n+1}\frac{1}{i}\binom{n}{i-1}p^{i-1}(1-p)^{(n+1-i)}\\
 & =\frac{1}{(n+1)p}\left(1-(1-p)^{n+1}\right) \leq\frac{1}{(n+1)p}.
\end{align*}
Using the first part we obtain
\begin{align*}
\mathbb{E}\left[\frac{\mathbb{I}\{B>0\}}{B}\right]\leq\mathbb{E}\left[\frac{2}{1+B}\right] &=\frac{2}{(n+1)p}\left(1-(1-p)^{n+1}\right) \leq\frac{2}{(n+1)p},
\end{align*}
which completes the proof.
\end{proof}
The next lemma gives us bounds for higher moments of this kind.
\begin{lemma}
\label{lem:1/(1+B)^m} Let $B\sim\text{Bin}(n-1,p)$ and let $q\in\mathbb{N}$.
The following inequalities hold
\begin{align*}
\mathbb{E}\left[(1+B)^{-2}\right] & \geq\frac{1}{p^{2}n(n+1)}\left(1-(n+1)p(1-p)^{n}-(1-p)^{n+1}\right)\\
\mathbb{E}\left[(1+B)^{-q}\right] & \leq\frac{q!}{p^{q}}\frac{(n-1)!}{(n+q-1)!}\leq\frac{q!}{p^{q}n^{q}}.
\end{align*}
\end{lemma}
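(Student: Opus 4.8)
The plan is to reduce everything to the expectation of the reciprocal rising factorial $\prod_{j=1}^{q}(j+B)^{-1}$, which can be evaluated in closed form. First I would record the elementary combinatorial identity
\begin{equation*}
\frac{1}{(i+1)(i+2)\cdots(i+q)}\binom{n-1}{i}=\frac{i!}{(i+q)!}\frac{(n-1)!}{i!(n-1-i)!}=\frac{(n-1)!}{(n+q-1)!}\binom{n+q-1}{i+q},
\end{equation*}
valid for $0\le i\le n-1$. Multiplying by $p^{i}(1-p)^{n-1-i}$, summing over $i$, and then substituting $j=i+q$ (so that $n-1-i=n+q-1-j$) gives
\begin{equation*}
\mathbb{E}\Big[\prod_{j=1}^{q}(j+B)^{-1}\Big]=\frac{(n-1)!}{(n+q-1)!}\,p^{-q}\sum_{j=q}^{n+q-1}\binom{n+q-1}{j}p^{j}(1-p)^{n+q-1-j},
\end{equation*}
where the remaining sum is exactly the probability that a $\mathrm{Bin}(n+q-1,p)$ variable is at least $q$.

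For the upper bounds I would simply bound this tail probability by $1$, which yields $\mathbb{E}[\prod_{j=1}^{q}(j+B)^{-1}]\le p^{-q}(n-1)!/(n+q-1)!$. To pass from the rising factorial to $(1+B)^{-q}$ I use that for $B\ge 0$ each factor satisfies $j+B\le j(1+B)$, whence $\prod_{j=1}^{q}(j+B)\le q!\,(1+B)^{q}$ and therefore $(1+B)^{-q}\le q!\prod_{j=1}^{q}(j+B)^{-1}$. Taking expectations gives the first claimed bound $\mathbb{E}[(1+B)^{-q}]\le \frac{q!}{p^{q}}\frac{(n-1)!}{(n+q-1)!}$, and the second follows at once from $(n-1)!/(n+q-1)!=\prod_{l=0}^{q-1}(n+l)^{-1}\le n^{-q}$.

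For the lower bound I specialize to $q=2$ and evaluate the tail sum exactly, subtracting the $j=0$ and $j=1$ terms from $1$:
\begin{equation*}
\mathbb{E}\Big[\frac{1}{(1+B)(2+B)}\Big]=\frac{1}{n(n+1)p^{2}}\big(1-(n+1)p(1-p)^{n}-(1-p)^{n+1}\big).
\end{equation*}
Since $2+B\ge 1+B$ implies $(1+B)^{-2}\ge[(1+B)(2+B)]^{-1}$, taking expectations yields precisely the stated lower bound.

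None of the steps is genuinely hard; the only point requiring care is keeping the inequality directions consistent. For the upper bound one passes \emph{upward} from $(1+B)^{-q}$ to $q!\prod_{j=1}^{q}(j+B)^{-1}$ via $\prod_{j=1}^{q}(j+B)\le q!(1+B)^{q}$, whereas for the lower bound one passes \emph{downward} from $(1+B)^{-2}$ to $[(1+B)(2+B)]^{-1}$ via $(1+B)^{2}\le(1+B)(2+B)$. The other mild point is the reindexing $j=i+q$, which is what recognizes the truncated binomial sum as a genuine tail of a $\mathrm{Bin}(n+q-1,p)$ distribution and so closes the computation.
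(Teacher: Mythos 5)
Your proposal is correct and follows essentially the same route as the paper: both bound $(1+B)^{-q}$ by $q!\prod_{j=1}^{q}(j+B)^{-1}$ (equivalently $(1+i)^{-1}\le j/(j+i)$), reindex the resulting sum as a tail probability of a $\mathrm{Bin}(n+q-1,p)$ variable, bound that tail by $1$ for the upper bound, and evaluate it exactly for $q=2$ to get the lower bound. The only difference is cosmetic — you package the computation as a single closed-form identity for the reciprocal rising factorial before specializing, whereas the paper carries out the two cases separately.
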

\begin{proof}
We get the lower bound by
\begin{align*}
\mathbb{E}\left[\frac{1}{(1+B)^{2}}\right] & =\sum_{i=0}^{n-1}\frac{1}{(1+i)^{2}}\binom{n-1}{i}p^{i}(1-p)^{n-1-i}\\
 & \geq\sum_{i=0}^{n-1}\frac{1}{(1+i)(2+i)}\binom{n-1}{i}p^{i}(1-p)^{n-1-i}\\
 & =\frac{1}{p^{2}n(n+1)}\sum_{i=2}^{n+1}\binom{n+1}{i}p^{i}(1-p)^{n+1-i}\\
 & =\frac{1}{p^{2}n(n+1)}\left(1-(n+1)p(1-p)^{n}-(1-p)^{n+1}\right).
\end{align*}
For the upper bound we have
\begin{align*}
\mathbb{E}\left[\frac{1}{(1+B)^{q}}\right] & =\sum_{i=0}^{n-1}\frac{1}{(1+i)^{q}}\binom{n-1}{i}p^{i}(1-p)^{n-1-i}\\
 & =\sum_{i=0}^{n-1}\frac{1}{(1+i)^{q}}\frac{(n-1)!}{i!(n-1-i)!}p^{i}(1-p)^{n-1-i}.
\end{align*}
For $j\geq1$ we note that
\[
\frac{1}{1+i}=1-\frac{i}{i+1}\leq1-\frac{i}{i+j}=\frac{j}{j+i}.
\]
 This implies
\[
\frac{1}{(1+i)^{q}}=\prod_{j=1}^{q}\frac{1}{(1+i)}\leq\prod_{j=1}^{q}\frac{j}{j+i}=\prod_{j=1}^{q}j\prod_{j=1}^{q}\frac{1}{j+i}=q!\frac{i!}{(i+q)!}.
\]
We get
\begin{align*}
\mathbb{E} & \left[\frac{1}{(1+B)^{q}}\right]\\
 & \leq\sum_{i=0}^{n-1}\frac{q!i!}{(i+q)!}\frac{(n-1)!}{i!(n-1-i)!}p^{i}(1-p)^{n-1-i}\\
 & =\frac{q!(n-1)!}{p^{q}(n+q-1)!}\sum_{i=0}^{n-1}\frac{i!}{(i+q)!}\frac{(n+q-1)!}{(i+q)!(n+q-1-(i+q))!}p^{i+q}(1-p)^{n+q-1-(i+q)}\\
 & =\frac{q!(n-1)!}{p^{q}(n+q-1)!}\sum_{i=0}^{n-1}\binom{n+q-1}{i+q}p^{i+q}(1-p)^{n+q-1-(i+q)}\\
 & =\frac{q!(n-1)!}{p^{q}(n+q-1)!}\sum_{i=q}^{n+q-1}\binom{n+q-1}{i}p^{i}(1-p)^{n+q-1-i}\\
 & \leq\frac{q!(n-1)!}{p^{q}(n+q-1)!} =\frac{q!}{p^{q}}\prod_{j=0}^{q-1}\frac{1}{n+j} \leq\frac{q!}{p^{q}n^{q}}.
\end{align*}
The second inequality holds due to the binomial theorem and the last
inequality is implied by $n+j\geq n$ if $j\geq0.$
\end{proof}
The proposition below is used to prove the two results that follow
subsequently. It is given as Theorem 4 in the work by \citet{Skorski2025}.
\begin{prop}
\label{prop:skorski}Let $B\sim\text{Bin}(n,p)$ and $\sigma^{2}=p(1-p)$.
Then for any integer $q>1$ we have
\[
\mathbb{E}\left[(B-np)^{q}\right]^{1/q}=C(n,p,q)\max\{k^{1-k/q}(n\sigma^{2})^{k/q}:k=1,\ldots,\lfloor q/2\rfloor\},
\]
where $C(n,p,q)$ is uniformly bounded by $(3e)^{-1}\leq C(n,p,q)\leq(5/2)^{1/5}e^{1/2}$.
\end{prop}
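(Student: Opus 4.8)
The plan is to reduce the statement to a sharp moment bound for a sum of independent centered Bernoulli variables and then to isolate the dominant combinatorial contribution. Write $B-np=\sum_{i=1}^{n}Y_{i}$ with $Y_{i}=\xi_{i}-p$ for i.i.d.\ Bernoulli$(p)$ variables $\xi_{i}$, so that the $Y_{i}$ are independent, centered, with $\mathbb{E}[Y_{i}^{2}]=\sigma^{2}$ and, for $j\ge2$, $\mathbb{E}[Y_{i}^{j}]=\sigma^{2}\big((1-p)^{j-1}-(-p)^{j-1}\big)$, whence $|\mathbb{E}[Y_{i}^{j}]|\le\sigma^{2}$ and $|\mathbb{E}[Y_{i}^{j}]|\asymp\sigma^{2}$ for $p$ bounded away from $0$ and $1$. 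The target is therefore $\mathbb{E}[(\sum_{i}Y_{i})^{q}]$, and the entire difficulty lies in computing its order with uniform constants.

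First I would expand $\mathbb{E}[(\sum_{i=1}^{n}Y_{i})^{q}]$ by the multinomial theorem and exploit independence together with centering: any monomial in which some index $i$ occurs exactly once has vanishing expectation. Hence only configurations survive in which each of the, say, $k$ distinct indices appears at least twice; since the multiplicities sum to $q$ this forces $2k\le q$, i.e.\ $k\in\{1,\dots,\lfloor q/2\rfloor\}$, which is exactly the range in the statement. Grouping the surviving terms by $k$, the level-$k$ contribution factorises into a combinatorial count---choosing $k$ indices out of $n$ and partitioning the $q$ positions into $k$ blocks of size $\ge2$---times a product of $k$ single-block moments, each of order $\sigma^{2}$. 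This yields, up to $p$-dependent factors, a contribution of order $(\mathrm{count}_{k})\cdot n^{k}\sigma^{2k}$, so that $\mathbb{E}[(B-np)^{q}]\asymp\sum_{k}(\mathrm{count}_{k})\,(n\sigma^{2})^{k}$.

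The key step is the asymptotics of the combinatorial count. Using $\binom{n}{k}\asymp n^{k}/k!$ and Stirling's formula for the factorials governing the partition of $q$ positions into $k$ blocks of size at least two, one extracts that $(\mathrm{count}_{k})^{1/q}\asymp k^{1-k/q}$ up to a universal factor. Taking the $q$-th root of the level-$k$ term then produces exactly $k^{1-k/q}(n\sigma^{2})^{k/q}$, and since the $\lfloor q/2\rfloor$ summands are comparable up to a factor at most $q$, replacing the sum by its maximum costs only a bounded multiple, which is absorbed into $C(n,p,q)$. This recovers the displayed identity. (Alternatively, the Marcinkiewicz--Zygmund bound of Proposition~\ref{prop:MZ ineq} gives the preliminary reduction $\mathbb{E}[|\sum_i Y_i|^q]\asymp\mathbb{E}[(\sum_i Y_i^2)^{q/2}]$, but it does not by itself produce the sharp max-form.)

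The main obstacle is the \emph{sharp, uniform} control of $C(n,p,q)$: a matching two-sided bound with constants independent of $n,p,q$. The upper bound with a $q$-free constant already requires the careful Stirling bookkeeping above rather than a crude Rosenthal-type inequality, whose constants typically degrade with $q$; I would either track the combinatorial estimates with explicit constants or invoke a sharp Lata\l{}a-type moment-comparison theorem for sums of i.i.d.\ centered variables and specialise it to the Bernoulli case. The lower bound is more delicate: one must verify that the dominant level $k$ contributes without cancellation despite the alternating signs in $\mathbb{E}[Y_{i}^{j}]$, which can be handled by a symmetrisation argument or by isolating the even-multiplicity configurations whose block moments are all positive. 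Pinning down the explicit constants $(3e)^{-1}\le C(n,p,q)\le(5/2)^{1/5}e^{1/2}$ is precisely the content of Theorem~4 in \citet{Skorski2025}, on which we rely.
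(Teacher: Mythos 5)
The paper does not actually prove this proposition: it is imported verbatim as Theorem~4 of \citet{Skorski2025}, and the surrounding text only ever uses the resulting upper bound $\mathbb{E}[(B-np)^{q}]=\mathcal{O}((np)^{q/2})$ recorded in Remark~\ref{rem:Skorski}. Your proposal sketches the standard route one would take to prove such a result from scratch --- writing $B-np=\sum_i(\xi_i-p)$, expanding the $q$-th power, discarding configurations with a singleton index, grouping by the number $k\le\lfloor q/2\rfloor$ of distinct indices, and extracting $k^{1-k/q}(n\sigma^{2})^{k/q}$ from Stirling asymptotics of the block counts --- and this outline is a sound heuristic for where the max-form comes from. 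But for the two parts that carry the actual content of the proposition, namely the uniform two-sided constants and the lower bound, you explicitly fall back on the very citation the paper uses, so in substance your proof and the paper's coincide: both are ``apply Skorski, Theorem~4.'' One caveat is worth making explicit: the cancellation issue you flag for the lower bound is real and not merely technical. For odd $q$ the \emph{signed} central moment can vanish --- e.g.\ $q=3$, $p=1/2$ gives $\mathbb{E}[(B-np)^{3}]=0$ while the right-hand side of the display is positive --- so the identity with $C(n,p,q)\ge(3e)^{-1}$ can only be literally true for absolute central moments $\mathbb{E}[|B-np|^{q}]$ or for even $q$. Since the paper only ever invokes the upper bound (and there $\mathbb{E}[(B-np)^{q}]\le\mathbb{E}[|B-np|^{q}]$), nothing downstream is affected, but your proposed symmetrisation or restriction to even-multiplicity configurations would indeed be needed to rescue the lower bound in the form stated.
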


\begin{remark}
\label{rem:Skorski}Let $np>1$, it holds that
\[
\max\{k^{1-k/q}(n\sigma^{2})^{k/q}:k=1,\ldots,\lfloor q/2\rfloor\}\leq(np)^{\lfloor q/2\rfloor/q}\max\{k^{1-k/q}:k=1,\ldots,\lfloor q/2\rfloor\}.
\]
Hence $\mathbb{E}[(B-np)^{q}]=\mathcal{O}((np)^{q/2})$ if $q$ is
fixed for all $n$.

The next two lemmas consider the moments of terms like
$\frac{1}{B}-\frac{1}{np}$.
We need upper bounds for the rate of these moments in $n$ and $p$
because in the proofs of the main results we will need to replace
binomial random variables in the denominator by their expectation.
\end{remark}

\begin{lemma}
\label{lem:moments binomial fractures}Let $B\sim\text{Bin}(n,p)$
such that $p^{-1}=\mathcal{O}(n)$. For any fixed integer $q>1$ it
holds that
\begin{align*}
\mathbb{E}\left[\mathbb{I}\{B>0\}\left(\frac{1}{B}-\frac{1}{np}\right)^{q}\right] & \lesssim(np)^{-3q/2}.
\end{align*}
\end{lemma}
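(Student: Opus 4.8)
The plan is to rewrite the bracketed difference with a common denominator and then decouple the resulting numerator and denominator by Cauchy--Schwarz, so that the fluctuation of $B$ around its mean and the negative moments of $B$ can be estimated separately by the results already at hand. Writing $\frac{1}{B}-\frac{1}{np}=\frac{np-B}{np\,B}$, the quantity to be bounded becomes
\[
\mathbb{E}\left[\mathbb{I}\{B>0\}\left(\frac{1}{B}-\frac{1}{np}\right)^{q}\right]=\frac{1}{(np)^{q}}\mathbb{E}\left[\mathbb{I}\{B>0\}\frac{(np-B)^{q}}{B^{q}}\right].
\]
First I would apply the Cauchy--Schwarz inequality to the expectation on the right, splitting it into a numerator factor $\mathbb{E}\big[(np-B)^{2q}\big]^{1/2}$ and a denominator factor $\mathbb{E}\big[\mathbb{I}\{B>0\}B^{-2q}\big]^{1/2}$; this is the price for $B$ appearing in both places.

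For the numerator factor I would invoke Proposition \ref{prop:skorski} together with Remark \ref{rem:Skorski}: since $p^{-1}=\mathcal{O}(n)$ guarantees $np\gtrsim1$, so that the hypothesis $np>1$ of the remark applies for $n$ large, the centered $2q$-th moment satisfies $\mathbb{E}[(B-np)^{2q}]=\mathcal{O}((np)^{q})$, hence $\mathbb{E}[(np-B)^{2q}]^{1/2}=\mathcal{O}((np)^{q/2})$. For the denominator factor I would use the elementary bound $\mathbb{I}\{B>0\}/B\le 2/(1+B)$ already exploited in the proof of Lemma \ref{lem:Gy=0000F6rfi 4.1}, which gives $\mathbb{I}\{B>0\}B^{-2q}\le 2^{2q}(1+B)^{-2q}$, and then apply Lemma \ref{lem:1/(1+B)^m} to the binomial with $n$ trials (using $n+1$ in place of the lemma's $n$): this yields $\mathbb{E}[(1+B)^{-2q}]\le (2q)!\,p^{-2q}(n+1)^{-2q}\lesssim (np)^{-2q}$, so the denominator factor is $\mathcal{O}((np)^{-q})$.

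Combining the three pieces, the right-hand side is of order $(np)^{-q}\cdot(np)^{q/2}\cdot(np)^{-q}=(np)^{-3q/2}$, which is the claimed bound. The main obstacle is the statistical dependence between the numerator $(np-B)^{q}$ and the denominator $B^{-q}$: one cannot factor the expectation directly, and the Cauchy--Schwarz decoupling forces the use of moments of order $2q$ rather than $q$. This is precisely why the argument needs both the higher-order moment bound from Remark \ref{rem:Skorski} and the negative-moment bound of order $2q$ from Lemma \ref{lem:1/(1+B)^m}; the hypothesis $p^{-1}=\mathcal{O}(n)$, i.e.\ $np$ bounded away from $0$, is what makes these two rates line up and keeps the constants uniform in $n$ and $p$.
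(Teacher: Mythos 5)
Your argument is correct, and it reaches the stated rate by a genuinely different route than the paper. The paper's proof starts from the algebraic identity \eqref{eq:binom denom decomp},
\[
\frac{1}{B}-\frac{1}{np}=\frac{np-B}{(np)^{2}}+\frac{(np-B)^{2}}{(np)^{3}}+\frac{(np-B)^{3}}{B(np)^{3}},
\]
bounds the three resulting terms separately, disposes of the only remaining random denominator via the trivial bound $B\geq1$ on $\{B>0\}$, and then invokes Proposition \ref{prop:skorski} for the centered moments of orders $q$, $2q$ and $3q$; the three contributions are of order $(np)^{-3q/2}$, $(np)^{-2q}$ and $(np)^{-3q/2}$. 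You instead keep the single common-denominator form $\frac{np-B}{np\,B}$ and decouple numerator and denominator by Cauchy--Schwarz, which costs you moments of order $2q$ on both sides but lets you treat the fluctuation term $\mathbb{E}[(np-B)^{2q}]^{1/2}=\mathcal{O}((np)^{q/2})$ and the negative moment $\mathbb{E}[\mathbb{I}\{B>0\}B^{-2q}]^{1/2}\lesssim\mathbb{E}[(1+B)^{-2q}]^{1/2}=\mathcal{O}((np)^{-q})$ by Remark \ref{rem:Skorski} and Lemma \ref{lem:1/(1+B)^m}, respectively; the factors $(np)^{-q}\cdot(np)^{q/2}\cdot(np)^{-q}$ combine to exactly the claimed $(np)^{-3q/2}$, so nothing is lost at the level of rates. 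Your route is shorter and makes the two ingredients (a Gaussian-type moment bound for $B-np$ and a sharp negative-moment bound for $B$) more transparent, whereas the paper's decomposition avoids Cauchy--Schwarz and needs the negative-moment lemma only through $B\geq1$, at the price of the less obvious three-term identity and a centered moment of order $3q$. Both arguments share the same harmless caveats: for odd $q$ the bound should be read in absolute value, and the hypothesis $p^{-1}=\mathcal{O}(n)$ is used only to keep $np$ bounded below so that the moment bounds from Proposition \ref{prop:skorski} take the form $(np)^{q/2}$ with constants uniform in $n$ and $p$.
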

\begin{proof}
Using simple calculations one gets
\begin{equation}
\frac{1}{B}-\frac{1}{np}=\frac{np-B}{(np)^{2}}+\frac{(np-B)^{2}}{(np)^{3}}+\frac{(np-B)^{3}}{B(np)^{3}}.\label{eq:binom denom decomp}
\end{equation}
Hence we get
\begin{align*}
\mathbb{E} & \left[\mathbb{I}\{B>0\}\left(\frac{1}{B}-\frac{1}{np}\right)^{q}\right]\\
 & \lesssim\mathbb{E}\left[\left(\frac{np-B}{(np)^{2}}\right)^{q}\right]+\mathbb{E}\left[\left(\frac{(np-B)^{2}}{(np)^{3}}\right)^{q}\right]+\mathbb{E}\left[\mathbb{I}\{B>0\}\left(\frac{(np-B)^{3}}{B(np)^{3}}\right)^{q}\right]\\
 & \leq(np)^{-2q}\mathbb{E}\left[\left(np-B\right)^{q}\right]+(np)^{-3q}\mathbb{E}\left[(np-B)^{2q}\right]+(np)^{-3q}\mathbb{E}\left[(np-B)^{3q}\right].
\end{align*}
We apply Proposition \ref{prop:skorski} in the case where $p^{-1}=\mathcal{O}(n)$.
We obtain
\begin{align*}
\mathbb{E} & \left[\mathbb{I}\{B>0\}\left(\frac{1}{B}-\frac{1}{np}\right)^{q}\right]\\
 & \lesssim(np)^{-2q}\mathbb{E}\left[(np-B)^{q}\right]+(np)^{-3q}\mathbb{E}\left[(np-B)^{2q}\right]+(np)^{-3q}\mathbb{E}\left[(np-B)^{3q}\right]\\
 & \lesssim(np)^{-3q/2}+(np)^{-2q}+(np)^{-3q/2}\lesssim(np)^{-3q/2},
\end{align*}
which yields the claim.
\end{proof}

The last lemma is similar to the above, but instead of the indicator
an added positive term ensures that the denominator is positive.
\begin{lemma}
\label{lem:bin +h fractures mom} For any fixed integer $q>1$ and
$h\in\mathbb{N}$ with $1\leq h\leq q$ let $B\sim\text{Bin}(n-h,p)$.
If $p^{-1}=\mathcal{O}(n)$ it holds that
\begin{align*}
\mathbb{E}\left[\left(\frac{1}{h+B}-\frac{1}{np}\right)^{q}\right] & \lesssim(np)^{-3q/2}.
\end{align*}
\end{lemma}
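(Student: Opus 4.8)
The plan is to mirror the proof of Lemma~\ref{lem:moments binomial fractures}, the only new feature being that the additive constant $h\ge1$ makes the denominator $h+B\ge1$ strictly positive, so that no indicator is needed. First I would record the purely algebraic identity obtained from (\ref{eq:binom denom decomp}) upon replacing $B$ by $h+B$: since $h+B\ge1>0$,
\[
\frac{1}{h+B}-\frac{1}{np}=\frac{np-(h+B)}{(np)^{2}}+\frac{(np-(h+B))^{2}}{(np)^{3}}+\frac{(np-(h+B))^{3}}{(h+B)(np)^{3}}.
\]

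Next I would raise this to the $q$-th power and use $|x+y+z|^{q}\le3^{q-1}(|x|^{q}+|y|^{q}+|z|^{q})$ together with $(h+B)^{-q}\le1$ to obtain
\begin{align*}
\mathbb{E}\Big[\Big(\tfrac{1}{h+B}-\tfrac{1}{np}\Big)^{q}\Big]
&\lesssim(np)^{-2q}\mathbb{E}\big[|np-(h+B)|^{q}\big]\\
&\quad+(np)^{-3q}\mathbb{E}\big[|np-(h+B)|^{2q}\big]+(np)^{-3q}\mathbb{E}\big[|np-(h+B)|^{3q}\big].
\end{align*}
Hence everything reduces to bounding the absolute moments $\mathbb{E}[|np-(h+B)|^{j}]$ for $j=q,2q,3q$, and the target $(np)^{-3q/2}$ will follow once I show $\mathbb{E}[|np-(h+B)|^{j}]\lesssim(np)^{j/2}$: indeed the first term then gives $(np)^{-3q/2}$, the third term gives $(np)^{-3q/2}$, and the second gives $(np)^{-2q}\le(np)^{-3q/2}$ because $p^{-1}=\mathcal{O}(n)$ forces $np\gtrsim1$.

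The one point requiring care is that $np$ is not the mean of $h+B$. Writing $np-(h+B)=-h(1-p)-(B-(n-h)p)$ separates a deterministic shift, bounded by $h\le q$, from the centered binomial $B-(n-h)p$. By Minkowski's inequality and Proposition~\ref{prop:skorski} (see Remark~\ref{rem:Skorski}), applied with $n-h$ trials and using $(n-h)p\sim np$, I would estimate
\[
\mathbb{E}\big[|np-(h+B)|^{j}\big]^{1/j}\le h(1-p)+\mathbb{E}\big[|B-(n-h)p|^{j}\big]^{1/j}\lesssim 1+(np)^{1/2}\lesssim(np)^{1/2},
\]
where the final step again uses $np\gtrsim1$. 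Raising to the $j$-th power gives $\mathbb{E}[|np-(h+B)|^{j}]\lesssim(np)^{j/2}$, and substituting $j=q,2q,3q$ above yields the claim. The main, and only genuinely new, obstacle compared with Lemma~\ref{lem:moments binomial fractures} is this shift by the bounded constant $h(1-p)$, which is harmless precisely because $h$ is fixed and $np$ is bounded away from zero.
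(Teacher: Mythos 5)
Your proposal is correct and follows essentially the same route as the paper: the same algebraic decomposition of $\frac{1}{h+B}-\frac{1}{np}$ (your numerator $np-(h+B)$ is identical to the paper's $(n-h)p-B-(1-p)h$), the same bound $(h+B)^{-1}\le1$, the same separation of the bounded deterministic shift $h(1-p)$ from the centered binomial, and the same appeal to Proposition~\ref{prop:skorski} to get $\mathbb{E}[|B-(n-h)p|^{j}]\lesssim(np)^{j/2}$. The only cosmetic difference is that you combine the shift and the centered part via Minkowski before raising to the $j$-th power, whereas the paper applies the elementary inequality $|a+b|^{j}\lesssim|a|^{j}+|b|^{j}$ after; both yield the same bounds.
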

\begin{proof}
Using (\ref{eq:binom denom decomp}) from the previous proof and replacing
$B$ by $h+B$ we get
\begin{align*}
\frac{1}{h+B}-\frac{1}{np} & =\frac{(n-h)p-B-(1-p)h}{(np)^{2}}+\frac{\left((n-h)p-B-(1-p)h\right)^{2}}{(np)^{3}}\\
 & \qquad+\frac{\left((n-h)p-B-(1-p)h\right)^{3}}{(h+B)(np)^{3}}.
\end{align*}
Hence
\begin{align*}
\mathbb{E} & \left[\left(\frac{1}{h+B}-\frac{1}{np}\right)^{q}\right]\\
 & \lesssim\mathbb{E}\left[\left(\frac{(n-h)p-B-(1-p)h}{(np)^{2}}\right)^{q}\right]+\mathbb{E}\left[\left(\frac{\left((n-h)p-B-(1-p)h\right)^{2}}{(np)^{3}}\right)^{q}\right]\\
 & \qquad+\mathbb{E}\left[\left(\frac{\left((n-h)p-B-(1-p)h\right)^{3}}{(h+B)(np)^{3}}\right)^{q}\right]\\
 & \leq(np)^{-2q}\mathbb{E}\left[\left((n-h)p-B-(1-p)h\right)^{q}\right]+(np)^{-3q}\mathbb{E}\left[\left((n-h)p-B-(1-p)h\right)^{2q}\right]\\
 & \qquad+(np)^{-3q}\mathbb{E}\left[\left((n-h)p-B-(1-p)h\right)^{3q}\right]\\
 & \lesssim(np)^{-2q}\left(\mathbb{E}\left[\left((n-h)p-B\right)^{q}\right]+h^{q}\right)+(np)^{-3q}\left(\mathbb{E}\left[\left((n-h)p-B\right)^{2q}\right]+h^{2q}\right)\\
 & \qquad+(np)^{-3q}\left(\mathbb{E}\left[\left((n-h)p-B\right)^{3q}\right]+h^{3q}\right).
\end{align*}
With Theorem 4 by \citet{Skorski2025} we get
\begin{align*}
\mathbb{E} & \left[\left(\frac{1}{h+B}-\frac{1}{np}\right)^{q}\right]\\
 & \lesssim(np)^{-2q}\left(\mathcal{O}\left(((n-h)p)^{q/2}\right)+h^{q}\right)+(np)^{-3q}\left(\mathcal{O}\left(((n-h)p)^{q}\right)+h^{2q}\right)\\
 & \qquad+(np)^{-3q}\left(\mathcal{O}\left(((n-h)p)^{3q/2}\right)+h^{3q}\right)\\
 & \lesssim(np)^{-q3/2},
\end{align*}
which completes the proof.
\end{proof}

\bibliography{rf_lit}

\end{document}